\documentclass[11pt]{article}
\usepackage[a4paper, margin=1in]{geometry}

\usepackage[utf8]{inputenc}
\usepackage[T1]{fontenc}
\usepackage{lmodern}
\usepackage{microtype}

\usepackage{amsmath, amssymb, amsthm, mathtools}
\usepackage{cancel}
\usepackage{bm} % Bold math symbols
\usepackage{dsfont} % Double stroke for indicator functions
\numberwithin{equation}{section}
\usepackage[normalem]{ulem}

\allowdisplaybreaks[4]

\newtheoremstyle{mystyle}%   % Style name
    {}%                      % Space above
    {}%                      % Space below
    {\itshape}%              % Body font
    {}%                      % Indent amount for the first line
    {\bfseries}%             % Head font
    {.}%                     % Punctuation after head
    {5pt}%                   % Space after head
    {\thmname{#1}\thmnumber{ #2}\thmnote{ #3}}

\theoremstyle{mystyle}
\newtheorem{theorem}{Theorem}[section]
\newtheorem{lemma}[theorem]{Lemma}
\newtheorem{proposition}[theorem]{Proposition}
\newtheorem{corollary}[theorem]{Corollary}

\theoremstyle{definition}
\newtheorem{definition}[theorem]{Definition}
\newtheorem{remark}[theorem]{Remark}
\newtheorem{assump}[theorem]{Assumption}
\newtheorem{example}[theorem]{Example}

\newtheorem*{theorem*}{Theorem}

\newtheorem{thmintro}{Theorem}

\usepackage{enumitem}

\usepackage{graphicx}
\usepackage{caption}
\usepackage{subcaption}
\usepackage{booktabs} % Professional tables
\usepackage{float} % Improved floating options

\usepackage{xurl}
\usepackage{hyperref}
\hypersetup{
    colorlinks=true,
    linkcolor=black,
    citecolor=black,
    urlcolor=black
}

\usepackage{cleveref}
\crefname{enumi}{Assumption}{Assumptions}

\usepackage[
  style=numeric,
  citestyle=numeric-comp,
  giveninits=true,
  maxcitenames=2,      % Maximum number of author names to display in citations
  mincitenames=1,      % Minimum number of names to keep when there are too many authors
  maxbibnames=3,        % Use et al. if there are 3 or more authors
  isbn=false,
]{biblatex}
\AtEveryBibitem{%
  \ifentrytype{book}
    {\clearfield{pages}\clearfield{pagetotal}}
    {}%
}
\AtEveryBibitem{\iffieldundef{doi}{}{\clearfield{url}\clearfield{urlyear}\clearfield{urlmonth}\clearfield{urlday}}}
\usepackage[dvipsnames]{xcolor}
\usepackage[most]{tcolorbox}
\newcommand{\N}{\mathbb{N}}
\newcommand{\R}{\mathbb{R}}
\newcommand{\E}{\mathbb{E}}

\newcommand{\1}{\mathds{1}}

\newcommand{\bs}[1]{\boldsymbol{#1}}
\newcommand{\mcal}[1]{\mathcal{#1}}
\newcommand{\mbb}[1]{\mathbb{#1}}
\newcommand{\js}{\operatorname{JS}}
\newcommand{\leb}{\operatorname{Leb}}

\newcommand{\D}{\mathbb{D}}

\usepackage{changepage}

\title{Kinetic Mean-Field Planning via Schrödinger Bridges: Existence, Penalization, and Consistency Bounds}

\author{Erhan Bayraktar\thanks{This author is partially supported by the National Science Foundation under grant DMS-2507940 and by the Susan M. Smith chair.}
  \quad Zhiyan Ding \quad Hiroaki Horikawa\\[6pt]
  \normalsize Department of Mathematics, University of Michigan, United States\\
  \normalsize \texttt{erhan@umich.edu}, \texttt{zyding@umich.edu}, \texttt{hhiroaki@umich.edu}
}

\date{}

\begin{document}
\maketitle

\begin{abstract}
We study a mean-field planning problem for a swarm of interacting, non-cooperative agents whose
dynamics are stochastic and kinetic. The noise acts on the velocity only, and the interaction (of Cucker--Smale type for instance) enters through the drift.
For a fixed flow, the problem reduces to a Schr\"odinger bridge problem, and we first prove the existence of an equilibrium by a Schauder fixed-point argument.
Then we replace the hard terminal constraint by a penalty function and show that equilibria of the penalized problems converge, along subsequences, to an equilibrium of the original problem.
Since equilibria are not unique in general, we instead quantify the consistency error, which measures how far an equilibrium of the penalized problem is from being an equilibrium of the original one. The rate is of order $n^{-1/2}$ for the relative-entropy penalty.
The proof relies on the quantitative stability of entropic optimal transport and on a pointwise linear growth bound on the score of the degenerate reference dynamics, which is of independent interest.
\end{abstract}

\tableofcontents

\section{Introduction} \label{sec:intro}

In this paper, we study a mean-field planning problem (MFP) for the stochastic kinetic dynamics:
\begin{equation}
\label{eq:intro_controlled_SDE}
  \left\{
  \begin{aligned}
    & dX_t = V_t \, dt, \\
    & dV_t = \bigl[ f(t, X_t, V_t, \mu_t) + \alpha_t \bigr] \, dt + dW_t, \\
    & (X_0, V_0) \sim \mu_{\mathrm{init}},
  \end{aligned}
  \right.
\end{equation}
where $\bs{\mu} : t \mapsto \mu_t$ is a flow of probability measures on $\R^{2d}$, $\alpha$ is the
control process, and $W$ is a $d$-dimensional Brownian motion. Given the initial and terminal
configurations $\mu_{\mathrm{init}}$ and $\mu_{\mathrm{fin}}$, we seek a triplet
$(\hat{\alpha}, \hat{g}, \hat{\bs{\mu}})$ of a control, a terminal cost, and a flow of probability
measures satisfying:
\begin{flalign}
  &\text{(Optimality)}
  && \hat{\alpha} \in \underset{\alpha \in \mcal{D}}{\arg\min}\,
     \E \left[\frac{1}{2} \int^T_0 |\alpha_t|^2 \, dt + \hat{g}(X_T, V_T) \right],
  && \hspace{4em} \label{eqn:optimality} \\
  &\text{(Consistency)}
  && \hat{\mu}_t = \mcal{L}(\hat{X}_t, \hat{V}_t), \quad t \in [0,T],
  && \label{eqn:consistency} \\[6pt]
  &\text{(Terminal condition)}
  && \hat{\mu}_T = \mu_{\mathrm{fin}},
  && \label{eqn:terminal}
\end{flalign}
where, in \eqref{eqn:optimality}, $\mcal{D}$ is the set of controls for which \eqref{eq:intro_controlled_SDE} admits a weak solution
and $(X, V)$ solves \eqref{eq:intro_controlled_SDE} with $\bs{\mu} = \hat{\bs{\mu}}$, and, in \eqref{eqn:consistency}, $(\hat{X}, \hat{V})$ solves \eqref{eq:intro_controlled_SDE} with $\bs{\mu} = \hat{\bs{\mu}}$ and $\alpha = \hat{\alpha}$.  In words, under the incentive $\hat{g}$, the swarm reaches $\mu_{\mathrm{fin}}$ as an equilibrium of a (non-cooperative) mean-field game (MFG) in which each agent solves the incentivized minimal-action problem \eqref{eqn:optimality}.

Our choice of model, namely kinetic dynamics with an interacting drift, is motivated by the Cucker--Smale flocking model\footnote{For a technical reason, our result covers a modified version of this model. See Example~\ref{example:Cucker--Smale}.}~\cite{cucker2007}, which describes how individual birds or fish adjust their velocities to those of their neighbors and eventually form a coordinated flock. Game-theoretic formulations of this model were proposed in \cite{nourian2010synthesis,nourian2011mean}, where alignment enters the running cost of each agent as a penalty on the deviation of its velocity from those of its neighbors. In the present paper, alignment instead enters the dynamics through the drift $f$: the uncontrolled dynamics ($\alpha \equiv 0$) are themselves of Cucker--Smale type, and the control $\alpha$ describes the deviation of each agent from the flocking behavior.
Following the flock is thus free, whereas any deviation from it costs energy. Hence, if the observed terminal configuration $\mu_{\mathrm{fin}}$ cannot be reached by flocking alone, the agents must have deviated, and the MFP asks what incentive makes such deviations worthwhile and along which paths the swarm is transported between the two configurations.

To solve the MFP, we consider the following constrained minimization problem:
\begin{equation}
\tag*{$\mathrm{SB}(\bs{\mu})$}
  \inf \Bigl\{ H\bigl(Q \,\big|\, R^{\bs{\mu}}\bigr)
  \;:\; Q \in \mcal{P}(\Omega), \ Q_0 = \mu_{\mathrm{init}}, \ Q_T = \mu_{\mathrm{fin}} \Bigr\},
\end{equation}
where $\Omega = C([0,T]; \R^{2d})$, $Q_t$ denotes the time-$t$ marginal of $Q$,
$H(\cdot \,|\, \cdot)$ is the relative entropy, and $R^{\bs{\mu}}$ is the law of the uncontrolled dynamics \eqref{eq:intro_controlled_SDE}, i.e.\ with $\alpha \equiv 0$, which we call the reference process.
It is a variant of the Schr\"odinger bridge (SB) problem, first introduced by Schr\"odinger \cite{schrodinger1931umkehrung, schrodinger1932theorie} to determine the most likely paths of a cloud of particles, relative to a reference measure, given its initial and final configurations.
As F\"ollmer~\cite{Follmer_randomfields} observed, the energy $\E[\frac{1}{2}\int_0^T |\alpha_t|^2 \, dt]$ of an admissible control equals the relative entropy $H(Q \,|\, R^{\bs{\mu}})$ of the law $Q$ of the controlled process by Girsanov's theorem.
Thus, SB($\bs{\mu}$) is equivalent to minimizing the energy under the boundary constraints.

We now define an equilibrium: denoting by $P^{\bs{\mu}}$ the unique minimizer of \ref{eq:SB_mu}, we call a continuous flow $\hat{\bs{\mu}}$ a mean-field equilibrium (MFE) if $\hat{\mu}_t = P^{\hat{\bs{\mu}}}_t$ for all $t \in [0,T]$.
An MFE yields a solution of the MFP as follows.
The minimizer $P^{\hat{\bs{\mu}}}$ admits the classical decomposition
$dP^{\hat{\bs{\mu}}}/dR^{\hat{\bs{\mu}}} = \varphi(Z_0)\,\psi(Z_T)$, where $\varphi, \psi$ are the Schr\"odinger potentials, and it is then also the unique minimizer of the unconstrained problem (see Remark~\ref{remark:MFP_SB})
\begin{equation}
\label{eq:SB_unconstrained}
  \inf \Bigl\{ H\bigl(Q \,\big|\, R^{\hat{\bs{\mu}}}\bigr) - \E^Q \bigl[ \log \psi(Z_T) \bigr]
  \;:\; Q \in \mcal{P}(\Omega), \ Q_0 = \mu_{\mathrm{init}} \Bigr\}.
\end{equation}
Hence, given an MFE $\hat{\bs{\mu}}$, the triplet
$(\hat{\alpha}, \hat{g}, \hat{\bs{\mu}})$ solves the MFP, where $\hat{g} := -\log \psi$ and
$\hat{\alpha}$ is the control associated with $P^{\hat{\bs{\mu}}}$ by
$dP^{\hat{\bs{\mu}}}/dR^{\hat{\bs{\mu}}} = \mcal{E}\bigl(\int_0^{T} \hat{\alpha}_t \cdot dW_t\bigr)$.
Conversely, from every solution of the MFP, we can construct the MFE. For a more precise argument see Remark~\ref{remark:MFP_SB}.

Unlike most of the existing literature on MFP~\cite{poretta2014planning, Achdou2012planning, orrieri2019variational_planning, graber2019planning_masstransport, gomes2018firstorder_planning, francesco2022approx_planning, benamou2019, ren2023pathdependent_mfp, chen2018steering, mimikos2024regularity_mfp, cardaliaguet2024freeboundary}, we consider kinetic dynamics with degenerate noise in which the interaction among the agents enters
through the drift of the velocity equation. Consequently the Hamiltonian of the Hamilton--Jacobi--Bellman (HJB) equation~\eqref{eq:pde} is non-separable; see Section~\ref{sec:lit_review} for the related literature.

The first of our four main results is the existence of an MFE; informally,
it reads as follows.
\begin{thmintro}[Theorem~\ref{thm:existence_mfe}]
\label{thm:informal_existence}
Under suitable conditions on the drift $f$ and on the marginals
$\mu_{\mathrm{init}}, \mu_{\mathrm{fin}}$, there exists an MFE $\hat{\bs{\mu}}$.
\end{thmintro}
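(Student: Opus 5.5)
The plan is a Schauder fixed-point argument for the map $\Phi:\bs{\mu}\mapsto (P^{\bs{\mu}}_t)_{t\in[0,T]}$, posed on a convex set $\mcal{K}$ of continuous flows $[0,T]\to\mcal{P}_1(\R^{2d})$ (or $\mcal{P}_2$). The admissible flows will have moments uniformly bounded by a constant $M$ and a uniform modulus of continuity in time, say $W_1(\mu_s,\mu_t)\le C|t-s|^{1/2}$. Such a set is convex and compact in $C([0,T];\mcal{P}_1)$ with the sup-$W_1$ metric, which embeds into a locally convex space of signed measures. For this we need $f$ bounded, or at least of linear growth uniformly in the measure argument, and Lipschitz or continuous in $\mu$ for $W_1$. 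We also need $\mu_{\mathrm{init}},\mu_{\mathrm{fin}}$ to have finite moments and finite entropy with respect to a suitable reference, for instance densities with Gaussian tails.

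The first step is well-posedness of $\mathrm{SB}(\bs{\mu})$ for each $\bs{\mu}\in\mcal{K}$. By Girsanov, $R^{\bs{\mu}}$ is equivalent to the law $R^0$ of the free kinetic (Langevin) process with the same initial law. The density $dR^{\bs{\mu}}/dR^0$ has moments controlled uniformly in $\bs{\mu}$ when $f$ is bounded. The kinetic transition kernel is Gaussian with a nondegenerate covariance, because the Kolmogorov system is controllable. Hence $R^0_{0T}$ has a positive density, and the endpoint problem has finite value whenever $H(\mu_{\mathrm{init}}\otimes\mu_{\mathrm{fin}}\,|\,R^{\bs{\mu}}_{0T})<\infty$. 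Existence and uniqueness of $P^{\bs{\mu}}$ then follow from strict convexity and lower semicontinuity of entropy, together with the standard reduction of the path problem to a static problem on the endpoint coupling. The next task is a uniform bound $\sup_{\bs{\mu}\in\mcal{K}}H(P^{\bs{\mu}}|R^{\bs{\mu}})\le C_0$. We get it by testing with the admissible competitor $Q=(\mu_{\mathrm{init}}\otimes\mu_{\mathrm{fin}})\,R^{\bs{\mu}}(\cdot|Z_0,Z_T)$, whose entropy equals $H(\mu_{\mathrm{init}}\otimes\mu_{\mathrm{fin}}|R^{\bs{\mu}}_{0T})$. We bound this quantity uniformly using Gaussian lower bounds on the kinetic transition density. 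These lower bounds persist under a bounded drift perturbation, via Girsanov and Jensen.

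The second step is that $\Phi$ maps $\mcal{K}$ into itself. The entropy bound gives a control $\alpha$ with $\E\int_0^T|\alpha_t|^2\,dt\le 2C_0$. From the SDE, $\E\sup_t|V_t|^2$ and $\E\sup_t|X_t|^2$ are bounded in terms of $\mu_{\mathrm{init}}$, $\|f\|_\infty$ and $C_0$. We also get $\E|V_t-V_s|\lesssim |t-s|^{1/2}$ and $\E|X_t-X_s|\lesssim|t-s|$. This yields both the moment bound and the time-H\"older modulus. We choose $M$ and $C$ from these constants; they do not depend on $\bs{\mu}$, so $\Phi(\mcal{K})\subset\mcal{K}$.

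The third step, which we expect to be the main obstacle, is continuity of $\Phi$. Let $\bs{\mu}^n\to\bs{\mu}$ in $\mcal{K}$. The laws $P^{\bs{\mu}^n}$ are tight on $\Omega$, by the uniform entropy bound and the tightness of $\{R^{\bs{\mu}^n}\}$, so we extract a limit $P$. Lower semicontinuity of $H$ will come from writing $H(Q|R^{\bs{\mu}^n})$ through its Donsker--Varadhan dual and using $R^{\bs{\mu}^n}\to R^{\bs{\mu}}$ in total variation. That convergence follows from Pinsker's inequality and the Girsanov formula $H(R^{\bs{\mu}^n}|R^{\bs{\mu}})=\tfrac12\E\int|f(\cdot,\mu^n_t)-f(\cdot,\mu_t)|^2dt\to0$. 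Together these give $H(P|R^{\bs{\mu}})\le\liminf_n H(P^{\bs{\mu}^n}|R^{\bs{\mu}^n})$, and the marginal constraints pass to the limit. For optimality of $P$ we need a recovery sequence. We take $Q^n:=\tfrac{dR^{\bs{\mu}^n}}{dR^{\bs{\mu}}}$-reweightings of $P^{\bs{\mu}}$, modified to preserve both marginals. The cleanest way is to use the same endpoint coupling $\pi:=P^{\bs{\mu}}_{0T}$ and set $Q^n=\pi\, R^{\bs{\mu}^n}(\cdot|Z_0,Z_T)$, so that $H(Q^n|R^{\bs{\mu}^n})=H(\pi|R^{\bs{\mu}^n}_{0T})$. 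The real difficulty is proving $H(\pi|R^{\bs{\mu}^n}_{0T})\to H(\pi|R^{\bs{\mu}}_{0T})$. This requires convergence of the endpoint transition densities $\log p^{\bs{\mu}^n}(z_0,z_T)\to\log p^{\bs{\mu}}(z_0,z_T)$, locally uniformly with a uniform quadratic bound, so that dominated convergence applies under $\pi$, which has finite second moments. We would derive this from two-sided Gaussian bounds on the kinetic density under bounded drift, which are uniform in the flow, plus pointwise convergence via the Girsanov representation of the bridge. Once it is in place, $P=P^{\bs{\mu}}$ by uniqueness, the whole sequence converges, and so do the marginal flows, uniformly in $t$ thanks to the equicontinuity. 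Schauder's theorem then yields the MFE $\hat{\bs{\mu}}$.
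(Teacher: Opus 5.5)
Your architecture matches the paper's. You apply Schauder to the marginal-flow best-response map. You get a uniform bound on $H(P^{\bs{\mu}}\,|\,R^{\bs{\mu}})$ by testing with $\mu_{\mathrm{init}}\otimes\mu_{\mathrm{fin}}$ glued to the reference bridges, using a Gaussian lower bound on the kernel. Continuity comes from tightness, joint lower semicontinuity of entropy and uniqueness. The ingredients differ in a few places, all legitimately:
\begin{itemize}
\item You build the H\"older modulus into the convex set, which makes it compact. The moment constraint should be on $M_2$, as your Step~2 provides, so that the set is compact in $\mcal{P}_1$. The paper instead uses a closed convex set $E$ with only a second-moment bound and proves that $\Psi(E)$ is relatively compact by Arzel\`a--Ascoli; the estimates are the same.
\item Your Girsanov--Jensen comparison with the free kinetic Gaussian kernel is more elementary than the Delarue--Menozzi lower bound the paper cites. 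As you set it up, however, it uses $\|f\|_\infty<\infty$. The paper's existence theorem needs only Lipschitz continuity and linear growth; boundedness of $f$ enters only for the rate.
\item Your moment bounds, via the control representation and the SDE, need only $M_2(\mu_{\mathrm{init}})<\infty$. The paper goes through Donsker--Varadhan and exponential moments of $R^{\bs{\mu}}$, which is why it assumes $\int e^{\theta|z|^2}\,d\mu_{\mathrm{init}}<\infty$.
\end{itemize}

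The real divergence is the recovery step. You leave it as a sketch (uniform two-sided kernel bounds plus convergence of $\log p^{\bs{\mu}^n}_T$), and it is not needed. Since the constraints do not depend on the flow, $P^{\bs{\mu}}$ is itself admissible for $\mathrm{SB}(\bs{\mu}^n)$. Let $\alpha$ be the control of $P^{\bs{\mu}}$, set $\Delta_n(t,z):=f(t,z,\mu_t)-f(t,z,\mu^n_t)$, and note $|\Delta_n|\le\varepsilon_n:=C_f\sup_t\mcal{W}_1(\mu^n_t,\mu_t)$. Girsanov then gives
\[
H(P^{\bs{\mu}}\,|\,R^{\bs{\mu}^n}) = H(P^{\bs{\mu}}\,|\,R^{\bs{\mu}}) + \E^{P^{\bs{\mu}}}\Bigl[\int_0^T \Delta_n(t,Z_t)\cdot\alpha_t\,dt + \tfrac12\int_0^T|\Delta_n(t,Z_t)|^2\,dt\Bigr].
\]
The correction is at most $\varepsilon_n\sqrt{T}\,\bigl(\E^{P^{\bs{\mu}}}\int_0^T|\alpha_t|^2\,dt\bigr)^{1/2}+\tfrac{T}{2}\varepsilon_n^2$, which tends to $0$. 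This is exactly the paper's argument.

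The same estimate also closes your own bridge construction. By data processing, $H(\pi\,|\,R^{\bs{\mu}^n}_{0,T})\le H(P^{\bs{\mu}}\,|\,R^{\bs{\mu}^n})$, and you only need $\limsup_n H(\pi\,|\,R^{\bs{\mu}^n}_{0,T})\le H(\pi\,|\,R^{\bs{\mu}}_{0,T})$. So no convergence of transition densities and no uniform upper Gaussian bound is required. With this replacement your proof is complete.
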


Although we can show the existence of the equilibrium, we cannot expect uniqueness under our setting because the monotonicity conditions used in most of the MFP literature do not hold, see a detailed discussion in Remark~\ref{remark:non-uniqueness}. Furthermore, in~Appendix~\ref{appendix:non-uniqueness}, we provide an explicit example of non-uniqueness of the MFE under the assumptions of this work. Existence of an equilibrium can be proven by applying Schauder's fixed-point theorem to the
best-response map $\bs{\mu} \mapsto (P^{\bs{\mu}}_t)_{t \in [0,T]}$, which is well-defined thanks to the well-posedness of the Schr\"odinger bridge problem~\ref{eq:SB_mu}. The key ingredient is a bound on the relative entropy $H(P^{\bs{\mu}} \,|\, R^{\bs{\mu}})$ which is uniform in $\bs{\mu}$.
This follows from a Gaussian lower bound on the transition density of $R^{\bs{\mu}}$, uniform in $\bs{\mu}$, due to \cite[Theorem~1.1]{delarue_density_estimates}; see Section~\ref{sec:sketch_existence} for details.

After the existence result, we also consider a soft-constraint problem, i.e.\ an MFG, to approximately find the MFE and study its convergence rates. What distinguishes the planning problem from a standard MFG is that the terminal cost of the
agent is often not prescribed a priori. A natural way to bring the problem back to the standard setting is to
replace the terminal constraint by a penalty: for $\lambda > 0$ and a fixed flow $\bs{\mu}$,
consider
\begin{equation}
\tag*{$\lambda$-$\mathrm{SB}(\bs{\mu})$}
    \inf_{Q \in \mcal{P}(\Omega)} \Bigl\{
    H(Q \,|\, R^{\bs{\mu}}) + \lambda\, G(Q_T) \,:\, Q_0 = \mu_{\mathrm{init}}
    \Bigr\},
\end{equation}
where $G \ge 0$ is a convex function on $\mcal{P}(\R^{2d})$ vanishing only at
$\mu_{\mathrm{fin}}$. Intuitively, this measures the deviation of the terminal law from $\mu_{\mathrm{fin}}$;
the leading example is $G(\nu) = H(\nu \,|\, \mu_{\mathrm{fin}})$. Denoting by
$P^{\lambda, \bs{\mu}}$ the unique minimizer of \ref{eq:soft_SB_mu}, we call a flow
$\bs{\mu}^\lambda$ a $\lambda$-MFE if $\mu^\lambda_t = P^{\lambda, \bs{\mu}^\lambda}_t$ for all
$t \in [0,T]$. Although the penalty depends on the law $Q_T$ rather than on the terminal state, a
$\lambda$-MFE is precisely an MFE of a standard MFG in which the agent faces the terminal cost
$\lambda \frac{\delta G}{\delta \nu}(\mu^\lambda_T, \cdot)$; see
Remark~\ref{remark:potential_argument}. For a fixed $\bs{\mu}$, \ref{eq:soft_SB_mu} is the
soft-constrained Schr\"odinger bridge problem studied recently in the context of generative
modeling \cite{garg2024soft, ma2025soft-schr} and of the mean-field Schr\"odinger bridge (MFSB)
\cite{hernandez_tangpi_2025}. Penalized formulations of MFP go back to
\cite{Achdou2012planning, poretta2014planning}.

Writing $n$-MFE for a $\lambda$-MFE with $\lambda = n \in \N$, we have, as expected, the following convergence.
\begin{thmintro}[Theorem~\ref{thm:qualitative}]
\label{thm:informal_qualitative}
Under the assumptions of Theorem~\ref{thm:informal_existence} and suitable conditions on $G$, a
$\lambda$-MFE exists for every $\lambda > 0$, and every sequence of $n$-MFEs
$\{\bs{\mu}^n\}_{n \in \N}$ has a subsequence converging to an
MFE $\bs{\mu}^\infty$ in the space of continuous flows.
\end{thmintro}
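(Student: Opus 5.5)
Two things have to be shown: that a $\lambda$-MFE exists, and that sequences of $n$-MFEs are relatively compact with every limit point an MFE.

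\emph{Existence of a $\lambda$-MFE.} I will rerun the Schauder argument behind Theorem~\ref{thm:informal_existence}, now for the map $\bs{\mu}\mapsto (P^{\lambda,\bs{\mu}}_t)_{t\in[0,T]}$. For fixed $\bs{\mu}$, \ref{eq:soft_SB_mu} has a unique minimizer:
\begin{itemize}[label={}, leftmargin=0pt]
\item the functional is strictly convex in $Q$ (relative entropy is strictly convex and $G$ is convex);
\item it is lower semicontinuous, assuming $G$ is lsc for weak convergence;
\item its sublevel sets are tight, because relative entropy has compact sublevel sets.
\end{itemize}
The key a priori bound is obtained by testing with $Q=P^{\bs{\mu}}$. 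Since $G\ge0$ and $G(\mu_{\mathrm{fin}})=0$,
\[
H(P^{\lambda,\bs{\mu}}\,|\,R^{\bs{\mu}})\le H(P^{\bs{\mu}}\,|\,R^{\bs{\mu}})\le C,
\]
where $C$ is the uniform-in-$\bs{\mu}$ bound already used for Theorem~\ref{thm:informal_existence}, coming from the Gaussian lower bound of \cite{delarue_density_estimates}. This bound is uniform in both $\bs{\mu}$ and $\lambda$.

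It then gives equicontinuity and moment bounds for the flows, so the image lies in a convex compact subset of continuous flows. Equicontinuity follows from the entropy bound plus moment estimates on $R^{\bs{\mu}}$ (kinetic SDE with bounded/Lipschitz $f$), via Donsker--Varadhan. Continuity of the best-response map comes from a $\Gamma$-convergence argument: if $\bs{\mu}^k\to\bs{\mu}$, then $R^{\bs{\mu}^k}\to R^{\bs{\mu}}$, and in a form strong enough for the entropies I will use the Girsanov representation with respect to the common law of the $f\equiv0$ process. Uniqueness of the limit minimizer then gives convergence of the whole sequence. Schauder yields a $\lambda$-MFE.

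\emph{Compactness of $n$-MFEs.} Let $\bs{\mu}^n$ be an $n$-MFE and $P^n:=P^{n,\bs{\mu}^n}$. The same comparison with $P^{\bs{\mu}^n}$ gives
\[
H(P^n\,|\,R^{\bs{\mu}^n})+n\,G(P^n_T)\le C,
\]
hence $G(\mu^n_T)\le C/n\to0$. The uniform entropy bound makes $\{P^n\}$ tight on $\Omega$ and the flows $\{\bs{\mu}^n\}$ equicontinuous with bounded moments. Arzel\`a--Ascoli then gives a subsequence with $\bs{\mu}^n\to\bs{\mu}^\infty$ in continuous flows and $P^n\to P^\infty$ weakly, with $P^\infty_t=\mu^\infty_t$. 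Lower semicontinuity of $G$ gives $G(\mu^\infty_T)=0$, so $\mu^\infty_T=\mu_{\mathrm{fin}}$. Trivially $P^\infty_0=\mu_{\mathrm{init}}$. Thus $P^\infty$ is admissible for $\mathrm{SB}(\bs{\mu}^\infty)$.

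\emph{Identifying the limit.} It remains to show $P^\infty=P^{\bs{\mu}^\infty}$, and this is the main obstacle. The difficulty is an upper bound for the limit: I need a recovery sequence $Q^n$, admissible for $n$-$\mathrm{SB}(\bs{\mu}^n)$, with
\[
H(Q^n\,|\,R^{\bs{\mu}^n})+nG(Q^n_T)\to H(P^{\bs{\mu}^\infty}\,|\,R^{\bs{\mu}^\infty}).
\]
The penalty $nG$ blows up unless $Q^n_T$ is exactly $\mu_{\mathrm{fin}}$, so I take $Q^n:=P^{\bs{\mu}^n}$, which hits $\mu_{\mathrm{fin}}$ and kills the penalty term. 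What is then needed is continuity of the unconstrained bridge value,
\[
H(P^{\bs{\mu}^n}\,|\,R^{\bs{\mu}^n})\to H(P^{\bs{\mu}^\infty}\,|\,R^{\bs{\mu}^\infty}),
\]
along converging flows. I expect this to be essentially the continuity step already established for the best-response map in Theorem~\ref{thm:informal_existence}, which I will take as available. Concretely, the bridge value is the entropic OT cost between $\mu_{\mathrm{init}}$ and $\mu_{\mathrm{fin}}$ with cost $-\log$ of the transition density of $R^{\bs{\mu}}$. It is continuous if these densities converge locally uniformly with uniform Gaussian bounds, which the Delarue--Menozzi estimates provide.

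\emph{Conclusion.} Lower semicontinuity of $(Q,R)\mapsto H(Q\,|\,R)$ under joint weak convergence gives
\[
H(P^\infty\,|\,R^{\bs{\mu}^\infty})\le\liminf_n H(P^n\,|\,R^{\bs{\mu}^n})\le \lim_n H(P^{\bs{\mu}^n}\,|\,R^{\bs{\mu}^n}) = H(P^{\bs{\mu}^\infty}\,|\,R^{\bs{\mu}^\infty}).
\]
Hence $P^\infty$ is a minimizer of $\mathrm{SB}(\bs{\mu}^\infty)$, and by uniqueness $P^\infty=P^{\bs{\mu}^\infty}$. Since $P^\infty_t=\mu^\infty_t$, the limit $\bs{\mu}^\infty$ is an MFE.
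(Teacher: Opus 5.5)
Your proof is correct and follows the paper's argument: Schauder for $\Psi^\lambda$ using the uniform bound $H(P^{\lambda,\bs{\mu}}\,|\,R^{\bs{\mu}})\le H(P^{\bs{\mu}}\,|\,R^{\bs{\mu}})\le M$ (from testing with $P^{\bs{\mu}}$), then compactness of $\{P^{n,\bs{\mu}^n}\}$, the bound $G(P^{n,\bs{\mu}^n}_T)\le M/n$ with lower semicontinuity of $G$ to recover the terminal constraint, and joint lower semicontinuity of relative entropy plus uniqueness for SB$(\bs{\mu}^\infty)$ to identify the limit. The only difference is your competitor: the paper compares $P^{n,\bs{\mu}^n}$ directly with the fixed $P^{\bs{\mu}^\infty}$, which is admissible for every $n$-SB$(\bs{\mu}^n)$ with zero penalty, so it only needs $H(P^{\bs{\mu}^\infty}\,|\,R^{\bs{\mu}^n})\to H(P^{\bs{\mu}^\infty}\,|\,R^{\bs{\mu}^\infty})$ from the Girsanov step of the existence proof; that same comparison, $H(P^{\bs{\mu}^n}\,|\,R^{\bs{\mu}^n})\le H(P^{\bs{\mu}^\infty}\,|\,R^{\bs{\mu}^n})$, already gives the $\limsup$ of SB values your route needs, so your ``main obstacle'' is immediate and the fallback via convergence of transition densities is unnecessary (the Delarue--Menozzi Gaussian bounds alone would not give that convergence anyway).
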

Thus equilibria of the hard-constrained problem are obtained as limits of equilibria of standard
MFGs. The existence part follows the fixed-point argument of Theorem~\ref{thm:informal_existence}, and the convergence part is proven using the compactness of $\left\{ P^{n, \bs{\mu}^n} \right\}_{n \in \N}$, which follows from the uniform bound on the second moments of the path measures; see Section~\ref{sec:sketch_qualitative}.

Since we do not know about uniqueness of equilibria in general, the convergence in
Theorem~\ref{thm:informal_qualitative} holds only along subsequences, and it is not clear whether $\bs{\mu}^n$ converges to a unique limit when $n\rightarrow\infty$.

However, we can still quantify how far an $n$-MFE is from
being an MFE. Writing $P^{\bs{\mu}^n}$ for the solution of the hard-constrained problem
$\mathrm{SB}(\bs{\mu}^n)$, i.e.\ for the best response to the flow $\bs{\mu}^n$,
the quantity $\sup_{t \in [0,T]} \mcal{W}_1(\mu^n_t, P^{\bs{\mu}^n}_t)$ vanishes exactly when
$\bs{\mu}^n$ is an MFE and measures the violation of the equilibrium condition otherwise; we call
it the consistency error of $\bs{\mu}^n$. Quantities of this type are used in computational methods for MFGs \cite{guo2024mfomo, yu2024conv_mfp}. Our third result provides a rate for this error.
\begin{thmintro}[Theorem~\ref{thm:consistency_error} and Corollary~\ref{cor:consistency}]
\label{thm:informal_consistency}
In addition to the assumptions of Theorem~\ref{thm:informal_existence}, let
$G(\nu) = H(\nu \,|\, \mu_{\mathrm{fin}})$, let $\mu_{\mathrm{fin}}$ satisfy suitable regularity conditions, and let $f$ be smooth and bounded together with all its space derivatives with the bounds uniform in $\bs{\mu}$. Then there exists $C$, independent of
$n$, such that for every $n$-MFE $\bs{\mu}^n$,
\[
  \sup_{t \in [0,T]} \mcal{W}_1(\mu^n_t, P^{\bs{\mu}^n}_t)
  \le \mcal{W}_1\bigl(P^{n, \bs{\mu}^n}, P^{\bs{\mu}^n}\bigr) \le C n^{-1/2}.
\]
\end{thmintro}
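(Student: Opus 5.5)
The first inequality is immediate. Since the projection $\omega \mapsto \omega(t)$ is $1$-Lipschitz from $\Omega$ (sup norm) to $\R^{2d}$, any coupling of $P^{n,\bs{\mu}^n}$ and $P^{\bs{\mu}^n}$ induces a coupling of the time-$t$ marginals with no larger cost. Together with $\mu^n_t = P^{n,\bs{\mu}^n}_t$ (the $n$-MFE property), this gives $\mcal{W}_1(\mu^n_t,P^{\bs{\mu}^n}_t)\le \mcal{W}_1(P^{n,\bs{\mu}^n},P^{\bs{\mu}^n})$ for every $t$. The work is therefore entirely in the path-space bound, which concerns a \emph{fixed} flow $\bs{\mu}=\bs{\mu}^n$. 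The constant $C$ must be uniform in $\bs{\mu}$, which is why all estimates on $f$ are assumed uniform in $\bs{\mu}$.

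For fixed $\bs{\mu}$, both $P^{n,\bs{\mu}}$ and $P^{\bs{\mu}}$ are bridges of $R^{\bs{\mu}}$. Concretely, $P = \int R^{\bs{\mu}}_{z_0,z_T}\,\pi(dz_0,dz_T)$, where $\pi$ solves a static entropic problem with respect to the two-time marginal $R^{\bs{\mu}}_{0T}$. For the hard problem, $\pi$ has marginals $(\mu_{\mathrm{init}},\mu_{\mathrm{fin}})$. For the soft problem with $G=H(\cdot|\mu_{\mathrm{fin}})$, a standard disintegration shows that $\pi^n$ solves the hard entropic problem with marginals $(\mu_{\mathrm{init}},\nu^n)$, where $\nu^n=P^{n,\bs{\mu}}_T$. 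Gluing the two bridge mixtures along a coupling of $\pi^n$ and $\pi$ reduces the path-space distance to two pieces: a static bound $\mcal{W}_1(\pi^n,\pi)$, and a Lipschitz dependence of the bridge $R^{\bs{\mu}}_{z_0,z_T}$ on its endpoints (in $\mcal{W}_1$ on $\Omega$). I would prove the latter by writing the kinetic bridge as the reference SDE with the extra drift $\nabla_v\log p(t,z;T,z_T)$. This is exactly where the pointwise linear growth bound on the score of the degenerate transition density advertised in the abstract enters. It gives control of the bridge drift, and hence endpoint-Lipschitz stability, with constants depending only on the uniform bounds on $f$.

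The static term should be handled by quantitative stability of entropic optimal transport with respect to marginals. For cost $c=-\log p^{\bs{\mu}}(0,\cdot;T,\cdot)$, this gives $\mcal{W}_1(\pi^n,\pi)\lesssim \mcal{W}_1(\nu^n,\mu_{\mathrm{fin}})$ (or a power thereof). The growth/regularity of $c$ is furnished by Gaussian-type two-sided density bounds (Delarue–Menozzi) and the score bound. It remains to bound $\nu^n$ against $\mu_{\mathrm{fin}}$. Comparing with the admissible competitor $P^{\bs{\mu}}$ in \ref{eq:soft_SB_mu} gives
\[
H(P^{n,\bs{\mu}}|R^{\bs{\mu}})+n H(\nu^n|\mu_{\mathrm{fin}})\le H(P^{\bs{\mu}}|R^{\bs{\mu}})\le C,
\]
with $C$ uniform in $\bs{\mu}$ by the existence part. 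This yields $H(\nu^n|\mu_{\mathrm{fin}})\le C/n$. A transport inequality $T_1$ for $\mu_{\mathrm{fin}}$ (part of the assumed regularity) then gives $\mcal{W}_1(\nu^n,\mu_{\mathrm{fin}})\le C n^{-1/2}$. If the stability is linear in $\mcal{W}_1$ of marginals, the rate $n^{-1/2}$ follows.

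The main obstacle is the entropic-OT stability with a \emph{non-quadratic, kinetic} cost $c$ together with endpoint-Lipschitz continuity of the degenerate bridges, both uniformly in $\bs{\mu}$. A weaker stability estimate (e.g.\ via square roots of entropy) would degrade the rate. I would first try to prove an entropy-based stability instead: bound $H(\pi^n|\pi)$ or a symmetric version via the Schrödinger potentials, using the linear-growth score bound to make $\log\psi$ Lipschitz. Then $H(P^{n,\bs{\mu}}|P^{\bs{\mu}})$ reduces to static quantities through the shared bridges, and a weighted Pinsker/$T_1$ step on path space, using uniform second-moment bounds, would give $\mcal{W}_1\le C\sqrt{H}\le Cn^{-1/2}$.
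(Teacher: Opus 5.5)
Your proof of the first inequality is correct and matches the paper. So is your observation that $\pi^n:=(Z_0,Z_T)_\#P^{n,\bs{\mu}}$ solves the hard static problem with marginals $(\mu_{\mathrm{init}},\nu^n)$ and has the same bridges. The gap is that neither step of your main route for the path-space bound is available.

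(a) You need a \emph{linear} plan stability $\mcal{W}_1(\pi^n,\pi)\lesssim\mcal{W}_1(\nu^n,\mu_{\mathrm{fin}})$ for the unbounded, non-quadratic cost $-\log p^{\bs{\mu}}_T$ with non-compact marginals. The plan-stability result the score bound makes applicable, Eckstein--Nutz Theorem~3.11, is only H\"older-type; the paper notes it gives merely $\mcal{W}_2(\pi^n,\pi)\le Cn^{-1/4}$. Combined with $H(\nu^n|\mu_{\mathrm{fin}})\le C/n$, you land strictly below $n^{-1/2}$.

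(b) Endpoint-Lipschitz continuity of the kinetic bridges does not follow from $|\nabla\log p^{\bs{\mu}}_T|\le C(1+|z|+|w|)$. That bound controls the size of the bridge drift $\nabla_v\log p(t,\cdot\,;T,w)$, with a constant blowing up as $t\to T$. It says nothing about the drift's Lipschitz dependence on $w$ or on the state. That would need uniform bounds on $\nabla_v^2\log p$ and $\nabla_w\nabla_v\log p$ with the correct singularity at $t=T$, which are exactly the second-order estimates the Sheu-type argument cannot deliver.

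Your fallback has the same kind of hole. Write $d\pi/dR_{0,T}=\varphi\otimes\psi$. Then $\nabla\log\psi(w)=\nabla\log p_{\mathrm{fin}}(w)+\E_\pi[\nabla_w c(Z_0,w)\,|\,Z_T=w]$. A linear-growth bound on $\nabla\log\psi$ therefore requires $\E_\pi[|Z_0|\,|\,Z_T=w]\le C(1+|w|)$ uniformly in $\bs{\mu}$, which the score bound alone does not give. You also do not say how $H(\nu^n|\mu_{\mathrm{fin}})\le C/n$ becomes $H(\pi^n|\pi)\le C/n$. Granting the potential bound, this route would close: the Bregman identity $H(\pi^n|\pi)=H(\pi^n|R_{0,T})-H(\pi|R_{0,T})-\int\log\psi\,d(\nu^n-\mu_{\mathrm{fin}})$, soft optimality and Talagrand give $H(\pi^n|\pi)\le C\sqrt{h_n}-nh_n\le C/n$. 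As written, though, it is only conditionally viable.

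The missing idea is to avoid plan stability, bridge regularity and potentials altogether. The paper measures the error with the Jensen--Shannon divergence. Because $\js$ is an $f$-divergence and the bridges coincide, $\js(P^{n,\bs{\mu}},P^{\bs{\mu}})=\js(\pi^n,\pi)$. On path space, $\mcal{W}_1(Q,\tilde Q)\le 2\sqrt{(M_2^2(Q)+M_2^2(\tilde Q))\,\js(Q,\tilde Q)}$, and the second moments are uniformly bounded.

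To bound $\js(\pi^n,\pi)$, the paper uses the parallelogram identity $\tfrac12H(\alpha|\sigma)+\tfrac12H(\beta|\sigma)=H(\tfrac{\alpha+\beta}{2}|\sigma)+\js(\alpha,\beta)$. It applies it to the plans with $\sigma=R_{0,T}$ and to the terminal marginals with $\sigma=\leb$, together with $V=S+H(\cdot|\leb)$. This shows that $\js(\pi^n,\pi)-\js(\nu^n,\mu_{\mathrm{fin}})$ is at most the Jensen gap of the EOT \emph{value} $S$ at the midpoint. That gap is bounded by $C\mcal{W}_2(\nu^n,\mu_{\mathrm{fin}})$ via value stability (Eckstein--Nutz, Theorem~3.7), which needs only the linear-growth score bound.

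With only $h_n\le C/n$ this route gives $n^{-1/4}$. The rate $n^{-1/2}$ comes from the bootstrap $(1+n)h_n\le S(\mu_{\mathrm{fin}})-S(\nu^n)+\int\log p_{\mathrm{fin}}\,d(\mu_{\mathrm{fin}}-\nu^n)\le C\sqrt{h_n}$. It uses value stability, the linear growth of $\nabla\log p_{\mathrm{fin}}$, and Talagrand's $T_2$ inequality from the log-Sobolev assumption; your $T_1$ is not enough because the stability is in $\mcal{W}_2$. This yields $h_n\le Cn^{-2}$, hence $\js(\pi^n,\pi)\le C/n$ and $\mcal{W}_1\le Cn^{-1/2}$.
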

Whereas \cite{Achdou2012planning,poretta2014planning,mimikos2024regularity_mfp} estimate the terminal error $\|\mu^n_T - \mu_{\mathrm{fin}}\|$, our result extends the estimate to the whole flow of marginals on $[0,T]$.
Whether the rate $n^{-1/2}$ is sharp is left open; an explicit example in
Remark~\ref{remark:lower_bound} shows that no rate faster than $n^{-1}$ can hold in general.
The proof is based on the classical equivalence between \ref{eq:SB_mu} and
an entropic optimal transport (EOT) problem with cost $-\log p^{\bs{\mu}}_T$, where
$p^{\bs{\mu}}_T$ is the transition density of the reference dynamics. Under this equivalence,
the joint laws $P^{n, \bs{\mu}^n}_{0,T}$ and $P^{\bs{\mu}^n}_{0,T}$ at times $0$ and $T$ solve the same EOT problem with different terminal
marginals, $\mu^n_T$ and $\mu_{\mathrm{fin}}$, so that the problem reduces to the stability of
EOT with respect to its terminal marginal. The quantitative stability of the EOT value due to Eckstein and Nutz
\cite{eckstein2022quant_stability_EOT} controls the Jensen--Shannon divergence of the two optimal
plans, which we then lift to the path measures; see
Section~\ref{sec:sketch_consistency_error}.

Applying the stability result requires a bound on the gradient of the EOT cost
$c = -\log p^{\bs{\mu}}_T$, that is, on the score of the reference dynamics, uniformly in $\bs{\mu}$.
This is our last main result.
\begin{thmintro}[Theorem~\ref{thm:linear_growth}]
\label{thm:informal_linear_growth}
Let $f$ satisfy the assumptions in Theorem~\ref{thm:informal_consistency}. Then for every flow $\bs{\mu}$, the transition density $(z,w) \mapsto p^{\bs{\mu}}_T(z,w)$
of the reference dynamics $R^{\bs{\mu}}$ is infinitely differentiable, and there exists $C$,
independent of $\bs{\mu}$, such that
\begin{equation*}
  \bigl| \nabla_{z,w} \log p^{\bs{\mu}}_T(z,w) \bigr| \le C \bigl( 1 + |z| + |w| \bigr).
\end{equation*}
\end{thmintro}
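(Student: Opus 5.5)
Since $\bs{\mu}$ is fixed, the reference dynamics are a time-inhomogeneous kinetic (Langevin-type) SDE $dX_t = V_t\,dt$, $dV_t = b(t,X_t,V_t)\,dt + dW_t$ with $b(t,x,v) := f(t,x,v,\mu_t)$. By assumption $b$ is smooth in $(x,v)$ and bounded together with all its space derivatives, uniformly in $\bs{\mu}$. So the statement is about hypoelliptic (Kolmogorov) operators with bounded smooth drift perturbations. The plan is to compare $p^{\bs{\mu}}_T$ with the explicit Gaussian kernel of the free kinetic process ($b\equiv 0$). We write $z=(x,v)$ for the initial point and $w=(y,u)$ for the terminal point. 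The Gaussian kernel is $\bar p_T(z,w) \propto \exp\bigl(-\tfrac12 |\mathbb{M}_T^{-1/2}(w - e^{TA}z)|^2\bigr)$ with the Kolmogorov covariance $\mathbb{M}_T$. Its log-gradient is exactly affine in $(z,w)$, which is the source of the linear growth.

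\emph{Step 1 (smoothness and a two-sided Gaussian bound).} Hörmander's condition holds uniformly: $\partial_v$ together with the bracket $[\partial_v, v\cdot\nabla_x + b\cdot\nabla_v]$ spans. So $p^{\bs{\mu}}_T$ is $C^\infty$ in $(z,w)$. For quantitative control I would use the Delarue--Menozzi two-sided bounds (already invoked for Theorem~\ref{thm:existence_mfe}). These give $C^{-1}\bar p_{T}^{(c^{-1})}(\theta_T(z),w) \le p^{\bs{\mu}}_T(z,w) \le C \bar p_T^{(c)}(\theta_T(z),w)$, where $\theta$ is the deterministic flow of the drift and the constants are uniform in $\bs{\mu}$.

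\emph{Step 2 (derivative bounds).} The core estimate is
\[
|\nabla_{z,w} p^{\bs{\mu}}_T(z,w)| \le C(1+|z|+|w|)\,\bar p^{(c)}_T(\theta_T(z),w).
\]
Dividing by the Gaussian lower bound is then delicate, because the constants $c$ and $c^{-1}$ differ and the ratio of the two Gaussians is not bounded. So I would not argue this way. Instead I would use a Malliavin/Bismut integration-by-parts representation. For the forward variable,
\[
\nabla_w \log p_T(z,w) = -\E\bigl[\delta(\text{Malliavin weight}) \,\big|\, Z_T = w,\ Z_0=z\bigr],
\]
and likewise $\nabla_z \log p_T(z,w) = \E[\text{Bismut weight}\mid Z_T = w]$. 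In both cases the score is a conditional expectation of a weight under the bridge. The Malliavin covariance of the kinetic process is invertible with deterministic bounds, since $b$ and its derivatives are bounded and $T$ is fixed. The weight is an affine functional of $W$ plus bounded terms, so one has to bound $\E[|W\text{-functional}| \mid Z_T=w]$ under the bridge.

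\emph{Step 3 (bridge moments).} Write $Z_T = e^{TA}z + \int_0^T e^{(T-s)A}B\,dW_s + O(\|b\|_\infty)$. Conditioning on $Z_T=w$ pins the Gaussian part to $w - e^{TA}z + O(1)$. The Brownian functional under the bridge then has conditional mean of size $O(1+|z|+|w|)$ and bounded conditional fluctuations. To make this rigorous, I would represent the bridge law by a change of measure from the free Gaussian bridge (Girsanov with bounded drift $b$) and control the Radon--Nikodym density in $L^p$ uniformly along the bridge. This step is the main obstacle: exponential moments of $\int b\,dW$ under a pinned measure must be bounded uniformly in the endpoint. I would first try the $h$-transform, $\tilde b = b + \nabla_v \log p_{T-t}(\cdot,w)$, combined with a Gronwall/bootstrap argument on the score itself. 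Alternatively I would obtain the estimate via a parametrix expansion around the frozen Gaussian kernel, using that the error terms involve derivatives of $b$, which are bounded uniformly in $\bs{\mu}$. In either route, all constants depend only on $T$, $d$ and the derivative bounds on $f$, hence are independent of $\bs{\mu}$. This yields $|\nabla_{z,w}\log p^{\bs{\mu}}_T(z,w)| \le C(1+|z|+|w|)$.
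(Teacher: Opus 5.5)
The gap is Step~3, which carries the whole content of the theorem. Steps~1--2 only rewrite the score as a conditional expectation, under the bridge, of an integration-by-parts weight; Step~3 then lists strategies without carrying any of them out.

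Concretely, the linear part of the weight is harmless after conditioning. If you integrate by parts on the driftless kinetic process and restore $b$ by Girsanov, then $W(\gamma^h)=\langle \Sigma_T^{-1}h,\, Z_T-\Theta_T(z)\rangle$ is pinned by $Z_T=w$. But the weight also contains $\int_0^T (\nabla_y b)(s,Z_s)\,\ell^h_s\cdot dW_s$, with a bounded deterministic $\ell^h$. Up to bounded terms, its conditional mean is $-\mathbb{E}^Q\int_0^T (\nabla_y b)(s,Z_s)\,\ell^h_s\cdot \nabla_v J(s,Z_s)\,ds$, where $J=-\log p(\cdot,T;\cdot,w)$. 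So it is governed by the bridge drift $-\nabla_v J$, i.e.\ by the score you are trying to bound. None of your three routes breaks this circle.

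\begin{itemize}
\item Uniform-in-endpoint $L^p$ bounds on the density of the true bridge with respect to the free Gaussian bridge cannot be expected. Under the free bridge, $\int b\cdot dW$ contains $\int b(s,Z_s)\cdot\bar u_s\,ds$, and the free-bridge drift $\bar u$ has size $|w-\Theta_T(z)|$. Take $d=1$, the drifts $b_a(t,x,v)=\sin(x-at^2)$ (all with $\Lambda_k=1$), $z=0$ and $w=(aT^2,2aT)$. The free-bridge mean is $(at^2,2at)$, and up to a deterministic normalisation and terms whose law does not depend on $a$, $\log(dQ/d\bar Q)=2a\int_0^T\sin(\xi^x_s)\,ds$, with $\xi$ the centred free bridge.
\item A parametrix expansion gives bounds of the form $|\nabla p|\le C\,\bar p^{(c)}$. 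That is exactly the mismatch of Gaussian constants you correctly rejected in Step~2.
\item A Gronwall/bootstrap on the score through the $h$-transform is circular unless you first have an a priori bound on the bridge control.
\end{itemize}

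That a priori bound is the missing idea, and it follows from the two-sided Gaussian estimates alone. Set $D_t=p(t,T;Z_t,w)/p(0,T;z,w)$, so the bridge is $dQ=D_t\,dR_z$ on $\mathcal{F}_t$. Computing $\mathbb{E}^Q[\log D_t]$ gives the energy identity $\tfrac12\mathbb{E}^Q\int_0^t|\nabla_vJ(s,Z_s)|^2\,ds=J(0,z)-\mathbb{E}^Q[J(t,Z_t)]\le C_{T-t}(1+|z|^2+|w|^2)$ for $t<T$ (Lemma~\ref{lemma:control_nabla_J_by_J}).

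The paper turns this $L^2$ bound into a pointwise one (Lemma~\ref{lemma:bounds_by_E(nabla_vJ)}):
\begin{itemize}
\item it differentiates the HJB equation for $J$ and writes the SDEs of $\partial_{x_\alpha}J(t,Z_t)$ and $\partial_{v_\beta}J(t,Z_t)$ along the bridge;
\item it uses the $\partial_{x_\beta}J$ term in the drift of $\partial_{v_\beta}J$ to eliminate $\nabla_xJ$, which the energy does not control;
\item it time-averages over a window of length $(T-s_0)/2$, staying away from $T$, where $\nabla_vJ$ is no longer square-integrable along the bridge;
\item it handles $\nabla_w$ with the time-reversed bridge.
\end{itemize}
Even on your Malliavin route you would need this energy bound, plus a separate treatment of the part of the weight near $t=T$.

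Finally, Step~1 is not justified as written. The drift $b(t,z)=f(t,z,\mu_t)$ is only continuous in $t$, so the classical H\"ormander theorem, which needs coefficients smooth in $(t,z)$, does not apply. The versions allowing rough time dependence need the restricted H\"ormander condition, without brackets involving the drift, and that fails here (your own bracket uses the drift). Smoothness has to be proved directly. The paper does this by Malliavin integration by parts for the driftless kinetic process, whose Malliavin covariance $\Sigma_t$ is deterministic and invertible, followed by Girsanov.
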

To our knowledge, this is the first pointwise linear bound on the score of a kinetic diffusion with a nonlinear drift and the first one on the derivative in the terminal variable $w$ except for the linear case where the kernel is Gaussian. See Section~\ref{sec:lit_review} for more details.
To prove it, we follow Sheu's approach~\cite[Theorem~B]{sheu1991_someestimates}, which deals with non-degenerate dynamics with time-independent coefficients. The argument is based on Fleming's logarithmic transformation \cite{fleming1985stochastic}, which regards $J(t,z) := -\log p^{\bs{\mu}}(t,T;z, w)$ as the value function of a stochastic control problem, for a fixed $w$.

This approach requires the regularity of $(s,t,z,w)\mapsto p^{\bs{\mu}}(s,t;z,w)$, namely $C^3$ in $(z,w)$ and $C^1$ in $(s,t)$, together with bounds on $\nabla_{z,w}p^{\bs\mu}$ that are uniform in $\bs{\mu}$. This cannot be taken from H\"ormander's theorem~\cite{hormander}: it requires coefficients that are smooth in $(t,z)$, whereas for a fixed $\bs{\mu}$ the drift $(t,z)\mapsto f(t,z,\mu_t)$ is merely continuous in $t$. The versions of the theorem that allow coefficients which are only measurable in $t$~\cite{Chaleyat1984hypoelipticity} or H\"older continuous in $t$~\cite{cattiaux2002hypoeliptic} are not applicable either, as they require the restricted H\"ormander condition, in which Lie brackets with the drift are not allowed. This condition fails for kinetic dynamics, whose noise acts on the velocity only. Since we could not find a suitable result for our case, we give a self-contained proof in Appendix~\ref{appendix:smoothness} based on Malliavin calculus. We apply Malliavin integration by parts to the driftless kinetic process, whose Malliavin covariance matrix is deterministic and explicitly invertible, and then restore the drift by Girsanov's theorem. Here, the boundedness of $f$ and the additive noise structure of the reference process play a key role.

\subsection{Related literature} \label{sec:lit_review}
\paragraph{Mean-field planning problem.}
Introduced independently by Lasry and Lions \cite{lasry2007mean} and by Huang et al.\ \cite{huang2006large}, MFGs have been recognized as a powerful framework for modeling large systems of interacting agents. The theory analyzes the limiting behavior of a dynamic crowd as the population size $N \to \infty$. As a characterization of the MFE, a coupled system of the HJB equation and the Fokker--Planck (FP) equation was introduced in \cite{lasry2007mean}. The former characterizes the optimality of the representative agent, while the latter describes the flow of the distribution of the agent's state.

Formally, our problem corresponds to the following PDE system (see Remark~\ref{remark:cost_coupling}):
\begin{equation} \label{eq:pde}
\left\{ \begin{aligned}
&\partial_t u + v \cdot \nabla_x u + \tfrac{1}{2}\Delta_v u + H\bigl(t,x,v,\nabla_v u,\rho(t,\cdot)\bigr) = 0, \\
&\partial_t \rho + \nabla_x \cdot (v\rho) + \nabla_v \cdot \bigl( \rho\, \nabla_p H\bigl(t,x,v,\nabla_v u,\rho(t,\cdot)\bigr) \bigr) = \tfrac{1}{2}\Delta_v \rho, \\
&\rho(0,\cdot) = \mu_{\mathrm{init}}, \qquad \rho(T,\cdot) = \mu_{\mathrm{fin}},
\end{aligned} \right.
\end{equation}
where the Hamiltonian $H$ is given by:
\[
H(t,z,p,\rho) \;:=\; \inf_{\beta\in\mathbb{R}^d}\Bigl\{\, p\cdot\beta + \tfrac12|\beta - f(t,z,\rho)|^2 \Bigr\}
\;=\; p\cdot f(t,z,\rho) - \tfrac12|p|^2.
\]
Here, $\rho(t,\cdot)$ plays the role of $\mu_t$ and the optimal control is given in feedback form by $\hat{\alpha}_t = -\nabla_v u(t, X_t, V_t)$. In contrast to the system arising in the standard MFG literature, the terminal condition is imposed on the FP equation, $\rho(T,\cdot) = \mu_{\mathrm{fin}}$, rather than on the value function $u$. In this sense, our problem is a variant of the mean-field planning problem introduced by Lions~\cite{lions2009cours}, who obtained the well-posedness of the first-order case and the non-degenerate second-order case with an (essentially) quadratic Hamiltonian. Without an a priori terminal cost $g$, the problem can be regarded as an inverse problem: find a terminal cost $g$ such that the population reaches the terminal distribution $\mu_{\mathrm{fin}}$ at time $T$ as the outcome of the non-cooperative individual optimization with $u(T,\cdot) = g$.
A related inverse incentive problem is studied in~\cite{bayraktar2021terminal} using Schr\"odinger bridges to characterize equilibria of terminal ranking games and the rank-based rewards implementing prescribed attainable terminal distributions. Their setting features non-degenerate Brownian dynamics and interaction through rewards, whereas our model has kinetic dynamics and interaction through the drift.

Since its introduction by Lions~\cite{lions2009cours}, the mean-field planning problem has been studied in a variety of settings; see, for instance, \cite{poretta2014planning, Achdou2012planning, orrieri2019variational_planning, graber2019planning_masstransport, gomes2018firstorder_planning, francesco2022approx_planning, benamou2019, ren2023pathdependent_mfp, chen2018steering, mimikos2024regularity_mfp, cardaliaguet2024freeboundary}. To our knowledge, however, none of these works allows a degenerate diffusion, except for the linear-quadratic model of \cite{chen2018steering}, where the dynamics is linear, the interaction is through the mean, and the solution is explicit.

In the existing literature on MFP problems, existence of an equilibrium is obtained either by approximating the problem with penalized MFG systems (\ref{eq:soft_SB_mu} in our case), and passing to the limit \cite{poretta2014planning}, or by exploiting a variational structure \cite{orrieri2019variational_planning, graber2019planning_masstransport}. In the linear-quadratic setting, \cite{chen2018steering} constructs the solution explicitly by exploiting the well-posedness of the SB problem for linear dynamics.
Some of those works establish the uniqueness of the equilibrium with a separable Hamiltonian under the Lasry--Lions monotonicity of the cost coupling, but uniqueness may fail without such structure.

\paragraph{Variants of the Schr\"odinger bridge problem.}
The Schr\"odinger bridge problem was originally posed by Schr\"odinger
\cite{schrodinger1931umkehrung, schrodinger1932theorie} as an inverse problem to find the most
likely evolution of a cloud of independent particles between two observed configurations; see
\cite{leonard_survey_2014} for a great survey. The problem has recently attracted renewed attention because of its applicability to computational optimal transport \cite{cuturi2013sinkhorn} and generative modeling \cite{de2021diffusion}, among others.
While most of this literature deals with non-degenerate reference processes, Chiarini et
al.~\cite{chiarini_etal_kineticSB} are among the few to study the kinetic Schr\"odinger bridge
problem, in which the reference process is the underdamped Langevin dynamics. Among other things, they quantified the turnpike property of the marginals of the optimally controlled process.

For interacting particles, Backhoff et al.\ \cite{backoff_etal_menafield_schrodinger} introduced the MFSB problem. It is shown in \cite{hernandez_tangpi_2025} that a relaxed version of the MFSB arises as the $N \to \infty$ limit of constrained cooperative control problems, in which the $N$ players minimize the average cost
$\frac{1}{2N} \sum^N_{i=1}\E \left[ \int^T_0 |\alpha^i_t|^2 dt \right]$
subject to a terminal constraint. In the limit this leads to a McKean--Vlasov control problem, whereas our non-cooperative setting leads to a classical stochastic control problem coupled with a fixed-point condition. While preparing this manuscript, we became aware of the recent manuscript \cite{eldesoukey2026schr}, which studies the cooperative counterpart of our problem for the Cucker--Smale dynamics and proposes a numerical algorithm for computing its solution. In the present paper, we keep the drift $f$ general so we can cover other models too.

\paragraph{Consistency error and computational approaches to MFGs.}
The consistency error introduced in Theorem~\ref{thm:informal_consistency} quantifies the extent to which the environment $\bs{\mu}$ is reproduced by the flow of marginal laws of the dynamics controlled by a certain strategy given the environment. In the optimization formulation of \cite{guo2024mfomo}, the violation of the consistency condition enters the objective function, which characterizes the (possibly non-unique) MFE as its minimizers. The $L^2$-distance between a flow and its best response is used in \cite{yu2024conv_mfp} to analyze fictitious play \cite{cardaliguet2017fictitious}. There it is shown that, under the Lasry--Lions strong monotonicity of the cost coupling, which in particular yields uniqueness, together with the strong convexity of the running cost, this distance is comparable to the distance to the unique equilibrium and controls the exploitability \cite{perrin2020fictitious}. No such monotonicity is available in the present setting, and thus we believe the consistency error is the natural quantity for which a rate can still be established without uniqueness.

\paragraph{Estimates on the kinetic score.}
For kinetic diffusions with nonlinear drifts, various estimates are already available: gradient estimates and Harnack inequalities for the associated semigroup $(P_t)_{t \ge 0}$~\cite{wang2013derivative,baudoin2017bakry,baudoin2020gradient, zhang2010bismut, guillin2012_degenerate_FP}, two-sided Gaussian bounds on the kernel~\cite{konakov2010explicit,delarue_density_estimates,ren2025heat}, and bounds on its derivatives~\cite{kusuoka1985applicationsII,chaudru2023heat}.
However, pointwise estimates on the score $\nabla\log p$ are scarce.
Bismut-type formulas~\cite{zhang2010bismut,guillin2012_degenerate_FP,wang2013derivative}
control the score only in an averaged sense: the gradient bounds they provide
for $P_tf$ amount to bounds on $\nabla_z\log p_t(z,\cdot)$ in $L^q(p_t(z,\cdot))$,
$q<\infty$.
\cite[Theorem~4.2]{zhang2010bismut} represents $\nabla_z\log p_T(z,w)$ pointwise as a conditional expectation along the bridge, without estimating it.
To the best of our knowledge, outside the linear case~\cite{chen2018optimal,hou2026heat}, no bound of linear growth on the score of the transition kernel is available for kinetic diffusions with a nonlinear drift.

\subsection{Organization of the paper}
The rest of the paper is organized as follows.
Section~\ref{sec:setting} introduces the setting, the assumptions and some preliminary results, and states our four main results precisely. Section~\ref{sec:strategy} gives an overview and intuitive understandings of the proof of each theorem and explains where the main difficulties lie.
Section~\ref{sec:proofs} contains the detailed proofs, and Section~\ref{sec:discussion} discusses open problems and future directions. Appendix~\ref{appendix:deferred_proofs} collects the proofs of the lemmas stated without proof in the main text, and Appendix~\ref{appendix:smoothness} proves the smoothness of the transition density.

\section{Settings and main results}
\label{sec:setting}
\subsection{Notation}

Throughout the paper we fix a finite time horizon $T > 0$.

\paragraph{Wasserstein spaces.}
Let $(\mcal{X}, d_{\mcal{X}})$ be a Polish space and let $p \ge 1$.
We denote by $\mcal{P}(\mcal{X})$ the set of all Borel probability measures on
$\mcal{X}$. Fixing a reference point $x_0 \in \mcal{X}$, we set
\begin{equation*}
  M_p(\mu) := \left( \int_{\mcal{X}} d_{\mcal{X}}(x, x_0)^p \, \mu(dx) \right)^{1/p},
  \qquad \mu \in \mcal{P}(\mcal{X}),
\end{equation*}
and define
$\mcal{P}_p(\mcal{X}) := \{ \mu \in \mcal{P}(\mcal{X}) : M_p(\mu) < \infty \}$,
which by the triangle inequality does not depend on the choice of $x_0$.
We equip $\mcal{P}_p(\mcal{X})$ with the $p$-Wasserstein distance
\begin{equation*}
  \mcal{W}_p(\mu, \nu)
  := \left( \inf_{\pi \in \Pi(\mu,\nu)}
       \int_{\mcal{X} \times \mcal{X}} d_{\mcal{X}}(x,y)^p \, \pi(dx,dy) \right)^{1/p},
\end{equation*}
where $\Pi(\mu,\nu)$ is the set of couplings of $\mu$ and $\nu$.
For different Polish spaces $\mcal{X},\tilde{\mcal{X}}$, we use the same notation $M_p$ and $\mcal{W}_p$, by slight abuse of notation.

\paragraph{Path space.}
We take as sample space the space of continuous paths
$\Omega := C([0,T]; \R^{2d})$, equipped with the supremum norm
$\| \omega \|_\infty := \sup_{t \in [0,T]} | \omega_t |$, and we let
$\mcal{F} := \mcal{B}(\Omega)$ be its Borel $\sigma$-algebra, so that
$(\Omega, \mcal{F})$ is a measurable space.
We write $Z = (Z_t  )_{t \in [0,T]}$ for the canonical process
$Z_t(\omega) := \omega_t$, and $\mbb{F} = (\mcal{F}_t)_{t \in [0,T]}$ for the
canonical filtration $\mcal{F}_t := \sigma(Z_s : 0 \le s \le t)$; whenever a
probability measure $P$ on $(\Omega, \mcal{F})$ is fixed, $\mbb{F}$ is understood
to be augmented by the $P$-null sets and made right-continuous, so that it
satisfies the usual conditions.
Since $\Omega$ is a separable Banach space, we always take $x_0 = 0$ in the definition of
$M_p$ on $\Omega$; thus $\mcal{P}_1(\Omega)
= \{ P \in \mcal{P}(\Omega) : \int_\Omega \|\omega\|_\infty \, P(d\omega) < \infty \}$.

\paragraph{Marginals and relative entropy.}
For $P \in \mcal{P}(\Omega)$ and $t \in [0,T]$ we write
$P_t := (Z_t)_{\#} P \in \mcal{P}(\R^{2d})$ for the time-$t$ marginal of $P$, and
$(P_t)_{t \in [0,T]}$ for the associated flow of marginals.
Given $P, R \in \mcal{P}(\Omega)$, the relative entropy of $P$ with respect to $R$
is
\begin{equation*}
  H(P \,|\, R) :=
  \begin{cases}
    \displaystyle \int_\Omega \log \frac{dP}{dR} \, dP, & P \ll R, \\[2mm]
    +\infty, & \text{otherwise.}
  \end{cases}
\end{equation*}

\subsection{Hard constraint problem}\label{sec:hard_constraint}
Let $f : [0,T] \times \R^d \times \R^d \times \mcal{P}_1(\R^{2d}) \to \R^d$, and let
$\mu_{\mathrm{init}}, \mu_{\mathrm{fin}} \in \mcal{P}(\R^{2d})$ denote the initial and the target
distributions. Given a flow of probability measures
$\bs{\mu} = (\mu_t)_{t \in [0,T]} \in C([0,T]; \mcal{P}_1(\R^{2d}))$, we consider the
kinetic dynamics
\begin{equation}
\label{eq:reference_SDE}
     dX_t = V_t \, dt, \quad
     dV_t = f(t, X_t, V_t, \mu_t) \, dt + dW_t, \quad
     (X_0, V_0) \sim \mu_{\mathrm{init}},
\end{equation}
where $W$ is a standard $d$-dimensional Brownian motion (BM).

Under Assumption~\ref{assump:coefficient} below, \eqref{eq:reference_SDE} admits a unique
weak solution, and we denote by $R^{\bs{\mu}} \in \mcal{P}(\Omega)$ its law on path
space; equivalently, $R^{\bs{\mu}}$ is the unique probability measure on
$(\Omega, \mcal{F})$ under which the canonical process
$Z = (X, V)$, $Z_t(\omega) := \omega_t$, solves \eqref{eq:reference_SDE}.

The Schr\"odinger bridge problem associated with $\bs{\mu}$ is
\begin{equation*} \label{eq:SB_mu}  \tag*{SB($\bs{\mu}$)}
  \inf \Bigl\{ H\bigl(Q \,\big|\, R^{\bs{\mu}}\bigr)
  \;:\; Q \in \mcal{P}(\Omega), \ Q_0 = \mu_{\mathrm{init}}, \ Q_T = \mu_{\mathrm{fin}} \Bigr\},
\end{equation*}

It is established by \cite{Follmer_randomfields} that \ref{eq:SB_mu} has the same value as the minimal action problem:
\begin{equation*}
\inf\E^{Q} \left[ \frac{1}{2} \int^T_0 |\alpha_t|^2 \, dt
    \right]
    \label{eq:minimial_action}
\end{equation*}
where the infimum is taken over $\R^d$-valued $\mbb{F}$-progressively measurable processes $\alpha$ and $Q \in \mcal{P}(\Omega)$ such that $Z=(X,V)$ is the solution to the controlled SDE
\begin{equation}
\label{eq:controlled_SDE}
d X_t = V_t dt, \quad
d V_t = \left(f(t, X_t, V_t, \mu_t) + \alpha_t \right) dt +  dW^Q_t,
\end{equation}
subject to $Q_0 = \mu_{\mathrm{init}}$ and $Q_T = \mu_{\mathrm{fin}}$. Here,
$
  W^{Q}_t := V_t - V_0
  - \int_0^t \bigl( f(s, X_s, V_s, \mu_s) + \alpha_s \bigr) \, ds,
$
is a standard $d$-dimensional $(\mbb{F}, Q)$-BM.

Although the following result is standard, similar arguments will be used later in the paper (see Lemma~\ref{lemma:P_continuous} and Corollary~\ref{cor:Psi_equicontinuous}).
\begin{lemma} \label{lemma:entropy_is_kinetic_energy}
For each $Q \ll R^{\bs{\mu}}$ with $Q_0 = R^{\bs{\mu}}_0$ and $H(Q |  R^{\bs{\mu}}) < \infty$, there exists an $\mbb{F}$-progressively measurable process $\alpha$ such that
\begin{equation*}
    H(Q | R^{\bs{\mu}}) = \E^Q\left[  \frac{1}{2} \int^T_0 |\alpha_t|^2 dt \right].
\end{equation*}
\end{lemma}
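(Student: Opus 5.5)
We will follow Föllmer's classical argument: represent the density process of $Q$ with respect to $R^{\bs{\mu}}$ as a stochastic exponential driven by the reference Brownian motion, and then use Girsanov's theorem. Write $R := R^{\bs{\mu}}$, $D := dQ/dR$ and $D_t := \E^R[D \mid \mcal{F}_t]$, a nonnegative $(\mbb{F},R)$-martingale with $D_0 = 1$. The initial value is $1$ because $Q_0 = R_0$ and $\mcal{F}_0$ is generated by $Z_0$ up to null sets. Under $R$, the process $W_t := V_t - V_0 - \int_0^t f(s,X_s,V_s,\mu_s)\,ds$ is a $d$-dimensional Brownian motion. The first step is a martingale representation: every $(\mbb{F},R)$-martingale can be written as $D_0 + \int_0^t \eta_s\cdot dW_s$. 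This holds because the canonical filtration of $Z$ coincides, up to null sets, with that generated by $Z_0$ and $W$: $X$ is obtained from $V$ by integration, and $V$ is recovered from $(Z_0,W)$ by uniqueness in law and pathwise for the SDE. If pathwise uniqueness is unavailable, we would instead use the fact that $R$ is the unique solution to the martingale problem, which by Jacod's theorem yields the predictable representation property.

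Next, we define $\alpha_t := \eta_t / D_t$ on $\{D_t>0\}$ and $\alpha_t := 0$ otherwise. Since $D>0$ $Q$-a.s., we have $Q(\inf_t D_t > 0) = 1$, so $\int_0^T|\alpha_t|^2dt<\infty$ $Q$-a.s. and
\[
D_t = \mcal{E}\Bigl(\int_0^\cdot \alpha_s\cdot dW_s\Bigr)_t \quad Q\text{-a.s.}
\]
Girsanov's theorem in its general, Lenglart-type form (valid for $Q \ll R$) then shows that $W^Q_t := W_t - \int_0^t \alpha_s\,ds$ is a $Q$-Brownian motion. Hence $Z$ solves \eqref{eq:controlled_SDE} under $Q$ with this $\alpha$.

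For the entropy identity, we take logarithms:
\[
\log D_T = \int_0^T \alpha_t\cdot dW_t - \frac12\int_0^T|\alpha_t|^2dt = \int_0^T \alpha_t\cdot dW^Q_t + \frac12\int_0^T|\alpha_t|^2dt.
\]
Taking $Q$-expectations would immediately give $H(Q\mid R) = \E^Q\bigl[\frac12\int_0^T|\alpha_t|^2dt\bigr]$, provided the stochastic integral has zero mean. This is the main obstacle, since a priori we only know $H(Q\mid R)<\infty$ and not that $\E^Q\int|\alpha|^2<\infty$.

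To handle it, we localize with $\tau_k := \inf\{t : \int_0^t|\alpha_s|^2ds \ge k\}\wedge T$. The stopped integral is then a true $Q$-martingale, so
\[
\E^Q[\log D_{\tau_k}] = \tfrac12\E^Q\Bigl[\int_0^{\tau_k}|\alpha_t|^2dt\Bigr].
\]
The left-hand side equals $H(Q|_{\mcal{F}_{\tau_k}}\mid R|_{\mcal{F}_{\tau_k}}) \le H(Q\mid R)$, because entropy decreases under restriction. Monotone convergence then gives $\E^Q\bigl[\frac12\int_0^T|\alpha|^2\bigr] \le H(Q\mid R) <\infty$. For the reverse inequality, we observe that $\log D_{\tau_k}\to\log D_T$ $Q$-a.s. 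We then apply lower semicontinuity of relative entropy, since $\mcal{F}_{\tau_k}\uparrow\mcal{F}_T$ up to null sets. Alternatively, once $\E^Q\int_0^T|\alpha|^2<\infty$ is known, the full stochastic integral is an $L^2(Q)$-martingale, and the identity follows directly from the displayed formula.
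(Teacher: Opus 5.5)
Your proposal is correct and follows the same route as the paper: represent $dQ/dR^{\bs{\mu}}$ as a stochastic exponential $\mcal{E}\bigl(\int_0^\cdot \alpha_t\cdot dW^{R^{\bs{\mu}}}_t\bigr)$ via Girsanov, show $\E^Q\int_0^T|\alpha_t|^2\,dt<\infty$, and take $Q$-expectations in $\log\frac{dQ}{dR^{\bs{\mu}}} = \int_0^T\alpha_t\cdot dW^Q_t + \frac12\int_0^T|\alpha_t|^2\,dt$. The only difference is that the paper cites F\"ollmer's Lemma~2.6 for the integrability step, whereas you reprove it yourself (localization at $\tau_k$ plus the decrease of entropy under restriction to $\mcal{F}_{\tau_k}$) and also justify the martingale representation via the strong-solution filtration.
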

\begin{proof}
By Girsanov's theorem, there exists some $\mbb{F}$-progressively measurable process $\alpha$ such that $\frac{dQ}{dR^{\bs{\mu}}} = \mcal{E}(\int^T_0 \alpha_t \, dW^{R^{\bs{\mu}}}_t), \, Q\text{-a.s.}$, where $\mcal{E}(M)$ denotes the stochastic exponential for a continuous local martingale $M$. If $H(Q|R^{\bs{\mu}}) < \infty$, then $\E^Q \int^T_0 |\alpha_t|^2 \, dt < \infty$ by \cite[Lemma~2.6]{follmer_entropy}. Therefore, we compute
\begin{equation*}
    H(Q | R^{\bs{\mu}})
        = \int \log \frac{dQ}{d R^{\bs{\mu}}}\, dQ
        = \E^Q\left[ \int^T_0 \alpha_t \, dW^{Q}_t + \frac{1}{2} \int^T_0 |\alpha_t|^2 dt\right]
        = \E^Q\left[  \frac{1}{2} \int^T_0 |\alpha_t|^2 dt\right].         \qedhere
\end{equation*}
\end{proof}

\begin{remark}[Equivalence to an MFP with a cost coupling]
\label{remark:cost_coupling}
Let $R^0 \in \mcal{P}(\Omega)$ denote the law of the driftless kinetic process
\begin{equation}
\label{eq:driftless}
d X_t = V_t \, dt, \qquad
d V_t = dW^{R^0}_t ,
\end{equation}
where $W^{R^0}$ is a $d$-dimensional Brownian motion under $R^0$.
Let $Q \in \mcal{P}(\Omega)$ satisfy $Q_0 = \mu_{\mathrm{init}}$ and $H(Q \,|\, R^0) < \infty$, and let $\tilde{\alpha}$ be the drift of $V$ under $Q$, i.e.\ $dV_t = \tilde{\alpha}_t \, dt + dW^Q_t$ for a $Q$-Brownian motion $W^Q$, as in Lemma~\ref{lemma:entropy_is_kinetic_energy}.
Let $f$ be bounded as Assumption~\ref{assump:bounded_derivative}.
Since $R^{\bs{\mu}}$ and $R^0$ are mutually absolutely continuous by Girsanov's theorem, with
$\log \frac{dR^{\bs{\mu}}}{dR^0} = \int_0^T f(t,X_t,V_t,\mu_t) \cdot dW^{R^0}_t - \frac12 \int_0^T |f(t,X_t,V_t,\mu_t)|^2 \, dt$,
and since $W^{R^0}_t = V_t - V_0 = W^Q_t + \int_0^t \tilde{\alpha}_s \, ds$ under $Q$, we obtain
\begin{align*}
H(Q \,|\, R^{\bs{\mu}})
&= H(Q \,|\, R^0) - \int_\Omega \log \left( \frac{dR^{\bs{\mu}}}{dR^0}\right) dQ \\
&= \E^Q \left[ \frac{1}{2} \int^T_0 |\tilde{\alpha}_t|^2 \, dt -  \int^T_0 f(t,X_t, V_t, \mu_t) \cdot dW^{R^0}_t + \frac{1}{2} \int^T_0  |f(t,X_t, V_t, \mu_t)|^2 \, dt  \right] \\
&= \E^Q \left[ \frac{1}{2} \int^T_0 |\tilde{\alpha}_t|^2 \, dt -  \int^T_0 f(t,X_t, V_t, \mu_t) \cdot \bigl( dW^{Q}_t + \tilde{\alpha}_t \, dt \bigr) + \frac{1}{2} \int^T_0  |f(t,X_t, V_t, \mu_t)|^2 \, dt \right] \\
&=  \E^Q \left[ \frac{1}{2} \int^T_0 \bigl|\tilde{\alpha}_t - f(t,X_t, V_t, \mu_t)\bigr|^2 \, dt \right],
\end{align*}
where we have used the martingality of the stochastic integral with respect to $W^Q$.
Hence $\mathrm{SB}(\bs{\mu})$ is equivalent to the stochastic control problem for the driftless kinetic process \eqref{eq:driftless}, in which the velocity drift $\tilde{\alpha}$ is the control and the running cost is $\frac12 |\tilde{\alpha}_t - f(t,X_t,V_t,\mu_t)|^2$; that is, the interaction enters only through the cost, which penalizes the deviation of the control from $f$.
The Hamiltonian associated with \eqref{eq:pde} incorporates this cost function. Note that $H$ is not separable in $(p,\mu)$ because of the cross term $p \cdot f(t,x,v,\mu)$.
\end{remark}

We introduce the assumptions on the boundary constraints:
\begin{assump}
\label{assump:initial_and_target}
$\,$
\begin{enumerate} [label=\textup{(\roman*)}, ref=\textup{(\roman*)}]
    \item \label{assump:densities}
      $\mu_{\mathrm{init}}, \mu_{\mathrm{fin}} \in \mcal{P}_1(\R^{2d})$ are absolutely continuous with
      respect to the Lebesgue measure on $\R^{2d}$; we denote their densities by
      $p_{\mathrm{init}}$ and $p_{\mathrm{fin}}$, so that $\mu_{\mathrm{init}}(dz) = p_{\mathrm{init}}(z) \, dz$
      and $\mu_{\mathrm{fin}}(dz) = p_{\mathrm{fin}}(z) \, dz$.

    \item \label{assump:log_bound}
    $\log p_{\mathrm{fin}}\in L^1(\mu_{\mathrm{fin}})$.

    \item \label{assump:second_moment_and_exp_moment}
    There exists $\theta >0$ such that
    $\int_{\R^{2d}} e^{\theta |z|^2 }\mu_{\mathrm{init}} (dz)  < \infty$, and $M_2(\mu_{\mathrm{fin}}) < \infty$.
\end{enumerate}
\end{assump}
\begin{remark}
Let us elaborate on the assumptions. The assumptions \ref{assump:densities} and \ref{assump:log_bound} are to guarantee the well-posedness of the problem \ref{eq:SB_mu}, which we prove in Proposition~\ref{prop:well-posedness_SB_mu}.
The assumption \ref{assump:second_moment_and_exp_moment} is used to prove Lemma~\ref{lemma:bdd_exp_moment}, which eventually leads to the uniform bound on the second moments of $P^{\bs{\mu}}$ in Proposition~\ref{prop:bdd_2nd_moment}.
\end{remark}

The following is the assumption for the drift coefficient of the velocity process:
\begin{assump}
\label{assump:coefficient}
Let $f: [0,T] \times \R^{2d}  \times \mcal{P}_1(\R^{2d}) \to \R^{d}$ be a jointly measurable function.
There exists $C_f>0$ such that:
\begin{enumerate} [label=\textup{(\roman*)}, ref=\textup{(\roman*)}]
    \item \label{assump:lipschitz}
    For any $z, w \in \R^{2d}$ and $\mu, \nu \in \mcal{P}_1(\R^{2d})$,
    \begin{equation*}
        | f(t,z,\mu) -  f(t,w,\nu)| \leq C_f  \left( |z-w| + \mcal{W}_1(\mu, \nu) \right)
    \end{equation*}
    for any $t \in [0,T]$.

    \item \label{assump:linear_growth} For any $z \in \R^{2d}$ and $\mu \in \mcal{P}_1(\R^{2d})$, it holds
    \begin{equation*}
        | f(t,z,\mu)| \leq C_f(1 + |z|),
    \end{equation*}
    for any $t \in [0,T]$.
\end{enumerate}
\end{assump}
\begin{remark}
    From Assumption \ref{assump:coefficient}, the standard well-posedness result guarantees that the reference equation \eqref{eq:reference_SDE} has a unique strong solution, and thus the reference path measure $R^{\bs{\mu}} \in \mcal{P}(\Omega)$ is well-defined. The independence of $C_f$ from $\bs{\mu}$ is inherited by the constant $C$ in Lemma~\ref{lemma:kernel_bound}, $M$ in \eqref{eq:bound_entropy_1}, and consequently $K$ in Proposition~\ref{prop:bdd_2nd_moment}. This fact plays a crucial role in applying Schauder's fixed point theorem.
\end{remark}

\begin{example}\label{example:Cucker--Smale}
Let us consider the variation of the Cucker--Smale model:
\begin{equation*}
f(x, v, \mu) := \int_{\R^{2d}} \Phi(x - y) \Gamma(w - v) \mu(dy,dw),
\end{equation*}
where
\begin{equation*}
    \Phi(x) = \dfrac{\kappa}{(1 + |x|^2)^\beta} , \quad
    \Gamma(v) = \dfrac{v}{\sqrt{1 + |v|^2}} .
\end{equation*}
Both $\Phi$ and $\Gamma$ are bounded, uniformly Lipschitz functions, and the Lipschitz constant of $f$ only depends on these functions.
\end{example}

We first state the existence of the transition density and its lower bound. The proof is provided in Appendix~\ref{sec:proof_of_kernel_lowerbound}.
\textbf{Throughout the paper, $C$ denotes a generic positive constant that may change from line to line and is independent of $\bs\mu$.}

As in \cite[Lemma~2.1]{chiarini_etal_kineticSB}, we rely on the Gaussian estimates of the transition kernel \cite[Theorem~1.1]{delarue_density_estimates} to obtain the following.
\begin{lemma}
\label{lemma:kernel_bound}
Let Assumption~\ref{assump:coefficient} hold true.
Then, for every
$\bm{\mu}\in C\big([0,T];\mathcal{P}_1(\mathbb{R}^{2d})\big)$ and every starting
point $z\in\mathbb{R}^{2d}$, the law of $Z_T$ started from $Z_0=z$ admits a
density $w\mapsto p^{\bm{\mu}}_T(z,w)$.
Furthermore, there exists a constant $C = C( C_f,T, d) > 0$ independent of $\bs{\mu}$, such that
\begin{equation}
p^{\bs{\mu}}_T( z , w )
\geq \frac{1}{C} \exp \left( - C \left( |z|^2  +  |w|^2 \right) \right),
\quad \forall \bs{\mu} \in C([0,T] ; \mcal{P}_1(\R^{2d})).
\end{equation}
\end{lemma}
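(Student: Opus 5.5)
We will obtain Lemma~\ref{lemma:kernel_bound} from the two-sided Gaussian estimate of \cite[Theorem~1.1]{delarue_density_estimates}, by reducing our setting to theirs and then bounding the kinetic Gaussian exponent by a quadratic in $(z,w)$. The plan has three steps.

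\emph{Step 1: reduction to a fixed, $\bs{\mu}$-independent class of drifts.} Fix $\bs{\mu}\in C([0,T];\mcal{P}_1(\R^{2d}))$ and set $b^{\bs\mu}(t,x,v):=f(t,x,v,\mu_t)$. By Assumption~\ref{assump:coefficient}, $b^{\bs\mu}$ is jointly measurable. It is Lipschitz in $(x,v)$ with constant $C_f$, uniformly in $t$. Its linear growth constant is also $C_f$. None of these constants depends on $\bs\mu$. The SDE \eqref{eq:reference_SDE} started at $z$ is therefore a degenerate kinetic diffusion $dX=V\,dt$, $dV=b^{\bs\mu}(t,X,V)\,dt+dW$ with identity diffusion matrix. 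This is exactly the framework of \cite{delarue_density_estimates}: a chain of two $d$-dimensional components with nondegenerate noise on the last component, a Lipschitz drift, and the first component driven by the second through a linear map with full rank. Their theorem gives existence of the density $w\mapsto p^{\bs\mu}_T(z,w)$, together with a lower bound
\[
p^{\bs\mu}_T(z,w)\ \ge\ \frac{1}{C}\,T^{-2d}\exp\bigl(-C\,T\,|\mathbb{T}_T^{-1}(\theta_T(z)-w)|^2\bigr),
\]
where $\mathbb{T}_T=\mathrm{diag}(T^{3/2}I_d,\,T^{1/2}I_d)$ is the kinetic scaling and $\theta_T(z)$ is the time-$T$ value of the deterministic flow $\dot\theta=(\theta^v,b^{\bs\mu}(t,\theta))$, $\theta_0=z$. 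The main thing to verify here is that the constant $C$ in their result depends only on $d$, $T$ and the Lipschitz/growth constants. It must not depend on any regularity in time, since $b^{\bs\mu}$ is merely measurable in $t$. I expect this verification against their hypotheses to be the main obstacle. If their statement needs the drift bounded in some component or only Lipschitz, we would still conclude, because the linear growth of $f$ is implied by the Lipschitz bound plus $|f(t,0,\mu)|\le C_f$. The argument follows \cite[Lemma~2.1]{chiarini_etal_kineticSB}, where the same reduction is made.

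\emph{Step 2: bounding the flow $\theta_T(z)$.} By Grönwall and the linear growth $|b^{\bs\mu}(t,\theta)|\le C_f(1+|\theta|)$, we get $|\theta_T(z)|\le C(1+|z|)$ with $C=C(C_f,T)$. This bound holds for every $\bs\mu$.

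\emph{Step 3: conclusion.} Since $T$ is fixed, the weights $T^{-3/2}$ and $T^{-1/2}$ are constants. We use
\[
|\theta_T(z)-w|^2\le 2|\theta_T(z)|^2+2|w|^2\le C(1+|z|^2+|w|^2),
\]
so the exponent is bounded below by $-C(1+|z|^2+|w|^2)$. Absorbing $e^{-C}$ and $T^{-2d}$ into the prefactor yields
\[
p^{\bs\mu}_T(z,w)\ge \tfrac1C\exp\bigl(-C(|z|^2+|w|^2)\bigr),
\]
with $C=C(C_f,T,d)$ uniform in $\bs\mu$.
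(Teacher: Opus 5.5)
Your proposal is correct and follows the paper's proof: you invoke \cite[Theorem~1.1]{delarue_density_estimates} with a constant depending only on $(C_f,T,d)$, hence uniform in $\bs{\mu}$, then bound the deterministic flow $\theta_T(z)$ by Gr\"onwall's inequality, and finally absorb $|\theta_T(z)-w|^2\le C(1+|z|^2+|w|^2)$ and the powers of $T$ into the constant. The only slip is in how you wrote the Delarue--Menozzi exponent: $T|\mathbb{T}_T^{-1}(\cdot)|^2$ with $\mathbb{T}_T=\mathrm{diag}(T^{3/2}I_d,T^{1/2}I_d)$ is off by a factor of $T$ from the correct kinetic form $T^{-3}|\theta^x_T(z)-w_x|^2+T^{-1}|\theta^v_T(z)-w_v|^2$ used in the paper, but since $T$ is fixed this only changes the constant and does not affect the conclusion.
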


Granting this lemma, we obtain the well-posedness of \ref{eq:SB_mu}:
\begin{proposition} \label{prop:well-posedness_SB_mu}
Fix $\bs {\mu} \in C([0,T];\mcal{P}_1(\R^{2d}))$. Under Assumptions \ref{assump:initial_and_target} and \ref{assump:coefficient}, there exists a unique solution $P^{\bs{\mu}} \in \mcal{P}(\Omega)$ to \textup{\ref{eq:SB_mu}}.
\end{proposition}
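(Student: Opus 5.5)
Uniqueness is immediate: the constraint set $\{Q: Q_0=\mu_{\mathrm{init}},\,Q_T=\mu_{\mathrm{fin}}\}$ is convex and $H(\cdot\,|\,R^{\bs{\mu}})$ is strictly convex on its domain. So once a minimizer with finite value exists, it is unique. For existence, we will use the standard reduction of the dynamic problem to the static one, as in \cite{leonard_survey_2014}. The disintegration formula gives
\[
H(Q\,|\,R^{\bs{\mu}})=H(Q_{0,T}\,|\,R^{\bs{\mu}}_{0,T})+\int H\bigl(Q^{z,w}\,|\,R^{\bs{\mu},z,w}\bigr)\,Q_{0,T}(dz,dw).
\]
Therefore \ref{eq:SB_mu} has a minimizer iff the static problem $\inf\{H(\pi\,|\,R^{\bs{\mu}}_{0,T}):\pi\in\Pi(\mu_{\mathrm{init}},\mu_{\mathrm{fin}})\}$ does. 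If $\hat\pi$ is the static minimizer, then $P^{\bs{\mu}}:=\int R^{\bs{\mu},z,w}\,\hat\pi(dz,dw)$ is the dynamic one. By Lemma~\ref{lemma:kernel_bound}, $R^{\bs{\mu}}_{0,T}(dz,dw)=\mu_{\mathrm{init}}(dz)\,p^{\bs{\mu}}_T(z,w)\,dw$.

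The main step is to show the static value is finite. We test with the product coupling $\pi=\mu_{\mathrm{init}}\otimes\mu_{\mathrm{fin}}$, which has density $p_{\mathrm{fin}}(w)/p^{\bs{\mu}}_T(z,w)$ with respect to $R^{\bs{\mu}}_{0,T}$. This gives
\[
H(\pi\,|\,R^{\bs{\mu}}_{0,T})=\int\log p_{\mathrm{fin}}\,d\mu_{\mathrm{fin}}-\iint\log p^{\bs{\mu}}_T(z,w)\,\mu_{\mathrm{init}}(dz)\,\mu_{\mathrm{fin}}(dw).
\]
The first term is finite by Assumption~\ref{assump:initial_and_target}\ref{assump:log_bound}. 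For the second, the Gaussian lower bound of Lemma~\ref{lemma:kernel_bound} yields $-\log p^{\bs{\mu}}_T(z,w)\le \log C+C(|z|^2+|w|^2)$. Integrating this against $\mu_{\mathrm{init}}\otimes\mu_{\mathrm{fin}}$ gives a finite bound, since $M_2(\mu_{\mathrm{init}})<\infty$ follows from the exponential moment in \ref{assump:second_moment_and_exp_moment}, and $M_2(\mu_{\mathrm{fin}})<\infty$ is assumed. The negative part of $-\log p^{\bs{\mu}}_T$ causes no trouble, because relative entropy is nonnegative and the decomposition above is legitimate once $\log(d\pi/dR^{\bs{\mu}}_{0,T})$ is bounded above in $L^1(\pi)$. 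We expect this integrability bookkeeping to be the main point requiring care. The resulting bound is also uniform in $\bs{\mu}$, which we anticipate reusing later.

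For existence of the static minimizer, $\Pi(\mu_{\mathrm{init}},\mu_{\mathrm{fin}})$ is weakly compact by tightness of its marginals and is convex and closed. The map $\pi\mapsto H(\pi\,|\,R^{\bs{\mu}}_{0,T})$ is weakly lower semicontinuous, by the Donsker--Varadhan variational formula, and it is finite at some point. The direct method therefore produces a minimizer $\hat\pi$, unique by strict convexity. Finally, the mixture of bridges $P^{\bs{\mu}}$ built from $\hat\pi$ has marginals $\mu_{\mathrm{init}}$ and $\mu_{\mathrm{fin}}$ and attains the lower bound given by the disintegration formula. Any other admissible $Q$ satisfies $H(Q\,|\,R^{\bs{\mu}})\ge H(Q_{0,T}\,|\,R^{\bs{\mu}}_{0,T})\ge H(\hat\pi\,|\,R^{\bs{\mu}}_{0,T})$, which gives optimality. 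Equality forces $Q_{0,T}=\hat\pi$ and $Q^{z,w}=R^{\bs{\mu},z,w}$ a.e., which gives uniqueness.
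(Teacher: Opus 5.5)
Your proposal is correct and follows the paper's proof. You reduce SB($\bs{\mu}$) to the static problem via Léonard's disintegration, then show the value is finite by testing the product coupling $\mu_{\mathrm{init}}\otimes\mu_{\mathrm{fin}}$ against the Gaussian lower bound of Lemma~\ref{lemma:kernel_bound} together with Assumption~\ref{assump:initial_and_target}. The only difference is that you spell out the direct-method existence of the static minimizer, the mixture-of-bridges construction, and uniqueness via strict convexity, which the paper delegates to \cite[Lemma~2.4]{leonard_survey_2014}.
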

\begin{proof}
By \cite[Lemma~2.4]{leonard_survey_2014},
we know the value of \ref{eq:SB_mu} is the same as that of the following static problem:
\begin{equation*}
\inf \left\{ H (\pi |R^{\bs{\mu}}_{0,T} ) \,;\, \pi \in \mcal{P}(\R^{2d}\times\R^{2d}),   \pi_0 = \mu_{\mathrm{init}}, \pi_T = \mu_{\mathrm{fin}} \right\},
\end{equation*}
and it suffices to show that this is finite. In particular, we prove that $H( \mu_{\mathrm{init}} \otimes \mu_{\mathrm{fin}}| R^{\bs{\mu}}_{0,T})<\infty$. By Lemma~\ref{lemma:kernel_bound}, $R_{0,T}^{\bs{\mu}}$ has a positive density and is equivalent to $\mu_{\mathrm{init}} \otimes \textup{Leb}_{\R^{2d}}$. Using Assumption \ref{assump:initial_and_target} \ref{assump:densities}, $\mu_{\mathrm{init}} \otimes \mu_{\mathrm{fin}} \ll R_{0,T}^{\bs{\mu}}$.
Therefore,
\begin{align}
H( \mu_{\mathrm{init}} \otimes \mu_{\mathrm{fin}}| R^{\bs{\mu}}_{0,T})
    &= \int \log \frac{p_{\mathrm{init}} (z) p_{\mathrm{fin}}(w)} {p_{\mathrm{init}} (z)  p_T^{\bs{\mu}}(z,w)}  d\mu_{\mathrm{init}} \otimes \mu_{\mathrm{fin}}(z,w) \notag \\
    &= \int \log p_{\mathrm{fin}} \, d\mu_{\mathrm{fin}} - \int \log p_T^{\bs{\mu}} \, d\mu_{\mathrm{init}} \otimes \mu_{\mathrm{fin}} \notag \\
    &\leq \int \log p_{\mathrm{fin}} \, d\mu_{\mathrm{fin}} - \int \log \frac{1}{C}e^{-C (|z|^2 + |w|^2)} \, d\mu_{\mathrm{init}} \otimes \mu_{\mathrm{fin}} \notag \\
    &\leq C(1 + M^2_2(\mu_{\mathrm{init}}) + M^2_2(\mu_{\mathrm{fin}})) =: M < \infty, \label{eq:bound_entropy_1}
\end{align}
where we have used Assumption \ref{assump:initial_and_target} \ref{assump:log_bound}-\ref{assump:second_moment_and_exp_moment} and Lemma~\ref{lemma:kernel_bound}.
This constant $M$ is independent of the choice of $\bs{\mu}$.
\end{proof}
An immediate consequence of the above proof is the following:
\begin{corollary} \label{cor:bdd_relative_entropy}
There exists a constant $M = M(C_f, T, d, M_2(\mu_{\mathrm{init}}), M_2(\mu_{\mathrm{fin}}), H(\mu_{\mathrm{fin}}|\leb) ) > 0$ independent of the choice of $\bs{\mu} \in C([0,T]; \mcal{P}_1(\R^{2d})) $ such that
\begin{equation}
    H(P^{\bs{\mu}} | R^{\bs{\mu}}) \leq M, \quad \forall \bs{\mu} \in C([0,T]; \mcal{P}_1(\R^{2d})).
\end{equation}
\end{corollary}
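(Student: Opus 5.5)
Since $P^{\bs{\mu}}$ minimizes \ref{eq:SB_mu}, it suffices to compare its value with a single admissible competitor whose entropy is controlled independently of $\bs{\mu}$. The natural candidate is the path measure obtained by gluing the independent coupling $\mu_{\mathrm{init}}\otimes\mu_{\mathrm{fin}}$ to the bridges of $R^{\bs{\mu}}$, namely
\[
Q := \int R^{\bs{\mu},z,w}\,(\mu_{\mathrm{init}}\otimes\mu_{\mathrm{fin}})(dz,dw),
\]
where $R^{\bs{\mu},z,w}$ denotes the disintegration of $R^{\bs{\mu}}$ with respect to $(Z_0,Z_T)$. By construction $Q_0=\mu_{\mathrm{init}}$ and $Q_T=\mu_{\mathrm{fin}}$, so $Q$ is admissible.

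The first step is the additive decomposition of relative entropy along the endpoint disintegration, which is \cite[Lemma~2.4]{leonard_survey_2014}, the same fact invoked in the proof of Proposition~\ref{prop:well-posedness_SB_mu}. Because $Q$ and $R^{\bs{\mu}}$ share the same bridges, the conditional entropy term vanishes and
\[
H(P^{\bs{\mu}}\,|\,R^{\bs{\mu}}) \le H(Q\,|\,R^{\bs{\mu}}) = H(\mu_{\mathrm{init}}\otimes\mu_{\mathrm{fin}}\,|\,R^{\bs{\mu}}_{0,T}).
\]

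The second step is simply to recall \eqref{eq:bound_entropy_1}. That estimate bounds the right-hand side by
\[
\int\log p_{\mathrm{fin}}\,d\mu_{\mathrm{fin}} + C\bigl(1+M_2^2(\mu_{\mathrm{init}})+M_2^2(\mu_{\mathrm{fin}})\bigr).
\]
The first term equals $H(\mu_{\mathrm{fin}}|\leb)$, which is finite by Assumption~\ref{assump:initial_and_target}~\ref{assump:log_bound}. The constant $C$ comes from Lemma~\ref{lemma:kernel_bound} and depends only on $(C_f,T,d)$. This produces the stated dependence of $M$.

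The only point needing care is uniformity in $\bs{\mu}$, and it reduces entirely to the $\bs{\mu}$-independence of the Gaussian lower bound in Lemma~\ref{lemma:kernel_bound}. The second moments of the marginals are fixed data and do not vary with $\bs{\mu}$. Consequently no step is a genuine obstacle once Lemma~\ref{lemma:kernel_bound} is granted.
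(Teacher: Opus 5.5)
Your proof is correct and is essentially the paper's argument. The paper bounds $H(P^{\bs{\mu}}|R^{\bs{\mu}})$ by the value of the static problem via \cite[Lemma~2.4]{leonard_survey_2014} and tests that value with $\mu_{\mathrm{init}}\otimes\mu_{\mathrm{fin}}$ in \eqref{eq:bound_entropy_1}, while you make the corresponding dynamic competitor (the product coupling glued to the bridges of $R^{\bs{\mu}}$) explicit; keeping $\int\log p_{\mathrm{fin}}\,d\mu_{\mathrm{fin}} = H(\mu_{\mathrm{fin}}|\leb)$ as a separate term, rather than absorbing it into the constant as the paper's display does, matches the stated dependence of $M$ slightly more transparently.
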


\begin{definition}
For $\bs{\mu} \in C\left([0,T] ; \mcal{P}_1(\R^{2d}) \right)$, let $P^{\bs{\mu}}$ be the unique solution of \textup{\ref{eq:SB_mu}}.
We call $\bs{\mu}$  a mean-field equilibrium, if
\begin{equation}
    \mu_t = {P}^{\bs{\mu}}_t, \forall t \in [0,T].
\end{equation}
\end{definition}

We state our first main result:
\begin{theorem}
\label{thm:existence_mfe}
Let Assumptions~\ref{assump:initial_and_target} and \ref{assump:coefficient} be in force.
Then, there exists a mean-field equilibrium $\bs{\mu} \in C\left([0,T] ; \mcal{P}_1(\R^{2d}) \right)$.
\end{theorem}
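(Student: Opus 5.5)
We apply Schauder's fixed-point theorem to the best-response map
\[
\Psi : \bs{\mu} \mapsto (P^{\bs{\mu}}_t)_{t \in [0,T]}
\]
on a convex set $\mcal{K} \subset C([0,T];\mcal{P}_1(\R^{2d}))$. The map is well defined by Proposition~\ref{prop:well-posedness_SB_mu}. We regard $C([0,T];\mcal{P}_1(\R^{2d}))$ as a convex subset of the locally convex space $C([0,T];\mcal{M})$, where $\mcal{M}$ is the space of finite signed measures with finite first moment, equipped with the topology of uniform-in-time $\mcal{W}_1$ convergence (equivalently, of the bounded-Lipschitz dual norm on measures with controlled moments). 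We then proceed in three steps: invariance of a convex set, relative compactness of the image, and continuity.

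\textbf{Step 1: a priori bounds and the invariant set.} Corollary~\ref{cor:bdd_relative_entropy} gives $H(P^{\bs{\mu}}|R^{\bs{\mu}})\le M$ uniformly in $\bs{\mu}$. Lemma~\ref{lemma:entropy_is_kinetic_energy} then yields a control $\alpha$ with $\E^{P^{\bs{\mu}}}\int_0^T|\alpha_t|^2dt\le 2M$. Under $P^{\bs{\mu}}$ we have
\[
dV_t=(f(t,Z_t,\mu_t)+\alpha_t)\,dt+dW_t .
\]
The drift is of linear growth in $z$, but it also depends on $\mu_t$; by Assumption~\ref{assump:coefficient}\ref{assump:lipschitz}--\ref{assump:linear_growth}, as stated, the bound $C_f(1+|z|)$ is uniform in $\mu$. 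Gronwall's inequality therefore gives
\[
\E^{P^{\bs{\mu}}}\|Z\|_\infty^2\le K
\]
with $K$ independent of $\bs{\mu}$ (this is Proposition~\ref{prop:bdd_2nd_moment}, stated later). The same argument gives an equicontinuity estimate. For $s<t$, writing $|Z_t-Z_s|\le \int_s^t(|V_r|+|f|+|\alpha_r|)\,dr+|W_t-W_s|$ and applying Cauchy--Schwarz,
\[
\mcal{W}_1(P^{\bs{\mu}}_t,P^{\bs{\mu}}_s)\le \E|Z_t-Z_s|\le C\sqrt{t-s}
\]
uniformly in $\bs{\mu}$. We set
\[
\mcal{K}:=\{\bs{\mu}:\mu_0=\mu_{\mathrm{init}},\ \sup_t M_2(\mu_t)^2\le K,\ \mcal{W}_1(\mu_t,\mu_s)\le C|t-s|^{1/2}\}.
\]
This set is convex, because $\mcal{W}_1$ is jointly convex and $M_2^2$ is linear. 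It is closed, and by Arzel\`a--Ascoli it is compact: the uniform second moments make the time slices lie in a $\mcal{W}_1$-compact set, and the equicontinuity controls the time dependence. The bounds above show $\Psi(\mcal{K})\subset\mcal{K}$.

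\textbf{Step 2: continuity of $\Psi$ on $\mcal{K}$.} This is the main obstacle. Let $\bs{\mu}^k\to\bs{\mu}$ uniformly in $\mcal{W}_1$. The path measures $P^{\bs{\mu}^k}$ have uniformly bounded $\E\|Z\|_\infty^2$ and uniformly controlled modulus, so they are tight on $\Omega$. Take a subsequential limit $P$. We show $P=P^{\bs{\mu}}$; uniqueness then gives convergence of the full sequence, and the moment bounds upgrade weak convergence of the marginals to uniform $\mcal{W}_1$ convergence.

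First, $P$ is admissible: $P_0=\mu_{\mathrm{init}}$ and $P_T=\mu_{\mathrm{fin}}$.

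Second, $R^{\bs{\mu}^k}\to R^{\bs{\mu}}$ in a strong sense. By Girsanov's theorem and the Lipschitz dependence in $\mu$,
\[
H(R^{\bs{\mu}}|R^{\bs{\mu}^k})=\tfrac12\E\int_0^T|f(t,Z_t,\mu_t)-f(t,Z_t,\mu^k_t)|^2dt\le C\sup_t\mcal{W}_1(\mu_t,\mu^k_t)^2\to0,
\]
and symmetrically in the other direction. Joint lower semicontinuity of relative entropy then gives
\[
H(P|R^{\bs{\mu}})\le\liminf_k H(P^{\bs{\mu}^k}|R^{\bs{\mu}^k}).
\]

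Third, we need the recovery (upper) bound: for any admissible $Q$ with $H(Q|R^{\bs{\mu}})<\infty$,
\[
\limsup_k H(Q|R^{\bs{\mu}^k})\le H(Q|R^{\bs{\mu}}).
\]
Using the change-of-drift computation of Remark~\ref{remark:cost_coupling}, if $Q$ has control $\alpha$ relative to $R^{\bs{\mu}}$, then
\[
H(Q|R^{\bs{\mu}^k})=\tfrac12\E^Q\int_0^T|\alpha_t+f(t,Z_t,\mu_t)-f(t,Z_t,\mu^k_t)|^2dt,
\]
and this converges to $H(Q|R^{\bs{\mu}})$ because the perturbation is bounded by $C_f\sup_t\mcal{W}_1(\mu^k_t,\mu_t)$. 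This computation needs integrability of $\E^Q\int|\alpha|^2$, which finite entropy provides. If $Q$ does not have finite second moments, a truncation argument may be needed; alternatively, we can restrict to the minimizer $Q=P^{\bs{\mu}}$, which has the moment bounds.

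Combining the two bounds, $P$ is a minimizer of \ref{eq:SB_mu}, so $P=P^{\bs{\mu}}$ by uniqueness. The delicate points are the Girsanov identification of drift differences under a possibly non-Markov $Q$, and the interchange of limits. We handle both by working with the explicit quadratic formulas above rather than with densities.

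\textbf{Step 3: conclusion.} $\Psi$ is a continuous self-map of the nonempty convex compact set $\mcal{K}$ in a locally convex space. Schauder's theorem therefore yields a fixed point $\bs{\mu}=\Psi(\bs{\mu})$, which is a mean-field equilibrium.
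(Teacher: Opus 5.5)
Your proof is correct and follows the same architecture as the paper's. Both apply Schauder to $\Psi$, and both prove continuity of $\Psi$ through tightness, joint lower semicontinuity of $H$, the Girsanov recovery bound $H(P^{\bs{\mu}}|R^{\bs{\mu}^k})\to H(P^{\bs{\mu}}|R^{\bs{\mu}})$, and uniqueness. Both get compactness from Arzel\`a--Ascoli.

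The genuine difference is the uniform second-moment bound.

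\begin{itemize}
\item The paper derives Proposition~\ref{prop:bdd_2nd_moment} from the Donsker--Varadhan inequality. It pairs $H(P^{\bs{\mu}}|R^{\bs{\mu}})\le M$ with the uniform exponential moment $\E^{R^{\bs{\mu}}}[e^{\vartheta\|Z\|_\infty^2}]\le C$ of Lemma~\ref{lemma:bdd_exp_moment}. This is exactly where the Gaussian integrability of $\mu_{\mathrm{init}}$ in Assumption~\ref{assump:initial_and_target}~\ref{assump:second_moment_and_exp_moment} is used.
\item You instead apply Gronwall to the controlled dynamics under $P^{\bs{\mu}}$, using $\E^{P^{\bs{\mu}}}\int_0^T|\alpha_t|^2\,dt=2H(P^{\bs{\mu}}|R^{\bs{\mu}})\le 2M$. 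The paper uses this same decomposition, but only for equicontinuity in Lemma~\ref{lemma:P_continuous}.
\end{itemize}

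Your route is valid and needs only $M_2(\mu_{\mathrm{init}})<\infty$, which is already required for $M$. So for this theorem it shows the exponential-moment hypothesis can be dropped. The paper's route, in exchange, needs only the entropy bound plus information on the uncontrolled reference process.

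The remaining differences are cosmetic:
\begin{itemize}
\item You build equicontinuity and $\mu_0=\mu_{\mathrm{init}}$ into a compact convex invariant set $\mcal{K}$. The paper uses a closed convex set with relatively compact image.
\item You get $R^{\bs{\mu}^k}\to R^{\bs{\mu}}$ from $H(R^{\bs{\mu}}|R^{\bs{\mu}^k})\to 0$. The paper uses the synchronous coupling \eqref{eq:reference_continuity}.
\end{itemize}

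Two details to fix.
\begin{itemize}
\item Tightness of $\{P^{\bs{\mu}^k}\}$ on $\Omega$ does not follow from the marginal bound $\E|Z_t-Z_s|\le C\sqrt{t-s}$ alone. You need a pathwise modulus such as $\E[\sup_{|t-s|\le\delta}|Z_t-Z_s|^2]\le C\delta^\beta$, as in Lemma~\ref{lemma:modulus_Z}. It follows from the same decomposition: apply Cauchy--Schwarz to $\int|\alpha|$ and use the modulus of $W^{P}$.
\item Your truncation caveat in the recovery step is unnecessary. The drift perturbation satisfies $|f(t,z,\mu_t)-f(t,z,\mu^k_t)|\le C_f\sup_t\mcal{W}_1(\mu_t,\mu^k_t)$ uniformly in $z$. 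So $\E^Q\int_0^T|\alpha_t|^2\,dt<\infty$ alone gives the convergence, with no moment condition on $Q$.
\end{itemize}
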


\begin{remark}[On uniqueness/non-uniqueness of MFE]
\label{remark:non-uniqueness}
Uniqueness of the MFE fails in general under our assumptions: Appendix~\ref{appendix:non-uniqueness} provides a drift $f$ satisfying Assumptions~\ref{assump:coefficient} and~\ref{assump:bounded_derivative} for which a continuum of MFEs exists.
Nonuniqueness in the absence of monotonicity also occurs in finite-state mean field games; see~\cite{bayraktar2020nonuniqueness} for a two-state model with anti-monotone running costs.
Whether uniqueness holds for the modified Cucker--Smale model of Example~\ref{example:Cucker--Smale} remains open. The standard sufficient conditions do not apply to it: from the form of the Lagrangian $\frac12|\beta - f(z,\mu)|^2$ of Remark~\ref{remark:cost_coupling}, one checks that neither the Lasry--Lions monotonicity condition nor the displacement monotonicity condition of~\cite{meszaros2024mean} is satisfied by this model.
\end{remark}

\begin{remark}[Correspondence between MFE and solutions of the MFP]
\label{remark:MFP_SB}
We make precise the correspondence, stated in Section~\ref{sec:intro}, between MFE and solutions of the MFP.
Fix a flow $\bs{\mu} \in C([0,T];\mcal{P}_1(\R^{2d}))$ and write $R := R^{\bs{\mu}}$, $P := P^{\bs{\mu}}$, $p_T := p^{\bs{\mu}}_T$ and $R_{0,T} := (Z_0,Z_T)_{\#}R$, so that $R_{0,T}(dz,dw) = \mu_{\mathrm{init}}(dz)\,p_T(z,w)\,dw$.

\smallskip\noindent
\textit{(i) Schr\"odinger potentials.}
Denoting $\pi := (Z_0,Z_T)_{\#}P$, we have $dP/dR = (d\pi/dR_{0,T})(Z_0,Z_T)$, and $\pi$ solves the EOT problem between $\mu_{\mathrm{init}}$ and $\mu_{\mathrm{fin}}$ with cost $c := -\log p_T$  (see Section~\ref{sec:sketch_consistency_error}).
By Lemma~\ref{lemma:kernel_bound} and the Gaussian upper bound in \cite[Theorem~1.1]{delarue_density_estimates}, $|c(z,w)| \le C(1+|z|^2+|w|^2)$, so $c \in L^1(\mu_{\mathrm{init}} \otimes \mu_{\mathrm{fin}})$ and \cite[Theorem~4.2]{nutz2021introduction} yields measurable functions $\varphi, \psi^{\mathrm{fin}} \colon \R^{2d} \to (0,\infty)$ with $\log\varphi \in L^1(\mu_{\mathrm{init}})$, $\log\psi^{\mathrm{fin}} \in L^1(\mu_{\mathrm{fin}})$, unique a.e.\ up to $(a\varphi, \psi^{\mathrm{fin}}/a)$, $a > 0$, such that $\pi = (\varphi \otimes \psi^{\mathrm{fin}})\, e^{-c}\,\mu_{\mathrm{init}} \otimes \mu_{\mathrm{fin}}$.
Since $R_{0,T} = e^{-c}\,\mu_{\mathrm{init}} \otimes \mathrm{Leb}$, this is the product form $dP/dR = \varphi(Z_0)\,\psi(Z_T)$ with
\begin{equation*}
  \varphi > 0, \qquad \psi := \psi^{\mathrm{fin}}\, p_{\mathrm{fin}} \ge 0, \qquad \{\psi > 0\} = \{p_{\mathrm{fin}} > 0\}.
\end{equation*}
Accordingly the incentive $\hat{g} := -\log\psi \colon \R^{2d} \to (-\infty,+\infty]$ equals $+\infty$ outside the support of $\mu_{\mathrm{fin}}$, and $\hat{g} \in L^1(\mu_{\mathrm{fin}})$ since $\log p_{\mathrm{fin}} \in L^1(\mu_{\mathrm{fin}})$ by Assumption~\ref{assump:initial_and_target}.

\smallskip\noindent
\textit{(ii) From an MFE to a solution of the MFP.}
Consider the unconstrained problem \eqref{eq:SB_unconstrained}, i.e.\ the minimization of
\begin{equation*}
  J(Q) := H(Q\,|\,R) + \E^{Q}\bigl[\hat{g}(Z_T)\bigr]
\end{equation*}
over $Q \in \mcal{P}(\Omega)$ with $Q_0 = \mu_{\mathrm{init}}$, with the convention $J(Q) := +\infty$ whenever $H(Q\,|\,R) = \infty$ or $Q(\hat{g}(Z_T) = +\infty) > 0$. We claim that $P$ is the unique minimizer of $J$; in particular the terminal constraint $Q_T = \mu_{\mathrm{fin}}$ is automatically satisfied by the minimizer.
Indeed, let $H(Q\,|\,R) < \infty$, $Q_0 = \mu_{\mathrm{init}}$ and $Q(\psi(Z_T) = 0) = 0$. Then $Q \ll P$, because $\varphi(Z_0) > 0$ $Q$-a.s.\ by $Q_0 = \mu_{\mathrm{init}}$, and
\begin{equation*}
  \log\frac{dQ}{dR} = \log\frac{dQ}{dP} + \log\varphi(Z_0) + \log\psi(Z_T), \qquad Q\text{-a.s.}
\end{equation*}
The left-hand side is $Q$-integrable since $H(Q\,|\,R) < \infty$, $\log\varphi(Z_0)$ is $Q$-integrable since $\log\varphi \in L^1(\mu_{\mathrm{init}})$, and $(\log\tfrac{dQ}{dP})^{-}$ is $Q$-integrable. Hence $(\log\psi(Z_T))^{+} \in L^1(Q)$, so that $\E^{Q}[\hat{g}(Z_T)]$ is well defined in $(-\infty,+\infty]$, and taking expectations gives
\begin{equation}
\label{eq:gibbs_identity}
  H(Q\,|\,R) + \E^{Q}\bigl[\hat{g}(Z_T)\bigr] = H(Q\,|\,P) + \int \log\varphi\, d\mu_{\mathrm{init}}.
\end{equation}
The last term is a finite constant independent of $Q$, so $J(Q) \ge J(P)$ with equality if and only if $Q = P$. Thus, once we obtain an MFE $\hat{\bs{\mu}}$, $\hat{g} =- \log \psi$ is an incentive function constituting a solution of the MFP as discussed in Section~\ref{sec:intro}.

\smallskip\noindent
\textit{(iii) From a solution of the MFP to an MFE.}
Conversely, let $(\hat{\alpha}, \hat{g}, \hat{\bs{\mu}})$ solve the MFP, where $\hat{g}$ is such that $\int \hat{g}\, d\mu_{\mathrm{fin}}$ is finite (otherwise \eqref{eqn:optimality} is void), and let $\hat{P}$ be the law of the controlled process. By Girsanov's theorem, $H(\hat{P}\,|\,R^{\hat{\bs{\mu}}}) = \E[\frac12\int_0^T |\hat{\alpha}_t|^2\,dt] < \infty$, and $\hat{P}_T = \hat{\mu}_T = \mu_{\mathrm{fin}}$ by \eqref{eqn:consistency} and \eqref{eqn:terminal}. Let $Q$ be admissible for $\mathrm{SB}(\hat{\bs{\mu}})$ with $H(Q\,|\,R^{\hat{\bs{\mu}}}) < \infty$. Then the control $\alpha^{Q}$ associated with $Q$ by Lemma~\ref{lemma:entropy_is_kinetic_energy} is admissible and $\E^{Q}[\hat{g}(Z_T)] = \int \hat{g}\,d\mu_{\mathrm{fin}} = \E^{\hat{P}}[\hat{g}(Z_T)]$, so that \eqref{eqn:optimality} reduces to $H(\hat{P}\,|\,R^{\hat{\bs{\mu}}}) \le H(Q\,|\,R^{\hat{\bs{\mu}}})$. Hence $\hat{P}$ solves $\mathrm{SB}(\hat{\bs{\mu}})$, $\hat{P} = P^{\hat{\bs{\mu}}}$ by uniqueness, and $\hat{\mu}_t = \hat{P}_t = P^{\hat{\bs{\mu}}}_t$ for all $t$: $\hat{\bs{\mu}}$ is an MFE.
\end{remark}

\subsection{Soft constraint problem}
For $\lambda > 0$, we consider the following soft constraint problem
\begin{equation*}
    \inf_{Q \in \mcal{P}(\Omega)} \left\{
    H(Q | R^{\bs{\mu}}) + \lambda G(Q_T) \,:\, Q_0 = \mu_{\mathrm{init}}
    \right\} \tag*{$\lambda$-SB($\bs{\mu}$)} \label{eq:soft_SB_mu}
\end{equation*}
for a terminal cost function $G$ satisfying the conditions:
\begin{assump} \label{assump:terminal_cost}
    $G: \mcal{P}(\R^{2d}) \to [0,\infty]$ satisfies the following:
\begin{enumerate}[label=\textup{(\roman*)}, ref=\textup{(\roman*)}]
    \item $G$ is convex,
    \item $G$ is l.s.c.\ with respect to the weak topology, and
    \item \label{assump:g_zero}$G(\nu) = 0 \iff \nu = \mu_{\mathrm{fin}}$.
\end{enumerate}
\end{assump}

The next lemma is used to guarantee the well-posedness of \ref{eq:soft_SB_mu}. The proof is provided in Appendix~\ref{appendix:proof_compact_sublevel}.
\begin{lemma} \label{lemma:compact_sublevel}
For any $c >0 $, the sublevel set $\left\{ Q \in \mcal{P}(\Omega) :  H(Q|R^{\bs{\mu}}) \leq c  \right\}$ is compact in the weak topology.
\end{lemma}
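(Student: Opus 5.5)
The plan is the classical argument that sublevel sets of relative entropy with respect to a fixed probability measure on a Polish space are weakly compact: prove tightness via Prokhorov, then closedness via lower semicontinuity. Fix $\bs{\mu}$ and $c>0$, and write $R := R^{\bs{\mu}}$ and $\mcal{K}_c := \{Q \in \mcal{P}(\Omega) : H(Q|R) \le c\}$.

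\emph{Step 1 (closedness).} We use the Donsker--Varadhan variational formula
\begin{equation*}
H(Q|R) = \sup_{\phi \in C_b(\Omega)} \Bigl\{ \int \phi \, dQ - \log \int e^{\phi} \, dR \Bigr\}.
\end{equation*}
Each map $Q \mapsto \int \phi\, dQ - \log\int e^\phi\, dR$ is weakly continuous, so $H(\cdot|R)$ is weakly lower semicontinuous as a supremum of continuous maps. Hence $\mcal{K}_c$ is weakly closed.

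\emph{Step 2 (tightness).} Since $\Omega = C([0,T];\R^{2d})$ is Polish, the single measure $R$ is tight. Given $\varepsilon>0$, we choose a compact set $K \subset \Omega$ with $R(K^c) \le \delta$, where $\delta$ will be fixed below. The key input is the entropy inequality $\int \phi\, dQ \le H(Q|R) + \log\int e^{\phi}\, dR$, applied with $\phi = L\,\1_{K^c}$ for $L > 0$. This gives
\begin{equation*}
L\, Q(K^c) \le c + \log\bigl(1 + (e^L - 1) R(K^c)\bigr) \le c + \log\bigl(1 + e^L \delta\bigr).
\end{equation*}
We take $L = 2c/\varepsilon$ (assuming $c \ge \varepsilon$; otherwise use $L = 2$) and $\delta = e^{-L}$. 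Then $Q(K^c) \le (c + \log 2)/L \le \varepsilon$ up to adjusting constants, uniformly over $Q \in \mcal{K}_c$. Because $\phi$ is bounded but not continuous, the inequality must be justified either directly through the Gibbs variational principle for bounded measurable functions or through a monotone approximation. Once it holds, $\mcal{K}_c$ is tight.

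\emph{Step 3 (conclusion).} By Prokhorov's theorem, the tight family $\mcal{K}_c$ is relatively compact in the weak topology. Combined with Step 1, $\mcal{K}_c$ is compact, and sequentially compact since the weak topology on $\mcal{P}(\Omega)$ is metrizable.

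I expect the only delicate point to be the rigorous justification of the entropy inequality for the bounded measurable test function $\phi$ in Step 2. If $Q \ll R$, it follows from Jensen's inequality applied to $dQ/dR$; if $Q \not\ll R$, then $H(Q|R)=\infty$ and $Q \notin \mcal{K}_c$, so nothing needs to be checked. The argument uses no specific properties of the kinetic reference dynamics. It needs only that $R$ is a probability measure on a Polish space.
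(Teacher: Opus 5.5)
Your proof is correct and is essentially the paper's argument. The paper proves $Q(A)\le \bigl(H(Q|R^{\bs{\mu}})+\log 2\bigr)/\log\bigl(1/R^{\bs{\mu}}(A)\bigr)$ by Jensen on the partition $\{A,A^c\}$, which is exactly your Gibbs inequality with $\phi=L\1_{A}$ and $L=\log(1/R^{\bs{\mu}}(A))$; it then transfers the tightness of the single measure $R^{\bs{\mu}}$ and concludes with lower semicontinuity and Prohorov, as you do. Your stated constants do not work: $L=2c/\varepsilon$ only gives $Q(K^c)\le\varepsilon$ when $c\ge\log 2$, and $L=2$ fails for small $\varepsilon$. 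Taking $L=(c+\log 2)/\varepsilon$ and $\delta=e^{-L}$ gives $Q(K^c)\le\varepsilon$ directly; this is precisely the paper's choice $\delta=\exp(-(c+\log 2)/\varepsilon)$.
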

Thanks to this compactness result, \ref{eq:soft_SB_mu} is guaranteed to be well-posed.
\begin{proposition} \label{prop:wellposedness_soft_SB}
    Let Assumption~\ref{assump:initial_and_target}, \ref{assump:coefficient}, and \ref{assump:terminal_cost} hold. Then, for $\bs{\mu} \in C([0,T], \mcal{P}_1(\R^{2d}))$,  there exists a unique solution $P^{\lambda,\bs{\mu}} \in \mcal{P}(\Omega)$ to \textup{\ref{eq:soft_SB_mu}}.
\end{proposition}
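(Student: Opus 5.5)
We argue by the direct method: show that the objective is finite somewhere, that a minimizing sequence is precompact, and that the objective is lower semicontinuous, so that a minimizer exists; strict convexity then gives uniqueness. Write
\[
\mathcal{J}(Q) := H(Q \,|\, R^{\bs{\mu}}) + \lambda G(Q_T)
\]
on the set $\mathcal{A} := \{Q \in \mcal{P}(\Omega) : Q_0 = \mu_{\mathrm{init}}\}$.

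\emph{Finiteness of the infimum.} By Proposition~\ref{prop:well-posedness_SB_mu}, $P^{\bs{\mu}} \in \mathcal{A}$ satisfies $P^{\bs{\mu}}_T = \mu_{\mathrm{fin}}$ and $H(P^{\bs{\mu}}|R^{\bs{\mu}}) \le M$ by Corollary~\ref{cor:bdd_relative_entropy}. Assumption~\ref{assump:terminal_cost}\ref{assump:g_zero} gives $G(\mu_{\mathrm{fin}}) = 0$, so $\mathcal{J}(P^{\bs{\mu}}) \le M$ and $\inf_{\mathcal{A}} \mathcal{J} \le M < \infty$.

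\emph{Existence.} Take a minimizing sequence $(Q^k) \subset \mathcal{A}$ with $\mathcal{J}(Q^k) \le M+1$. Since $G \ge 0$, we get $H(Q^k|R^{\bs{\mu}}) \le M+1$, and Lemma~\ref{lemma:compact_sublevel} yields a subsequence converging weakly to some $Q^*$. The constraint $Q_0 = \mu_{\mathrm{init}}$ is preserved in the limit, because $Q \mapsto Q_0$ is weakly continuous (the projection $\omega \mapsto \omega_0$ is continuous); so $Q^* \in \mathcal{A}$. The relative entropy $H(\cdot|R^{\bs{\mu}})$ is weakly l.s.c. by the Donsker--Varadhan variational formula. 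The map $Q \mapsto Q_T$ is weakly continuous, and $G$ is l.s.c. by Assumption~\ref{assump:terminal_cost}, so $Q \mapsto G(Q_T)$ is l.s.c. Hence $\mathcal{J}(Q^*) \le \liminf_k \mathcal{J}(Q^k) = \inf_{\mathcal{A}} \mathcal{J}$, and $Q^*$ is a minimizer.

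\emph{Uniqueness.} The set $\mathcal{A}$ is convex. The map $Q \mapsto G(Q_T)$ is convex because $Q \mapsto Q_T$ is linear and $G$ is convex. On $\{H < \infty\}$, $H(\cdot|R^{\bs{\mu}})$ is strictly convex, by strict convexity of $x \log x$. Now suppose $Q^1 \ne Q^2$ are both minimizers with finite value. Their midpoint lies in $\mathcal{A}$, and by strict convexity of $H$ and convexity of $G(\cdot_T)$ it has a strictly smaller value of $\mathcal{J}$, a contradiction. One small point must be checked: the value $\lambda G(Q_T)$ must be finite at minimizers so the strict inequality survives. This holds because $\mathcal{J} \le M$ there.

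The main (mild) obstacle is the lower semicontinuity of the composite objective on path space. It is handled by the continuity of the marginal maps together with the l.s.c. in Assumption~\ref{assump:terminal_cost}; the compactness itself is supplied by Lemma~\ref{lemma:compact_sublevel}.
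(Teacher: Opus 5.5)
Your proposal is correct and follows essentially the same direct-method argument as the paper: finiteness of the value via $P^{\bs{\mu}}$ and $G(\mu_{\mathrm{fin}})=0$, compactness from Lemma~\ref{lemma:compact_sublevel}, lower semicontinuity of $H$ and $G$, and uniqueness from strict convexity of $H$ together with convexity of $G$. You also make explicit two points the paper leaves implicit: the weak continuity of $Q \mapsto Q_0, Q_T$ (so the limit stays admissible), and the finiteness of the objective at minimizers (so the strict convexity inequality is not lost to $+\infty$).
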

\begin{proof}
The uniqueness is immediate from the strict convexity of the relative entropy and the convexity of $G$.
Moreover, the value of \ref{eq:soft_SB_mu} is finite. Indeed, take the solution $P^{\bs{\mu}}$ to \ref{eq:SB_mu}. Then, $H(P^{\bs{\mu}} |  R^{\bs{\mu}} ) < \infty$ and $G(P^{\bs{\mu}}_T) = 0$ by Assumption~\ref{assump:terminal_cost} \ref{assump:g_zero} since $P^{\bs{\mu}}_T = \mu_{\mathrm{fin}}$.

We now show the existence. Observe that the left hand side of the inclusion
\begin{equation*}
\left\{ Q \in \mcal{P}(\Omega) :  H(Q|R^{\bs{\mu}}) + \lambda G(Q_T)\leq c  \right\}
\subset \left\{ Q \in \mcal{P}(\Omega) :  H(Q|R^{\bs{\mu}}) \leq c  \right\}
\end{equation*}
is compact with respect to the weak topology.
Indeed, we know that the right hand side is compact (see, Lemma~\ref{lemma:compact_sublevel}), and thus it suffices to check that the left hand side is closed with respect to the weak topology, but this is immediate by $H$ and $G$ being l.s.c.

Take a minimizing sequence $\{P^n\}_{n\in \N} \subset \mcal{P}(\Omega)$ with $P^n_0 = \mu_{\mathrm{init}}$ such that
\begin{equation*}
\lim_{n\to \infty} ( H(P^n | R^{\bs{\mu}}) + \lambda G(P^n_T) )
= \inf_{Q \in \mcal{P}(\Omega)} \left\{
    H(Q | R^{\bs{\mu}}) + \lambda G(Q_T) \,:\, Q_0 = \mu_{\mathrm{init}}
    \right\}.
\end{equation*}
By the compactness, there exists a subsequence of $\{P^n\}_{n\in \N}$, which we still denote by the same, converging to some $P^\star \in \mcal{P}(\Omega)$ with $P^\star_0 = \mu_{\mathrm{init}}$ in the weak topology. This $P^\star$ is a solution to \ref{eq:soft_SB_mu}.
Indeed, since $H$ and $G$ are l.s.c., we have
\begin{equation*}
    H(P^\star | R^{\bs{\mu}}) + \lambda G(P^\star_T)
    \leq \liminf_{n \to \infty} ( H(P^n | R^{\bs{\mu}}) + \lambda G(P^n_T) )
    =\inf_{Q \in \mcal{P}(\Omega)} \left\{
    H(Q | R^{\bs{\mu}}) + \lambda G(Q_T) \,:\, Q_0 = \mu_{\mathrm{init}}
    \right\}. \qedhere
\end{equation*}
\end{proof}

As in the hard-constraint case, we define the equilibrium:
\begin{definition}
    We call $\bs{\mu} \in C([0,T]; \mcal{P}(\R^{2d}))$ a $\lambda$-mean-field equilibrium ($\lambda$-MFE), if it satisfies the following fixed point property:
    \begin{equation*}
        P^{\lambda,\bs{\mu}}_t = \mu_t, \quad \forall {t \in [0,T]}.
    \end{equation*}
\end{definition}

\begin{remark}[Relation with standard MFGs]\label{remark:potential_argument}
Problem \ref{eq:soft_SB_mu} is a control problem of McKean--Vlasov type, since the terminal cost depends on the law $Q_T$ of the controlled process at time $T$ rather than on its terminal state. Nevertheless, a $\lambda$-MFE is an equilibrium of a \textit{standard} MFG. Suppose that $G$ admits a linear functional derivative
$\frac{\delta G}{\delta \nu}\colon \mathcal{P}_2(\mathbb{R}^{2d})\times\mathbb{R}^{2d}\to\mathbb{R}$
(cf.\ \cite[Definition 5.43]{carmona2018probabilistic_I}).\footnote{For the purpose of this remark it suffices that the one-sided directional derivative $\frac{d}{d\varepsilon}G(\mu_T+\varepsilon(\nu-\mu_T))|_{\varepsilon=0^+}$ exists at the equilibrium terminal law $\mu_T$ and is represented by a function
$\frac{\delta G}{\delta\nu}(\mu_T,\cdot)$; for $G(\cdot)=H(\cdot\,|\,\mu_{\mathrm{fin}})$ this function is $\log\frac{d\mu_T}{d\mu_{\mathrm{fin}}}$.}
Then $\bs{\mu}$ is a $\lambda$-MFE if and only if it is an MFE of the MFG in which the representative agent solves
\begin{equation*}
\inf_{Q\in\mathcal{P}(\Omega)}
\left\{ H(Q\,|\,R^{\bs{\mu}}) + \lambda \int_{\mathbb{R}^{2d}} \frac{\delta G}{\delta \nu}(\mu_T, z)\, Q_T(dz)
\ :\ Q_0=\mu_{\mathrm{init}}\right\},
\end{equation*}
for a given flow $\bs{\mu}$.
By Girsanov's theorem, this is equivalent to the classical stochastic control problem:
\begin{equation*}
\inf_{\alpha}\
\mathbb{E}\left[
\int_0^T \frac{1}{2}|\alpha_t|^2\,dt + \lambda\, \frac{\delta G}{\delta \nu}(\mu_T, Z_T)
\right],
\end{equation*}
where $Z = (X,V)$ is the controlled kinetic process \eqref{eq:controlled_SDE} with the terminal cost $\lambda\frac{\delta G}{\delta\nu}(\mu_T,\cdot)$.

For instance, for the squared $L^2$-distance $G(\nu)=\frac12\|\nu-\mu_{\mathrm{fin}}\|_{L^2}^2$
(identifying measures with their densities), the terminal cost of the associated MFG is
$\lambda\,(\mu_T(x)-\mu_{\mathrm{fin}}(x))$. With $\lambda=1/\varepsilon$, this is exactly the
penalized planning problem of \cite{poretta2014planning, Achdou2012planning,
mimikos2024regularity_mfp}. Our choice $G(\cdot) =H(\cdot\,|\,\mu_{\mathrm{fin}})$ in Assumption~\ref{assump:Talagrand} below is its entropic counterpart, with terminal cost $\lambda\log\frac{d\mu_T}{d\mu_{\mathrm{fin}}}$.

The mechanism behind this equivalence is the same as in potential (or variational) MFGs, see \cite[Section~2.6]{lasry2007mean}.
Here only the terminal cost has this potential structure, while the interaction through the drift of $R^{\bs{\mu}}$ does not, which is why the existence of a $\lambda$-MFE still requires the fixed-point argument of Section~\ref{sec:proof_existence}.
\end{remark}

We are ready to state our second main theorem.
\begin{theorem} \label{thm:qualitative}
Let Assumptions~\ref{assump:initial_and_target}, \ref{assump:coefficient}, and \ref{assump:terminal_cost} hold.
For any $\lambda > 0$, there exists a $\lambda$-MFE $\bs{\mu} \in  C([0,T];\mcal{P}_1(\R^{2d}))$.
Additionally, for each $n$, let $\bs{\mu}^n \in  C([0,T];\mcal{P}_1(\R^{2d})) $ be an $n$-MFE.
Then, there exists a subsequence $\left\{\bs{\mu}^{n_k}\right\}_{k \in \N}$ such that $\bs{\mu}^{n_k} \to \bs{\mu}^\infty$ for some MFE $\bs{\mu}^\infty \in C([0,T]; \mcal{P}_1(\R^{2d}))$, i.e.
\begin{equation*}
    P^{\bs{\mu}^\infty}_t = \bs{\mu}^\infty_t, \quad t \in [0,T].
\end{equation*}
\end{theorem}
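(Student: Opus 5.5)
We split the statement into existence for fixed $\lambda$ and the limit $n\to\infty$, and treat both with the same compactness machinery as in the proof of Theorem~\ref{thm:existence_mfe}.

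\textbf{Existence of a $\lambda$-MFE.} We apply Schauder's fixed-point theorem to the map $\Psi^\lambda:\bs{\mu}\mapsto (P^{\lambda,\bs{\mu}}_t)_{t\in[0,T]}$ on a convex compact subset of $C([0,T];\mcal{P}_1(\R^{2d}))$. The key a priori bound is uniform in $\bs{\mu}$ and in $\lambda$. Comparing with the admissible competitor $P^{\bs{\mu}}$, for which $G(P^{\bs{\mu}}_T)=0$, and using Corollary~\ref{cor:bdd_relative_entropy}, we get
\[
H(P^{\lambda,\bs{\mu}}\,|\,R^{\bs{\mu}})+\lambda G(P^{\lambda,\bs{\mu}}_T)\le H(P^{\bs{\mu}}\,|\,R^{\bs{\mu}})\le M.
\]
From $H\le M$ we then obtain the steps that, in the hard case, came from the same entropy bound:
\begin{enumerate}
\item a uniform second-moment bound, via the Donsker--Varadhan inequality and the exponential moment of $R^{\bs{\mu}}$ (Lemma~\ref{lemma:bdd_exp_moment}, Proposition~\ref{prop:bdd_2nd_moment});
\item equicontinuity of $t\mapsto P^{\lambda,\bs{\mu}}_t$ in $\mcal{W}_1$, via the Girsanov representation of Lemma~\ref{lemma:entropy_is_kinetic_energy}, which gives $\E|V_t-V_s|\lesssim |t-s|^{1/2}(1+\sqrt{M})$ and $\E|X_t-X_s|\lesssim|t-s|$.
\end{enumerate}
Together these yield a compact convex set $\mcal{K}$, given by Arzel\`a--Ascoli and uniform integrability, that is invariant under $\Psi^\lambda$.

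\textbf{Continuity of $\Psi^\lambda$.} Let $\bs{\mu}^k\to\bs{\mu}$ and set $P^k:=P^{\lambda,\bs{\mu}^k}$. By the uniform entropy bound we extract a weakly convergent subsequence $P^k\to\bar P$; we aim to show $\bar P=P^{\lambda,\bs{\mu}}$ by a $\Gamma$-convergence argument.
\begin{enumerate}
\item \emph{Liminf inequality:} $H(\bar P|R^{\bs{\mu}})\le\liminf H(P^k|R^{\bs{\mu}^k})$. This follows from joint lower semicontinuity of relative entropy, since $R^{\bs{\mu}^k}\to R^{\bs{\mu}}$ weakly by Lipschitz dependence of $f$ on $\mu$ and strong well-posedness. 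The $G$ term is handled by its lower semicontinuity.
\item \emph{Recovery sequence:} for any $Q$ with finite cost, take $Q^k$ with $dQ^k/dR^{\bs{\mu}^k}=(dQ/dR^{\bs{\mu}})$ evaluated along the same control. Concretely, we use the control form of Remark~\ref{remark:cost_coupling}: keep the same law of the driftless process $Q$ and note
\[
H(Q|R^{\bs{\mu}^k})=\E^Q\Bigl[\tfrac12\int_0^T|\tilde\alpha_t-f(t,Z_t,\mu^k_t)|^2\,dt\Bigr]\to H(Q|R^{\bs{\mu}}).
\]
This convergence holds by dominated convergence, using the Lipschitz bound in $\mu$, and it leaves $Q_T$, hence $G(Q_T)$, unchanged.
\end{enumerate}
I expect this to mirror Lemma~\ref{lemma:P_continuous}. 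Uniqueness of the minimizer then gives convergence of the whole sequence, and hence continuity of $\Psi^\lambda$ into $\mcal{K}$.

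\textbf{Convergence as $n\to\infty$.} Let $\bs{\mu}^n$ be $n$-MFEs and $P^n:=P^{n,\bs{\mu}^n}$. The bounds above are independent of $n$, so $\{\bs{\mu}^n\}\subset\mcal{K}$ and $\{P^n\}$ is tight. Extract a subsequence with $\bs{\mu}^{n_k}\to\bs{\mu}^\infty$ in $C([0,T];\mcal{P}_1)$ and $P^{n_k}\to\bar P$ weakly. Then $\bar P_t=\mu^\infty_t$, and $\bar P_0=\mu_{\mathrm{init}}$. Moreover $G(P^n_T)\le M/n\to0$, so lower semicontinuity of $G$ gives $G(\bar P_T)=0$, i.e.\ $\bar P_T=\mu_{\mathrm{fin}}$ by Assumption~\ref{assump:terminal_cost}~\ref{assump:g_zero}. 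Thus $\bar P$ is admissible for $\mathrm{SB}(\bs{\mu}^\infty)$.

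To prove optimality, we use the liminf inequality together with the comparison $H(P^n|R^{\bs{\mu}^n})\le H(P^{\bs{\mu}^n}|R^{\bs{\mu}^n})$. This gives
\[
H(\bar P|R^{\bs{\mu}^\infty})\le\liminf_k H(P^{\bs{\mu}^{n_k}}|R^{\bs{\mu}^{n_k}}),
\]
so it remains to show $\limsup_k H(P^{\bs{\mu}^{n_k}}|R^{\bs{\mu}^{n_k}})\le H(P^{\bs{\mu}^\infty}|R^{\bs{\mu}^\infty})$. For this we use the recovery construction above with $Q=P^{\bs{\mu}^\infty}$, which preserves $Q_T=\mu_{\mathrm{fin}}$ and is therefore admissible for $\mathrm{SB}(\bs{\mu}^{n_k})$.

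Hence $\bar P=P^{\bs{\mu}^\infty}$ by uniqueness, and $\mu^\infty_t=P^{\bs{\mu}^\infty}_t$ for all $t$, so $\bs{\mu}^\infty$ is an MFE. The main obstacle is the recovery/limsup step, namely justifying the dominated convergence of the cost $\tfrac12\int|\tilde\alpha-f(\cdot,\mu^k)|^2$. This needs integrability of $|Z|^2$ under $Q$ (from the second-moment bounds) and is the place where the linear-growth and Lipschitz conditions of Assumption~\ref{assump:coefficient} are used.
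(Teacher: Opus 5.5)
Your proposal is correct and follows essentially the paper's proof. The shared steps are: the uniform bound $H(P^{\lambda,\bs{\mu}}\,|\,R^{\bs{\mu}})\le M$ obtained by comparing with $P^{\bs{\mu}}$; Schauder's theorem via the same moment and equicontinuity estimates; continuity via joint lower semicontinuity together with $H(Q\,|\,R^{\bs{\mu}^k})\to H(Q\,|\,R^{\bs{\mu}})$ for fixed $Q$; and, in the limit, $G(P^n_T)\le M/n$, admissibility of the limit, and uniqueness for SB$(\bs{\mu}^\infty)$.

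The differences are only cosmetic. The paper compares $P^n$ directly with $P^{\bs{\mu}^\infty}$, which is admissible for $n$-SB$(\bs{\mu}^n)$ since $G(\mu_{\mathrm{fin}})=0$, rather than passing through $P^{\bs{\mu}^{n_k}}$. Also, the convergence $H(Q\,|\,R^{\bs{\mu}^k})\to H(Q\,|\,R^{\bs{\mu}})$ needs no second-moment integrability, because $|f(t,z,\mu_t)-f(t,z,\mu^k_t)|\le C_f\sup_s\mcal{W}_1(\mu_s,\mu^k_s)$ holds uniformly in $z$. Using this bound also avoids invoking Remark~\ref{remark:cost_coupling}, which is stated only for bounded $f$.
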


\subsection{Quantifying the consistency error}
To obtain the rate for the consistency error as in Corollary~\ref{cor:consistency}, we impose the additional assumptions:
\begin{assump}
\label{assump:bounded_derivative}
The drift function $f$ is continuous in time and has bounded space derivatives of all orders with the bounds independent of the choice of $\bs{\mu} \in C([0,T]; \mcal{P}_1(\R^{2d}))$, i.e.\ for each $k \in \N_0$, there exists some constant $\Lambda_k > 0$ independent of $\bs{\mu}$, such that
\begin{equation*}
\sup_{t\in[0,T], z \in \R^{2d}} |\nabla^k_z f(t,z,\mu_t)| \leq \Lambda_k.
\end{equation*}
\end{assump}
\begin{example}
The variant of the Cucker--Smale model introduced in Example~\ref{example:Cucker--Smale} satisfies this assumption.
Indeed, let us consider differentiating $\frac{P_n}{(1 + x^2)^m}$, where $P_n$ is a polynomial of order $n$.
Then,
\begin{align*}
\frac{P'_n (1+x^2)^m - 2mP_n (1 + x^2)^{m-1} x  }{(1+x^2)^{2m}}
=
\frac{P'_n (1+x^2) - 2mP_n x  }{(1+x^2)^{m+1}},
\end{align*}
thus, the order of the numerator only increases by $1$, while that of the denominator increases by $2$.
Therefore, both $\Phi$ and $\Gamma$ have bounded derivatives of all orders.
\end{example}
\begin{assump}
\label{assump:Talagrand}
We assume
\begin{enumerate} [label=\textup{(\roman*)}, ref=\textup{(\roman*)}]
    \item \label{assump:KL_terminal} $G(\nu) = H(\nu | \mu_{\mathrm{fin}})$.

    \item \label{assump:log_sobolev}$\mu_{\mathrm{fin}}$ satisfies the logarithmic Sobolev inequality, that is, there exists a constant $\alpha>0$ such that, for every smooth function
    $f:\mathbb{R}^{2d}\to[0,\infty)$ with $\int_{\mathbb{R}^{2d}} f\,d\mu_{\mathrm{fin}}=1$,
    \begin{equation*}\label{eq:LSI}
      2\alpha\int_{\mathbb{R}^{2d}} f\log f\,d\mu_{\mathrm{fin}}
      \;\le\;
      \int_{\{f>0\}}\frac{|\nabla f|^{2}}{f}\,d\mu_{\mathrm{fin}} .
    \end{equation*}

    \item \label{assump:nabla_p_fin_linear_growth} $\log p_{\mathrm{fin}}$ is in $C^1$ and the gradient of it is of linear growth, that is, there exists $C_{p_{\mathrm{fin}}}$ such that $| \nabla \log p_{\mathrm{fin}} (y) | \leq C_{p_{\mathrm{fin}}}(1 + |y|)$.
\end{enumerate}
\end{assump}

\begin{remark}
Note that under \ref{assump:KL_terminal}, Assumption~\ref{assump:terminal_cost} is satisfied. This choice is the same as \cite{garg2024soft}. The assumption \ref{assump:log_sobolev} is used to invoke the so-called Talagrand inequality (or Otto--Villani inequality) (cf. \cite[Theorem~5.2]{Gigli2013logsobolev}), i.e.\ $\mcal{W}_2(\nu,  \mu_{\mathrm{fin}}) \leq  C \sqrt{H(\nu | \mu_{\mathrm{fin}})}$ for all $\nu \in \mcal{P}(\R^{2d})$ for some $C>0$ only depending on $\alpha$. The assumption~\ref{assump:nabla_p_fin_linear_growth} is solely to improve the rate in Theorem~\ref{thm:consistency_error} by the argument in Lemma~\ref{lemma:bootstrap_h_n}. Without (iii), our estimate degrades to $\mcal{O}(n^{-1/4})$.
\end{remark}
Under these assumptions, our third main result gives the upper bound for this error:
\begin{theorem} \label{thm:consistency_error}
Let Assumptions \ref{assump:initial_and_target}, \ref{assump:coefficient}, \ref{assump:bounded_derivative} and \ref{assump:Talagrand} be in force. Then there exists a constant
\[
C =C( T,d, \theta, \int e^{\theta|z|^2}d\mu_{\mathrm{init}},  M_2(\mu_{\mathrm{fin}}), H(\mu_{\mathrm{fin}} | \leb), \alpha, C_{p_{\mathrm{fin}}}, \Lambda_0, \Lambda_1, \Lambda_2 ) > 0,
\]
such that for every $\bs{\mu} \in C([0,T];\mcal{P}_1(\R^{2d}))$ and $n \in \N$, the following bounds hold:
\begin{equation*}
\mcal{W}_1(P^{n,\bs{\mu}}, P^{\bs{\mu}})
\leq
C n^{- 1/2},
\end{equation*}
where $P^{n,\bs{\mu}}$ is the solution to $n$-\textup{SB($\bs{\mu}$)}, and $P^{\bs{\mu}}$ is that of \textup{SB($\bs{\mu}$)}.
\end{theorem}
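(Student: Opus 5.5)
The plan is to reduce everything to the static entropic optimal transport problem with cost $c := -\log p^{\bs{\mu}}_T$. First, both $P^{n,\bs{\mu}}$ and $P^{\bs{\mu}}$ are mixtures of the bridges of $R^{\bs{\mu}}$: for $P^{\bs{\mu}}$ this is Remark~\ref{remark:MFP_SB}(i). For $P^{n,\bs{\mu}}$, replacing $Q$ by $\int R^{\bs{\mu}}(\cdot\,|\,Z_0=z,Z_T=w)\,Q_{0,T}(dz,dw)$ lowers the entropy and leaves $Q_T$ unchanged. Hence $P^{n,\bs{\mu}}_{0,T}=:\pi^n$ solves the EOT problem between $\mu_{\mathrm{init}}$ and $\nu^n:=P^{n,\bs{\mu}}_T$ with cost $c$, and $\pi:=P^{\bs{\mu}}_{0,T}$ solves the same problem with marginals $(\mu_{\mathrm{init}},\mu_{\mathrm{fin}})$.

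\emph{Terminal deviation.} I would bound $H(\nu^n|\mu_{\mathrm{fin}})$ by comparing $P^{n,\bs{\mu}}$ with the competitor $P^{\bs{\mu}}$:
\[
H(P^{n,\bs{\mu}}|R^{\bs{\mu}})+nH(\nu^n|\mu_{\mathrm{fin}})\le H(P^{\bs{\mu}}|R^{\bs{\mu}})\le M
\]
by Corollary~\ref{cor:bdd_relative_entropy}, which gives $H(\nu^n|\mu_{\mathrm{fin}})\le M/n$. Talagrand's inequality then yields $\mcal{W}_2(\nu^n,\mu_{\mathrm{fin}})\le Cn^{-1/2}$. Along the way I would also get uniform second (indeed exponential) moments of $\nu^n$ and $P^{n,\bs{\mu}}$, in the style of Proposition~\ref{prop:bdd_2nd_moment}.

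\emph{EOT stability.} Next I would apply the quantitative stability of Eckstein--Nutz to bound $\js(\pi^n,\pi)$ by a quantity controlled by $\mcal{W}_1$ (or $\mcal{W}_2$) of the terminal marginals, multiplied by a Lipschitz-type constant of the cost. Since $c$ is only locally Lipschitz, with $|\nabla_w c(z,w)|\le C(1+|z|+|w|)$ by Theorem~\ref{thm:linear_growth} (which I take as available, uniformly in $\bs{\mu}$), I would use a weighted version and Cauchy--Schwarz with the uniform second moments. This should give $\js(\pi^n,\pi)\le C\,\mcal{W}_2(\nu^n,\mu_{\mathrm{fin}})\le Cn^{-1/2}$.

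\emph{Lift to paths.} Because both path measures share the same bridges of $R^{\bs{\mu}}$, their divergence on path space equals the divergence of $\pi^n$ and $\pi$. Couple them by first coupling $(Z_0,Z_T)$ and then gluing bridges. Since the distance is unbounded, I would bound
\[
\mcal{W}_1(P^{n,\bs{\mu}},P^{\bs{\mu}})\le \int \|\omega\|_\infty\,|P^{n,\bs{\mu}}-P^{\bs{\mu}}|(d\omega),
\]
truncate at level $L$, and use the total-variation/JS comparison together with exponential moments of $\|Z\|_\infty$. These moments are uniform in $\bs{\mu}$, from Gaussian tails of $R^{\bs{\mu}}$ and the entropy bound $M$.

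\emph{Main obstacle.} I expect this last step and the EOT stability step to be the delicate points. A naive argument gives $\mathrm{TV}\lesssim\sqrt{\js}\lesssim n^{-1/4}$, which is precisely the degraded rate mentioned in the remark. To reach $n^{-1/2}$ I would bootstrap using Assumption~\ref{assump:Talagrand}~\ref{assump:nabla_p_fin_linear_growth}, in the spirit of Lemma~\ref{lemma:bootstrap_h_n}. The terminal potential of $P^{n,\bs{\mu}}$ is $-n\log(d\nu^n/d\mu_{\mathrm{fin}})$, and the linear growth of $\nabla\log p_{\mathrm{fin}}$ and of the score lets one show that the log-density ratio $h_n:=\log(d\pi^n/d\pi)$ is small in a linearly growing weighted sense. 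The aim is a first-order estimate on $h_n$ that gives $\int|h_n|(1+|z|+|w|)\,d\pi\lesssim n^{-1/2}$, instead of passing through a square root. This estimate on $h_n$ is the step I would work out most carefully.
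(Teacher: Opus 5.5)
\textbf{Main gap: the rate.} The chain you actually carry out stops at $n^{-1/4}$, and the repair you propose is not one that can be carried out with available tools.

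Your terminal-deviation step discards $H(P^{n,\bs{\mu}}|R^{\bs{\mu}})\ge 0$ and only gets $H(\nu^n|\mu_{\mathrm{fin}})\le M/n$. From there, $\mcal{W}_2(\nu^n,\mu_{\mathrm{fin}})\lesssim n^{-1/2}$ and $\js(\pi^n,\pi)\lesssim n^{-1/2}$, and any lift costing a square root gives only $n^{-1/4}$.

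Your fix is a first-order, linearly weighted $L^1$ bound on $\log(d\pi^n/d\pi)$. That log-ratio equals $\log(\varphi^n/\varphi)(z)+\log(\psi^n/\psi)(w)$ for the Schr\"odinger potentials. So the bound amounts to Lipschitz stability of the potentials in the terminal marginal, which is not known for this unbounded cost with non-compact marginals.

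The missing idea is to bootstrap the terminal entropy $h_n:=H(\nu^n|\mu_{\mathrm{fin}})$ itself, keeping the entropy term in the comparison with the competitor $P^{\bs{\mu}}$. Let $S(\rho)$ be the EOT value between $\mu_{\mathrm{init}}$ and $\rho$ for the cost $c=-\log p^{\bs{\mu}}_T$. Then $H(\pi^n|R^{\bs{\mu}}_{0,T})=S(\nu^n)+H(\nu^n|\leb)$, the analogous identity holds for $\pi$, and $H(\nu^n|\leb)=h_n+\int\log p_{\mathrm{fin}}\,d\nu^n$. So the optimality inequality $H(\pi^n|R^{\bs{\mu}}_{0,T})+nh_n\le H(\pi|R^{\bs{\mu}}_{0,T})$ becomes
\[
(1+n)h_n\le S(\mu_{\mathrm{fin}})-S(\nu^n)+\int\log p_{\mathrm{fin}}\,d(\mu_{\mathrm{fin}}-\nu^n)\le C\,\mcal{W}_2(\nu^n,\mu_{\mathrm{fin}})\le C\sqrt{h_n}.
\]
The first difference is handled by Eckstein--Nutz value stability, the second by the linear growth of $\nabla\log p_{\mathrm{fin}}$ (same argument as for $(\mathrm{A}_L)$), and the last step is Talagrand. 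Hence $h_n\le Cn^{-2}$. This is exactly where Assumption~\ref{assump:Talagrand}~\ref{assump:nabla_p_fin_linear_growth} enters. It gives $\js(\pi^n,\pi)\lesssim n^{-1}$, after which a square-root lift yields $n^{-1/2}$.

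\textbf{Two further steps need a concrete mechanism.}

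First, the Eckstein--Nutz value theorem controls $S$, not the plans in $\js$. Their plan-stability result is in $\mcal{W}_2$, which does not lift to path measures. To pass from value stability to $\js$, apply $\tfrac12H(\alpha|\sigma)+\tfrac12H(\beta|\sigma)=H(\tfrac{\alpha+\beta}{2}|\sigma)+\js(\alpha,\beta)$ twice: to $(\pi^n,\pi)$ with $\sigma=R^{\bs{\mu}}_{0,T}$, and to $(\nu^n,\mu_{\mathrm{fin}})$ with $\sigma=\leb$. Since $\pi_{1/2}:=\tfrac12(\pi^n+\pi)$ is admissible for the midpoint marginal $\nu_{1/2}$, this gives
\[
\js(\pi^n,\pi)\le\js(\nu^n,\mu_{\mathrm{fin}})+\tfrac12\bigl[S(\nu^n)+S(\mu_{\mathrm{fin}})\bigr]-S(\nu_{1/2})\le\tfrac12 h_n+\tfrac{L}{\sqrt2}\,\mcal{W}_2(\nu^n,\mu_{\mathrm{fin}}).
\]

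Second, your truncation/TV lift is lossy and rests on an unjustified claim. An entropy bound $H(P|R^{\bs{\mu}})\le M$ only gives $P(A)\le (M+\log 2)/\log(1/R^{\bs{\mu}}(A))$, i.e.\ polynomial tails of $\|Z\|_\infty$, not exponential moments. With second moments alone, truncation yields $\js^{1/4}$; even with exponential moments you lose a logarithmic factor. Use Cauchy--Schwarz directly instead. Set $Q_{1/2}:=\tfrac12(P^{n,\bs{\mu}}+P^{\bs{\mu}})$ and write $dP^{n,\bs{\mu}}/dQ_{1/2}=1+u$, $dP^{\bs{\mu}}/dQ_{1/2}=1-u$ with $|u|\le 1$. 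Then $\int u^2\,dQ_{1/2}\le 2\js$, so
\[
\mcal{W}_1(P^{n,\bs{\mu}},P^{\bs{\mu}})\le 2\sqrt{\bigl(M_2^2(P^{n,\bs{\mu}})+M_2^2(P^{\bs{\mu}})\bigr)\,\js(\pi^n,\pi)},
\]
which needs only the uniform second moments.
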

As an immediate consequence of this stability, we obtain the rate of the consistency error.
\begin{corollary}\label{cor:consistency}
Under the same assumptions of Theorem~\ref{thm:consistency_error}, let $C>0$ be as in the statement of Theorem~\ref{thm:consistency_error}. Let $\bs{\mu}^n$ be an $n$-MFE. Then, we have
\begin{equation*}
\sup_{t \in [0,T]}\mcal{W}_1( \mu^n_t, P^{\bs{\mu}^n}_t)
\leq
C n^{- 1/2}.
\end{equation*}
\end{corollary}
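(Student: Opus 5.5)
The corollary follows from Theorem~\ref{thm:consistency_error} with the flow frozen at $\bs{\mu} = \bs{\mu}^n$. Since $\bs{\mu}^n$ is an $n$-MFE, the defining fixed-point property gives $\mu^n_t = P^{n,\bs{\mu}^n}_t$ for every $t \in [0,T]$. The quantity to bound is therefore $\sup_{t}\mcal{W}_1(P^{n,\bs{\mu}^n}_t, P^{\bs{\mu}^n}_t)$, which compares the time-$t$ marginals of the soft- and hard-constrained Schr\"odinger bridges relative to the same reference measure $R^{\bs{\mu}^n}$.

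The first step is to show that the marginal maps are $1$-Lipschitz from $\mcal{W}_1$ on $\mcal{P}_1(\Omega)$ to $\mcal{W}_1$ on $\mcal{P}_1(\R^{2d})$. The evaluation map $\omega \mapsto \omega_t$ satisfies $|\omega_t - \omega'_t| \le \|\omega - \omega'\|_\infty$. Let $\gamma$ be any coupling of $P^{n,\bs{\mu}^n}$ and $P^{\bs{\mu}^n}$ on $\Omega\times\Omega$. Its image under $(\omega,\omega')\mapsto(\omega_t,\omega'_t)$ is a coupling of the time-$t$ marginals, and its transport cost is at most $\int\|\omega-\omega'\|_\infty\,d\gamma$. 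Taking the infimum over $\gamma$ gives
\[
\mcal{W}_1(P^{n,\bs{\mu}^n}_t, P^{\bs{\mu}^n}_t) \le \mcal{W}_1(P^{n,\bs{\mu}^n}, P^{\bs{\mu}^n})
\]
for every $t$. The right-hand side does not depend on $t$, so the same bound holds for the supremum over $t$. This step needs both path measures to lie in $\mcal{P}_1(\Omega)$. Finiteness of the path-space $\mcal{W}_1$ distance is already implicit in Theorem~\ref{thm:consistency_error}; the same moment bounds (cf.\ Proposition~\ref{prop:bdd_2nd_moment}) also give finiteness of the marginal distances.

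The second step applies Theorem~\ref{thm:consistency_error} with $\bs{\mu}=\bs{\mu}^n \in C([0,T];\mcal{P}_1(\R^{2d}))$, which gives $\mcal{W}_1(P^{n,\bs{\mu}^n},P^{\bs{\mu}^n}) \le C n^{-1/2}$. The theorem holds for every flow with a constant $C$ independent of $\bs{\mu}$, so the fact that $\bs{\mu}^n$ varies with $n$ causes no difficulty. This uniformity is the only delicate point, and it is supplied by the statement of the theorem. Combining the two steps yields the claim with the same constant $C$.
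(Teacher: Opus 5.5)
Your proposal is correct and is essentially the paper's own argument: it combines the fixed-point identity $\mu^n_t = P^{n,\bs{\mu}^n}_t$, the fact that time-$t$ evaluation is $1$-Lipschitz from $(\Omega,\|\cdot\|_\infty)$ to $\R^{2d}$, and Theorem~\ref{thm:consistency_error} applied to the frozen flow $\bs{\mu}=\bs{\mu}^n$ with its $\bs{\mu}$-independent constant. This is precisely the chain $\sup_{t}\mcal{W}_1(\mu^n_t,P^{\bs{\mu}^n}_t)\le \mcal{W}_1(P^{n,\bs{\mu}^n},P^{\bs{\mu}^n})\le C n^{-1/2}$ that the paper displays in the introduction.
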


\begin{remark} \label{remark:lower_bound}
Although we do not know about the optimality of the rate, a simple case where $f\equiv0$, and $\mu_{\mathrm{init}}, \mu_{\mathrm{fin}}$ are Gaussian  showcases that the consistency error is bounded from below by $c\,n^{-1}$ for some $c>0$. See Appendix~\ref{appendix:lower_bound}.
\end{remark}

Obtaining this rate involves the smoothness of $p^{\bs{\mu}}_T$, which is of independent interest.
\begin{theorem} \label{thm:linear_growth}
Let Assumptions~\ref{assump:coefficient} and \ref{assump:bounded_derivative} hold.
Then, $(z,w) \mapsto p_T^{\bs{\mu}}(z,w)$ is infinitely differentiable.
Furthermore, there exists a constant $C = C(T, d, \Lambda_0, \Lambda_1, \Lambda_2)>0$ such that
\begin{equation*}
    \left|\nabla_{z,w} \log p_T^{\bs{\mu}} (z,w) \right| \leq C(1 + |z|+ |w|).
\end{equation*}
\end{theorem}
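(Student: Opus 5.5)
The plan follows Sheu's approach via Fleming's logarithmic transformation, combined with the smoothness and uniform derivative bounds from Appendix~\ref{appendix:smoothness}.

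\textbf{Step 1: smoothness and a priori bounds.} By Assumption~\ref{assump:bounded_derivative}, $f$ and all its spatial derivatives are bounded uniformly in $\bs\mu$. We take the result of Appendix~\ref{appendix:smoothness} as given: $(s,t,z,w)\mapsto p^{\bs\mu}(s,t;z,w)$ is $C^\infty$ in $(z,w)$ and $C^1$ in $(s,t)$. Its proof runs Malliavin integration by parts on the driftless kinetic process $R^0$, whose Malliavin covariance is the deterministic invertible matrix $\int_0^T\binom{T-r}{1}\binom{T-r}{1}^{\!\top}dr\otimes I_d$, and then restores the drift by Girsanov. Bounded $f$ makes the Girsanov density have all moments, which yields bounds on derivatives of $p^{\bs\mu}$ uniform in $\bs\mu$. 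Together with Lemma~\ref{lemma:kernel_bound} and the Gaussian upper bound of \cite{delarue_density_estimates}, this gives $|\log p^{\bs\mu}_T(z,w)|\le C(1+|z|^2+|w|^2)$. That bound is the zeroth-order input.

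\textbf{Step 2: the $z$-gradient via the log transform.} Fix $w$ and set $J(t,z):=-\log p^{\bs\mu}(t,T;z,w)$ for $t<T$. Since $p$ solves the backward Kolmogorov equation in $(t,z)$, $J$ solves the HJB equation
\[
\partial_t J+v\cdot\nabla_xJ+f\cdot\nabla_vJ+\tfrac12\Delta_vJ-\tfrac12|\nabla_vJ|^2=0 .
\]
Thus $J$ is the value function of a control problem: the velocity drift is $f+\beta$, the running cost is $\tfrac12|\beta|^2$, and the "terminal cost" is singular at $t=T$.

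Following Sheu, I would work on $[0,T/2]$ and use the dynamic programming principle with the regular terminal datum $J(T/2,\cdot)$, whose growth is quadratic by Step 1. Differentiating the HJB equation in $z$ and applying a Bernstein-type maximum principle to a quantity such as $\phi=|\nabla_vJ|^2+\varepsilon|\nabla_xJ|^2+$ mixed terms gives linear growth of $\nabla_zJ$. The mixed terms are needed because of the degeneracy; this is the kinetic analogue of the hypocoercive or Villani-type functional.

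Alternatively, and I would try this first since it is cleaner, I would differentiate a stochastic representation. Write $\nabla_z\log p_T=\nabla_z p_T/p_T$. Express $\nabla_z p_T$ via Bismut/Malliavin integration by parts along the bridge, as in \cite[Thm 4.2]{zhang2010bismut}, as a conditional expectation of a weight whose size is controlled by $|Z_0|+|Z_T|$ plus Gaussian fluctuations. Bounding this conditional expectation requires the two-sided Gaussian bounds to control bridge moments.

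\textbf{Step 3: the $w$-gradient.} Here I would use time reversal or duality. The function $(s,w)\mapsto p(0,s;z,w)$ solves the forward Fokker--Planck equation
\[
\partial_s p+v\cdot\nabla_xp+\nabla_v\cdot(fp)-\tfrac12\Delta_vp=0 .
\]
Since $\nabla_v\cdot f$ is bounded, this is again a kinetic equation. After time reversal it has reversed transport $-v$, a drift $-f$, and a bounded zeroth-order potential $\nabla_v\cdot f$. A Feynman--Kac version of the same log-transform argument, where the bounded potential only adds a bounded running cost, then gives $|\nabla_w\log p|\le C(1+|z|+|w|)$.

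\textbf{Main obstacle.} The hard part is the degenerate gradient estimate in Step 2/3: getting pointwise linear growth of $\nabla_x J$, not just $\nabla_v J$, when the noise acts only on $v$. The $x$-derivative must be recovered through the commutator $[\partial_v,v\cdot\nabla_x]=\nabla_x$, which in the maximum-principle approach forces the twisted functional with carefully tuned time weights (such as $t$, $t^2$, $t^3$ scalings). The constants must also depend only on $\Lambda_0,\Lambda_1,\Lambda_2$ and stay uniform in $\bs\mu$.
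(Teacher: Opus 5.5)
There is a genuine gap. Your Step~1, the logarithmic transform, and the time reversal with the bounded potential $\nabla_{w_v}\cdot b$ in Step~3 all match the paper's framework. The problem is the step that carries the content of the theorem: converting the zeroth-order bound $|\log p_T(z,w)|\le C(1+|z|^2+|w|^2)$ into a pointwise bound on $\nabla_z J(0,z)$. You name two routes, but neither contains the mechanism, and Step~3 inherits the same gap.

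Two ingredients are missing. The first is the energy identity along the bridge $Q$ obtained from the Doob transform $D_u=p(u,T;Z_u,w)/p(0,T;z,w)$:
\[
\mathbb{E}^Q\Big[\tfrac12\int_0^{t}|\nabla_v J(s,Z_s)|^2\,ds\Big]
= J(0,z)-\mathbb{E}^Q\big[J(t,Z_t)\big]
\le J(0,z)+\log\sup_{y} p(t,T;y,w).
\]
For $t=T/2$ the two-sided Gaussian estimates bound this by $C(1+|z|^2+|w|^2)$. You must stop before $T$, since the bridge energy on all of $[0,T]$ is infinite.

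The second is a way to bound $|\nabla_z J(0,z)|$ by the square root of this energy. The paper does this as follows:
\begin{enumerate}
\item It writes the dynamics of $\partial_{x_\alpha}J(t,Z_t)$ and $\partial_{v_\beta}J(t,Z_t)$ under $Q$, and notes that the drift of $\partial_{v_\beta}J$ contains $\partial_{x_\beta}J$; this is your commutator.
\item It uses that drift term to eliminate $\mathbb{E}^Q\int\partial_{x_\beta}J$ from the martingale identity for $\partial_{x_\beta}J(s_0,z)$.
\item It time-averages over $s\in[s_0,s_0+\delta/3]$ and $t\in[s_0+2\delta/3,s_0+\delta]$.
\item It justifies the localization by the uniform bound on $\nabla p$ (Lemma~\ref{lemma:partial_J_bdd}).
\item It then gets the $\nabla_vJ$ bound from the $\nabla_xJ$ bound at $(u,Z_u)$ and the Markov property of the bridge.
\end{enumerate}
You correctly identify the commutator as the key, but you never turn it into an estimate.

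As sketched, neither route closes.

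In (a), a maximum principle for $\phi$ on $[0,T/2]$ controls $\phi(0,\cdot)$ through $\sup\phi(T/2,\cdot)$, that is, through $\nabla J(T/2,\cdot)$. Quadratic growth of $J(T/2,\cdot)$ says nothing about that gradient. You would need time weights vanishing at $T/2$, and a Hamilton-type comparison of $\phi$ with $J$, which is the only source of the factor $1+|z|^2+|w|^2$. You would also need a localization on the unbounded $\R^{2d}$. None of this is specified, and whether such a twisted functional closes for nonlinear $b$ is exactly the obstacle you list.

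In (b), the Bismut weight is an It\^o integral $\int h_s\cdot dW_s$ against the \emph{reference} Brownian motion. Under the bridge, $W=W^Q-\int\nabla_vJ(s,Z_s)\,ds$, so its conditional mean is $-\mathbb{E}^Q\int h_s\cdot\nabla_vJ(s,Z_s)\,ds$. It is not a centred Gaussian fluctuation controlled by $|Z_0|+|Z_T|$. If $h$ lives up to time $T$, this mean involves the divergent energy near $T$.

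Route (b) can in fact be completed, and faster than the paper's argument:
\begin{enumerate}
\item By Chapman--Kolmogorov, apply Bismut's formula on $[0,T/2]$ to $g:=p(T/2,T;\cdot,w)$, which is $C^1_b$ by Corollary~\ref{cor:bounded_derivative}.
\item Take the bounded adapted direction $h_t:=\nabla_zb(t,Z_t)\zeta_t+u_t$, where a deterministic $u$ steers $\dot\zeta^x=\zeta^v$, $\dot\zeta^v=-u$ from $\zeta_0=e$ to $\zeta_{T/2}=0$.
\item This gives $e\cdot\nabla_z\log p_T(z,w)=\mathbb{E}^Q[\int_0^{T/2}h_t\cdot dW_t]=-\mathbb{E}^Q[\int_0^{T/2}h_t\cdot\nabla_vJ(t,Z_t)\,dt]$.
\item Cauchy--Schwarz and the energy identity then give the linear bound directly, with no time-averaging.
\end{enumerate}
That computation, and the energy identity it rests on, is exactly what your proposal leaves out.
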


\section{Overview of the proofs}
\label{sec:strategy}
In this section, we provide a brief overview of the proof of each theorem provided in Section~\ref{sec:proofs}.

\subsection{Theorem~\ref{thm:existence_mfe}}
\label{sec:sketch_existence}
By Proposition~\ref{prop:well-posedness_SB_mu}, the map $\Psi: \bs{\mu} \mapsto \Psi(\bs{\mu}) := (P^{\bs{\mu}}_t)_{t \in [0,T]}$ is well-defined. Note that $\Psi(\bs{\mu})$ is not the optimal path measure $P^{\bs{\mu}}$ itself but the flow of its marginals, and that a fixed point of $\Psi$ is an MFE by definition.

We find a fixed point by Schauder's theorem: if $\mcal{E}$ is a non-empty closed convex subset of a normed vector space $\mcal{V}$ and $\Phi : \mcal{E} \to \mcal{E}$ is continuous with $\Phi(\mcal{E})$ relatively compact in $\mcal{V}$, then $\Phi$ has a fixed point in $\mcal{E}$ (cf.\ \cite[Theorem~4.32]{carmona2018probabilistic_I}). We apply it in the normed vector space $C([0,T]; \mcal{M}_1(\R^{2d}))$, where $\mcal{M}_1(\R^{2d})$ denotes the space of finite signed measures $\mu$ with $\int |z| \, d|\mu| < \infty$, equipped with the following variant of the Kantorovich--Rubinstein norm, $\| \mu \|_{\mathrm{KR}} := |\mu(\R^{2d})| + \sup \{ \int f \, d\mu \,;\, \|f\|_{\mathrm{Lip}} \leq 1, \, f(0) = 0 \}$, which induces the $\mcal{W}_1$-metric on $\mcal{P}_1(\R^{2d})$. The proof consists of the following three steps.

\subsubsection*{Step 1: A closed convex set $E \subset C([0,T]; \mcal{P}_1(\R^{2d}))$ with $\Psi(E) \subset E$.}
We take $E$ of the form \eqref{def:set_E}, that is, the set of flows whose second moments are bounded by a constant $K$ uniformly in $t \in [0,T]$, and we look for $K$ such that $\Psi(\bs{\mu}) \in E$ for all $\bs{\mu} \in E$. The key is the uniform second moment bound of Proposition~\ref{prop:bdd_2nd_moment}, which we obtain from the Donsker--Varadhan inequality: for $\alpha > 0$, $\E^{P^{\bs{\mu}}} [\alpha \sup_{t \in [0,T]} |Z_t|^2]$ is bounded by $H(P^{\bs{\mu}} | R^{\bs{\mu}})$ plus the logarithm of $\E^{R^{\bs{\mu}}} [ e^{\alpha \sup_{t \in [0,T]} |Z_t|^2} ]$. The first term is bounded by the constant $M$ of Corollary~\ref{cor:bdd_relative_entropy}, which comes from the Gaussian lower bound of the transition kernel (Lemma~\ref{lemma:kernel_bound}). For the second term, since the growth constant $C_f$ is independent of $\bs{\mu}$ and $\mu_{\mathrm{init}}$ has a finite exponential moment (Assumption~\ref{assump:initial_and_target}~\ref{assump:second_moment_and_exp_moment}), we can find $\alpha > 0$ such that it is bounded uniformly in $\bs{\mu}$ (Lemma~\ref{lemma:bdd_exp_moment}). Together with the continuity of $t \mapsto P^{\bs{\mu}}_t$ in $\mcal{W}_1$ (Lemma~\ref{lemma:P_continuous}), this gives $\Psi(E) \subset E$ for the $K$ of Proposition~\ref{prop:bdd_2nd_moment}.

\subsubsection*{Step 2: $\Psi : E \to E$ is continuous (Proposition~\ref{prop:Psi_continuous}).}
Let $\bs{\mu}^n \to \bs{\mu}^\infty$ in $C([0,T]; \mcal{P}_1(\R^{2d}))$; we prove $P^{\bs{\mu}^n} \to P^{\bs{\mu}^\infty}$ in $\mcal{P}_1(\Omega)$. By the relative compactness of $\{ P^{\bs{\mu}^n} \}_{n \in \N}$ in $\mcal{P}_1(\Omega)$ (Lemma~\ref{lemma:relativelycpt_W1}), the sequence has an accumulation point $P^\star$. Along a subsequence converging to $P^\star$, the joint lower semicontinuity of $H$ and the optimality of $P^{\bs{\mu}^n}$ for SB($\bs{\mu}^n$) yield \eqref{eq:P_star_is_optimal}, that is, $H(P^\star | R^{\bs{\mu}^\infty}) \leq \liminf_n H(P^{\bs{\mu}^\infty} | R^{\bs{\mu}^n})$. We then show $\lim_n H(P^{\bs{\mu}^\infty} | R^{\bs{\mu}^n}) = H(P^{\bs{\mu}^\infty} | R^{\bs{\mu}^\infty})$ by computing the Girsanov density of $R^{\bs{\mu}^\infty}$ with respect to $R^{\bs{\mu}^n}$, whose drift difference is bounded by $C_f \sup_{t} \mcal{W}_1(\mu^n_t, \mu^\infty_t)$ (Assumption~\ref{assump:coefficient}~\ref{assump:lipschitz}). Since $P^\star$ satisfies the boundary constraints, the uniqueness of the solution to SB($\bs{\mu}^\infty$) gives $P^\star = P^{\bs{\mu}^\infty}$, and the double subsequence argument concludes $P^{\bs{\mu}^n} \to P^{\bs{\mu}^\infty}$ in $\mcal{P}_1(\Omega)$.

\subsubsection*{Step 3: $\Psi(E)$ is relatively compact (Proposition~\ref{prop:Psi_precompact_range}).}
We apply the Arzel\`a--Ascoli theorem, for which we check: (i) for each $t \in [0,T]$, $\{ P^{\bs{\mu}}_t \}_{\bs{\mu} \in E}$ is relatively compact in $\mcal{P}_1(\R^{2d})$; (ii) $\{ \Psi(\bs{\mu}) \}_{\bs{\mu} \in E}$ is equicontinuous in $C([0,T]; \mcal{P}_1(\R^{2d}))$. Item (i) is immediate from the uniform second moment bound of Proposition~\ref{prop:bdd_2nd_moment}. Item (ii) is Corollary~\ref{cor:Psi_equicontinuous}, which follows from the estimate \eqref{eq:equi_continuous}: the $\mcal{W}_1$-modulus of continuity of $t \mapsto P^{\bs{\mu}}_t$ is controlled by the second moment of $P^{\bs{\mu}}$ and the kinetic energy of the optimal control $\alpha^{\bs{\mu}}$, the latter being equal to $H(P^{\bs{\mu}} | R^{\bs{\mu}})$ (Lemma~\ref{lemma:entropy_is_kinetic_energy}), and both are bounded uniformly in $\bs{\mu}$.

\begin{remark}[Non-degenerate setting]
The degeneracy of the reference process enters the proof of Theorem~\ref{thm:existence_mfe} only through the uniform entropy bound \eqref{eq:bound_entropy_1}, which relies on the Gaussian lower bound for the transition density of \cite[Theorem~1.1]{delarue_density_estimates}, uniform in $\bs{\mu}$.
Consequently, the same argument applies to any model (degenerate or not) whose transition density satisfies a lower bound of this type and whose drift satisfies the conditions of Assumption~\ref{assump:coefficient}.
\end{remark}

\subsection{Theorem~\ref{thm:qualitative}}
\label{sec:sketch_qualitative}
Fix $\lambda > 0$. By Proposition~\ref{prop:wellposedness_soft_SB}, the map $\Psi^\lambda : \bs{\mu} \mapsto (P^{\lambda, \bs{\mu}}_t)_{t \in [0,T]}$ is well-defined, and a $\lambda$-MFE is a fixed point of $\Psi^\lambda$. The key observation is the following relation between \ref{eq:SB_mu} and \ref{eq:soft_SB_mu}:
\begin{equation} \label{eq:bound_soft_by_hard}
  H(P^{\lambda, \bs{\mu}} | R^{\bs{\mu}}) \le H(P^{\lambda, \bs{\mu}} | R^{\bs{\mu}}) + \lambda \, G(P^{\lambda, \bs{\mu}}_T)
  \le H(P^{\bs{\mu}} | R^{\bs{\mu}}) + \lambda \, G(P^{\bs{\mu}}_T) = H(P^{\bs{\mu}} | R^{\bs{\mu}}) \leq M,
\end{equation}
which follows from $G \geq 0$, the optimality of $P^{\lambda, \bs{\mu}}$, $G(\mu_{\mathrm{fin}}) = 0$ and Corollary~\ref{cor:bdd_relative_entropy}. Since $M$ is independent of $\lambda$ and $\bs{\mu}$, the uniform bounds of Section~\ref{sec:sketch_existence} hold for $P^{\lambda, \bs{\mu}}$ as well, and the existence of a $\lambda$-MFE follows from the same program.

For the convergence, let $\bs{\mu}^n$ be an $n$-MFE. By \eqref{eq:bound_soft_by_hard} with $\lambda = n$, $\{ P^{n, \bs{\mu}^n} \}_{n \in \N}$ is relatively compact in $\mcal{P}_1(\Omega)$ as in Lemma~\ref{lemma:relativelycpt_W1}; let $P^\star$ be a limit point and $\bs{\mu}^\infty := (P^\star_t)_{t \in [0,T]}$. It suffices to show $P^\star = P^{\bs{\mu}^\infty}$, and \eqref{eq:bound_soft_by_hard} is again the essential tool: it gives $G(P^{n, \bs{\mu}^n}_T) \leq M/n \to 0$, so that $P^\star_T = \mu_{\mathrm{fin}}$ by the lower semicontinuity of $G$ and Assumption~\ref{assump:terminal_cost}~\ref{assump:g_zero}. It also gives $H(P^{n, \bs{\mu}^n} | R^{\bs{\mu}^n}) \leq H(P^{\bs{\mu}^\infty} | R^{\bs{\mu}^n})$, which yields $H(P^\star | R^{\bs{\mu}^\infty}) \leq H(P^{\bs{\mu}^\infty} | R^{\bs{\mu}^\infty})$ in the limit as in Step~2 of Section~\ref{sec:sketch_existence}. Hence $P^\star = P^{\bs{\mu}^\infty}$ by the uniqueness of the solution to SB($\bs{\mu}^\infty$).

\subsection{Theorem~\ref{thm:consistency_error}} \label{sec:sketch_consistency_error}
Fix $\bs{\mu} \in C([0,T]; \mcal{P}_1(\R^{2d}))$ and $n \in \N$, and let $P^n_{\mathrm{soft}}$ and $P_{\mathrm{hard}}$ be the solutions to $n$-SB($\bs{\mu}$) and SB($\bs{\mu}$), respectively. We write $\pi^n_{\mathrm{soft}}, \pi_{\mathrm{hard}}$ for their joint laws at times $0$ and $T$, $\nu^n_{\mathrm{soft}}, \nu_{\mathrm{hard}} = \mu_{\mathrm{fin}}$ for their terminal marginals, and $h_n := H(\nu^n_{\mathrm{soft}} | \nu_{\mathrm{hard}}) = G(\nu^n_{\mathrm{soft}})$.

\subsubsection*{Step 1: Reduction to the stability of entropic optimal transport.}
The starting point is the observation that $P^n_{\mathrm{soft}}$ solves the \textit{hard}-constrained problem with the same reference measure $R^{\bs{\mu}}$ and the terminal constraint $\nu_{\mathrm{hard}}$ replaced by $\nu^n_{\mathrm{soft}}$. Hence, bounding $\mcal{W}_1(P^n_{\mathrm{soft}}, P_{\mathrm{hard}})$ reduces to the stability of SB($\bs{\mu}$) with respect to its terminal constraint; moreover, since $P^n_{\mathrm{soft}}$ and $P_{\mathrm{hard}}$ share the same bridge, it suffices to consider the static problem \ref{eq:SB_mu_nu}. By Lemma~\ref{lemma:SB_EOT}, this is equivalent to the EOT problem \ref{eq:EOT} with the cost $c^{\bs{\mu}} = -\log p^{\bs{\mu}}_T$, and the values of these problems are related by $V^{\bs{\mu}}(\rho) = S^{\bs{\mu}}(\rho) + H(\rho | \leb)$.
The direct application of~\cite[Theorem~3.7]{eckstein2022quant_stability_EOT} yields the Lipschitz stability \eqref{eq:EOT_value_stability} of the value $S^{\bs{\mu}}$ with respect to $\mcal{W}_2$ (Proposition~\ref{prop:value_stability}), with a constant $L$ independent of $n$ and $\bs{\mu}$.

\subsubsection*{Step 2: From the value stability to the plan stability (Proposition~\ref{prop:JS_bound}).}
The value stability controls the Jensen gap of the convex functional $S^{\bs{\mu}}$ at the midpoint $\nu_{1/2} := \frac{1}{2}(\nu^n_{\mathrm{soft}} + \nu_{\mathrm{hard}})$, namely $\frac{1}{2} S^{\bs{\mu}}(\nu^n_{\mathrm{soft}}) + \frac{1}{2} S^{\bs{\mu}}(\nu_{\mathrm{hard}}) - S^{\bs{\mu}}(\nu_{1/2}) \leq C L \, \mcal{W}_2(\nu^n_{\mathrm{soft}}, \nu_{\mathrm{hard}})$ (Proposition~\ref{prop:value_stability} and Lemma~\ref{lemma:midpoint_W2}). On the other hand, the parallelogram identity \eqref{eq:compensation}, applied to the plans and to the marginals, shows that this Jensen gap dominates $\js(\pi^n_{\mathrm{soft}}, \pi_{\mathrm{hard}}) - \js(\nu^n_{\mathrm{soft}}, \nu_{\mathrm{hard}})$, where $\js$ denotes the Jensen--Shannon divergence; see \eqref{eq:key_identity} and Lemma~\ref{lemma:JS_plans}. Since $\js(\nu^n_{\mathrm{soft}}, \nu_{\mathrm{hard}}) \leq h_n / 2$ by \eqref{eq:JS_marginals} and $\mcal{W}_2(\nu^n_{\mathrm{soft}}, \nu_{\mathrm{hard}}) \leq C h_n^{1/2}$ by Talagrand's inequality \eqref{eq:talagrand}, we obtain $\js(\pi^n_{\mathrm{soft}}, \pi_{\mathrm{hard}}) \leq C h_n^{1/2}$. Finally, comparing the value of $n$-SB($\bs{\mu}$) at $P^n_{\mathrm{soft}}$ with that at the admissible $P_{\mathrm{hard}}$, and using the value stability once more, we get $(1 + n) h_n \leq C h_n^{1/2}$, i.e., $h_n \leq C n^{-2}$ (Lemma~\ref{lemma:bootstrap_h_n}). This yields the plan stability \eqref{eq:JS_final}, $\js(\pi^n_{\mathrm{soft}}, \pi_{\mathrm{hard}}) \leq C n^{-1}$ (Proposition~\ref{prop:JS_bound}).

\subsubsection*{Step 3: Lifting to the path measures.}
Since $\js$ is an $f$-divergence and $P^n_{\mathrm{soft}}$, $P_{\mathrm{hard}}$ share the same bridge, $\js(P^n_{\mathrm{soft}}, P_{\mathrm{hard}}) = \js(\pi^n_{\mathrm{soft}}, \pi_{\mathrm{hard}})$. The $T_1$-type inequality for $\js$ (Lemma~\ref{lemma:T_1_inequality_for_JS}), together with the uniform second moment bound of Proposition~\ref{prop:bdd_2nd_moment}, then gives $\mcal{W}_1(P^n_{\mathrm{soft}}, P_{\mathrm{hard}}) \leq C \js(\pi^n_{\mathrm{soft}}, \pi_{\mathrm{hard}})^{1/2} \leq C n^{-1/2}$, which is Theorem~\ref{thm:consistency_error}.

\begin{remark}[Literature on quantitative stability of EOT]
Quantitative stability of EOT has been studied in various settings
\cite{eckstein2022quant_stability_EOT, bayraktar2025stability,
carlier_displacement_smoothnessEOT,
deligiannidis2024quantitative_stability_IPFP,
carlier2020_differential_multimarginal_SB,
chiarini_etal_kineticSB, chiarini2024semiconcavity}.
Results allowing non-compact marginals and unbounded costs include
\cite{eckstein2022quant_stability_EOT, bayraktar2025stability,
chiarini2024semiconcavity}.
In \cite{bayraktar2025stability}, the authors establish Wasserstein stability of optimal couplings for a class
of divergence regularizations, including relative entropy,
under polynomial moment assumptions.

Here we use the value-stability result of
\cite[Theorem~3.7]{eckstein2022quant_stability_EOT}.
Its cost condition follows from the linear growth bound on
$\nabla\log p^{\bs{\mu}}$ proved in
Section~\ref{sec:proof_lineargrowth}.
The passage to path measures is then obtained through our
Jensen--Shannon argument.

The approach of \cite{chiarini2024semiconcavity} instead uses
semiconcavity assumptions on the cost and the Schr\"odinger
potential. The additional second-order estimates needed for
that approach are not established by our method.
\end{remark}

\begin{remark}[On the choice of the Jensen--Shannon divergence] \label{remark:sketch_choice_of_JS}
Under the same assumptions, \cite[Theorem~3.11]{eckstein2022quant_stability_EOT} directly gives $\mcal{W}_2(\pi^n_{\mathrm{soft}}, \pi_{\mathrm{hard}}) \leq C n^{-1/4}$.
However, a Wasserstein bound on the plans does not lift to the path measures, which is why we want to use an $f$-divergence in Step~2.
Among the class of $f$-divergences, the natural candidate would be the relative entropy (or its symmetrized version). In fact, the same estimate as in Lemma~\ref{lemma:T_1_inequality_for_JS} holds for this choice too (this holds because the constant is allowed to depend on both measures, in contrast to the usual $T_1$ inequality).
However, to bound the relative entropy, we will need the (Lipschitz) stability of the map $\rho \mapsto \log \psi^\rho$ in addition to the value stability obtained in Step~1, where $\psi^\rho$ is the Schr\"odinger potential associated with the terminal marginal $\rho$.
In fact, the relative entropy between two optimal plans is the Bregman divergence of
$V^{\bs{\mu}}$: $H(\pi | \tilde{\pi}) = V^{\bs{\mu}}(\rho) - V^{\bs{\mu}}(\tilde{\rho}) - \int \log\tilde{\psi} \, d(\rho - \tilde{\rho})$, where $\pi, \tilde{\pi}$ are the optimal plans with terminal marginals $\rho, \tilde{\rho}$, and $\tilde{\psi}$ is the Schr\"odinger potential associated with $\tilde{\pi}$.
Such Lipschitz stability in $L^2$-distance is obtained in \cite{carlier2020_differential_multimarginal_SB}, for instance, but only for bounded kernels and marginals whose densities are bounded above and below by a positive constant.
On the other hand, the Jensen gap of $V^{\bs{\mu}}$, which dominates $\js(\pi^n_{\mathrm{soft}}, \pi_{\mathrm{hard}})$, is controlled by the value stability alone.

Our approach for obtaining the stability of the path measures might be useful in other settings where estimates on the transition kernel are accessible but the potentials are not.
Indeed, our argument depends on the transition kernel (through Theorem~\ref{thm:linear_growth}) only in Step~1. Thus, once a Lipschitz stability of the EOT value with respect to $\mcal{W}_2$ is obtained by whatever means, together with a control of $\mcal{W}_2(\nu^n_{\mathrm{soft}}, \nu_{\mathrm{hard}})$ and $H(\nu^n_{\mathrm{soft}} | \nu_{\mathrm{hard}})$ for the terminal marginals, Steps~2 and~3 apply accordingly.
\end{remark}

\subsection{Theorem~\ref{thm:linear_growth}}
\label{sec:sketch_linear_growth}
As mentioned in Section~\ref{sec:intro}, our proof follows the approach of \cite[Theorem~B]{sheu1991_someestimates}, which establishes the same type of estimate for $\nabla \log p$ in the non-degenerate setting. The smoothness of $p^{\bs{\mu}}$ stated in Theorem~\ref{thm:linear_growth} is proved in Appendix~\ref{appendix:smoothness}, and we take it for granted in this overview (see Remark~\ref{remark:sketch_smoothness}). In this subsection, we write $p$ for $p^{\bs{\mu}}$.

\subsubsection*{Step 1: Control formulation and the energy identity}
Fix $w \in \R^{2d}$ and let $J(t,z) := -\log p(t,T;z,w)$. By Fleming's logarithmic transformation, $J$ solves the HJB equation \eqref{eq:HJB}, and it is therefore interpreted as the value function of the stochastic control problem that minimizes the quadratic energy $\frac{1}{2} \E [\int_t^T |\alpha_s|^2 \, ds]$ of a drift control $\alpha$ acting on the velocity component, under the constraint that the process reaches $w$ at time $T$; heuristically, this corresponds to the terminal cost $g := -\log \delta_w$. The optimal control is given in feedback form by $\alpha_t = -\nabla_v J(t, Z_t)$, and the optimally controlled process is the bridge process \eqref{eq:bridge_process}, namely the reference process conditioned on $Z_T = w$. Along the bridge process, we have the identity \eqref{eq:bridge-rep} of Lemma~\ref{lemma:control_nabla_J_by_J}, which is nothing but the dynamic programming principle:
\begin{equation*}
\E \left[ \frac{1}{2} \int_0^t |\nabla_v J(s, Z_s)|^2 \, ds \right]
= J(0, z) - \E \left[ J(t, Z_t) \right].
\end{equation*}
Applying the two-sided Gaussian estimates of \cite[Theorem~1.1]{delarue_density_estimates} to the right hand side, we see that the minimal energy
$\frac{1}{2} \E  \int_0^t |\nabla_v J(s, Z_s)|^2 \, ds$ is bounded by $C (1 + |z|^2 + |w|^2)$.

\subsubsection*{Step 2: Bounding $\nabla_z J$ by the minimal energy (Lemma~\ref{lemma:bounds_by_E(nabla_vJ)})}
In view of Step~1, the linear growth of $\nabla_z \log p$ follows once $|\nabla_z J(s_0, z)|$ is bounded by $\E [ \int_{s_0}^{s_0 + \delta} |\nabla_v J(t, Z_t)|^2 \, dt ]^{1/2}$, over a time window $\delta$ of order $T - s_0$. This is the content of Lemma~\ref{lemma:bounds_by_E(nabla_vJ)}.
The main tool is the evolution \eqref{eq:evolutions_of_nabla_J} of the processes $\partial_{x_\alpha} J(t, Z_t)$ and $\partial_{v_\beta} J(t, Z_t)$ along the bridge process:
\begin{equation*}
\left\{
\begin{split}
d \, \partial_{x_\alpha} J(t, Z_t) &= - \big( \partial_{x_\alpha} b \cdot \nabla_v  J \big)(t, Z_t) \, dt + \nabla_v \partial_{x_\alpha}  J(t, Z_t) \cdot dW^Q_t , \\
d \, \partial_{v_\beta} J(t, Z_t) &= -
\left[
\partial_{x_\beta} J
+ \partial_{v_\beta} b \cdot \nabla_v J
\right] (t, Z_t) \, dt
+  \nabla_v \partial_{v_\beta}  J(t, Z_t) \cdot dW^Q_t .
\end{split}
\right.
\end{equation*}
which is derived in Appendix~\ref{appendix:derivation_of_evolution_nabla_J} by differentiating the HJB equation \eqref{eq:HJB} in the space variables and applying It\^o's formula.

We start with the estimate of $\partial_{x_\beta} J$. From the SDE of $\partial_{x_\beta} J(t, Z_t)$ and taking the expectation, we obtain the identity \eqref{eq:partial_x_J_identity}:
\begin{equation*}
 \partial_{x_\beta} J(s_0, z)
 = \E\left[
 \partial_{x_\beta} J(u, Z_{u}) + \int^{u}_{s_0} \big( \partial_{x_\beta} b \cdot \nabla_v J \big)(r, Z_r) \, dr
\right] ,
\end{equation*}
which represents $\partial_{x_\beta} J(s_0, z)$ in terms of $\partial_{x_\beta} J(u, Z_u)$ and $\int_{s_0}^u (\partial_{x_\beta} b \cdot \nabla_v J)(r, Z_r) \, dr$. Here, the stochastic integral is localized by the exit times $\tau_N$ from the balls of radius $N$, and the passage to the limit $N \to \infty$ is justified by Lemma~\ref{lemma:partial_J_bdd}, which relies on the uniform bound on $\nabla p$ of Corollary~\ref{cor:bounded_derivative}. Integrating the above identity over $u \in [s,t]$ gives \eqref{eq:partial_x_J_before_plug_in}:
\begin{equation*}
(t-s) \, \partial_{x_\beta} J(s_0, z)
 = \E\left[
 \int^t_s \partial_{x_\beta} J(u, Z_{u}) \, du + \int^t_s \int^u_{s_0} \big( \partial_{x_\beta} b \cdot \nabla_v J \big)(r, Z_r) \, dr \, du
\right] .
\end{equation*}
The obstacle here is that the right hand side involves $\E [ \int_s^t \partial_{x_\beta} J(u, Z_u) \, du ]$, which is not controlled by the energy identity of Step~1. To replace this term by terms involving $\nabla_v J$ only, we observe that the second equation of \eqref{eq:evolutions_of_nabla_J} contains $\partial_{x_\beta} J$ in its drift.
Due to the localizing argument as above, we obtain the identity \eqref{eq:int_partial_x_J}:
\begin{equation*}
\E\left[\int^t_s \partial_{x_\beta} J(u, Z_u) \, du \right]
= \E\Bigg[ - \partial_{v_\beta} J(t, Z_{t})
+ \partial_{v_\beta} J(s, Z_{s}) - \int^t_s \big( \partial_{v_\beta}b \cdot \nabla_v J \big)(u, Z_u) \, du \Bigg].
\end{equation*}
Notice the right hand side does not include any term with $\partial_x J$.
Plugging this identity into the one above, averaging over $s$ and $t$ (this is the time-averaging argument of \cite[Proof of Lemma~5.4]{sheu1991_someestimates}), and using $|\partial_{v_\beta} J| \leq |\nabla_v J|$, $\| \nabla b \|_\infty \leq C$ and the Cauchy--Schwarz inequality, we obtain the first bound in \eqref{eq:bounds_by_E(nabla_vJ)}.

For $\partial_{v_\beta} J$ (Step~3 of the proof), we follow the same argument starting from the identity \eqref{eq:partial_v_J_identity}, which is also derived from the second equation of \eqref{eq:evolutions_of_nabla_J}. This time, the term $\E [ \int |\partial_{x_\beta} J(u, Z_u)| \, du ]$ is controlled by the estimate for $\partial_{x_\beta} J$ established beforehand, applied at the starting point $(u, Z_u)$, together with the Markov property of the bridge process. This is the reason why Lemma~\ref{lemma:bounds_by_E(nabla_vJ)} is formulated for an arbitrary starting time $s_0 \in [0,T)$ rather than for $s_0 = 0$ only.

Finally, the estimate for the endpoint variable $\nabla_w \log p$ is obtained by running the same program for $I(t, w) := -\log p(0, t; z, w)$, the bridge process being replaced by its time reversal; see Appendix~\ref{appendix:estimate_for_end_point_variable}.

\begin{remark}[Smoothness of $p$] \label{remark:sketch_smoothness}
The derivation of \eqref{eq:evolutions_of_nabla_J} in Appendix~\ref{appendix:derivation_of_evolution_nabla_J} is where the smoothness of $p$ is used: differentiating the HJB equation \eqref{eq:HJB} in the space variables and applying It\^o's formula to $\partial_{x_\alpha} J(t, Z_t)$ and $\partial_{v_\beta} J(t, Z_t)$ require $J$ to be $C^1$ in $t$ and $C^3$ in $z$. In addition, the localization argument in the proof of Lemma~\ref{lemma:bounds_by_E(nabla_vJ)} requires the bound on $\nabla p$ of Corollary~\ref{cor:bounded_derivative}, uniform in $\bs{\mu}$.
\end{remark}

\begin{remark}[Difference from the non-degenerate setting] \label{remark:sketch_difference_from_Sheu}
In the non-degenerate setting of \cite{sheu1991_someestimates}, the control acts on all the coordinates, and hence the energy identity controls the full gradient, $\E [ \int |\nabla J(t, Z_t)|^2 \, dt ]$; the drift terms appearing in the evolution of $\nabla J$ are then directly bounded by the minimal energy. In the degenerate setting, the control acts on the velocity only, so that the energy identity \eqref{eq:bridge-rep} controls $\nabla_v J$ but gives no information on $\nabla_x J$. However, the evolution of $\partial_{v_\beta} J$ in \eqref{eq:evolutions_of_nabla_J} contains the drift term $\partial_{x_\beta} J$, which is the key to get rid of $\nabla_x J$ in the estimates.
\end{remark}

\begin{remark}[On the exponents of $\delta$] \label{remark:sketch_exponent}
In \cite[Theorem~B]{sheu1991_someestimates}, the gradient of $-\log p$ is bounded by $T^{-1/2}$ times the square root of the minimal energy. In Lemma~\ref{lemma:bounds_by_E(nabla_vJ)}, the exponent for $\nabla_v J$ is the same, $\delta^{-1/2}$, whereas the one for $\nabla_x J$ is $\delta^{-3/2}$. This gap in scaling is typical for the kinetic dynamics; the same pair of exponents appears for the kinetic kernel \cite{chaudru2023heat,delarue_density_estimates}. The Gaussian case shows that these exponents are the natural ones. Let $b \equiv 0$, so that $p(t, T; z, w)$ is the Gaussian density with mean $\Theta_\delta(z) = (x + \delta v, v)$ and covariance $\Sigma_\delta$ given in \eqref{eq:Sigma}, with $\delta := T - t$. Writing $a := w_x - x - \delta v$ and $c := w_v - v$, we have by \eqref{eq:Sigma_inverse}
\begin{equation*}
J(t, z) = \frac{6 |a|^2}{\delta^3} - \frac{6 \, a \cdot c}{\delta^2} + \frac{2 |c|^2}{\delta} + \text{const},
\qquad
\nabla_x J = - \frac{12 \, a}{\delta^3} + \frac{6 \, c}{\delta^2},
\qquad
\nabla_v J = - \frac{6 \, a}{\delta^2} + \frac{2 \, c}{\delta} .
\end{equation*}
Since the quadratic form in $J$ is positive definite, $J(t,z)$ and hence the minimal energy in \eqref{eq:bridge-rep} are of the order of $\mcal{Q} := \delta^{-3} |a|^2 + \delta^{-1} |c|^2$, up to an additive constant. Therefore,
\begin{equation*}
|\nabla_x J| \leq C \delta^{-\frac{3}{2}} \left( \delta^{-\frac{3}{2}} |a| + \delta^{-\frac{1}{2}} |c| \right) \leq C \delta^{-\frac{3}{2}} \mcal{Q}^{\frac{1}{2}},
\qquad
|\nabla_v J| \leq C \delta^{-\frac{1}{2}} \left( \delta^{-\frac{3}{2}} |a| + \delta^{-\frac{1}{2}} |c| \right) \leq C \delta^{-\frac{1}{2}} \mcal{Q}^{\frac{1}{2}},
\end{equation*}
which are exactly the exponents in \eqref{eq:bounds_by_E(nabla_vJ)}.
\end{remark}

\section{Proofs} \label{sec:proofs}
\subsection{Proof of Theorem \ref{thm:existence_mfe}} \label{sec:proof_existence}
Throughout this section, we work under Assumptions~\ref{assump:initial_and_target} and \ref{assump:coefficient}.
\subsubsection*{Step 1: Construction  of a closed convex subset $E \subset C\left([0,T] ; \mcal{P}_1(\R^{2d}) \right)$ with $\Psi(E) \subset E$.}
Specifically, we seek $E$ in the form:
\begin{equation}
\label{def:set_E}
E := \left\{
    \bs{\mu} \in C([0,T] ; \mcal{P}_1(\R^{2d}))
    \, \big| \,
    \sup_{t\in[0,T]} M_2(\mu_t) \leq K
\right\},
\end{equation}
for a constant $K > 0$, and $E$ is closed and convex.
To prove $\Psi(E) \subset E$, we establish the uniform moment bound for $\left\{ P^{\bs{\mu}} \right\}_{\bs{\mu}\in E}$, as well as the continuity of $(P_t^{\bs{\mu}})_{t \in [0,T]}$ with respect to $\mcal{W}_1$.

We first demonstrate the exponential moment bound in the following lemma:
\begin{lemma} \label{lemma:bdd_exp_moment}
Let $\theta > 0$ be as in Assumption~\ref{assump:initial_and_target}~\ref{assump:second_moment_and_exp_moment}. There exist constants $\vartheta = \vartheta(T, C_f, \theta) > 0$ and $C = C\bigl(T, d, C_f, \theta, \int e^{\theta|z|^2} \mu_{\mathrm{init}}(dz)\bigr) > 0$, both independent of $\bs{\mu}$, such that
\[
    \E^{R^{\bs{\mu}}} \left[
        e^{\vartheta
           \sup_{t\in[0,T]} |Z_t|^2
        }
    \right] \leq C,
\]
for all $\bs{\mu}\in C([0,T];\mcal{P}_1(\R^{2d}))$, where the canonical process $(Z_t)_{t\in[0,T]} = (\omega_t)_{t\in[0,T]}$ is the solution to the equation \eqref{eq:reference_SDE} under $R^{\bs{\mu}}$.
\end{lemma}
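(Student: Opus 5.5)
The plan is a pathwise Gronwall argument followed by a Gaussian tail estimate for the Brownian supremum. The key point is that the drift grows only linearly with a constant $C_f$ independent of $\bs{\mu}$. Under $R^{\bs{\mu}}$ we have $Z_t = Z_0 + \int_0^t b(s,Z_s)\,ds + (0, W_t)$, where $b(s,z) = (v, f(s,z,\mu_s))$. Assumption~\ref{assump:coefficient}~\ref{assump:linear_growth} gives $|b(s,z)| \le |z| + C_f(1+|z|) \le C(1+|z|)$ with $C = C(C_f)$. Hence, pathwise,
\begin{equation*}
|Z_t| \le |Z_0| + CT + \sup_{s\le T}|W_s| + C\int_0^t |Z_s|\,ds ,
\end{equation*}
and Gronwall's lemma yields $\sup_{t\in[0,T]}|Z_t| \le e^{CT}\bigl(|Z_0| + CT + \sup_{s\le T}|W_s|\bigr)$. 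Squaring with $(a+b+c)^2\le 3(a^2+b^2+c^2)$ gives
\begin{equation*}
\sup_{t}|Z_t|^2 \le 3e^{2CT}\bigl(|Z_0|^2 + C^2T^2 + \sup_{s\le T}|W_s|^2\bigr),
\end{equation*}
with constants depending only on $T$ and $C_f$, uniformly in $\bs{\mu}$.

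Next we exponentiate and use that $Z_0 \sim \mu_{\mathrm{init}}$ is independent of $W$ under $R^{\bs{\mu}}$; the strong solution is built on a product space, and only the law matters. For $\vartheta>0$ this gives
\begin{equation*}
\E^{R^{\bs{\mu}}}\bigl[e^{\vartheta\sup_t|Z_t|^2}\bigr] \le e^{3\vartheta e^{2CT}C^2T^2}\,\int e^{3\vartheta e^{2CT}|z|^2}\mu_{\mathrm{init}}(dz)\;\E\bigl[e^{3\vartheta e^{2CT}\sup_{s\le T}|W_s|^2}\bigr].
\end{equation*}
We choose $\vartheta$ so that $3\vartheta e^{2CT}\le\theta$; the middle factor is then bounded by $\int e^{\theta|z|^2}d\mu_{\mathrm{init}}$ from Assumption~\ref{assump:initial_and_target}~\ref{assump:second_moment_and_exp_moment}.

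The last factor requires $\beta := 3\vartheta e^{2CT}$ to be small enough that $\E[e^{\beta\sup_{s\le T}|W_s|^2}]<\infty$. This step uses a standard fact rather than anything from the paper. Bound $\sup_s|W_s|^2\le \sum_{i=1}^d \sup_s|W^i_s|^2$. By independence of the coordinates and the reflection principle, $\sup_s|W^i_s|$ is stochastically dominated by $|W^i_T| + |{-}W^i|$-type variables. Concretely, $P(\sup_s|W^i_s|>r)\le 4P(W^i_T>r)$, so $\E[e^{\beta\sup_s|W^i_s|^2}] \le 1+4\E[e^{\beta |W^i_T|^2}]<\infty$ whenever $\beta<1/(2T)$. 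Alternatively, Doob's $L^p$ inequality can be applied to the submartingale $e^{\beta|W_t|^2/2}$ and combined with Cauchy--Schwarz. Accordingly we set
\begin{equation*}
\vartheta := \min\bigl\{\theta, 1/(4T)\bigr\}\big/\bigl(3e^{2CT}\bigr),
\end{equation*}
which depends only on $(T, C_f, \theta)$. The resulting bound depends on $(T,d,C_f,\theta,\int e^{\theta|z|^2}d\mu_{\mathrm{init}})$, where $d$ enters through the Brownian factor, and not on $\bs{\mu}$.

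The only mildly delicate points are the independence of $Z_0$ and $W$ and the Gaussian exponential integrability of the Brownian supremum. Neither is a genuine obstacle. The essential observation is that the Gronwall constant depends on $C_f$ alone, which is what makes the bound uniform in $\bs{\mu}$.
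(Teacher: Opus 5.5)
Your proposal is correct and follows the paper's proof essentially step by step. Both arguments use a pathwise Gronwall bound whose constant depends only on $C_f$ and $T$, the factor-$3$ squaring, and the independence of $Z_0$ and $W$ under $R^{\bs{\mu}}$, and both make the same choice $\vartheta = \min\{\theta, 1/(4T)\}/(3e^{2CT})$.

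The paper handles the Brownian supremum with Doob's maximal inequality for the submartingale $e^{\beta|W_t|^2/2}$, which is your stated alternative. Your reflection-principle bound works equally well, as does your direct monotonicity bound on the $Z_0$ factor, where the paper uses Jensen.
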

Granting this lemma for the moment, we obtain the uniform moment bound for the solutions to \ref{eq:SB_mu}:
\begin{proposition}
\label{prop:bdd_2nd_moment}
There exists a constant
\[
K = K\Bigl(T, d, C_f, \theta, \int e^{\theta|z|^2} \mu_{\mathrm{init}}(dz), M_2(\mu_{\mathrm{fin}}), H(\mu_{\mathrm{fin}} \,|\, \leb)\Bigr) > 0,
\]
independent of $\bs{\mu}$, such that for any $\bs{\mu} \in C([0,T]; \mcal{P}_1(\R^{2d}))$, it holds that
\begin{equation}
\E^{P^{\bs{\mu}}}
\left[
    \sup_{t\in[0,T]} |Z_t|^2
\right]
\leq K.
\end{equation}
\end{proposition}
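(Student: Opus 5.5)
The plan is to use the Donsker--Varadhan (Gibbs) variational inequality for relative entropy. For any measurable $F\ge 0$ on $\Omega$ and any $Q\ll R$ with $H(Q|R)<\infty$, we have
\[
\E^Q[F]\le H(Q|R)+\log \E^R[e^{F}].
\]
We apply this with $Q=P^{\bs{\mu}}$, $R=R^{\bs{\mu}}$ and $F:=\vartheta\sup_{t\in[0,T]}|Z_t|^2$, where $\vartheta>0$ is the constant of Lemma~\ref{lemma:bdd_exp_moment}. If $F$ is not bounded, we first apply the inequality to $F\wedge N$ and then let $N\to\infty$ by monotone convergence; the right-hand side only decreases under truncation, so this causes no difficulty. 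The result is
\[
\vartheta\,\E^{P^{\bs{\mu}}}\Bigl[\sup_{t\in[0,T]}|Z_t|^2\Bigr]\le H(P^{\bs{\mu}}|R^{\bs{\mu}})+\log \E^{R^{\bs{\mu}}}\Bigl[e^{\vartheta\sup_{t}|Z_t|^2}\Bigr].
\]

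The two terms on the right are then bounded separately, and uniformly in $\bs{\mu}$.

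The first term is at most $M$ by Corollary~\ref{cor:bdd_relative_entropy}. Here $M$ depends only on $C_f,T,d,M_2(\mu_{\mathrm{init}}),M_2(\mu_{\mathrm{fin}})$ and $H(\mu_{\mathrm{fin}}|\leb)$. The quantity $M_2(\mu_{\mathrm{init}})$ is in turn controlled by $\theta$ and $\int e^{\theta|z|^2}d\mu_{\mathrm{init}}$, through the elementary inequality $|z|^2\le \theta^{-1}e^{\theta|z|^2}$.

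The second term is at most $\log C$ by Lemma~\ref{lemma:bdd_exp_moment}, with $C$ independent of $\bs{\mu}$.

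Dividing by $\vartheta$ gives $K:=\vartheta^{-1}(M+\log C)$, which depends only on the listed quantities.

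Essentially all the difficulty sits in the two ingredients, both granted at this point: the uniform entropy bound, which rests on the Gaussian kernel lower bound of Lemma~\ref{lemma:kernel_bound}, and the uniform exponential moment estimate of Lemma~\ref{lemma:bdd_exp_moment}. Given these, the only point to check is that $\vartheta$ and all the constants are genuinely independent of $\bs{\mu}$, which holds because every constant is derived from $C_f$ alone.
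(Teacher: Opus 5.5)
Your proposal is correct and matches the paper's proof. The paper uses the same route: Donsker--Varadhan applied to the truncations $\min\{\vartheta\sup_t|Z_t|^2,m\}$, monotone convergence, the uniform entropy bound $M$ of Corollary~\ref{cor:bdd_relative_entropy}, and the exponential moment bound of Lemma~\ref{lemma:bdd_exp_moment}, giving $K=\vartheta^{-1}(M+\log C)$. The only difference is cosmetic: you control $M_2(\mu_{\mathrm{init}})$ through $|z|^2\le\theta^{-1}e^{\theta|z|^2}$, whereas the paper uses Jensen's inequality.
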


\begin{proof}
By the Donsker--Varadhan inequality (cf.\ \cite[Proposition~2.2]{leonard2014someproperty}), we have
\begin{equation*}
\E^{P^{\bs{\mu}}} \left[u\right] \leq  H(P^{\bs{\mu}} | R^{\bs{\mu}}) + \log \E^{R^{\bs{\mu}}} \left[ e^u\right],
\end{equation*}
where $u$ is any bounded continuous function on $\Omega$.
In particular, take $u_m(\omega) := \min \left\{ \vartheta \sup_{t\in[0,T]} |Z_t(\omega)|^2,  m\right\}$, $m \in \N$, with $\vartheta > 0$ as in Lemma~\ref{lemma:bdd_exp_moment}.
Then,
\begin{equation*}
\E^{P^{\bs{\mu}}} \left[ u_m\right] \leq  H(P^{\bs{\mu}} | R^{\bs{\mu}}) + \log \E^{R^{\bs{\mu}}} \left[ e^{u_m}\right].
\end{equation*}
By the monotone convergence theorem, letting $m \nearrow \infty$, we obtain
\begin{equation*}
\E^{P^{\bs{\mu}}} \left[
    \vartheta \sup_{t\in[0,T]} |Z_t|^2
\right]
\leq  H(P^{\bs{\mu}} | R^{\bs{\mu}})
+ \log \E^{R^{\bs{\mu}}} \left[
    e^{
       \vartheta \sup_{t\in[0,T]} |Z_t|^2
    }
\right] \leq M + \log C =: \vartheta K,
\end{equation*}
where $M$ is the constant of Corollary~\ref{cor:bdd_relative_entropy} and $C$ is that of Lemma~\ref{lemma:bdd_exp_moment}. Since $M$, $C$ and $\vartheta$ depend only on the quantities listed in the statement (note that $M_2(\mu_{\mathrm{init}})^2 \le \theta^{-1} \log \int e^{\theta|z|^2} \mu_{\mathrm{init}}(dz)$ by Jensen's inequality), so does $K$.
\end{proof}
It remains to prove the lemma.
\begin{proof}[Proof of Lemma~\ref{lemma:bdd_exp_moment}]
Consider the evolution of $Z = (X, V)$ under the reference path measure $R^{\bs{\mu}}$.
By Assumption~\ref{assump:coefficient}~\ref{assump:linear_growth}, the drift of $Z$ satisfies $|(V_s, f(s, Z_s, \mu_s))| \leq |Z_s| + C_f(1 + |Z_s|) \leq C_0 (1 + |Z_s|)$ with $C_0 := 1 + C_f$, so that
\begin{align*}
 \sup_{0 \leq s\leq t}  \left| Z_s \right| \leq  \left| Z_0 \right| +  \int^t_0 C_0 \left( 1 +  \sup_{0 \leq u\leq s} \left| Z_u \right| \right) \, ds + \sup_{0 \leq s\leq t} |W_s|,
\end{align*}
where $C_0$ is independent of $\bs{\mu}$.
By Gronwall's inequality, we obtain
\begin{align*}
     \sup_{0 \leq s\leq T}  \left| Z_s \right|
     \leq \left( \left| Z_0 \right| + \sup_{0 \leq s\leq T} |W_s|  + C_0 T \right ) e^{C_0 T}.
\end{align*}
Thus, by $(a + b + c )^2 \leq 3(|a|^2 + |b|^2 + |c|^2)$,
\begin{align*}
     \sup_{0 \leq s\leq T}  \left| Z_s \right|^2
     \leq  C_1 \left( \left| Z_0 \right|^2 + \sup_{0 \leq s\leq T} |W_s|^2  + C_2 \right ),
     \qquad C_1 := 3 e^{2 C_0 T}, \quad C_2 := C_0^2 T^2,
\end{align*}
and for $\vartheta > 0$ determined later on,
\begin{equation*} \label{eq:exp(alpha_Z^2)}
     \exp \left( \vartheta \sup_{0 \leq s\leq T}  \left| Z_s \right|^2 \right)
     \leq  \exp\left( \vartheta C_1  \left( \left| Z_0 \right|^2 + \sup_{0 \leq s\leq T} |W_s|^2  + C_2 \right )\right).
\end{equation*}
Set $\eta := \frac{1}{4T}$. Since $x \mapsto e^{\frac{\eta}{2}|x|^2}$ is convex and $e^{\frac{\eta}{2}|W_t|^2}$ is integrable for $t \leq T$, the process $e^{\frac{\eta}{2} |W_t|^2}$ is a positive submartingale, and Doob's maximal inequality gives
\begin{equation*}
    \E^{R^{\bs{\mu}}} \Bigg[ e^{\eta \sup_{t\in[0,T]} |W_t|^2}\Bigg]
    = \E^{R^{\bs{\mu}}} \Bigg[ \sup_{t\in[0,T]}\left( e^{\frac{\eta}{2} |W_t|^2} \right)^2 \Bigg]
    \leq 4 \E^{R^{\bs{\mu}}} \Bigg[ e^{\eta |W_T|^2 } \Bigg]
    = 4 (1 - 2 \eta T)^{-d/2} = 4 \cdot 2^{d/2}.
\end{equation*}
Also, with $\theta > 0$ as in Assumption~\ref{assump:initial_and_target} \ref{assump:second_moment_and_exp_moment},
\begin{equation*}
    \E^{R^{\bs{\mu}}} \left[ e^{\theta |Z_0|^2} \right] = \int_{\R^{2d}} e^{\theta |z|^2} \mu_{\mathrm{init}}(dz) < \infty.
\end{equation*}
Now choose
\begin{equation*}
    \vartheta := \frac{\min \{ \eta, \theta \}}{C_1},
\end{equation*}
which depends only on $T$, $C_f$ and $\theta$. Since $\vartheta C_1 / \theta \leq 1$, Jensen's inequality gives
$\E^{R^{\bs{\mu}}} [ e^{\vartheta C_1 |Z_0|^2} ] \leq \E^{R^{\bs{\mu}}} [ e^{\theta |Z_0|^2} ]^{\vartheta C_1 / \theta} \leq \max \{ 1, \int e^{\theta |z|^2} \mu_{\mathrm{init}}(dz) \}$, and $\vartheta C_1 \leq \eta$ gives $\E^{R^{\bs{\mu}}} [ e^{\vartheta C_1 \sup_{s \leq T} |W_s|^2} ] \leq 4 \cdot 2^{d/2}$.
Therefore, since $Z_0$ and $\sup_{0 \leq s\leq T} |W_s|^2$ are independent under $R^{\bs{\mu}}$,
\begin{align*}
\E^{R^{\bs{\mu}}}\left[e ^{ \vartheta \sup_{0 \leq s\leq T}  \left| Z_s \right|^2 }\right]
&\leq  e^{\vartheta C_1 C_2} \, \E^{R^{\bs{\mu}}} \left[ e^{ \vartheta C_1 \left| Z_0 \right|^2 } \right]
\E^{R^{\bs{\mu}}} \left[ e^{ \vartheta C_1 \sup_{0 \leq s\leq T} |W_s|^2 } \right] \\
&\leq 4 \cdot 2^{d/2} \, e^{\vartheta C_1 C_2} \max \Bigl\{ 1, \int_{\R^{2d}} e^{\theta |z|^2} \mu_{\mathrm{init}}(dz) \Bigr\} =: C,
\end{align*}
and $C$ depends only on $T$, $d$, $C_f$, $\theta$ and $\int e^{\theta|z|^2} \mu_{\mathrm{init}}(dz)$; in particular, it is independent of $\bs{\mu}$.
\end{proof}

We prove the continuity of the flow $\left\{P^{\bs{\mu}}_t\right\}_{t\in[0,T]}$.
\begin{lemma}
\label{lemma:P_continuous}
For every $\bs{\mu} \in C([0,T]; \mcal{P}_1(\R^{2d}))$, $\Psi(\bs{\mu}) \in C([0,T]; \mcal{P}_1(\R^{2d}))$.
\end{lemma}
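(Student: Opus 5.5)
We need to show that $t\mapsto P^{\bs{\mu}}_t$ is continuous into $(\mcal{P}_1(\R^{2d}),\mcal{W}_1)$, and that each $P^{\bs{\mu}}_t$ lies in $\mcal{P}_1$. Membership is immediate from Proposition~\ref{prop:bdd_2nd_moment}: $M_2(P^{\bs{\mu}}_t)^2 \le \E^{P^{\bs{\mu}}}[\sup_t |Z_t|^2] \le K$. For continuity, we use the synchronous coupling $(Z_s, Z_t)_{\#}P^{\bs{\mu}}$. This gives
\[
\mcal{W}_1(P^{\bs{\mu}}_s, P^{\bs{\mu}}_t) \le \E^{P^{\bs{\mu}}}\bigl[|Z_t - Z_s|\bigr],
\]
so it suffices to control the increments of the canonical process under $P^{\bs{\mu}}$.

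To do this we describe the dynamics of $Z$ under $P^{\bs{\mu}}$. Since $H(P^{\bs{\mu}} \,|\, R^{\bs{\mu}}) \le M < \infty$ (Corollary~\ref{cor:bdd_relative_entropy}) and $P^{\bs{\mu}}_0 = \mu_{\mathrm{init}} = R^{\bs{\mu}}_0$, Lemma~\ref{lemma:entropy_is_kinetic_energy} together with Girsanov's theorem provides a progressively measurable $\alpha$ with $\E^{P^{\bs{\mu}}}\bigl[\frac12 \int_0^T |\alpha_r|^2\,dr\bigr] = H(P^{\bs{\mu}} \,|\, R^{\bs{\mu}})$. Under $P^{\bs{\mu}}$ the canonical process then solves
\[
dX_r = V_r\,dr, \qquad dV_r = \bigl(f(r,Z_r,\mu_r)+\alpha_r\bigr)\,dr + dW^{P}_r,
\]
with $W^P$ a $P^{\bs{\mu}}$-Brownian motion. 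For $s<t$ this yields
\[
|Z_t - Z_s| \le \int_s^t \bigl(|V_r| + |f(r,Z_r,\mu_r)| + |\alpha_r|\bigr)\,dr + |W^P_t - W^P_s|.
\]

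Taking expectations, we bound the terms one at a time:
\begin{itemize}
\item By Assumption~\ref{assump:coefficient}~\ref{assump:linear_growth}, the first two integrands together are at most $C_0\bigl(1+\sup_r|Z_r|\bigr)$, which integrates to $C_0 (t-s)(1+K^{1/2})$.
\item By Cauchy--Schwarz, $\E\int_s^t |\alpha_r|\,dr \le (t-s)^{1/2}(2M)^{1/2}$.
\item The Brownian increment contributes $\E|W^P_t - W^P_s| \le \sqrt{d(t-s)}$.
\end{itemize}
Altogether,
\[
\mcal{W}_1(P^{\bs{\mu}}_s, P^{\bs{\mu}}_t) \le C\bigl(|t-s| + |t-s|^{1/2}\bigr),
\]
with $C$ independent of $\bs{\mu}$. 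This gives continuity, and in fact uniform equicontinuity, which is presumably what Corollary~\ref{cor:Psi_equicontinuous} later reuses.

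The only delicate point is the justification of the semimartingale representation under $P^{\bs{\mu}}$, which Lemma~\ref{lemma:entropy_is_kinetic_energy} already provides. No moment issue arises, because the second-moment bound of Proposition~\ref{prop:bdd_2nd_moment} is uniform in $\bs{\mu}$.
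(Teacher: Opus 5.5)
Your proof is correct and is essentially the paper's argument: the paper also uses the synchronous coupling $(Z_s,Z_t)_{\#}P^{\bs{\mu}}$, the Girsanov representation of $P^{\bs{\mu}}$ with control $\alpha^{\bs{\mu}}$ from Lemma~\ref{lemma:entropy_is_kinetic_energy}, the linear growth of $f$, and the uniform bounds $K$ and $M$. The only cosmetic difference is that the paper bounds $\mcal{W}_1^2\le\mcal{W}_2^2\le\E^{P^{\bs{\mu}}}[|Z_t-Z_s|^2]$ rather than the first moment of the increment; both versions give the same $\bs{\mu}$-independent $\tfrac12$-H\"older modulus that is reused in Corollary~\ref{cor:Psi_equicontinuous}.
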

\begin{proof}
We compute
\begin{align}
\mcal{W}^2_1(P^{\bs{\mu}}_t, P^{\bs{\mu}}_s)
&\leq \mcal{W}^2_2(P^{\bs{\mu}}_t, P^{\bs{\mu}}_s)  \notag \\
&\leq
\E^{P^{\bs{\mu}}} \left[
    \left| Z_t - Z_s \right|^2
\right] \notag \\
&\leq
\E^{P^{\bs{\mu}}} \left[
    \left| \int^t_s V_u \, du \right|^2
    + \left|
        \int^t_s \left( f(u, Z_u , \mu_u) + \alpha^{\bs{\mu}}_u \right) \, du + W^{P^{\bs{\mu}}}_t -   W^{P^{\bs{\mu}}}_s
    \right|^2
\right] \notag \\
&\leq \E^{P^{\bs{\mu}}} \left[
    (t-s) \int^T_0 \left( |V_u|^2 + 4 \left|f(u, Z_u, \mu_u) \right|^2 + 4\left|\alpha^{\bs{\mu}}_u \right|^2 \right)  \, du  + 2d (t-s)
\right] \notag \\
&\leq (t-s) C\left( 1 +  \E^{P^{\bs{\mu}}} \left[
     \sup_{t\in[0,T]}|Z_t|^2 + \int^T_0 \left| \alpha^{\bs{\mu}}_u \right|^2  \, du
\right]
\right)
\notag
 \\
&\leq (t-s) C\left(
1 +  \E^{P^{\bs{\mu}}} \left[  \sup_{t\in[0,T]}|Z_t|^2  \right] + H(P^{\bs{\mu}} | R^{\bs{\mu}})
\right) \notag  \\
&\leq (t-s) C (1 + K + M). \label{eq:equi_continuous}
\end{align}
Therefore, $\Psi(\bs{\mu}) = (P^{\bs{\mu}}_t)_{t\in[0,T]} \in C([0,T]; \mcal{P}_1(\R^{2d}))$, or more precisely, it is $\frac{1}{2}$-H\"older continuous.
\end{proof}
We conclude that for $E \subset C([0,T]; \mcal{P}_1(\R^{2d}))$ defined as in \eqref{def:set_E} with $K$ specified as in Proposition~\ref{prop:bdd_2nd_moment}, we have $\Psi(E) \subset E$.

Since the coefficient of $(t-s)$ in \eqref{eq:equi_continuous} is independent of $\bs{\mu}$, an immediate consequence is:
\begin{corollary} \label{cor:Psi_equicontinuous}
$ \left\{\Psi(\bs{\mu}) \right\}_{\bs{\mu} \in E} \subset C([0,T]; \mcal{P}_1(\R^{2d}))$ is equicontinuous.
\end{corollary}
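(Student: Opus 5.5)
The plan is to read off equicontinuity directly from the estimate \eqref{eq:equi_continuous} in the proof of Lemma~\ref{lemma:P_continuous}, after checking that its constant is uniform over $\bs{\mu}\in E$. Fix $\bs{\mu}\in E$ and $0\le s\le t\le T$. The chain of inequalities there gives
\begin{equation*}
\mcal{W}_1\bigl(P^{\bs{\mu}}_t,P^{\bs{\mu}}_s\bigr)\le \sqrt{C(1+K+M)}\,|t-s|^{1/2}.
\end{equation*}
The work consists in confirming that none of the constants on the right depends on $\bs{\mu}$.

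The constants enter the chain in three places. First, the generic $C$ comes from the linear growth bound $|f(u,z,\mu_u)|\le C_f(1+|z|)$ of Assumption~\ref{assump:coefficient}~\ref{assump:linear_growth}, from $T$ and from $d$. Second, $K$ is the uniform second-moment bound of Proposition~\ref{prop:bdd_2nd_moment}, which holds for every $\bs{\mu}$, in particular for every $\bs{\mu}\in E$. Third, the kinetic energy $\E^{P^{\bs{\mu}}}\int_0^T|\alpha^{\bs{\mu}}_u|^2\,du$ equals $2H(P^{\bs{\mu}}|R^{\bs{\mu}})$ by Lemma~\ref{lemma:entropy_is_kinetic_energy}. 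This applies because $P^{\bs{\mu}}_0=\mu_{\mathrm{init}}=R^{\bs{\mu}}_0$, and the entropy is bounded by the $\bs{\mu}$-independent constant $M$ of Corollary~\ref{cor:bdd_relative_entropy}.

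With these three checks, $\omega(r):=\sqrt{C(1+K+M)}\,r^{1/2}$ is a modulus of continuity shared by every $\Psi(\bs{\mu})$ with $\bs{\mu}\in E$. That is, for every $\varepsilon>0$ there is a $\delta=\varepsilon^2/(C(1+K+M))$ such that $|t-s|<\delta$ implies $\mcal{W}_1(P^{\bs{\mu}}_t,P^{\bs{\mu}}_s)<\varepsilon$ for all $\bs{\mu}\in E$. This is exactly equicontinuity in $C([0,T];\mcal{P}_1(\R^{2d}))$ with respect to the $\mcal{W}_1$ metric, which the $\mathrm{KR}$ norm induces on probability measures. The only delicate point is the uniformity of the energy term, and Corollary~\ref{cor:bdd_relative_entropy} already settles it, so I expect no real obstacle.
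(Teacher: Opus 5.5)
Your proposal is correct and matches the paper's argument: the corollary is read off from the $\tfrac12$-H\"older estimate \eqref{eq:equi_continuous}, whose constant $C(1+K+M)$ depends only on $C_f$, $T$, $d$ and on the $\bs{\mu}$-independent bounds of Proposition~\ref{prop:bdd_2nd_moment} and Corollary~\ref{cor:bdd_relative_entropy}. Your check of each constant, including the use of Lemma~\ref{lemma:entropy_is_kinetic_energy} for the energy term, simply spells out the paper's one-line justification.
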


\subsubsection*{Step 2: $\Psi :E \to E$ is continuous.}
We will use the following estimate to prove the tightness of $\left\{P^{\bs{\mu}}\right\}_{\bs{\mu}\in E}$ in the proposition below.
\begin{lemma}\label{lemma:modulus_Z}
For every $\beta \in (0,1)$ there exists a constant $C > 0$,
independent of $\bs{\mu}$, such that
\begin{equation*}
\E^{P^{\bs{\mu}}} \Bigg[
    \sup_{\substack{s,t\in[0,T] \\ |t-s| \leq \delta}}
    \left| Z_t - Z_s \right|^2
\Bigg]
\leq C \delta^{\beta},
\qquad \delta \in (0,1].
\end{equation*}
\end{lemma}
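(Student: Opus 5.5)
Under $P^{\bs{\mu}}$, by Lemma~\ref{lemma:entropy_is_kinetic_energy} the canonical process satisfies $dX_t = V_t\,dt$ and $dV_t = (f(t,Z_t,\mu_t) + \alpha^{\bs{\mu}}_t)\,dt + dW^{P^{\bs{\mu}}}_t$, with $\E^{P^{\bs{\mu}}}\int_0^T |\alpha^{\bs{\mu}}_t|^2\,dt = 2H(P^{\bs{\mu}}|R^{\bs{\mu}}) \le 2M$ by Corollary~\ref{cor:bdd_relative_entropy}. For $|t-s|\le\delta$, I will decompose the increment $Z_t - Z_s$ into a drift part and a martingale part and bound each separately.

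For the drift part, the Cauchy--Schwarz inequality gives, uniformly over $|t-s|\le\delta$,
\[
\Bigl|\int_s^t \bigl(V_u, f(u,Z_u,\mu_u)+\alpha^{\bs{\mu}}_u\bigr)\,du\Bigr|^2
\le \delta \int_0^T \bigl( C(1+|Z_u|^2) + 2|\alpha^{\bs{\mu}}_u|^2 \bigr)\,du ,
\]
using the linear growth in Assumption~\ref{assump:coefficient}~\ref{assump:linear_growth}. Taking expectations and applying Proposition~\ref{prop:bdd_2nd_moment} together with the energy bound yields $C(1+K+M)\,\delta$, with a constant independent of $\bs{\mu}$. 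Since $\delta\le1$, we have $\delta \le \delta^\beta$.

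For the martingale part, $W^{P^{\bs{\mu}}}$ is a standard Brownian motion under $P^{\bs{\mu}}$, so its law does not depend on $\bs{\mu}$. It therefore suffices to show $\E\bigl[\sup_{|t-s|\le\delta}|W_t-W_s|^2\bigr]\le C\delta^\beta$ for one fixed Brownian motion. I expect this to be the only mildly delicate step. I would use the Garsia--Rodemich--Rumsey inequality, or equivalently the quantitative Kolmogorov continuity theorem. Choose $p$ large enough that $\gamma := \tfrac12 - \tfrac1p > \beta/2$. Then $|W_t-W_s| \le \Gamma\,|t-s|^{\gamma}$, where the random Hölder constant $\Gamma$ has a finite moment of order $p$ because $\E|W_t-W_s|^p = C_p|t-s|^{p/2}$. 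This gives $\E[\sup_{|t-s|\le\delta}|W_t-W_s|^2] \le \E[\Gamma^2]\,\delta^{2\gamma} \le C\delta^{\beta}$, which explains why the statement restricts to $\beta<1$.

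Finally, I combine the two parts with $|a+b|^2\le 2|a|^2+2|b|^2$. Since every constant depends only on $T$, $d$, $C_f$, $\beta$, $K$ and $M$, the resulting bound is uniform in $\bs{\mu}$.
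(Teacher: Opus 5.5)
Your proposal is correct and is essentially the paper's proof: the drift part of $Z_t-Z_s$ is bounded by $C(1+K+M)\delta\le C(1+K+M)\delta^\beta$ via Cauchy--Schwarz, linear growth, Proposition~\ref{prop:bdd_2nd_moment} and the energy identity, and the Brownian increment is handled by Kolmogorov's continuity theorem, just as the paper does (the paper takes $\gamma=\beta/2$). One small precision: Kolmogorov's theorem at moment order $p$ yields $\Gamma\in L^p$ only for $\gamma<\tfrac12-\tfrac1p$ strictly, so you should take $\gamma\in(\beta/2,\tfrac12-\tfrac1p)$ or a larger $p$; this does not affect the argument.
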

\begin{proof}
By Kolmogorov's continuity theorem, there is $C_\gamma<\infty$, depending only
on $\gamma :=\beta/2\in(0,1/2)$, $T$ and $d$, such that
$\E^{P^{\bs{\mu}}}\big[\|W^{P^{\bs{\mu}}}\|_{C^\gamma([0,T])}^2\big]\leq C_\gamma$.
Consequently,
\[
    \E^{P^{\bs{\mu}}} \Bigg[
    \sup_{\substack{s,t\in[0,T] \\ |t-s| \leq \delta}}
    \left| W^{P^{\bs{\mu}}}_t - W^{P^{\bs{\mu}}}_s \right|^2
\Bigg]
\leq C_\gamma\,\delta^{2\gamma}=C_\gamma\,\delta^{\beta} .
\]
Computing as in the proof of Lemma~\ref{lemma:P_continuous}, we obtain
\begin{equation*}
\E^{P^{\bs{\mu}}} \Bigg[
    \sup_{\substack{s,t\in[0,T] \\ |t-s| \leq \delta}}
    \left| Z_t - Z_s \right|^2
\Bigg]
\leq
C(1 + K+M)\delta + C_\gamma \delta^\beta. \qedhere
\end{equation*}
\end{proof}

\begin{proposition}
\label{prop:Psi_continuous}
    $\Psi : E \to E $ is continuous.
\end{proposition}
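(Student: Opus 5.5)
Let $\bs{\mu}^n \to \bs{\mu}^\infty$ in $E$, i.e.\ $\sup_{t}\mcal{W}_1(\mu^n_t,\mu^\infty_t)\to 0$. We will prove that $P^{\bs{\mu}^n} \to P^{\bs{\mu}^\infty}$ in $\mcal{P}_1(\Omega)$. Since $\mcal{W}_1(P_t,Q_t)\le \mcal{W}_1(P,Q)$ uniformly in $t$ (the projection $\omega\mapsto\omega_t$ is $1$-Lipschitz for the sup norm), this gives $\Psi(\bs{\mu}^n)\to\Psi(\bs{\mu}^\infty)$ in $C([0,T];\mcal{P}_1(\R^{2d}))$. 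We use the standard double-subsequence argument: it suffices to show that every subsequence has a further subsequence converging to $P^{\bs{\mu}^\infty}$.

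\textbf{Relative compactness.} First, $\{P^{\bs{\mu}}\}_{\bs{\mu}\in E}$ is tight in $\Omega$. Tightness of the initial law holds because $P^{\bs{\mu}}_0=\mu_{\mathrm{init}}$. The modulus of continuity is controlled by Lemma~\ref{lemma:modulus_Z} together with Markov's inequality, and Arzel\`a--Ascoli then gives tightness. Proposition~\ref{prop:bdd_2nd_moment} provides a uniform bound on $\E[\|Z\|_\infty^2]$, so $\|Z\|_\infty$ is uniformly integrable. Tightness plus this uniform integrability yield relative compactness in $\mcal{P}_1(\Omega)$. Therefore any subsequence has a further subsequence with $P^{\bs{\mu}^n}\to P^\star$ in $\mcal{W}_1$. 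Moreover $P^\star_0=\mu_{\mathrm{init}}$ and $P^\star_T=\mu_{\mathrm{fin}}$, since the marginal maps are continuous; hence $P^\star$ is admissible for SB($\bs{\mu}^\infty$).

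\textbf{Optimality of the limit.} Along this subsequence we also want $R^{\bs{\mu}^n}\to R^{\bs{\mu}^\infty}$ weakly. This follows from strong solutions driven by the same Brownian motion: Gronwall and the Lipschitz assumption give $\E\|Z^n-Z^\infty\|_\infty\le C\sup_t\mcal{W}_1(\mu^n_t,\mu^\infty_t)\to0$. By joint lower semicontinuity of $H$ and optimality of $P^{\bs{\mu}^n}$,
\[
H(P^\star|R^{\bs{\mu}^\infty})\le\liminf_n H(P^{\bs{\mu}^n}|R^{\bs{\mu}^n})\le\liminf_n H(P^{\bs{\mu}^\infty}|R^{\bs{\mu}^n}).
\]
The main step is to show that $H(P^{\bs{\mu}^\infty}|R^{\bs{\mu}^n})\to H(P^{\bs{\mu}^\infty}|R^{\bs{\mu}^\infty})$. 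Write $\log\frac{dR^{\bs{\mu}^\infty}}{dR^{\bs{\mu}^n}}$ via Girsanov with drift difference $\Delta^n_t:=f(t,Z_t,\mu^\infty_t)-f(t,Z_t,\mu^n_t)$, which satisfies $|\Delta^n_t|\le C_f\varepsilon_n$ with $\varepsilon_n:=\sup_t\mcal{W}_1(\mu^n_t,\mu^\infty_t)$. Since $R^{\bs{\mu}^n}$ and $R^{\bs{\mu}^\infty}$ are equivalent (bounded drift difference, Novikov), we have
\[
H(P|R^{\bs{\mu}^n})=H(P|R^{\bs{\mu}^\infty})+\E^P\Big[\log\tfrac{dR^{\bs{\mu}^\infty}}{dR^{\bs{\mu}^n}}\Big]
\]
with $P:=P^{\bs{\mu}^\infty}$. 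Under $P$, write $dV_t=(f(t,Z_t,\mu^\infty_t)+\alpha_t)dt+dW^P_t$ with $\E^P\int|\alpha|^2\le 2M$ (Lemma~\ref{lemma:entropy_is_kinetic_energy}). The log-density equals $\int\Delta^n\cdot dW^{R^{\bs{\mu}^n}}-\frac12\int|\Delta^n|^2$, and $dW^{R^{\bs{\mu}^n}}=dW^P+(\Delta^n+\alpha)dt$ under $P$. Its $P$-expectation is therefore
\[
\E^P\Big[\tfrac12\int_0^T|\Delta^n_t|^2dt+\int_0^T\Delta^n_t\cdot\alpha_t\,dt\Big],
\]
where the stochastic integral is a true martingale because $\Delta^n$ is bounded. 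This quantity is bounded by $C\varepsilon_n^2+C\varepsilon_n\sqrt{M}\to0$. The delicate point here is rigor in identifying the Girsanov density under $P$, which is handled as in Lemma~\ref{lemma:entropy_is_kinetic_energy} and Remark~\ref{remark:cost_coupling}.

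Combining the two displays gives $H(P^\star|R^{\bs{\mu}^\infty})\le H(P^{\bs{\mu}^\infty}|R^{\bs{\mu}^\infty})$. Since $P^\star$ is admissible for SB($\bs{\mu}^\infty$), uniqueness in Proposition~\ref{prop:well-posedness_SB_mu} yields $P^\star=P^{\bs{\mu}^\infty}$. As the subsequence was arbitrary, $P^{\bs{\mu}^n}\to P^{\bs{\mu}^\infty}$ in $\mcal{P}_1(\Omega)$, and hence $\Psi$ is continuous on $E$.
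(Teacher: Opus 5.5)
Your proposal is correct and follows essentially the same route as the paper. The paper likewise gets relative compactness in $\mcal{P}_1(\Omega)$ from Lemma~\ref{lemma:modulus_Z} and Proposition~\ref{prop:bdd_2nd_moment}, and uses $R^{\bs{\mu}^n}\to R^{\bs{\mu}^\infty}$ (via the synchronous coupling) together with joint lower semicontinuity of $H$. It then shows $H(P^{\bs{\mu}^\infty}|R^{\bs{\mu}^n})\to H(P^{\bs{\mu}^\infty}|R^{\bs{\mu}^\infty})$ by the same Girsanov computation with bounded drift difference, and concludes with uniqueness of the SB minimizer and a subsequence argument.
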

\begin{proof}
The map $C([0,T]; \mcal{P}_1(\R^{2d})) \ni\bs{\mu} \mapsto R^{\bs{\mu}} \in \mcal{P}_1(\Omega)$ is continuous.
Indeed, by Assumption~\ref{assump:coefficient} \ref{assump:lipschitz} and Gronwall's inequality, we have
\begin{equation} \label{eq:reference_continuity}
\mcal{W}_1(R^{\bs{\mu}}, R^{\bs{\nu}}) \leq \E \left[ \sup_{s\in[0,T]} | Z_s^{\bs{\mu}} - Z_s^{\bs{\nu}} |  \right]\leq  C T e^{CT}\sup_{t \in [0,T]}\mcal{W}_1(\mu_t, \nu_t) ,
\end{equation}
where $Z^{\bs{\mu}}$ and $Z^{\bs{\nu}}$ solve the reference SDE~\eqref{eq:reference_SDE} with inputs $\bs{\mu}$ and $\bs{\nu}$, respectively, driven by the same Brownian motion.

Let $(\bs{\mu}^n)_{n\in\N}$ and $\bs{\mu}^\infty$ be such that $\bs{\mu}^n \to \bs{\mu}^\infty$ in $C([0,T];\mcal{P}_1(\R^{2d}))$.
We write
\[
    R^n := R^{\bs{\mu}^n}, \, R^\infty := R^{\bs{\mu}^\infty}, \, P^n := P^{\bs{\mu}^n}, P^\infty := P^{\bs{\mu}^\infty}, \,
    b^n(t,z) := f(t,z, {\mu^n_t}), \, b^\infty(t,z) := f(t,z, {\mu^\infty_t}).
\]
By \eqref{eq:reference_continuity}
, we know $\mcal{W}_{1}(R^n, R^\infty) \to 0$ as $n \to \infty$.

By Lemma~\ref{lemma:relativelycpt_W1} below, $\left\{P^n\right\}_{n\in\N}$ is relatively compact in $\mcal{P}_1(\Omega)$. Therefore, there exists a subsequence of $\left\{P^n\right\}_{n\in\N}$, which we still denote by the same, that converges to some $P^\star \in \mcal{P}_1(\Omega)$ in $\mcal{W}_1$.

We show $P^\infty = P^\star$. By the joint lower semicontinuity of $H(\cdot | \cdot)$,
\begin{equation} \label{eq:P_star_is_optimal}
H(P^\star | R^\infty)
\leq \liminf_{n \to \infty} H(P^n | R^n)
\leq \liminf_{n \to \infty} H(P^\infty | R^n).
\end{equation}
Also, $P^\star$ satisfies the boundary constraints.
Thus, once we prove $\lim_{n \to \infty} H(P^\infty | R^n) = H(P^\infty | R^\infty)$, $P^\infty = P^\star$ follows from the uniqueness of the solution to \ref{eq:SB_mu}.

Consider the density process:
\begin{equation*}
\mcal{E}^n_t
:= \exp\left(
\int^t_0 \left(
b^\infty(s,Z_s) - b^n(s,Z_s)
\right)\, dW^{R^n}_s
- \frac{1}{2} \int^t_0
\| b^\infty(s,Z_s) - b^n(s,Z_s)\|^2 \, ds
\right).
\end{equation*}
By Assumption~\ref{assump:coefficient} \ref{assump:lipschitz}, $| ( b^\infty - b^n )(t,z)| \leq C_f\mcal{W}_1(\mu^n_t, \mu^\infty_t) < \infty$, which is a deterministic bound, uniform in $z$. Thus, by Novikov's condition, $\mcal{E}^n$ is a true $R^n$-martingale and we  define $Q^\infty \in \mcal{P}(\Omega)$ by $\frac{dQ^\infty} {dR^n} := \mcal{E}^n_T$. By Girsanov's theorem, the canonical process $Z =(X,V)$ satisfies
\begin{equation*}
    \left\{
        \begin{aligned}
            & d X_t = V_t dt, \\
            & d V_t = b^\infty(t,Z_t)dt +  dW^{Q^\infty}_t,\\
            & (X_0, V_0) \sim \mu_{\mathrm{init}},
        \end{aligned}
    \right.
\end{equation*}
where $W^{Q^\infty}_\cdot := W^{R^n}_\cdot - \int^\cdot_0 ( b^\infty - b^n )(t,Z_t)\,dt$ is a $Q^\infty$-Brownian motion. By the uniqueness of the solution to the martingale problem, $Q^\infty = R^\infty$ and $W^{Q^\infty}= W^{R^\infty}$.
Consequently, $P^\infty \ll  R^\infty \sim R^n $, and $\dfrac{dR^\infty}{dR^n} = \mcal{E}^n_T$, $R^n$-a.s.
Therefore,
\begin{align*}
H(P^\infty | R^n)
&= \int_{\Omega} \log \left(
\dfrac{d P^\infty}{d R^n}
\right)
 dP^\infty\\
&= \int_{\Omega} \log \left(
\dfrac{d P^\infty}{d R^\infty}
\right)
dP^\infty
+
\int_{\Omega} \log \left(
\dfrac{d R^\infty}{d R^n}
\right)
dP^\infty \\
&= H(P^\infty | R^\infty)
+
\int_{\Omega} \log \left(
\dfrac{d R^\infty}{d R^n}
\right)
dP^\infty .
\end{align*}
Moreover, there exists an $\mbb{F}$-progressively measurable process $\alpha^\infty$ with $\E^{P^\infty}[\int_0^T |\alpha^\infty_t|^2 \, dt] < \infty$ (Lemma~\ref{lemma:entropy_is_kinetic_energy}) such that $W^{R^\infty}_\cdot = W^{P^\infty}_\cdot + \int_0^\cdot \alpha^\infty_t dt$. Thus,
\begin{align*}
& \left| \int_{\Omega} \log \left(
\dfrac{d R^\infty}{d R^n}
\right)
dP^\infty \right| \\
&= \left|  \E^{P^\infty} \left[
\int^T_0 \left( b^\infty- b^n \right)(t,Z_t)
\, dW^{R^\infty}_t
+ \frac{1}{2} \int^T_0
\|  \left( b^\infty- b^n \right)(t,Z_t)\|^2 \, dt
\right] \right|\\
&= \left|  \E^{P^\infty} \left[
\int^T_0 \left( b^\infty- b^n \right)(t,Z_t) \,  \left(d W^{P^\infty}_t
+ \alpha^\infty_t dt
\right)
+ \frac{1}{2} \int^T_0
\| \left( b^\infty- b^n \right)(t,Z_t)\|^2 \, dt
\right] \right|\\
& \leq C \left ( \E^{P^\infty} \left[ \int^T_0 |\alpha^\infty_t|^2 \, dt \right]^{1/2} \sup_{t \in [0,T]} \mcal{W}_1(\mu^n_t, \mu^\infty_t)  + \sup_{t \in [0,T]} \mcal{W}^2_1(\mu^n_t, \mu^\infty_t)  \right)  \quad
\longrightarrow 0, \quad \text{as} \quad  n \to \infty,
\end{align*}
where we have used $\int_0^\cdot (b^\infty-b^n)(t,Z_t)\, dW^{P^\infty}_t $ is $P^\infty$-martingale. Hence, $P^\star = P^\infty$ as claimed.

Since this is true for any convergent subsequence of $\{P^n\}_{n\in\N}$, the original whole sequence $P^n \to P^\infty$ in $\mcal{P}_1(\Omega)$ as $n \to \infty$, and $\sup_{t \in [0,T]} \mcal{W}_1 (P^n_t, P^\infty_t) \to 0$ follows.
\end{proof}

\begin{lemma} \label{lemma:relativelycpt_W1}
$\left\{P^n\right\}_{n\in\N}$ is relatively compact in $\mathcal{P}_1(\Omega)$.
\end{lemma}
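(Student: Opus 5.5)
To show that $\{P^n\}_{n\in\N}$ is relatively compact in $\mcal{P}_1(\Omega)$, I would use the standard characterization: a family in $\mcal{P}_1(\Omega)$ is relatively compact for $\mcal{W}_1$ if and only if it is tight and has uniformly integrable first moments, i.e.
\[
\lim_{r\to\infty}\sup_n \E^{P^n}\bigl[\|Z\|_\infty \1_{\{\|Z\|_\infty>r\}}\bigr]=0.
\]
Both properties should follow from the uniform bounds already established, whose constants are independent of $\bs{\mu}$.

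The uniform integrability is immediate from Proposition~\ref{prop:bdd_2nd_moment}. Since $\E^{P^n}[\|Z\|_\infty^2]\le K$ for all $n$, we get
\[
\E^{P^n}\bigl[\|Z\|_\infty\1_{\{\|Z\|_\infty>r\}}\bigr]\le K/r\to0
\]
uniformly in $n$.

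For tightness in $\Omega=C([0,T];\R^{2d})$, I would apply the Arzelà--Ascoli criterion in its probabilistic form: it suffices that $\{P^n_0\}$ is tight and that for every $\varepsilon>0$,
\[
\lim_{\delta\to0}\sup_n P^n\bigl(\sup_{|t-s|\le\delta}|Z_t-Z_s|>\varepsilon\bigr)=0.
\]
The first condition is trivial because $P^n_0=\mu_{\mathrm{init}}$ for all $n$; one could alternatively use $\sup_n P^n(\|Z\|_\infty>r)\le K/r^2$. The second follows from Lemma~\ref{lemma:modulus_Z} with, say, $\beta=1/2$, combined with Markov's inequality: the probability is at most $C\delta^{1/2}/\varepsilon^2$, with $C$ independent of $n$. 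One point needs care. Lemma~\ref{lemma:modulus_Z} is proved via \eqref{eq:equi_continuous}, which uses the constants $K$ and $M$ from Proposition~\ref{prop:bdd_2nd_moment} and Corollary~\ref{cor:bdd_relative_entropy}. These constants are uniform over all flows $\bs{\mu}$, so the lemma applies to each $P^n=P^{\bs{\mu}^n}$ with the same constant.

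Combining tightness and uniform integrability yields relative compactness in $\mcal{P}_1(\Omega)$. I expect no real obstacle here. The only thing to verify is that every constant used is independent of $n$, and this is exactly what the uniformity in $\bs{\mu}$ of the earlier lemmas provides. For the same reason, the identical argument applies verbatim to the family $\{P^{n,\bs{\mu}^n}\}$ of the soft problem via \eqref{eq:bound_soft_by_hard}.
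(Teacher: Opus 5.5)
Your proposal is correct and follows the paper's proof essentially verbatim. Tightness comes from the fixed initial law $\mu_{\mathrm{init}}$ together with the uniform modulus bound of Lemma~\ref{lemma:modulus_Z}, via Billingsley's criterion and Markov's inequality. Uniform integrability of first moments comes from the $\bs{\mu}$-independent second-moment bound of Proposition~\ref{prop:bdd_2nd_moment}, and the two together give relative compactness in $\mcal{P}_1(\Omega)$, with your check that every constant is uniform in $\bs{\mu}$ being exactly what the paper relies on.
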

\begin{proof}
Since $Z_0\sim\mu_{\mathrm{init}}$ under $P^n$ for every $n$, the family of initial
distributions is trivially tight. Moreover, fixing $\beta\in(0,1)$, Lemma~\ref{lemma:modulus_Z} gives that for any $0 < \delta < 1$, we have $\E^{P^n} \left[ \sup_{|t-s| \leq \delta} |Z_t - Z_s|^2 \right] \leq C \delta^\beta$ for a constant $C>0$ independent of $n$. Hence, we  use \cite[Theorem~7.3]{billingsley} to deduce tightness, and by Prohorov's theorem, it is relatively weakly compact.

In addition, by Proposition~\ref{prop:bdd_2nd_moment}, $\left\{ M_2(P^n) \right\}_{n\in\N}$ is uniformly bounded and thus, $\left\{ P^n \right\}_{n\in\N}$ has uniformly integrable first moments.
Therefore, it is also relatively compact in $\mathcal{P}_1(\Omega)$ (cf.\ \cite[Theorem~5.5]{carmona2018probabilistic_I}).
\end{proof}

\subsubsection*{Step 3: $\Psi(E)$ is relatively compact.}
\begin{proposition}
\label{prop:Psi_precompact_range}
    $\Psi : E \to E $ has a relatively compact range.
\end{proposition}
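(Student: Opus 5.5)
The plan is to apply the Arzelà--Ascoli theorem in $C([0,T];\mcal{P}_1(\R^{2d}))$. Since $(\mcal{P}_1(\R^{2d}),\mcal{W}_1)$ is a complete metric space and $[0,T]$ is compact, a family $\mcal{F}\subset C([0,T];\mcal{P}_1(\R^{2d}))$ is relatively compact in the uniform metric $\sup_t \mcal{W}_1$ if two conditions hold. First, for each $t$, the set $\{\nu_t : \bs{\nu}\in\mcal{F}\}$ is relatively compact in $\mcal{P}_1(\R^{2d})$. Second, $\mcal{F}$ is equicontinuous. The norm $\|\cdot\|_{\mathrm{KR}}$ induces $\mcal{W}_1$ on probability measures, so relative compactness for $\sup_t\mcal{W}_1$ gives relative compactness in the normed space $C([0,T];\mcal{M}_1(\R^{2d}))$ used in Schauder's theorem.

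\emph{Pointwise relative compactness.} Fix $t\in[0,T]$. By Proposition~\ref{prop:bdd_2nd_moment}, for every $\bs{\mu}$ (in particular every $\bs{\mu}\in E$),
\[
M_2(P^{\bs{\mu}}_t)^2 \le \E^{P^{\bs{\mu}}}\Bigl[\sup_{s\in[0,T]}|Z_s|^2\Bigr]\le K.
\]
This uniform second moment bound gives tightness of $\{P^{\bs{\mu}}_t\}_{\bs{\mu}\in E}$ through Markov's inequality $P^{\bs{\mu}}_t(|z|>r)\le K/r^2$. It also gives uniform integrability of the first moments, since $\int_{|z|>r}|z|\,dP^{\bs{\mu}}_t\le K/r$. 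By Prohorov's theorem together with the characterization of $\mcal{W}_1$-convergence as weak convergence plus convergence of first moments (\cite[Theorem~5.5]{carmona2018probabilistic_I}), the family is relatively compact in $\mcal{P}_1(\R^{2d})$.

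\emph{Equicontinuity.} This is exactly Corollary~\ref{cor:Psi_equicontinuous}. The estimate \eqref{eq:equi_continuous} gives
\[
\mcal{W}_1(P^{\bs{\mu}}_t,P^{\bs{\mu}}_s)\le \bigl(C(1+K+M)\bigr)^{1/2}|t-s|^{1/2}
\]
with a constant independent of $\bs{\mu}$. Combining the two conditions with Arzelà--Ascoli yields that $\Psi(E)$ is relatively compact.

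There is no serious obstacle here, since the heavy lifting (the $\bs{\mu}$-uniform bounds $K$ and $M$) was already done. The only point needing care is the functional-analytic one: checking that Arzelà--Ascoli applies with values in the metric space $(\mcal{P}_1,\mcal{W}_1)$, and that the resulting compactness is compactness in the ambient normed space $C([0,T];\mcal{M}_1(\R^{2d}))$. Both follow because $\mcal{P}_1(\R^{2d})$ is a $\|\cdot\|_{\mathrm{KR}}$-closed subset on which the norm metric coincides with $\mcal{W}_1$.
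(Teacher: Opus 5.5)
Your proposal is correct and follows essentially the same route as the paper. Both use Arzel\`a--Ascoli, with pointwise relative compactness in $\mcal{P}_1(\R^{2d})$ coming from the uniform second-moment bound of Proposition~\ref{prop:bdd_2nd_moment} (tightness plus uniformly integrable first moments) and equicontinuity from Corollary~\ref{cor:Psi_equicontinuous}; your remarks on passing from $\sup_t \mcal{W}_1$ to the normed space $C([0,T];\mcal{M}_1(\R^{2d}))$ just make explicit what the paper leaves implicit.
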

\begin{proof}
By Corollary~\ref{cor:Psi_equicontinuous}, we know that $\left\{\Psi(\bs{\mu})\right\}_{\bs{\mu} \in E} \subset C([0,T];\mcal{P}_1(\R^{2d}))$ is equicontinuous with respect to $\mcal{W}_1$.
Next, by Proposition~\ref{prop:bdd_2nd_moment}, $\left\{ P^{\bs{\mu}}_t\right\}_{\bs{\mu} \in E}$ has uniformly bounded second moments for each $t\in[0,T]$. Thus, it is relatively weakly compact by Prohorov's theorem. Moreover, since it has uniformly integrable first moments, it is relatively compact in $\mcal{P}_1$.
Therefore, we  apply the Arzel\`a--Ascoli theorem to conclude the proof.
\end{proof}
We are now ready to prove the existence of an MFE:
\begin{proof}[Proof of Theorem~\ref{thm:existence_mfe}]
Since $\|\cdot \|_{\mathrm{KR}}$ induces the $\mcal{W}_1$-metric on $\mcal{P}_1$, by Steps 1-3, we  apply Schauder's theorem in the normed vector space $C([0,T]; \mcal{M}_1(\R^{2d}))$ (cf.\ \cite[Theorem~4.32]{carmona2018probabilistic_I}) to obtain a fixed point of $\Psi$, which is an MFE.
\end{proof}

\subsection{Proof of Theorem \ref{thm:qualitative}}
Throughout this section, we work under Assumptions~\ref{assump:initial_and_target},
\ref{assump:coefficient} and \ref{assump:terminal_cost}.
\subsubsection*{Proof of existence of a $\lambda$-MFE}
Fix $\lambda >0$.
We follow the argument of the hard-constraint case (Theorem~\ref{thm:existence_mfe}), replacing $P^{\bs{\mu}}$ and $\Psi$ by $P^{\lambda, \bs{\mu}}$ and $\Psi^{\lambda}$, respectively. In order to avoid repetition, we only describe the differences from Section~\ref{sec:proof_existence}.

In Step 1, we first check the uniform bound:
\begin{equation*}
    H(P^{\lambda, \bs{\mu}} | R^{\bs{\mu}}) \leq M, \,\text{for all} \,  \bs{\mu} \in C([0,T]; \mcal{P}_1(\R^{2d})).
\end{equation*}
Indeed, $P^{\bs{\mu}}$ is admissible for \ref{eq:soft_SB_mu} and satisfies
$P^{\bs{\mu}}_T=\mu_{\mathrm{fin}}$, whence $G(P^{\bs{\mu}}_T)=0$.
Therefore the value of
\ref{eq:soft_SB_mu} is bounded above by that of \ref{eq:SB_mu}, i.e.
\begin{equation*}
  H(P^{\lambda,\bs{\mu}}\,|\,R^{\bs{\mu}})+\lambda G(P^{\lambda,\bs{\mu}}_T)
  \le H(P^{\bs{\mu}}\,|\,R^{\bs{\mu}})\le M,
\end{equation*}
and the claim follows since $G\ge 0$ by Assumption~\ref{assump:terminal_cost}.

With this bound, all the arguments in Step 1 in Section~\ref{sec:proof_existence} hold.
Continuity of $\Psi^\lambda$ also holds as in Step 2.
\begin{proposition}
\label{prop:Psi_lambda_continuous}
    $\Psi^\lambda : E \to E $ is continuous.
\end{proposition}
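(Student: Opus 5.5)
The plan is to repeat the argument of Proposition~\ref{prop:Psi_continuous}, replacing SB($\bs{\mu}$) by \ref{eq:soft_SB_mu}. Let $\bs{\mu}^n \to \bs{\mu}^\infty$ in $E$, and write $R^n, R^\infty$ as before and $P^n := P^{\lambda,\bs{\mu}^n}$, $P^\infty := P^{\lambda,\bs{\mu}^\infty}$. We must show $P^n \to P^\infty$ in $\mcal{P}_1(\Omega)$; this gives $\sup_t \mcal{W}_1(P^n_t, P^\infty_t)\to 0$.

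\textbf{Compactness.} The first step is relative compactness of $\{P^n\}$ in $\mcal{P}_1(\Omega)$. The uniform bound $H(P^{\lambda,\bs{\mu}}|R^{\bs{\mu}}) \le M$ established above is all that Proposition~\ref{prop:bdd_2nd_moment}, Lemma~\ref{lemma:modulus_Z} and Lemma~\ref{lemma:relativelycpt_W1} actually use. Their proofs therefore carry over verbatim: the Donsker--Varadhan argument gives the uniform second moment bound, and the control $\alpha$ from Lemma~\ref{lemma:entropy_is_kinetic_energy} gives the modulus estimate. Along a subsequence we get $P^n \to P^\star$ in $\mcal{W}_1$, and $P^\star_0 = \mu_{\mathrm{init}}$.

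\textbf{Identifying the limit.} Joint lower semicontinuity of $H$, weak lower semicontinuity of $G$ (Assumption~\ref{assump:terminal_cost}), and $P^n_T \to P^\star_T$ weakly give
\[
H(P^\star|R^\infty) + \lambda G(P^\star_T) \le \liminf_n \bigl( H(P^n|R^n) + \lambda G(P^n_T) \bigr) \le \liminf_n \bigl( H(P^\infty|R^n) + \lambda G(P^\infty_T) \bigr).
\]
The second inequality uses optimality of $P^n$ together with admissibility of $P^\infty$ for $\lambda$-SB($\bs{\mu}^n$). The term $\lambda G(P^\infty_T)$ does not depend on $n$ and is finite, being bounded by $M$.

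It remains to show $H(P^\infty|R^n) \to H(P^\infty|R^\infty)$. I would use exactly the Girsanov computation of Proposition~\ref{prop:Psi_continuous}. The drift difference is bounded deterministically by $C_f \sup_t \mcal{W}_1(\mu^n_t,\mu^\infty_t)$, so $dR^\infty/dR^n = \mcal{E}^n_T$. The correction term $\int \log(dR^\infty/dR^n)\,dP^\infty$ is then bounded by
\[
C\Bigl(\E^{P^\infty}\Bigl[\int_0^T|\alpha^\infty_t|^2 dt\Bigr]^{1/2}\varepsilon_n + \varepsilon_n^2\Bigr), \qquad \varepsilon_n := \sup_t \mcal{W}_1(\mu^n_t,\mu^\infty_t),
\]
and this tends to $0$. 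Here the energy of $\alpha^\infty$ is finite because $H(P^\infty|R^\infty) \le M$.

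Combining these, $P^\star$ is admissible with objective value at most that of $P^\infty$ for $\lambda$-SB($\bs{\mu}^\infty$). Uniqueness in Proposition~\ref{prop:wellposedness_soft_SB} then gives $P^\star = P^\infty$. Since every subsequence has a further subsequence with this same limit, the whole sequence converges.

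\textbf{Main obstacle.} The only genuinely new point is handling the penalty $\lambda G$ under limits. This is resolved by lower semicontinuity of $G$ on the left side, and by the fact that the comparison measure $P^\infty$ has a fixed terminal law on the right side, so $G(P^\infty_T)$ never needs continuity in $n$.
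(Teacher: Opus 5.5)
Your proposal is correct and follows the paper's route: the same ingredients in the same order. You obtain relative compactness from the uniform entropy bound $H(P^{\lambda,\bs{\mu}}\,|\,R^{\bs{\mu}})\le M$, then use the chain of inequalities based on joint lower semicontinuity of $H$, weak lower semicontinuity of $G$, and admissibility of $P^\infty$ for $\lambda$-SB$(\bs{\mu}^n)$. You conclude with the Girsanov argument giving $H(P^\infty|R^n)\to H(P^\infty|R^\infty)$ and uniqueness for $\lambda$-SB$(\bs{\mu}^\infty)$, and your observation that $\lambda G(P^\infty_T)$ is fixed in $n$ is exactly what the paper's proof uses.
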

\begin{proof}
Let $\{\bs{\mu}^n\}_{n\in\N}$ and $\bs{\mu}^\infty$ be such that $\bs{\mu}^n \to \bs{\mu}^\infty$ in $C([0,T];\mcal{P}_1(\R^{2d}))$.
We write
\[
    R^n := R^{\bs{\mu}^n}, \, R^\infty := R^{\bs{\mu}^\infty}, \, P^n := P^{\lambda, \bs{\mu}^n}, P^\infty := P^{\lambda,\bs{\mu}^\infty}.
\]
By arguments similar to those in Lemma~\ref{lemma:relativelycpt_W1}, there exists a subsequence $\left\{ P^n\right\}_{n\in \N}$, which we still denote by the same, converging in $\mcal{P}_1(\Omega)$ to some $P^\star$. We prove $P^\star = P^\infty$.
To this end, the only difference from the proof of Proposition~\ref{prop:Psi_continuous} is \eqref{eq:P_star_is_optimal}, which is replaced by the following:
\begin{equation*}
H(P^\star | R^\infty) + \lambda G(P^\star_T)
\leq \liminf_{n \to \infty} \left( H(P^n | R^n) + \lambda G(P^n_T) \right)
\leq \liminf_{n \to \infty} \left( H(P^\infty | R^n) + \lambda G(P^\infty_T) \right),
\end{equation*}
where we have used the l.s.c.\ of $G$. Then, as in
Proposition~\ref{prop:Psi_continuous}, it holds $\lim_{n\to\infty} H(P^\infty|R^n)
= H(P^\infty|R^\infty)$. By the uniqueness of the solution to $\lambda$-SB($\bs{\mu}^\infty$), $P^\star = P^\infty$.
\end{proof}
Furthermore, as in Proposition~\ref{prop:Psi_precompact_range}, $\Psi^\lambda(E)$ is relatively compact. Finally, we apply Schauder's theorem, and there exists a $\lambda$-MFE.

\subsubsection*{Proof of convergence of $\lambda$-MFE.}
By the first part of the proof, for each $n \in \N$, there exists an $n$-MFE $\bs{\mu}^n \in E$.

Set $R^n := R^{\bs{\mu}^n}$ and $P^n := P^{n,\bs{\mu}^n}$; since $\bs{\mu}^n$ is an
$n$-MFE, we have $\bs{\mu}^n = (P^n_t)_{t\in[0,T]}$.
We know that
\begin{equation*}
H(P^n | R^n)
\leq
H(P^n | R^n)+ nG(P^n_T) \leq H(P^{\bs{\mu}^n} | R^n) \leq M,
\end{equation*}
the last inequality being Corollary~\ref{cor:bdd_relative_entropy}, with $M$
independent of $n$. Hence, by arguments similar to those in
Lemma~\ref{lemma:relativelycpt_W1}, there is a subsequence of $\{P^n\}_{n\in\N}$,
still denoted by the same symbol, converging in $\mcal{P}_1(\Omega)$ to some $P^\star$.

Set $\bs{\mu}^\infty := (P_t^\star)_{t\in[0,T]}$, $R^\infty := R^{\bs{\mu}^\infty}$.
Since $\mcal{W}_1(P^n, P^\star) \to 0$, it holds $\bs{\mu}^n \to \bs{\mu}^\infty$ in $C([0,T];\mcal{P}_1(\R^{2d}))$, and therefore $\mcal{W}_1(R^n,R^\infty) \to 0$ by \eqref{eq:reference_continuity}.

Let $P^\infty := P^{\bs{\mu}^\infty}$ i.e.\ the solution to SB($\bs{\mu}^\infty$).
We show $P^\star = P^\infty$ in two steps:
(i) $P^\star_0 = \mu_{\mathrm{init}}$ and $P^\star_T = \mu_{\mathrm{fin}}$;
(ii) $H(P^\star | R^\infty) \le H(P^\infty | R^\infty)$.

\emph{Step (i).} By the above, $G(P^n_T) \le M/n$, so the lower semicontinuity of $G$
and $G \ge 0$ give
\begin{equation*}
  0 \le G(P^\star_T) \le \liminf_{n\to\infty} G(P^n_T)
  \le \lim_{n\to\infty} \frac{M}{n} = 0 .
\end{equation*}
By Assumption~\ref{assump:terminal_cost}, $G(\nu) = 0$ implies $\nu = \mu_{\mathrm{fin}}$, whence
$P^\star_T = \mu_{\mathrm{fin}}$. Moreover $P^n_0 = \mu_{\mathrm{init}}$ for every $n$ and
$P^n_0 \to P^\star_0$, so $P^\star_0 = \mu_{\mathrm{init}}$.

\emph{Step (ii).} Since $P^\infty_T = \mu_{\mathrm{fin}}$, we have $G(P^\infty_T) = 0$, and the
optimality of $P^n$ yields
\begin{equation*}
  H(P^n | R^n) \le H(P^n | R^n) + n\,G(P^n_T)
  \le H(P^\infty | R^n) + n\,G(P^\infty_T) = H(P^\infty | R^n).
\end{equation*}
By the joint lower semicontinuity of the relative entropy, and the fact that  $\lim_{n\to\infty} H(P^\infty|R^n)
= H(P^\infty|R^\infty)$ as in
Proposition~\ref{prop:Psi_continuous}, we conclude
\begin{equation*}
  H(P^\star | R^\infty) \le \liminf_{n\to\infty} H(P^n|R^n)
  \le \lim_{n\to\infty} H(P^\infty | R^n) = H(P^\infty | R^\infty).
\end{equation*}

By Step (i), $P^\star$ is admissible for $\mathrm{SB}(\bs{\mu}^\infty)$; by Step (ii)
and the uniqueness of its minimizer, $P^\star = P^\infty$. Consequently
$\bs{\mu}^\infty = (P^\infty_t)_{t\in[0,T]} = \Psi(\bs{\mu}^\infty)$, i.e.\
$\bs{\mu}^\infty$ is an MFE.
\subsection{Proof of Theorem \ref{thm:consistency_error}}
Throughout this section, we work under Assumptions \ref{assump:initial_and_target}, \ref{assump:coefficient}, \ref{assump:terminal_cost}, \ref{assump:bounded_derivative} and \ref{assump:Talagrand}.

\subsubsection*{Step 1: Equivalent formulation with entropic optimal transport}
In this section, we recall that the static formulation of SB is equivalent to the entropic optimal transport (EOT) problem with a certain cost function.

Throughout this section, we grant Theorem~\ref{thm:linear_growth}, which we prove in Section~\ref{sec:proof_lineargrowth}, i.e. for each $\bs{\mu} \in C([0,T]; \mcal{P}_1(\R^{2d}))$, there exists a smooth density $(z,w) \mapsto p^{\bs{\mu}}_T(z,w)$.
We define the cost function $c^{\bs{\mu}} : \R^{2d} \times \R^{2d} \to \R$:
\begin{equation*}
    c^{\bs{\mu}} (z,w) := -\log p^{\bs{\mu}}_T(z,w),
\end{equation*}
and consider the entropic optimal transport (EOT) problem with the initial boundary $\mu_{\mathrm{init}}$ and the final one $\rho \in \mcal{P}_2(\R^{2d})$:
\begin{equation} \tag*{EOT($\bs{\mu},\rho$)} \label{eq:EOT}
    S^{\bs{\mu}}(\rho) := \inf_{\pi \in \Pi(\mu_{\mathrm{init}}, \rho)} \mcal{F}^{\bs{\mu}}_{\rho}(\pi)
\end{equation}
for
\begin{equation*}
     \mcal{F}^{\bs{\mu}}_{\rho}(\pi) :=\int_{\R^{2d}\times \R^{2d}} c^{\bs{\mu}}(z,w) \, d\pi + H(\pi | \mu_{\mathrm{init}} \otimes \rho).
\end{equation*}
When $|H(\rho | \leb)| < \infty$, the identity \eqref{eq:SB_EOT} (see below) holds, and EOT is equivalent to the static version of \ref{eq:SB_mu} with its final boundary condition replaced by $\rho$. More precisely, for the static SB problem:
\begin{equation*} \label{eq:SB_mu_nu}  \tag*{$\textup{SB}_{\mathrm{stat}}$($\bs{\mu},\rho$)}
  V^{\bs{\mu}}(\rho):= \inf_{\pi \in \Pi (\mu_{\mathrm{init}}, \rho)} H\bigl(\pi\,\big|\, R^{\bs{\mu}}_{0,T} \bigr),
\end{equation*}
where
\[
    \Pi(\mu_{\mathrm{init}}, \rho) := \left\{
        \pi \in \mcal{P}(\R^{2d}\times\R^{2d})
        : \pi(\cdot \times \R^{2d}) = \mu_{\mathrm{init}},\, \pi(\R^{2d} \times \cdot) = \rho
        \right\},
\]
the following relation holds:
\begin{lemma} \label{lemma:SB_EOT}
Let $\rho \in \mcal{P}_2(\R^{2d})$ with $|H(\rho\,|\,\leb)| < \infty$.
Then, for every $\pi \in \Pi(\mu_{\mathrm{init}},\rho)$,
\begin{equation} \label{eq:SB_EOT}
  H\bigl(\pi \,\big|\, R^{\bs{\mu}}_{0,T}\bigr)
  = \mcal{F}^{\bs{\mu}}_{\rho}(\pi) + H(\rho \,|\, \leb)
\end{equation}
and
$V^{\bs{\mu}}(\rho)= S^{\bs{\mu}}(\rho) + H(\rho | \leb)$. In particular, the unique solution $\pi \in \Pi (\mu_{\mathrm{init}}, \rho)$ of $ \textup{SB}_{stat} (\bs{\mu}, \rho)$ attains the infimum of $\textup{EOT}(\bs{\mu}, \rho)$ and vice versa.
\end{lemma}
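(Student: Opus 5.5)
The identity \eqref{eq:SB_EOT} comes from computing the Radon--Nikodym derivative $d\pi/dR^{\bs{\mu}}_{0,T}$ through the chain rule for densities. By Lemma~\ref{lemma:kernel_bound}, $R^{\bs{\mu}}_{0,T}(dz,dw) = \mu_{\mathrm{init}}(dz)\,p^{\bs{\mu}}_T(z,w)\,dw$ with $p^{\bs{\mu}}_T>0$. Hence $R^{\bs{\mu}}_{0,T}$ is equivalent to $\mu_{\mathrm{init}}\otimes\leb$, and
\[
\frac{dR^{\bs{\mu}}_{0,T}}{d(\mu_{\mathrm{init}}\otimes\leb)}(z,w) = e^{-c^{\bs{\mu}}(z,w)}.
\]
Since $|H(\rho|\leb)|<\infty$, $\rho$ has a density $r$, and $\mu_{\mathrm{init}}\otimes\rho$ has density $r(w)$ with respect to $\mu_{\mathrm{init}}\otimes\leb$.

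\emph{Step 1.} Take $\pi\in\Pi(\mu_{\mathrm{init}},\rho)$. Because $R^{\bs{\mu}}_{0,T}\sim\mu_{\mathrm{init}}\otimes\leb$, we have $\pi\ll R^{\bs{\mu}}_{0,T}$ if and only if $\pi\ll\mu_{\mathrm{init}}\otimes\leb$. Since the second marginal of $\pi$ is $\rho$, which has a density, this is in turn equivalent to $\pi\ll\mu_{\mathrm{init}}\otimes\rho$: on $\{r=0\}$ the measure $\pi$ puts no mass. If $\pi$ fails these conditions, both sides of \eqref{eq:SB_EOT} equal $+\infty$. Otherwise, $\pi$-a.s.,
\[
\log\frac{d\pi}{dR^{\bs{\mu}}_{0,T}}(z,w) = \log\frac{d\pi}{d(\mu_{\mathrm{init}}\otimes\rho)}(z,w) + \log r(w) + c^{\bs{\mu}}(z,w).
\]

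\emph{Step 2.} Integrate this identity against $\pi$. The term $\int \log r(w)\,\pi(dz,dw)=\int \log r\,d\rho = H(\rho|\leb)$ is finite by assumption. For the cost term, the two-sided Gaussian bounds (Lemma~\ref{lemma:kernel_bound} together with the upper bound from \cite[Theorem~1.1]{delarue_density_estimates}, as used in Remark~\ref{remark:MFP_SB}) give $|c^{\bs{\mu}}(z,w)|\le C(1+|z|^2+|w|^2)$. As $\mu_{\mathrm{init}}$ and $\rho$ both have finite second moments, $c^{\bs{\mu}}\in L^1(\pi)$. The remaining term has negative part in $L^1(\pi)$, as any log-density does. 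Its expectation is therefore well defined in $(-\infty,+\infty]$ and equals $H(\pi|\mu_{\mathrm{init}}\otimes\rho)$, so splitting the integral is legitimate. This yields \eqref{eq:SB_EOT} for every $\pi$, including the case where both sides are $+\infty$.

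\emph{Step 3.} The two functionals $\pi\mapsto H(\pi|R^{\bs{\mu}}_{0,T})$ and $\mcal{F}^{\bs{\mu}}_\rho$ on $\Pi(\mu_{\mathrm{init}},\rho)$ differ by the finite constant $H(\rho|\leb)$. Taking infima gives $V^{\bs{\mu}}(\rho)=S^{\bs{\mu}}(\rho)+H(\rho|\leb)$, and the two problems have the same minimizers. Existence and uniqueness of a minimizer for $\mathrm{SB}_{\mathrm{stat}}(\bs{\mu},\rho)$ follow as in Proposition~\ref{prop:well-posedness_SB_mu}. Finiteness of the value is shown by testing with $\mu_{\mathrm{init}}\otimes\rho$: this gives $V^{\bs{\mu}}(\rho)\le H(\rho|\leb)+C(1+M_2^2(\mu_{\mathrm{init}})+M_2^2(\rho))<\infty$. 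Uniqueness comes from strict convexity of relative entropy on the convex set $\Pi(\mu_{\mathrm{init}},\rho)$, and existence from weak compactness of $\Pi$ together with lower semicontinuity. This gives the ``vice versa'' claim.

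The main obstacle is the integrability bookkeeping in Step 2, specifically avoiding an $\infty-\infty$ situation. The quadratic bound on $|c^{\bs{\mu}}|$ and the finiteness of $H(\rho|\leb)$ are exactly what make every term except $H(\pi|\mu_{\mathrm{init}}\otimes\rho)$ finite.
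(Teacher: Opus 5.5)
Your proposal is correct and follows essentially the same route as the paper's proof. Both factor $d\pi/dR^{\bs{\mu}}_{0,T}$ through $\mu_{\mathrm{init}}\otimes\rho$ using the positive density $p^{\bs{\mu}}_T$, observe that $\pi\ll R^{\bs{\mu}}_{0,T}$ iff $\pi\ll\mu_{\mathrm{init}}\otimes\rho$, and integrate the three-term log identity after ruling out $\infty-\infty$. The only minor differences are that you use the two-sided Gaussian bound to get $c^{\bs{\mu}}\in L^1(\pi)$, whereas the paper needs only the lower bound $c^{\bs{\mu}}\ge-\log C$, and that you also justify existence and uniqueness of the $\mathrm{SB}_{\mathrm{stat}}$ minimizer, which the paper takes for granted.
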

 This type of result is classical (see \cite[(4.4)]{nutz2021introduction}).  The proof in our setting is provided in Appendix~\ref{appendix:proof_SB_EOT} for the sake of completeness.

\subsubsection*{Step 2: The stability of the value of EOT}
Fix $\bs{\mu} \in C([0,T];\mcal{P}_1(\R^{2d}))$ and $n \in \N$. In the sequel, we use the following notation. For $R := R^{\bs{\mu}}, $
\begin{align*}
  &S(\cdot) := S^{\bs{\mu}}(\cdot) \quad (\text{the value of EOT($\bs{\mu}, \cdot$)}),
  &\qquad R_{0,T} := (Z_0,Z_T)_{\#} R, \\
  &P^n_{\mathrm{soft}} := P^{n,\bs{\mu}} \quad(\text{the solution to $n$-SB($\bs{\mu}$)}),
  &\pi^n_{\mathrm{soft}} := (Z_0,Z_T)_{\#}P^n_{\mathrm{soft}},\\
  &P_{\mathrm{hard}} := P^{\bs{\mu}} \quad  (\text{the solution to SB($\bs{\mu}$)}),
  &\pi_{\mathrm{hard}} := (Z_0,Z_T)_{\#} P_{\mathrm{hard}},
\end{align*}
and
\begin{equation*}
  \nu^n_{\mathrm{soft}} := (Z_T)_{\#} P^n_{\mathrm{soft}}, \qquad \nu_{\mathrm{hard}} := \mu_{\mathrm{fin}}, \qquad h_n := H(\nu^n_{\mathrm{soft}} \,|\, \nu_{\mathrm{hard}}) = G(\nu^n_{\mathrm{soft}}),
\end{equation*}
together with the midpoints
\begin{equation*}
  \nu_{1/2} := \tfrac{1}{2}(\nu^n_{\mathrm{soft}} +  \nu_{\mathrm{hard}}) \in \mcal{P}_2(\R^{2d}),
  \qquad
  \pi_{1/2} := \tfrac{1}{2}(\pi^n_{\mathrm{soft}} + \pi_{\mathrm{hard}}) \in \mcal{P}(\R^{2d}\times\R^{2d}).
\end{equation*}

The goal of this step is to establish the stability result for the value of EOT problems:
\begin{proposition} \label{prop:value_stability}
Let $\rho, \tilde{\rho} \in \mcal{P}_2(\R^{2d})$ satisfying $M^2_2(\rho), M^2_2(\tilde{\rho}) \leq K$ for $K > 0$ defined in Proposition~\ref{prop:bdd_2nd_moment}.
There exists $L = L(T, d, K, \Lambda_0, \Lambda_1, \Lambda_2)>0$, such that
\begin{equation} \label{eq:EOT_value_stability}
 |S(\rho) - S( \tilde{\rho})  | \leq L \mcal{W}_2(\rho,  \tilde{\rho}).
\end{equation}
\end{proposition}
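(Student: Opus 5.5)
The plan is to apply the value-stability result \cite[Theorem~3.7]{eckstein2022quant_stability_EOT} directly to the cost $c = c^{\bs{\mu}} = -\log p^{\bs{\mu}}_T$, with first marginal $\mu_{\mathrm{init}}$ fixed and second marginal varying. That theorem, in the form we use, says the following. Suppose the cost is $C^1$ in the second variable with a gradient of linear growth, $|\nabla_w c(z,w)| \le C_c(1+|z|+|w|)$, and suppose the marginals have bounded second moments. Then the entropic value is Lipschitz in $\mcal{W}_2$ with respect to either marginal. The constant depends only on $C_c$ and the second-moment bounds. So the proof reduces to verifying these hypotheses with constants independent of $\bs{\mu}$.

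\textbf{Step 1 (cost regularity).} By Theorem~\ref{thm:linear_growth}, $c$ is smooth and $|\nabla_{z,w} c(z,w)| \le C(1+|z|+|w|)$ with $C = C(T,d,\Lambda_0,\Lambda_1,\Lambda_2)$ independent of $\bs{\mu}$. Integrating along segments gives $|c(z,w)-c(z,w')| \le C(1+|z|+|w|+|w'|)|w-w'|$. The two-sided Gaussian bounds (Lemma~\ref{lemma:kernel_bound} and the upper bound of \cite{delarue_density_estimates}) give $|c(z,w)| \le C(1+|z|^2+|w|^2)$. Hence $c \in L^1(\mu_{\mathrm{init}}\otimes\rho)$, $S(\rho)$ is finite, and optimal plans exist for every $\rho$ with $M_2^2(\rho)\le K$.

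\textbf{Step 2 (a self-contained version of the stability argument).} Should the exact hypotheses of \cite[Theorem~3.7]{eckstein2022quant_stability_EOT} not match, I would prove the bound directly, using a gluing construction that is standard and short.

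\emph{Construction.} Let $\pi$ be optimal for $S(\rho)$ and let $\gamma$ be an optimal $\mcal{W}_2$ coupling of $(\rho,\tilde\rho)$. Disintegrate $\pi(dz,dw) = \rho(dw)\,\pi_w(dz)$ and set
\[
\tilde\pi(dz,d\tilde w) := \int \gamma(dw,d\tilde w)\,\pi_w(dz) \in \Pi(\mu_{\mathrm{init}},\tilde\rho).
\]

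\emph{Entropy term.} By the data-processing inequality applied to the map $(z,w,\tilde w)\mapsto(z,\tilde w)$, comparing the glued measure $\pi_w(dz)\gamma(dw,d\tilde w)$ with $\mu_{\mathrm{init}}\otimes\gamma$, we get $H(\tilde\pi\,|\,\mu_{\mathrm{init}}\otimes\tilde\rho) \le H(\pi\,|\,\mu_{\mathrm{init}}\otimes\rho)$.

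\emph{Cost term.} Using Step~1 and Cauchy--Schwarz,
\[
\int c\,d\tilde\pi - \int c\,d\pi \le C\Bigl(\int (1+|z|+|w|+|\tilde w|)^2\, d\pi_w\,d\gamma\Bigr)^{1/2}\mcal{W}_2(\rho,\tilde\rho).
\]
The first factor is bounded by $C(1+M_2(\mu_{\mathrm{init}})+\sqrt K)$.

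\emph{Conclusion.} Therefore $S(\tilde\rho) \le \mcal{F}^{\bs{\mu}}_{\tilde\rho}(\tilde\pi) \le S(\rho) + L\,\mcal{W}_2(\rho,\tilde\rho)$. Exchanging the roles of $\rho$ and $\tilde\rho$ gives \eqref{eq:EOT_value_stability}, with $L$ depending only on $T,d,K,\Lambda_0,\Lambda_1,\Lambda_2$ and on $\mu_{\mathrm{init}}$, which is fixed.

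\textbf{Main obstacle.} The main obstacle is the uniformity of $L$ in $\bs{\mu}$. This rests entirely on the $\bs{\mu}$-independent linear growth of $\nabla_w\log p^{\bs{\mu}}_T$ from Theorem~\ref{thm:linear_growth}; that is the genuinely hard input, and it is taken as given here. Two technical points remain to check:
\begin{enumerate}[label=(\alph*)]
\item The entropy inequality needs $H(\pi\,|\,\mu_{\mathrm{init}}\otimes\rho)<\infty$. This holds since $S(\rho)\le \int c\, d(\mu_{\mathrm{init}}\otimes\rho) <\infty$, $c$ is bounded below by $-C(1+|z|^2+|w|^2)$, and so $H(\pi\,|\,\mu_{\mathrm{init}}\otimes\rho) \le S(\rho) - \int c\,d\pi < \infty$.
\item Measurability of the disintegration $w \mapsto \pi_w$, which is standard on Polish spaces.
\end{enumerate}
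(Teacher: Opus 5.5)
Your proposal is correct and follows the paper's route. The paper also feeds the $\bs{\mu}$-uniform linear growth of $\nabla\log p^{\bs{\mu}}_T$ (Theorem~\ref{thm:linear_growth}) into \cite[Theorem~3.7(ii)]{eckstein2022quant_stability_EOT}, after checking condition $(\mathrm{A}_L)$ through Lemma~\ref{lemma:A_L}, which gives $L = C_{\mathrm{cost}}(1+4K)^{1/2}$ (absorbing $M_2^2(\mu_{\mathrm{init}}) \le K$). Your gluing argument in Step~2 is exactly the ``shadow'' construction behind that cited theorem, so it just makes the citation self-contained.
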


To establish this, we will make use of the quantitative stability for the value $S(\cdot)$ of EOT$(\bs{\mu}, \cdot)$, which is established in \cite[Theorem~3.7(ii)]{eckstein2022quant_stability_EOT}.
To apply the result, we need to check that the cost function $c^{\bs{\mu}}$ satisfies ($\mathrm{A}_L$) in \cite[Definition~3.3]{eckstein2022quant_stability_EOT}:
\begin{lemma} \label{lemma:A_L}
Let $c:\R^{2d} \times \R^{2d} \to \R$ be of class $C^1$ with derivative of linear growth, i.e.\ there exists some constant $C_{cost} >0 $ such that
\begin{equation*}
    |\nabla_{z,w} c(z,w) | \leq C_{cost}  (1 + |z| + |w|), \, z,w \in \R^{2d}.
\end{equation*}
Then,  for $ \mu,  \tilde{\mu}, \nu, \tilde{\nu}  \in \mathcal{P}_2(\R^{2d})$, there exists $L=L (M_2(\mu), M_2(\tilde{\mu}),M_2(\nu), M_2(\tilde{\nu}), C_{cost})>0$ such that
\begin{equation*}
\left| \int c \, d(\pi - \tilde{\pi}) \right| \leq L \, \mcal{W}_2(\pi, \tilde{\pi}),  \tag*{($\mathrm{A}_L$)}
\end{equation*}
for all $\pi \in \Pi(\mu, \nu)$ and $\tilde{\pi} \in \Pi(\tilde{\mu}, \tilde{\nu})$.
\end{lemma}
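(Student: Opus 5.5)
We plan to prove ($\mathrm{A}_L$) by coupling the two plans optimally and applying the mean value theorem along straight segments. Fix $\pi \in \Pi(\mu,\nu)$ and $\tilde\pi \in \Pi(\tilde\mu,\tilde\nu)$. Both lie in $\mcal{P}_2(\R^{2d}\times\R^{2d})$, since $M_2(\pi)^2 = M_2(\mu)^2 + M_2(\nu)^2$, and similarly for $\tilde\pi$. Let $\gamma$ be an optimal coupling for $\mcal{W}_2(\pi,\tilde\pi)$ on $(\R^{4d})^2$, with generic element $((z,w),(\tilde z,\tilde w))$. Since $c$ has quadratic growth (a consequence of the linear-growth gradient bound), $c$ is integrable under both plans. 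We may therefore write
\begin{equation*}
\int c\, d(\pi-\tilde\pi) = \int \bigl( c(z,w) - c(\tilde z,\tilde w) \bigr)\, \gamma(d(z,w), d(\tilde z,\tilde w)).
\end{equation*}

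By the mean value theorem, the integrand is bounded by
\begin{equation*}
\sup_{\lambda\in[0,1]} |\nabla c(\xi_\lambda)|\, |(z,w)-(\tilde z,\tilde w)|,
\qquad \xi_\lambda := \lambda(z,w) + (1-\lambda)(\tilde z,\tilde w).
\end{equation*}
By the hypothesis, this supremum is at most $C_{cost}\bigl(1 + \sqrt2(|(z,w)| + |(\tilde z,\tilde w)|)\bigr)$, using $|z|+|w| \le \sqrt2\,|(z,w)|$ and the convexity of the norm. Applying Cauchy--Schwarz under $\gamma$ then gives
\begin{equation*}
\left|\int c\, d(\pi-\tilde\pi)\right| \le C_{cost}\Bigl(1 + \sqrt2\bigl(M_2(\pi) + M_2(\tilde\pi)\bigr)\Bigr) \cdot \Bigl(\int |(z,w)-(\tilde z,\tilde w)|^2 \, d\gamma\Bigr)^{1/2},
\end{equation*}
where the prefactor comes from Minkowski's inequality in $L^2(\gamma)$. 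The last factor equals $\mcal{W}_2(\pi,\tilde\pi)$. We conclude with
\begin{equation*}
L := C_{cost}\Bigl(1 + \sqrt2\bigl(\sqrt{M_2(\mu)^2+M_2(\nu)^2} + \sqrt{M_2(\tilde\mu)^2+M_2(\tilde\nu)^2}\bigr)\Bigr),
\end{equation*}
which depends only on the listed quantities.

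There is no real obstacle here. The only point needing care is justifying the splitting of $\int c\,d(\pi-\tilde\pi)$ as an integral against $\gamma$. This requires $c\in L^1(\pi)\cap L^1(\tilde\pi)$, which follows from $|c(z,w)| \le |c(0,0)| + C_{cost}(|z|+|w| + |z|^2+|w|^2)$ (integrate the gradient bound along the ray from the origin) together with the finite second moments.
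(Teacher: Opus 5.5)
Your proposal is correct and follows essentially the same route as the paper: optimally couple $\pi$ and $\tilde\pi$, bound $c(z,w)-c(\tilde z,\tilde w)$ along the segment using the linear-growth gradient bound, then apply Cauchy--Schwarz so that $L$ depends only on the second moments. You additionally check that $c\in L^1(\pi)\cap L^1(\tilde\pi)$, which is needed to split $\int c\,d(\pi-\tilde\pi)$ against the coupling and which the paper leaves implicit.
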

\begin{proof}
Let $\kappa$ be an optimal coupling between $\pi, \tilde{\pi}$.
\begin{align}
\left| \int c \, d(\pi - \tilde{\pi}) \right| &= \left| \int ( c(z,w)  - c(\tilde{z}, \tilde{w}) )\, d\kappa(z,w,\tilde{z},\tilde{w}) \right| \notag \\
&\leq \left| \int \int^1_0  \nabla c (t (z,w) + (1-t) (\tilde{z}, \tilde{w}) ) ((z,w) - (\tilde{z}, \tilde{w}))  dt d\kappa \right|  \notag \\
&\leq C \int ( 1 + |(z,w)| + |(\tilde{z}, \tilde{w})|  ) \,  |(z,w) - (\tilde{z}, \tilde{w})| d \kappa  \notag  \\
&\leq C
\left( \int ( 1 + |(z,w)|^2 + |(\tilde{z}, \tilde{w})|^2 ) d \kappa \right)^{1/2}
\left( \int |(z,w) - (\tilde{z}, \tilde{w})|^2  d \kappa \right)^{1/2} \notag  \\
&\leq \underset{=: L }{\underline{C (1 + M^2_2(\mu) + M^2_2(\tilde{\mu})  + M^2_2(\nu) + M^2_2(\tilde{\nu}))^{1/2} }}  \, \mcal{W}_2(\pi, \tilde{\pi}). \label{eq:const_L}
\end{align}
\end{proof}

We are now ready to prove the proposition.
\begin{proof}[Proof of Proposition~\ref{prop:value_stability}]
By Theorem~\ref{thm:linear_growth}, $c^{\bs{\mu}}$ satisfies the condition for Lemma~\ref{lemma:A_L}. Hence, we apply \cite[Theorem~3.7 (ii)]{eckstein2022quant_stability_EOT} to deduce
\begin{equation*}
 |S(\rho) - S(\tilde{\rho})  | \leq L \mcal{W}_2(\rho, \tilde{\rho})
\end{equation*}
for $L := C_{\mathrm{cost}}(1+4K)^{1/2}$ by \eqref{eq:const_L}, where $C_{\mathrm{cost}} = C_{\mathrm{cost}}(T, d, \Lambda_0, \Lambda_1, \Lambda_2)$ is the constant of Theorem~\ref{thm:linear_growth}; in particular $L$ is independent of $n$ and $\bs{\mu}$.
\end{proof}

\subsubsection*{Step 3: From the stability of the value to the stability of the plans}
Let $(\mcal{X}, \mcal{G})$ be a measurable space.
We now introduce the Jensen--Shannon divergence: for $\alpha,\beta \in \mcal{P}(\mcal{X})$,
\begin{equation*}
  \js(\alpha,\beta)
  := \tfrac{1}{2} H\Bigl(\alpha \,\Big|\, \tfrac{\alpha+\beta}{2}\Bigr)
   + \tfrac{1}{2} H\Bigl(\beta  \,\Big|\, \tfrac{\alpha+\beta}{2}\Bigr).
\end{equation*}

The goal of this step is to obtain the following stability result for the optimal plans with respect to the divergence:
\begin{proposition} \label{prop:JS_bound}
There exists a constant $C =  C(\alpha, C_{p_{\mathrm{fin}}}, L, K) >0$ such that
\begin{equation} \label{eq:JS_final}
  \js\bigl(\pi^n_{\mathrm{soft}}, \pi_{\mathrm{hard}}\bigr)\;\le\; C n^{-1}.
\end{equation}
\end{proposition}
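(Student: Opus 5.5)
We follow the three-step program sketched in Section~\ref{sec:sketch_consistency_error}. The starting point is convexity of $V := V^{\bs{\mu}}$ on terminal marginals, together with an exact parallelogram-type identity. For probability measures $\alpha,\beta\ll\gamma$ one has the compensation identity
\begin{equation}\label{eq:compensation}
\tfrac12 H(\alpha|\gamma)+\tfrac12 H(\beta|\gamma) - H\bigl(\tfrac{\alpha+\beta}{2}\,\big|\,\gamma\bigr) = \js(\alpha,\beta).
\end{equation}
We apply it to the plans with $\gamma=R_{0,T}$, and to the marginals with $\gamma=\leb$ (justified since all the entropies involved are finite).

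Since $\pi_{1/2}\in\Pi(\mu_{\mathrm{init}},\nu_{1/2})$, we have $V(\nu_{1/2})\le H(\pi_{1/2}|R_{0,T})$. By Lemma~\ref{lemma:SB_EOT}, $\pi^n_{\mathrm{soft}}$ and $\pi_{\mathrm{hard}}$ are optimal for their own terminal marginals (the soft plan solves the hard problem with terminal law $\nu^n_{\mathrm{soft}}$). Combining this with \eqref{eq:compensation} for the plans gives
\begin{equation}\label{eq:key_identity}
\js(\pi^n_{\mathrm{soft}},\pi_{\mathrm{hard}}) \le \tfrac12 V(\nu^n_{\mathrm{soft}})+\tfrac12 V(\nu_{\mathrm{hard}})-V(\nu_{1/2}).
\end{equation}
Next we substitute $V=S+H(\cdot|\leb)$ and apply \eqref{eq:compensation} to the marginals. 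This turns the right-hand side into the Jensen gap of $S$ plus $\js(\nu^n_{\mathrm{soft}},\nu_{\mathrm{hard}})$, which is Lemma~\ref{lemma:JS_plans}.

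We then bound the two resulting terms. First, joint convexity gives
\begin{equation}\label{eq:JS_marginals}
\js(\nu^n_{\mathrm{soft}},\nu_{\mathrm{hard}})\le \tfrac12 H(\nu^n_{\mathrm{soft}}|\nu_{\mathrm{hard}}) = h_n/2 .
\end{equation}
Second, the Jensen gap of $S$ is at most $\tfrac12|S(\nu^n_{\mathrm{soft}})-S(\nu_{1/2})|+\tfrac12|S(\nu_{\mathrm{hard}})-S(\nu_{1/2})|$. By Proposition~\ref{prop:value_stability} this is at most $L\,\mcal{W}_2(\nu^n_{\mathrm{soft}},\nu_{\mathrm{hard}})$ up to a constant, since the midpoint is within $\mcal{W}_2$-distance $C\,\mcal{W}_2(\nu^n_{\mathrm{soft}},\nu_{\mathrm{hard}})$ of both endpoints (Lemma~\ref{lemma:midpoint_W2}, using the mixture coupling). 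All second moments are at most $K$ by Proposition~\ref{prop:bdd_2nd_moment} and its soft analogue. Talagrand's inequality
\begin{equation}\label{eq:talagrand}
\mcal{W}_2(\nu^n_{\mathrm{soft}},\mu_{\mathrm{fin}})\le C\sqrt{h_n}
\end{equation}
then yields $\js(\pi^n_{\mathrm{soft}},\pi_{\mathrm{hard}})\le C(h_n^{1/2}+h_n)$.

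It remains to show $h_n\le Cn^{-2}$ (Lemma~\ref{lemma:bootstrap_h_n}); this is the main obstacle. Comparing $n$-SB at its minimizer with the admissible $P_{\mathrm{hard}}$ gives $V(\nu^n_{\mathrm{soft}})+n h_n\le V(\mu_{\mathrm{fin}})$. The naive bound uses $V=S+H(\cdot|\leb)$ and the value stability, giving $n h_n\le L C h_n^{1/2}+H(\mu_{\mathrm{fin}}|\leb)-H(\nu^n_{\mathrm{soft}}|\leb)$. The entropy difference is where Assumption~\ref{assump:Talagrand}\ref{assump:nabla_p_fin_linear_growth} enters. We write
\begin{equation*}
H(\nu^n_{\mathrm{soft}}|\leb)-H(\mu_{\mathrm{fin}}|\leb)=h_n+\int\log p_{\mathrm{fin}}\,d(\nu^n_{\mathrm{soft}}-\mu_{\mathrm{fin}}).
\end{equation*}
Since $\nabla\log p_{\mathrm{fin}}$ has linear growth, the argument of Lemma~\ref{lemma:A_L} bounds the last integral by $C\,\mcal{W}_2(\nu^n_{\mathrm{soft}},\mu_{\mathrm{fin}})\le Ch_n^{1/2}$. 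Hence $(1+n)h_n\le Ch_n^{1/2}$, so $h_n^{1/2}\le C/(n+1)$ and $h_n\le Cn^{-2}$. Plugging this into the previous bound gives $\js(\pi^n_{\mathrm{soft}},\pi_{\mathrm{hard}})\le Cn^{-1}$, which is \eqref{eq:JS_final}.

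We expect the delicate points to be twofold. One is the finiteness of $H(\nu^n_{\mathrm{soft}}|\leb)$ and the integrability needed to split the entropies; this follows from $h_n<\infty$ together with $\log p_{\mathrm{fin}}\in L^1$ and the second-moment bounds. The other is checking that $\nu^n_{\mathrm{soft}}$ and $\nu_{1/2}$ satisfy the moment hypothesis of Proposition~\ref{prop:value_stability} with the same constant $K$.
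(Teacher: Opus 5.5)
Your proposal is correct and follows the paper's proof essentially step by step. It uses the compensation identity for the plans and for the marginals (giving Lemma~\ref{lemma:JS_plans}), value stability with the midpoint lemma and Talagrand's inequality, and the bootstrap $(1+n)h_n\le C h_n^{1/2}$ via the linear growth of $\nabla\log p_{\mathrm{fin}}$.

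One small correction: $\js(\nu^n_{\mathrm{soft}},\nu_{\mathrm{hard}})\le h_n/2$ does not follow from joint convexity. Joint convexity only yields $\tfrac14 H(\nu^n_{\mathrm{soft}}|\nu_{\mathrm{hard}})+\tfrac14 H(\nu_{\mathrm{hard}}|\nu^n_{\mathrm{soft}})$, whose reverse entropy is uncontrolled. The bound instead follows from your own compensation identity with $\gamma=\nu_{\mathrm{hard}}$ together with $H(\nu_{1/2}|\nu_{\mathrm{hard}})\ge 0$, exactly as in the paper.
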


To this end, we need Lemmas~\ref{lemma:compensation}--\ref{lemma:JS_plans}. The next lemma is a so-called parallelogram identity, which is, for instance, presented in the proof of \cite[Lemma~1.9]{nutz2021introduction} for $\sigma$ appearing below being a probability measure.
\begin{lemma} \label{lemma:compensation}
Let $\sigma$ be a $\sigma$-finite measure and let
$\alpha,\beta \in \mcal{P}(\mcal{X})$ with $m := \tfrac{1}{2}(\alpha+\beta)$.
If $H(\alpha\,|\,\sigma)$ and $H(\beta\,|\,\sigma)$ are finite, then
\begin{equation} \label{eq:compensation}
  \tfrac{1}{2} H(\alpha \,|\, \sigma) + \tfrac{1}{2} H(\beta \,|\, \sigma)
  = H(m \,|\, \sigma) + \js(\alpha,\beta).
\end{equation}
\end{lemma}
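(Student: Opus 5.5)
The plan is a direct computation with densities with respect to $\sigma$. Since $H(\alpha\,|\,\sigma)<\infty$ and $H(\beta\,|\,\sigma)<\infty$, both measures are absolutely continuous with respect to $\sigma$; write $a := d\alpha/d\sigma$ and $b := d\beta/d\sigma$. Then $m \ll \sigma$ with density $\tfrac12(a+b)$, and $\alpha,\beta \ll m$ with $d\alpha/dm = 2a/(a+b)$ and $d\beta/dm = 2b/(a+b)$ on $\{a+b>0\}$. The chain rule gives, $\alpha$-a.s.,
\[
\log a = \log\frac{d\alpha}{dm} + \log\frac{a+b}{2},
\]
and the analogous identity holds $\beta$-a.s. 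Integrating the first against $\alpha$ and the second against $\beta$, and averaging with weights $\tfrac12$, the left-hand side becomes $\tfrac12 H(\alpha\,|\,\sigma)+\tfrac12 H(\beta\,|\,\sigma)$. On the right, the log-ratio terms give $\js(\alpha,\beta)$. The last terms combine to
\[
\int \log\tfrac{a+b}{2}\, d\tfrac{\alpha+\beta}{2} = \int \log\tfrac{dm}{d\sigma}\, dm = H(m\,|\,\sigma).
\]
This is exactly \eqref{eq:compensation}.

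The only delicate point, and the one I expect to be the main obstacle, is justifying the splitting of the integrals when $\sigma$ is merely $\sigma$-finite. In that case the relative entropy is defined as $\int \log\frac{d\alpha}{d\sigma}\,d\alpha$, whose negative part need not be automatically integrable, so one must check that no term is of the form $\infty-\infty$.

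First, the log-ratio terms are harmless. Since $0\le d\alpha/dm\le 2$ and $0\le d\beta/dm\le 2$, we have $\log\frac{d\alpha}{dm}\le \log 2$. Moreover $H(\alpha\,|\,m)\ge 0$ because $m$ is a probability measure, and the same holds for $\beta$. Hence $0\le \js(\alpha,\beta)\le \log 2$, and these are genuinely integrable, finite terms. Given that, the identity $\log a = \log\frac{d\alpha}{dm}+\log\frac{a+b}{2}$ shows that $\log\frac{a+b}{2}$ is $\alpha$-integrable, because $\log a\in L^1(\alpha)$ by the finiteness assumption and $\log\frac{d\alpha}{dm}\in L^1(\alpha)$. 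By the same argument it is $\beta$-integrable, hence $m$-integrable. Thus $H(m\,|\,\sigma)$ is finite and all the manipulations above are legitimate.

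If one prefers to avoid this, an alternative is to reduce to the probability case cited from \cite[Lemma~1.9]{nutz2021introduction}. One picks a probability $\tilde\sigma\sim\sigma$ with $d\sigma/d\tilde\sigma = g$, for which every $H(\cdot\,|\,\sigma)$ equals $H(\cdot\,|\,\tilde\sigma)-\int\log g\,d(\cdot)$, and notes that the linear terms $-\int \log g$ cancel between the two sides. This route, however, requires checking integrability of $\log g$ under $\alpha$ and $\beta$. I would therefore go with the direct density argument above.
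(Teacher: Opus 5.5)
Your proposal is correct and follows essentially the same route as the paper: split $\log\frac{d\alpha}{d\sigma}=\log\frac{d\alpha}{dm}+\log\frac{dm}{d\sigma}$ (and likewise for $\beta$), integrate, and use $\tfrac12\alpha+\tfrac12\beta=m$ to collect the last terms into $H(m\,|\,\sigma)$. Your explicit integrability check is more careful than the paper, which only records $H(\alpha\,|\,m),H(\beta\,|\,m)<\infty$: you deduce $\log\frac{dm}{d\sigma}\in L^1(m)$ from $\log\frac{d\alpha}{dm}\in L^1(\alpha)$, $\log\frac{d\beta}{dm}\in L^1(\beta)$ and the finiteness of $H(\alpha\,|\,\sigma)$, $H(\beta\,|\,\sigma)$, which rules out any $\infty-\infty$ for merely $\sigma$-finite $\sigma$.
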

\begin{proof}
By $\beta \leq 2m$, it holds that $|\frac{d\beta}{dm}| \leq 2$, $m$-a.s. and the same holds  for $\frac{d\alpha}{dm}$. Hence $H(\alpha |m ), H(\beta |m )< \infty$.
Splitting $\log \frac{d\alpha}{d\sigma}
 = \log \frac{d\alpha}{dm} + \log \frac{dm}{d\sigma}$ and likewise for $\beta$, and using
$\tfrac{1}{2}\alpha + \tfrac{1}{2}\beta = m$ in the second term,
\begin{equation*}
  \tfrac{1}{2}H(\alpha|\sigma) + \tfrac{1}{2}H(\beta|\sigma)
  = \tfrac{1}{2}H(\alpha|m) + \tfrac{1}{2}H(\beta|m)
    + \int \log \frac{dm}{d\sigma} \, dm .\qedhere
\end{equation*}
\end{proof}

We shall also use the following elementary estimate
\begin{lemma} \label{lemma:midpoint_W2}
For $\alpha,\beta \in \mcal{P}_2(\R^{2d})$ and $m = \tfrac{1}{2}(\alpha+\beta)$,
\begin{equation*}
  \mcal{W}_2(\alpha, m) \le \tfrac{1}{\sqrt{2}} \, \mcal{W}_2(\alpha,\beta).
\end{equation*}
\end{lemma}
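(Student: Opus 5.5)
Take an optimal coupling of $\alpha$ and $\beta$ and mix it with the identity coupling. Let $\gamma \in \Pi(\alpha,\beta)$ be an optimal coupling for $\mcal{W}_2(\alpha,\beta)$, and let $\Delta := (\mathrm{id},\mathrm{id})_{\#}\alpha$ be the diagonal (identity) coupling of $\alpha$ with itself. Define
\begin{equation*}
  \kappa := \tfrac{1}{2}\Delta + \tfrac{1}{2}\gamma .
\end{equation*}

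First check that $\kappa$ is admissible. Its first marginal is $\tfrac12\alpha + \tfrac12\alpha = \alpha$, and its second marginal is $\tfrac12\alpha + \tfrac12\beta = m$. Hence $\kappa \in \Pi(\alpha, m)$.

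Then evaluate the transport cost of $\kappa$. By linearity of the cost in the coupling,
\begin{equation*}
  \mcal{W}_2^2(\alpha,m) \le \int |x-y|^2\,d\kappa = \tfrac12 \int |x-x|^2\,d\alpha + \tfrac12\int |x-y|^2\,d\gamma = \tfrac12\,\mcal{W}_2^2(\alpha,\beta).
\end{equation*}
Taking square roots gives $\mcal{W}_2(\alpha,m) \le \tfrac{1}{\sqrt2}\,\mcal{W}_2(\alpha,\beta)$.

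There is no real obstacle. The only point to record is that an optimal coupling $\gamma$ exists, which is standard on $\mcal{P}_2(\R^{2d})$. One could even avoid this by taking $\varepsilon$-optimal couplings and letting $\varepsilon \to 0$.
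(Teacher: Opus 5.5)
Your proof is correct and is the same as the paper's: mix the diagonal coupling $(\mathrm{id},\mathrm{id})_{\#}\alpha$ with an optimal coupling of $(\alpha,\beta)$ using equal weights. The resulting coupling has marginals $(\alpha,m)$ and cost exactly $\tfrac12\mcal{W}_2^2(\alpha,\beta)$.
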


\begin{proof}
Let $\kappa$ be a $\mcal{W}_2$-optimal coupling of $(\alpha,\beta)$ and set
$\xi := \tfrac{1}{2} (\mathrm{id},\mathrm{id})_\# \alpha + \tfrac{1}{2}\kappa$.
Then $\xi \in \Pi(\alpha,m)$ and
$\int |x-y|^2 \, d\xi = \tfrac{1}{2}\mcal{W}_2^2(\alpha,\beta)$.
\end{proof}

The following estimate is used to invoke Lemmas~\ref{lemma:SB_EOT} and \ref{lemma:compensation} in the proof of Lemma~\ref{lemma:JS_plans}.
\begin{lemma} \label{lemma:finite_entropy}
$|H(\nu^n_{\mathrm{soft}}\,|\,\leb)|,|H(\nu_{1/2}\,|\,\leb)| < \infty$.
\end{lemma}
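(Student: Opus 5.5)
We need $|H(\nu^n_{\mathrm{soft}}|\leb)|<\infty$ and $|H(\nu_{1/2}|\leb)|<\infty$. The plan is to get an upper bound from the entropy relative to $\mu_{\mathrm{fin}}$, and a lower bound from finite second moments.

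\emph{Upper bound for $\nu^n_{\mathrm{soft}}$.} First we need $\nu^n_{\mathrm{soft}}\ll\leb$. By \eqref{eq:bound_soft_by_hard}, $h_n=H(\nu^n_{\mathrm{soft}}|\mu_{\mathrm{fin}})\le M/n<\infty$, so $\nu^n_{\mathrm{soft}}\ll\mu_{\mathrm{fin}}\ll\leb$. Writing $q_n:=d\nu^n_{\mathrm{soft}}/d\leb=(d\nu^n_{\mathrm{soft}}/d\mu_{\mathrm{fin}})\,p_{\mathrm{fin}}$, we get
\[
H(\nu^n_{\mathrm{soft}}|\leb)=h_n+\int\log p_{\mathrm{fin}}\,d\nu^n_{\mathrm{soft}},
\]
provided the last integral makes sense. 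To bound $\int(\log p_{\mathrm{fin}})^+\,d\nu^n_{\mathrm{soft}}$ we use Assumption~\ref{assump:Talagrand}~\ref{assump:nabla_p_fin_linear_growth}: integrating the gradient along a segment gives $|\log p_{\mathrm{fin}}(y)|\le C(1+|y|^2)$. Since $M_2^2(\nu^n_{\mathrm{soft}})\le K$, the integral is finite in absolute value, and therefore $|H(\nu^n_{\mathrm{soft}}|\leb)|<\infty$.

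\emph{Lower bound for $\nu^n_{\mathrm{soft}}$.} Here we use the Gaussian reference $\gamma=\mathcal N(0,I_{2d})$ with density $g$. Then
\[
H(\rho|\leb)=H(\rho|\gamma)+\int\log g\,d\rho\ge -\tfrac{d}{1}\log(2\pi)\cdot\tfrac12\cdot 2-\tfrac12 M_2^2(\rho)>-\infty .
\]
The moment bound $M_2^2(\nu^n_{\mathrm{soft}})\le K$ comes from the soft analogue of Proposition~\ref{prop:bdd_2nd_moment}, which holds because $H(P^n_{\mathrm{soft}}|R)\le M$.

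\emph{The midpoint $\nu_{1/2}$.} The density of $\nu_{1/2}$ is $\tfrac12(q_n+p_{\mathrm{fin}})$, and its second moment is at most $K\vee M_2^2(\mu_{\mathrm{fin}})$, so the Gaussian argument gives the lower bound. For the upper bound, convexity of $\rho\mapsto H(\rho|\leb)$ gives
\[
H(\nu_{1/2}|\leb)\le \tfrac12 H(\nu^n_{\mathrm{soft}}|\leb)+\tfrac12 H(\mu_{\mathrm{fin}}|\leb).
\]
The first term is finite by the step above, and the second is finite by Assumption~\ref{assump:initial_and_target}~\ref{assump:log_bound}. Convexity is legitimate here because all the negative parts are integrable, thanks to the moment bounds.

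The main obstacle is making sure the integrals are well defined rather than of the form $\infty-\infty$. The Gaussian comparison handles the negative parts; the positive parts come from either $h_n<\infty$ or convexity. A cheaper route avoids \ref{assump:nabla_p_fin_linear_growth} altogether: apply convexity of $H(\cdot|\leb)$ together with $H(\nu^n_{\mathrm{soft}}|\mu_{\mathrm{fin}})<\infty$ through the identity $H(\rho|\leb)=H(\rho|\mu_{\mathrm{fin}})+\int\log p_{\mathrm{fin}}\,d\rho$. This route requires showing $(\log p_{\mathrm{fin}})^+\in L^1(\rho)$, which follows from the Donsker--Varadhan inequality
\[
\int(\log p_{\mathrm{fin}})^+\,d\rho\le H(\rho|\mu_{\mathrm{fin}})+\log\int p_{\mathrm{fin}}\,d\mu_{\mathrm{fin}} .
\]
It would also need $\int p_{\mathrm{fin}}^2<\infty$, which is not given, so I would use the linear-growth assumption, as in the first route.
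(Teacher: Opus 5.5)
Your proof is correct, but it uses a different intermediate measure than the paper.

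You decompose $\log\frac{d\nu^n_{\mathrm{soft}}}{d\leb}$ through $\mu_{\mathrm{fin}}$. That requires $\log p_{\mathrm{fin}}\in L^1(\nu^n_{\mathrm{soft}})$, which you get from Assumption~\ref{assump:Talagrand}~\ref{assump:nabla_p_fin_linear_growth} (quadratic growth of $\log p_{\mathrm{fin}}$) together with the uniform second-moment bound.

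The paper decomposes through the reference marginal $R_T$ instead:
\begin{itemize}
\item Data processing gives $0\le H(\nu^n_{\mathrm{soft}}|R_T)\le H(P^n_{\mathrm{soft}}|R)<\infty$.
\item The two-sided Gaussian kernel bounds give $\log C - C|w|^2\le \log\frac{dR_T}{d\leb}(w)\le \log C$, and this is $\nu^n_{\mathrm{soft}}$-integrable under the second-moment bound.
\item For $\nu_{1/2}$, the paper uses convexity of $H(\cdot\,|\,R_T)$.
\end{itemize}
This is exactly the fix for the obstruction in your last paragraph. The Lebesgue density of $R_T$, unlike $p_{\mathrm{fin}}$, is bounded above, so neither $\int p_{\mathrm{fin}}^2<\infty$ nor any regularity of $p_{\mathrm{fin}}$ is needed.

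What the paper's route buys is that the lemma does not depend on \ref{assump:nabla_p_fin_linear_growth}. This matches the remark that this assumption serves only to sharpen the rate in Lemma~\ref{lemma:bootstrap_h_n}. Without it, the argument still yields $n^{-1/4}$, but only with a proof of this lemma that avoids \ref{assump:nabla_p_fin_linear_growth}, which yours does not. In exchange, your route is elementary and avoids the kernel upper bound and the data-processing step.

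Two simplifications are available:
\begin{itemize}
\item Your first step already gives $|H(\nu^n_{\mathrm{soft}}|\leb)|<\infty$, since both $h_n$ and $\int\log p_{\mathrm{fin}}\,d\nu^n_{\mathrm{soft}}$ are finite. The Gaussian lower bound is therefore redundant for $\nu^n_{\mathrm{soft}}$.
\item For the midpoint you can stay within your own framework. Convexity of $H(\cdot\,|\,\mu_{\mathrm{fin}})$ gives $H(\nu_{1/2}|\mu_{\mathrm{fin}})\le \tfrac12 h_n$, with no $\infty-\infty$ issues. The same decomposition as in your first step then applies, which avoids the well-definedness care your convexity argument for $H(\cdot\,|\,\leb)$ required.
\end{itemize}
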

\begin{proof}
By definition, $H(\nu^n_{\mathrm{soft}} | \nu_{\mathrm{hard}}) < \infty$, and thus $\nu^n_{\mathrm{soft}} \ll \nu_{\mathrm{hard}} \ll \leb$.
\begin{equation*}
    \log \frac{d\nu^n_{\mathrm{soft}}}{d\leb} = \log \frac{d\nu^n_{\mathrm{soft}}}{dR_T} + \log \frac{dR_T}{d\leb}.
\end{equation*}
For the first term, we have
\begin{equation*}
0 \leq H(\nu^n_{\mathrm{soft}} | R_T) \leq H(P^n_{\mathrm{soft}} | R) < \infty.
\end{equation*}
For the upper bound of the second term, by the Gaussian upper bound for the kernel (cf. \cite[Theorem~1.1]{delarue_density_estimates}), it holds
\begin{equation*}
\log \frac{dR_T}{d\leb}(w) = \log \int p^{\bs{\mu}}_T(z,w) d\mu_{\mathrm{init}}(z) \leq \log \sup_{z,w} p^{\bs{\mu}}_T(z,w) \leq \log C,
\end{equation*}
and for the lower bound of the second term, by Lemma~\ref{lemma:kernel_bound}, we have
\begin{equation*}
\log \frac{dR_T}{d\leb}(w)
= \log \int p^{\bs{\mu}}_T(z,w) d\mu_{\mathrm{init}}(z)
\geq \log \left(C\int e^{- C(|z|^2 + |w|^2)  } d\mu_{\mathrm{init}}(z) \right)
\geq \log C - C|w|^2
\end{equation*}
and the right hand side is $\nu^n_{\mathrm{soft}}$-integrable by Proposition~\ref{prop:bdd_2nd_moment}. Hence, $|H(\nu^n_{\mathrm{soft}}\,|\,\leb)| < \infty$.

$H(\nu_{1/2}\,|\, R_{T}) < \infty$ follows from $H(\nu_{\mathrm{hard}}\,|\,R_{T}), H(\nu^n_{\mathrm{soft}}\,|\,R_{T})  < \infty$ and the convexity of $H(\cdot | R_T)$. Therefore, the same argument holds to establish $|H(\nu_{1/2}\,|\,\leb)| < \infty$.
\end{proof}

By this finiteness of entropy, we apply Lemma~\ref{lemma:SB_EOT} to ensure $\pi^n_{\mathrm{soft}}$ and $\pi_{\mathrm{hard}}$ are the solutions to EOT$(\bs{\mu}, \nu^n_{\mathrm{soft}})$ and EOT$(\bs{\mu}, \nu_{\mathrm{hard}})$, respectively. We will use this fact in the proof of the next result.
\begin{lemma} \label{lemma:JS_plans}
For $L>0$ as in Proposition~\ref{prop:value_stability}, it holds that
\begin{equation*}
  \js\bigl(\pi^n_{\mathrm{soft}}, \pi_{\mathrm{hard}}\bigr)
  \;\le\;  \js\bigl(\nu^n_{\mathrm{soft}}, \nu_{\mathrm{hard}}\bigr) + \frac{L}{\sqrt{2}} \, \mcal{W}_2(\nu^n_{\mathrm{soft}}, \nu_{\mathrm{hard}}).
\end{equation*}
\end{lemma}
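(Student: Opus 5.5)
Apply the parallelogram identity of Lemma~\ref{lemma:compensation} twice, once to the plans and once to the marginals, and then compare through the identity $V = S + H(\cdot\,|\,\leb)$ of Lemma~\ref{lemma:SB_EOT}.

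\emph{Plans.} Take $\sigma := R_{0,T}$, $\alpha := \pi^n_{\mathrm{soft}}$, $\beta := \pi_{\mathrm{hard}}$. Both entropies are finite: $H(\pi^n_{\mathrm{soft}}|R_{0,T}) \le H(P^n_{\mathrm{soft}}|R) < \infty$ and $H(\pi_{\mathrm{hard}}|R_{0,T}) \le M$. Lemma~\ref{lemma:compensation} then gives
\[
\tfrac12 H(\pi^n_{\mathrm{soft}}|R_{0,T}) + \tfrac12 H(\pi_{\mathrm{hard}}|R_{0,T}) = H(\pi_{1/2}|R_{0,T}) + \js(\pi^n_{\mathrm{soft}},\pi_{\mathrm{hard}}).
\]
The left-hand side equals $\tfrac12 V(\nu^n_{\mathrm{soft}}) + \tfrac12 V(\nu_{\mathrm{hard}})$, since both plans are optimal for their static SB problems. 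The plan $\pi_{1/2}$ lies in $\Pi(\mu_{\mathrm{init}},\nu_{1/2})$, so $H(\pi_{1/2}|R_{0,T}) \ge V(\nu_{1/2})$. Hence
\[
\js(\pi^n_{\mathrm{soft}},\pi_{\mathrm{hard}}) \le \tfrac12 V(\nu^n_{\mathrm{soft}}) + \tfrac12 V(\nu_{\mathrm{hard}}) - V(\nu_{1/2}).
\]

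\emph{Marginals.} Substitute $V = S + H(\cdot|\leb)$; this is legitimate because Lemma~\ref{lemma:finite_entropy} and Assumption~\ref{assump:initial_and_target} make all three Lebesgue entropies finite. The entropy terms combine to $\tfrac12 H(\nu^n_{\mathrm{soft}}|\leb) + \tfrac12 H(\nu_{\mathrm{hard}}|\leb) - H(\nu_{1/2}|\leb)$. Lemma~\ref{lemma:compensation} with $\sigma = \leb$ (a $\sigma$-finite measure, which is why the lemma was stated in that generality) identifies this combination as exactly $\js(\nu^n_{\mathrm{soft}},\nu_{\mathrm{hard}})$. Therefore
\[
\js(\pi^n_{\mathrm{soft}},\pi_{\mathrm{hard}}) \le \js(\nu^n_{\mathrm{soft}},\nu_{\mathrm{hard}}) + \tfrac12\bigl(S(\nu^n_{\mathrm{soft}}) - S(\nu_{1/2})\bigr) + \tfrac12\bigl(S(\nu_{\mathrm{hard}}) - S(\nu_{1/2})\bigr).
\]

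\emph{Value stability.} Bound each $S$-difference with Proposition~\ref{prop:value_stability}, then use Lemma~\ref{lemma:midpoint_W2}: $\mcal{W}_2(\nu^n_{\mathrm{soft}},\nu_{1/2})$ and $\mcal{W}_2(\nu_{\mathrm{hard}},\nu_{1/2})$ are each at most $\tfrac{1}{\sqrt2}\mcal{W}_2(\nu^n_{\mathrm{soft}},\nu_{\mathrm{hard}})$. The error term is then at most $\tfrac12 L \cdot \tfrac{2}{\sqrt2}\mcal{W}_2(\nu^n_{\mathrm{soft}},\nu_{\mathrm{hard}}) = \tfrac{L}{\sqrt2}\mcal{W}_2(\nu^n_{\mathrm{soft}},\nu_{\mathrm{hard}})$, which is the claimed bound.

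\emph{Main obstacle: checking the hypotheses of Proposition~\ref{prop:value_stability}.} The proposition needs the second moments of $\nu^n_{\mathrm{soft}}$, $\nu_{\mathrm{hard}}$ and $\nu_{1/2}$ to be at most $K$. For $\nu^n_{\mathrm{soft}}$, I would rerun the Donsker--Varadhan argument of Proposition~\ref{prop:bdd_2nd_moment}; it applies because $H(P^n_{\mathrm{soft}}|R) \le M$ by \eqref{eq:bound_soft_by_hard}. For $\nu_{\mathrm{hard}}$ the bound is Proposition~\ref{prop:bdd_2nd_moment} itself. For $\nu_{1/2}$ it follows because the second moment of a mixture is the average of the second moments. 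The same moment and entropy finiteness also makes every subtraction of entropies above a subtraction of finite numbers.
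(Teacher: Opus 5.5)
Your proposal is correct and follows essentially the paper's route. The paper also applies the parallelogram identity twice, converts with Lemma~\ref{lemma:SB_EOT}, compares at the midpoint plan, and finishes with value stability and the midpoint $\mcal{W}_2$ bound. The only difference is cosmetic: you bound $H(\pi_{1/2}\,|\,R_{0,T})$ below by $V(\nu_{1/2})$ before passing to $S$, whereas the paper first writes it as $\mcal{F}^{\bs{\mu}}_{\nu_{1/2}}(\pi_{1/2}) + H(\nu_{1/2}\,|\,\leb)$ and then uses $\mcal{F}^{\bs{\mu}}_{\nu_{1/2}} \ge S$. Your explicit check that $\nu^n_{\mathrm{soft}}$, $\nu_{\mathrm{hard}}$ and $\nu_{1/2}$ all have second moments at most $K$, as Proposition~\ref{prop:value_stability} requires, is a detail the paper leaves implicit.
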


\begin{proof}
Apply Lemma~\ref{lemma:compensation} twice: once for
$\sigma = R_{0,T}$, $\alpha = \pi^n_{\mathrm{soft}}$, $\beta = \pi_{\mathrm{hard}}$, $m = \pi_{1/2}$,
\begin{equation} \label{eq:convex_defect_plans}
  \tfrac{1}{2} H\bigl(\pi^n_{\mathrm{soft}} \,\big|\, R_{0,T}\bigr)
  + \tfrac{1}{2} H\bigl(\pi_{\mathrm{hard}} \,\big|\, R_{0,T}\bigr)
  = H\bigl(\pi_{1/2} \,\big|\, R_{0,T}\bigr) + \js\bigl(\pi^n_{\mathrm{soft}},\pi_{\mathrm{hard}}\bigr),
\end{equation}
and once for $\sigma = \leb$, $\alpha = \nu^n_{\mathrm{soft}}$,
$\beta = \nu_{\mathrm{hard}}$, $m = \nu_{1/2}$ (by Lemma~\ref{lemma:finite_entropy} and Assumption~\ref{assump:initial_and_target} \ref{assump:log_bound}),
\begin{equation} \label{eq:convex_defect_marginals}
  \tfrac{1}{2} H(\nu^n_{\mathrm{soft}} \,|\, \leb) + \tfrac{1}{2} H(\nu_{\mathrm{hard}} \,|\, \leb)
  = H(\nu_{1/2} \,|\, \leb) + \js(\nu^n_{\mathrm{soft}},\nu_{\mathrm{hard}}).
\end{equation}
By applying Lemma~\ref{lemma:SB_EOT}, we have
\begin{equation} \label{eq:two_values}
  H\bigl(\pi^n_{\mathrm{soft}} \,\big|\, R_{0,T}\bigr) = S(\nu^n_{\mathrm{soft}}) + H(\nu^n_{\mathrm{soft}} \,|\, \leb),
  \qquad
  H\bigl(\pi_{\mathrm{hard}} \,\big|\, R_{0,T}\bigr) = S(\nu_{\mathrm{hard}}) + H(\nu_{\mathrm{hard}} \,|\, \leb).
\end{equation}

Note that $\pi_{1/2} \in \Pi(\mu_{\mathrm{init}},\nu_{1/2})$, since $\pi^n_{\mathrm{soft}}$ and $\pi_{\mathrm{hard}}$ have the
same first marginal $\mu_{\mathrm{init}}$; hence Lemma~\ref{lemma:SB_EOT} applies to $\pi_{1/2}$ as
well and yields
$H(\pi_{1/2}\,|\,R_{0,T}) = \mcal{F}^{\bs{\mu}}_{\nu_{1/2}}(\pi_{1/2})
 + H(\nu_{1/2}\,|\,\leb)$.
Substituting this and \eqref{eq:two_values} into \eqref{eq:convex_defect_plans}, and then
eliminating $\tfrac{1}{2}H(\nu^n_{\mathrm{soft}}|\leb) + \tfrac{1}{2}H(\nu_{\mathrm{hard}}|\leb)$ by means of
\eqref{eq:convex_defect_marginals}, we have the following identity
\begin{equation} \label{eq:key_identity}
  \js\bigl(\pi^n_{\mathrm{soft}},\pi_{\mathrm{hard}}\bigr) - \js(\nu^n_{\mathrm{soft}},\nu_{\mathrm{hard}})
  = \tfrac{1}{2}\bigl( S(\nu^n_{\mathrm{soft}}) + S(\nu_{\mathrm{hard}}) \bigr)
    - \mcal{F}^{\bs{\mu}}_{\nu_{1/2}}(\pi_{1/2}).
\end{equation}
Since $\pi_{1/2} \in \Pi(\mu_{\mathrm{init}},\nu_{1/2})$, we have
$\mcal{F}^{\bs{\mu}}_{\nu_{1/2}}(\pi_{1/2}) \ge S(\nu_{1/2})$,
\begin{align*}
  \js\bigl(\pi^n_{\mathrm{soft}},\pi_{\mathrm{hard}}\bigr) - \js(\nu^n_{\mathrm{soft}},\nu_{\mathrm{hard}})
  &\le \tfrac{1}{2}\bigl( S(\nu^n_{\mathrm{soft}}) - S(\nu_{1/2}) \bigr)
     + \tfrac{1}{2}\bigl( S(\nu_{\mathrm{hard}})   - S(\nu_{1/2}) \bigr) \\
  &\le \tfrac{L}{2} \, \mcal{W}_2(\nu^n_{\mathrm{soft}},\nu_{1/2})
     + \tfrac{L}{2} \, \mcal{W}_2(\nu_{\mathrm{hard}},\nu_{1/2})
  && \text{by \eqref{eq:EOT_value_stability}} \\
  &\le \tfrac{L}{\sqrt{2}} \, \mcal{W}_2(\nu^n_{\mathrm{soft}},\nu_{\mathrm{hard}}),
\end{align*}
where the last line follows by Lemma~\ref{lemma:midpoint_W2}.
\end{proof}

\begin{proof}[Proof of Proposition~\ref{prop:JS_bound}]
By Assumption~\ref{assump:Talagrand}, we apply Talagrand's inequality to get
\begin{equation} \label{eq:talagrand}
\mcal{W}_2(\nu^n_{\mathrm{soft}},\nu_{\mathrm{hard}}) \le C \, H(\nu^n_{\mathrm{soft}} \,|\, \nu_{\mathrm{hard}})^{1/2} = C\, h_n^{1/2}.
\end{equation}

Moreover, $\js(\nu^n_{\mathrm{soft}},\nu_{\mathrm{hard}})$ is controlled directly by applying
Lemma~\ref{lemma:compensation} once more, with
$\sigma = \nu_{\mathrm{hard}}$, and using $H(\nu_{\mathrm{hard}}\,|\,\nu_{\mathrm{hard}}) = 0$ and $H(\nu_{1/2}\,|\,\nu_{\mathrm{hard}})\ge 0$,
\begin{equation} \label{eq:JS_marginals}
  \js(\nu^n_{\mathrm{soft}},\nu_{\mathrm{hard}})
  = \tfrac{1}{2} H(\nu^n_{\mathrm{soft}} \,|\, \nu_{\mathrm{hard}}) - H(\nu_{1/2} \,|\, \nu_{\mathrm{hard}})
  \le \frac{1}{2} h_n.
\end{equation}
Combining  Lemma~\ref{lemma:JS_plans}, \eqref{eq:talagrand}, and \eqref{eq:JS_marginals}, we obtain
\begin{equation*}
\js\bigl(\pi^n_{\mathrm{soft}},\pi_{\mathrm{hard}}\bigr)
  \;\le\; \frac{L}{\sqrt{2}}\, \mcal{W}_2(\nu^n_{\mathrm{soft}},\nu_{\mathrm{hard}}) + \js(\nu^n_{\mathrm{soft}},\nu_{\mathrm{hard}})
  \;\le\; C h_n^{1/2} + C h_n^{}
  \;\le\; \frac{C}{n},
\end{equation*}
where we have used Lemma~\ref{lemma:bootstrap_h_n} below for the last inequality.
\end{proof}

\begin{lemma} \label{lemma:bootstrap_h_n}
There exists a constant $C = C(\alpha, C_{p_{\mathrm{fin}}}, L, K) >0$ independent of $n$ such that
\begin{equation*}
h_n \leq \frac{C}{n^2}.
\end{equation*}
\end{lemma}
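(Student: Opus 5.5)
The plan is to compare the value of $n$-SB($\bs{\mu}$) at its minimizer $P^n_{\mathrm{soft}}$ with its value at the admissible competitor $P_{\mathrm{hard}}$. Everything is then reduced to terminal-marginal quantities that are controlled by $h_n^{1/2}$. The target is the self-improving inequality
\begin{equation*}
  (1+n)\,h_n \le C\,h_n^{1/2},
\end{equation*}
which immediately gives $h_n^{1/2}\le C/(1+n)$, hence $h_n\le C n^{-2}$.

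First, the optimality of $P^n_{\mathrm{soft}}$ together with $G(\nu_{\mathrm{hard}})=0$ gives $H(P^n_{\mathrm{soft}}|R)+n h_n\le H(P_{\mathrm{hard}}|R)$. By the data processing inequality, $H(\pi^n_{\mathrm{soft}}|R_{0,T})\le H(P^n_{\mathrm{soft}}|R)$. For the hard problem, the path-space and static values coincide (\cite[Lemma~2.4]{leonard_survey_2014}), so $H(P_{\mathrm{hard}}|R)=H(\pi_{\mathrm{hard}}|R_{0,T})$. The plans $\pi^n_{\mathrm{soft}}$ and $\pi_{\mathrm{hard}}$ are optimal for the static problems with terminal marginals $\nu^n_{\mathrm{soft}}$ and $\nu_{\mathrm{hard}}$. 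Lemma~\ref{lemma:SB_EOT} applies to both, since Lemma~\ref{lemma:finite_entropy} and Assumption~\ref{assump:initial_and_target}~\ref{assump:log_bound} give finiteness of $H(\cdot|\leb)$. Hence
\begin{equation*}
  n h_n \le S(\nu_{\mathrm{hard}})-S(\nu^n_{\mathrm{soft}}) + H(\nu_{\mathrm{hard}}|\leb)-H(\nu^n_{\mathrm{soft}}|\leb).
\end{equation*}

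Next I bound each term. The EOT value difference is at most $L\,\mcal{W}_2(\nu^n_{\mathrm{soft}},\nu_{\mathrm{hard}})$ by Proposition~\ref{prop:value_stability}. This requires $M_2^2\le K$ for both marginals. For $\nu_{\mathrm{hard}}=P^{\bs\mu}_T$ this is Proposition~\ref{prop:bdd_2nd_moment}. For $\nu^n_{\mathrm{soft}}$ it follows from the same Donsker--Varadhan argument, because $H(P^{n,\bs\mu}|R^{\bs\mu})\le M$ by \eqref{eq:bound_soft_by_hard}. By Talagrand's inequality \eqref{eq:talagrand}, this term is then at most $C h_n^{1/2}$.

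For the entropy difference, write $\log\frac{d\nu^n_{\mathrm{soft}}}{d\leb}=\log\frac{d\nu^n_{\mathrm{soft}}}{d\nu_{\mathrm{hard}}}+\log p_{\mathrm{fin}}$. This gives
\begin{equation*}
  H(\nu_{\mathrm{hard}}|\leb)-H(\nu^n_{\mathrm{soft}}|\leb) = -h_n - \int \log p_{\mathrm{fin}}\, d(\nu^n_{\mathrm{soft}}-\nu_{\mathrm{hard}}).
\end{equation*}
The $-h_n$ supplies the "$+1$" on the left of the target inequality, although it is harmless either way.

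I expect the main obstacle to be the last integral, and this is exactly where Assumption~\ref{assump:Talagrand}~\ref{assump:nabla_p_fin_linear_growth} enters. I will repeat the argument of Lemma~\ref{lemma:A_L} in one variable, with $\log p_{\mathrm{fin}}$ in place of $c$. Take a $\mcal{W}_2$-optimal coupling $\kappa$ of $(\nu^n_{\mathrm{soft}},\nu_{\mathrm{hard}})$ and use the mean value theorem along segments together with $|\nabla\log p_{\mathrm{fin}}(y)|\le C_{p_{\mathrm{fin}}}(1+|y|)$. Cauchy--Schwarz then gives
\begin{equation*}
  \Bigl|\int\log p_{\mathrm{fin}}\,d(\nu^n_{\mathrm{soft}}-\nu_{\mathrm{hard}})\Bigr| \le C_{p_{\mathrm{fin}}}(1+4K)^{1/2}\,\mcal{W}_2(\nu^n_{\mathrm{soft}},\nu_{\mathrm{hard}}) \le C h_n^{1/2}.
\end{equation*}
One must check that $\log p_{\mathrm{fin}}$ is integrable under both measures. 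This holds because $\log p_{\mathrm{fin}}$ has quadratic growth and both second moments are bounded by $K$.

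Collecting the bounds yields $(1+n)h_n\le C h_n^{1/2}$ with $C=C(\alpha,C_{p_{\mathrm{fin}}},L,K)$, which concludes the proof. Without (iii), one would only have $n h_n\le C$ from the value comparison, which degrades the rate, consistent with the remark on the $n^{-1/4}$ fallback.
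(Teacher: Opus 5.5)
Your proposal is correct and follows the paper's proof essentially step for step. You compare the $n$-SB value at $P^n_{\mathrm{soft}}$ with that at $P_{\mathrm{hard}}$, pass to the static plans and to EOT via Lemma~\ref{lemma:SB_EOT}, split $H(\nu^n_{\mathrm{soft}}|\leb)=h_n+\int\log p_{\mathrm{fin}}\,d\nu^n_{\mathrm{soft}}$, and bound the EOT difference by value stability plus Talagrand and the $\log p_{\mathrm{fin}}$ term by the Lemma~\ref{lemma:A_L}-type argument, which yields $(1+n)h_n\le C h_n^{1/2}$; your data-processing inequality in place of the paper's ``equivalently'', and your explicit checks of the second-moment bound for $\nu^n_{\mathrm{soft}}$ and of the integrability of $\log p_{\mathrm{fin}}$, only make the same steps slightly more careful.
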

\begin{proof}
Since $P_{\mathrm{hard}}$ is one of the feasible plans for the $n$-SB($\bs{\mu}$),
\begin{equation*}
H(P^n_{\mathrm{soft}} | R) + n h_n \leq H(P_{\mathrm{hard}} | R),
\end{equation*}
and equivalently,
\begin{equation*}
H(\pi^n_{\mathrm{soft}} | R_{0,T}) + n h_n \leq H(\pi_{\mathrm{hard}}| R_{0,T}).
\end{equation*}
Using Lemma~\ref{lemma:SB_EOT}, we obtain:
\begin{equation*}
n h_n
\leq S(\nu_{\mathrm{hard}}) - S(\nu^n_{\mathrm{soft}})
 + H( \nu_{\mathrm{hard}}| \leb)  -  H( \nu^n_{\mathrm{soft}}| \leb)
\end{equation*}
Plugging the following identity to the above estimate
\begin{equation*}
H(\nu^n_{\mathrm{soft}} | \leb)
= \int \log \frac{d\nu^n_{\mathrm{soft}}}{d\nu_{\mathrm{hard}}} d\nu^n_{\mathrm{soft}}
+ \int \log \frac{d\nu_{\mathrm{hard}}}{d\leb} d\nu^n_{\mathrm{soft}} = h_n + \int \log p_{\mathrm{fin}} d\nu^n_{\mathrm{soft}},
\end{equation*}
we obtain
\begin{equation*}
(1+n) h_n
\leq \underset{:= \mathrm{I}}{\underline{S(\nu_{\mathrm{hard}}) - S(\nu^n_{\mathrm{soft}})}}
+ \underset{:= \mathrm{II}}{\underline{\int \log p_{\mathrm{fin}} d(\nu_{\mathrm{hard}} - \nu^n_{\mathrm{soft}})}}.
\end{equation*}
For $\mathrm{I}$, by Proposition~\ref{prop:value_stability}, and Talagrand's inequality, we have
\begin{equation*}
\mathrm{I} \leq L\mcal{W}_2(\nu^n_{\mathrm{soft}},\nu_{\mathrm{hard}}) \leq C h_n^{1/2}.
\end{equation*}
For $\mathrm{II}$, by the linear growth condition of $\nabla \log p_{\mathrm{fin}}$, we use the exact same argument as in Lemma~\ref{lemma:A_L} to deduce
\begin{equation*}
\mathrm{II} \leq C \mcal{W}_2(\nu^n_{\mathrm{soft}}, \nu_{\mathrm{hard}}) \leq C h^{1/2}_n.
\end{equation*}
Combining these estimates, we conclude
\begin{equation*}
(1+n) h_n \leq C\sqrt{h_n},
\end{equation*}
from which the statement is direct.
\end{proof}

\subsubsection*{Step 4: From the stability of the plans to the stability of the path measures}
In this step, we will lift up the stability result for the plans derived in Proposition~\ref{prop:JS_bound} to that of the path measures. We begin with the following lemma, which is an analogue of the $T_1$-inequality, with the KL-divergence replaced by JS-divergence.
\begin{lemma} \label{lemma:T_1_inequality_for_JS}
For $Q, \tilde{Q} \in \mcal{P}_2(\Omega)$, it holds that
\begin{equation*}
\mcal{W}_1(Q,\tilde{Q})
\le 2\sqrt{\bigl(M_2^2(Q)+M_2^2(\tilde{Q})\bigr)\js(Q,\tilde{Q})}.
\end{equation*}
\end{lemma}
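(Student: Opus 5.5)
We plan to go through the midpoint $m := \tfrac12(Q+\tilde Q)$ and a weighted total-variation bound. Since $\mcal{W}_1$ is dominated by a weighted total variation, we will use the standard estimate (cf.\ Villani's bound $\mcal{W}_1(\alpha,\beta) \le \int d_\Omega(\omega,0)\, d|\alpha-\beta|(\omega)$, obtained by transporting the common part $\alpha\wedge\beta$ to itself and routing the remaining mass through the origin $0 \in \Omega$):
\begin{equation*}
\mcal{W}_1(Q,\tilde Q) \le \int_\Omega \|\omega\|_\infty \, d|Q-\tilde Q|(\omega).
\end{equation*}
Writing $Q = (1+g)\,m$ and $\tilde Q = (1-g)\,m$ with $g := \tfrac{dQ}{dm}-1 \in [-1,1]$ (both densities are bounded by $2$, since $Q, \tilde Q \le 2m$), we have $|Q-\tilde Q| = 2|g|\,m$. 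Hence
\begin{equation*}
\mcal{W}_1(Q,\tilde Q) \le 2\int \|\omega\|_\infty |g| \, dm \le 2\Big(\int \|\omega\|_\infty^2 dm\Big)^{1/2}\Big(\int g^2 dm\Big)^{1/2}
\end{equation*}
by Cauchy--Schwarz. The first factor equals $\bigl(\tfrac12(M_2^2(Q)+M_2^2(\tilde Q))\bigr)^{1/2}$.

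It then remains to bound $\int g^2\, dm$ by a constant multiple of $\js(Q,\tilde Q)$. By definition,
\begin{equation*}
\js(Q,\tilde Q) = \int \tfrac12\bigl[(1+g)\log(1+g) + (1-g)\log(1-g)\bigr]\, dm,
\end{equation*}
so it suffices to prove the elementary pointwise inequality $\phi(x) := \tfrac12[(1+x)\log(1+x)+(1-x)\log(1-x)] \ge \tfrac12 x^2$ for $x\in[-1,1]$. To see it, note that $\phi$ is even with $\phi(0)=\phi'(0)=0$, and $\phi''(x) = \tfrac12\bigl(\tfrac1{1+x}+\tfrac1{1-x}\bigr) = \tfrac{1}{1-x^2} \ge 1$; the endpoints $x = \pm 1$ follow by continuity. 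This yields $\int g^2\, dm \le 2\,\js(Q,\tilde Q)$.

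Combining the two steps gives
\begin{equation*}
\mcal{W}_1(Q,\tilde Q) \le 2\bigl(\tfrac12(M_2^2(Q)+M_2^2(\tilde Q))\bigr)^{1/2}\bigl(2\js(Q,\tilde Q)\bigr)^{1/2} = 2\sqrt{(M_2^2(Q)+M_2^2(\tilde Q))\js(Q,\tilde Q)},
\end{equation*}
which is exactly the claimed constant.

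The only step requiring care is the weighted-TV domination of $\mcal{W}_1$ on the Polish space $\Omega$. We would construct the coupling explicitly as $\kappa := (\mathrm{id},\mathrm{id})_\#(Q\wedge\tilde Q) + \frac{(Q-\tilde Q)^+\otimes(Q-\tilde Q)^-}{a}$, where $a := (Q-\tilde Q)^+(\Omega)$ (if $a=0$, then $Q=\tilde Q$ and there is nothing to prove). Under this coupling, $\|\omega-\omega'\|_\infty \le \|\omega\|_\infty + \|\omega'\|_\infty$ on the second part, and the two positive and negative parts have equal mass $a$; together these give the bound.
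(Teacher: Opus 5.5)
Your proof is correct and follows the paper's argument: the same midpoint densities $1\pm u$, the same pointwise bound $\tfrac12[(1+u)\log(1+u)+(1-u)\log(1-u)]\ge \tfrac12 u^2$, and the same Cauchy--Schwarz step with $\int\|\omega\|_\infty^2\,dQ_{1/2}=\tfrac12\bigl(M_2^2(Q)+M_2^2(\tilde Q)\bigr)$. The only difference is how you bound $\mcal{W}_1$ by the weighted total variation: you build an explicit coupling, whereas the paper uses Kantorovich--Rubinstein duality, testing $Q-\tilde Q$ against $1$-Lipschitz $\varphi$ with $\varphi(0)=0$ and using $|\varphi(\omega)|\le\|\omega\|_\infty$; these are the primal and dual forms of the same estimate.
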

\begin{proof}
Put \( Q_{1/2}=(Q+\tilde{Q})/2\) and write
\[
\frac{dQ}{dQ_{1/2}}=1+u,\qquad \frac{d\tilde{Q}}{dQ_{1/2}}=1-u,
\]
where $|u|\le 1$, since $0 \leq \frac{dQ}{dQ_{1/2}}, \frac{d\tilde{Q}}{dQ_{1/2}} \leq 2 $.

Since
\[
\frac{(1+u)\log(1+u)+(1-u)\log(1-u)}2
\ge \frac{u^2}{2},
\]we have \(\int u^2\,dQ_{1/2}\le2\operatorname{JS}(Q,\tilde{Q})\).
For any \(1\)-Lipschitz function \(\varphi\) normalized by \(\varphi(\omega_0)=0\) with $\omega_0 = \mathbf{0}$, Cauchy–Schwarz gives
\begin{align*}
\left|\int\varphi\,d(Q-\tilde{Q})\right|
&=2\left|\int\varphi u\,dQ_{1/2}\right|\\
&\le2\left(\int \|\omega - \omega_0\|_{\infty}^2\,dQ_{1/2}\right)^{1/2}
       \left(\int u^2\,dQ_{1/2}\right)^{1/2}\\
&\le2\sqrt{\bigl(M_2^2(Q)+M_2^2(\tilde{Q})\bigr)\operatorname{JS}(Q,\tilde{Q})}. \qedhere
\end{align*}
\end{proof}
We are now ready to prove Theorem~\ref{thm:consistency_error}.
\begin{proof}[Proof of Theorem~\ref{thm:consistency_error}]
By Proposition~\ref{prop:JS_bound} and Lemma~\ref{lemma:T_1_inequality_for_JS}, along with the fact that $P^n_{\mathrm{soft}}, P_{\mathrm{hard}}$ share the same bridge, we derive
\begin{align*}
\mcal{W}_1(P^n_{\mathrm{soft}}, P_{\mathrm{hard}})
\leq C \sqrt{\js(P^n_{\mathrm{soft}}, P_{\mathrm{hard}})}
= C \sqrt{\js(\pi^n_{\mathrm{soft}}, \pi_{\mathrm{hard}})}
\leq C n^{-1/2},
\end{align*}
and the claim follows.
\end{proof}

\subsection{Proof of Theorem \ref{thm:linear_growth}} \label{sec:proof_lineargrowth}
Throughout this section, we grant the smoothness of $(s, t, z, w) \mapsto p^{\bs{\mu}}(s, t; z, w)$ on $\{ 0 \leq s < t \leq T \} \times \R^{2d} \times \R^{2d}$. More precisely, we assume that $p^{\bs{\mu}}$ is $C^1$ in $s$ and $t$, and $C^\infty$ in $z$ and $w$. We defer its proof to Appendix~\ref{appendix:smoothness}, which is based on Malliavin calculus. Indeed, we derive the expressions of this density and its derivatives in Proposition~\ref{prop:density_rep} and \ref{prop:derivatives}.

In the sequel, we write $p$ for $p^{\bs{\mu}}$ and $b(t, z) = f(t, z, \mu_t)$, suppressing the dependence on $\bs{\mu}$; all the constants below are independent of $\bs{\mu}$.

\paragraph{The bridge process.}
Fix $z, w \in \R^{2d}$. Recall that $Z$ denotes the canonical process on the path space $\Omega$, and let $R_z$ be the law of the reference process started at $z$ at time $0$ (omitting the dependence on $\bs{\mu}$), so that under $R_z$,
\begin{equation*}
W_t := Z^v_t - v - \int_0^t b(s, Z_s) \, ds, \qquad t \in [0,T],
\end{equation*}
is a Brownian motion and $dZ^x_u = Z^v_u \, du$, $dZ^v_u = b(u, Z_u) \, du + dW_u$. For $0 \leq t< T$, we set
\begin{equation*}
J(t, y) := - \log p(t, T; y, w), \qquad (t, y) \in [0, T) \times \R^{2d} .
\end{equation*}
For $u < T$ define
\begin{equation} \label{eq:density_martingale}
D_u := \frac{p(u, T; Z_u, w)}{p(0, T; z, w)} = \exp \big( J(0, z) - J(u, Z_u) \big) .
\end{equation}
Since $(u,y) \mapsto p(u, T; y, w)$ solves the Kolmogorov backward equation $\partial_u p + \mcal{L}_u p = 0$, It\^o's formula gives $dD_u = D_u \, \nabla_v \log p(u, T; Z_u, w) \cdot dW_u = - D_u \, \nabla_v J(u, Z_u) \cdot dW_u$, so that $D$ is a positive local martingale under $R_z$ with $D_0 = 1$; by the Gaussian upper bound, for every $t < T$,
\begin{equation} \label{eq:D_bounded}
0 < D_u \leq \frac{C_{T-t}}{p(0, T; z, w)}, \qquad u \in [0,t], \qquad C_{T-t} := \sup_{u \in [0, t]} \ \sup_{y \in \R^{2d}} p(u, T; y, w),
\end{equation}
hence $D$ is a bounded martingale on $[0,t]$. We define the probability measure $Q$ on $\mcal{F}_{t}$, $t < T$, by $dQ/dR_z|_{\mcal{F}_t} := D_t$, and we call the canonical process $Z$ under $Q$ the bridge process from $(0, z)$ to $(T, w)$. By Girsanov's theorem, under $Q$ the process
\begin{equation*}
W^Q_u := W_u + \int_0^u \nabla_v J(r, Z_r) \, dr, \qquad u \in [0,t],
\end{equation*}
is a Brownian motion, and $Z$ satisfies
\begin{equation} \label{eq:bridge_process}
dZ^x_u = Z^v_u \, du, \qquad dZ^v_u = \big( b(u, Z_u) - \nabla_v J(u, Z_u) \big) \, du + dW^Q_u, \qquad Z_0 = z .
\end{equation}
Throughout, $\E$ denotes expectation under $Q$, and $\E^{R_z}$ expectation under $R_z$. The same construction with the starting time $0$ replaced by $s_0 \in [0, T)$ yields the bridge process from $(s_0, z)$ to $(T, w)$, which will be used in the proof of Lemma~\ref{lemma:bounds_by_E(nabla_vJ)}.

\begin{lemma} \label{lemma:control_nabla_J_by_J}
For every $t < T$, there exists $C_{T-t} = C_{T-t}(T, d, \Lambda_0, \Lambda_1)>0$ such that
\begin{equation} \label{eq:bridge-rep}
\E \left[ \frac{1}{2} \int_0^t |\nabla_v J(s, Z_s)|^2 \, ds \right]
= J(0, z) - \E \left[ J(t, Z_t) \right]
= \E \left[ \log D_t \right]
\leq J(0, z) + \log C_{T-t} .
\end{equation}
\end{lemma}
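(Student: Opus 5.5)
The plan is to identify $\log D_t$ in two ways and take its $Q$-expectation. By definition \eqref{eq:density_martingale}, $\log D_t = J(0,z) - J(t,Z_t)$, so $\E[\log D_t] = J(0,z) - \E[J(t,Z_t)]$ as soon as $J(t,Z_t)$ is $Q$-integrable. This gives the middle equality of \eqref{eq:bridge-rep}. The right inequality follows from the bound \eqref{eq:D_bounded}: we have $\log D_t \le \log C_{T-t} - \log p(0,T;z,w) = \log C_{T-t} + J(0,z)$. The constant $C_{T-t}$ is a supremum of $p(u,T;\cdot,w)$ over $u\le t$. By the Gaussian upper bound of \cite[Theorem~1.1]{delarue_density_estimates}, it depends only on $T-t$, $d$ and the bounds on $b$. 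Since $|b|\le \Lambda_0$ and the Lipschitz constant of $b$ is at most $\Lambda_1$ (Assumption~\ref{assump:bounded_derivative}), the constant is independent of $\bs{\mu}$.

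For the left equality, I will use the stochastic logarithm of $D$. From $dD_u = -D_u\nabla_v J(u,Z_u)\cdot dW_u$, Itô's formula gives
\[
\log D_t = -\int_0^t \nabla_v J(u,Z_u)\cdot dW_u - \tfrac12\int_0^t |\nabla_v J(u,Z_u)|^2\,du .
\]
Substituting $dW_u = dW^Q_u - \nabla_v J(u,Z_u)\,du$ turns this into
\[
\log D_t = -\int_0^t \nabla_v J\cdot dW^Q_u + \tfrac12\int_0^t |\nabla_v J|^2\,du .
\]
Taking $\E$ gives the claim, provided the $dW^Q$-integral has zero mean.

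The main obstacle is this integrability. I would localize with the exit times $\tau_N$ of $Z$ from balls of radius $N$. On $[0,t\wedge\tau_N]$, $\nabla_v J$ is bounded because $J(\cdot,\cdot)$ is $C^1$ and $p>0$ is continuous on the compact set $[0,t]\times\overline{B_N}$. Hence
\[
\E\bigl[\log D_{t\wedge\tau_N}\bigr] = \E\Bigl[\tfrac12\int_0^{t\wedge\tau_N}|\nabla_v J|^2\,du\Bigr].
\]
As $N\to\infty$, the right side increases to $\E[\frac12\int_0^t|\nabla_v J|^2du]$ by monotone convergence. For the left side, $\log D_{t\wedge\tau_N}$ is bounded above by the constant $\log C_{T-t}+J(0,z)$. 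For a lower bound, the Gaussian lower bound of Lemma~\ref{lemma:kernel_bound} (uniform in the starting time) gives $-J(u,y) \ge -C(1+|y|^2+|w|^2)$ with $C$ depending on $T-t$. Hence
\[
\log D_{t\wedge\tau_N} \ge J(0,z) - C\bigl(1+\sup_{u\le t}|Z_u|^2+|w|^2\bigr).
\]
It remains to show $\E[\sup_{u\le t}|Z_u|^2]<\infty$ under $Q$. This holds because $D_t$ is bounded, so
\[
\E\bigl[\sup_{u\le t}|Z_u|^2\bigr] \le \|D_t\|_\infty\,\E^{R_z}\bigl[\sup_{u\le t}|Z_u|^2\bigr],
\]
and the last expectation is finite by Lemma~\ref{lemma:bdd_exp_moment}. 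Dominated convergence then passes the left side to $\E[\log D_t]$, and the same domination shows that $J(t,Z_t)$ is $Q$-integrable.

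Combining the two identifications gives \eqref{eq:bridge-rep}. In particular, the energy is finite and bounded by $J(0,z)+\log C_{T-t}$.
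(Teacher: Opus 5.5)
Your proof is correct. It differs from the paper's only in how you pass $N\to\infty$ in the localized identity $\E[\log D_{t\wedge\tau_N}] = \E[\frac12\int_0^{t\wedge\tau_N}|\nabla_v J|^2\,du]$.

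The paper uses only the one-sided bound \eqref{eq:D_bounded}. It bounds the left side above by $J(0,z)+\log C_{T-t}$, uniformly in $N$, so monotone convergence gives a finite total energy. Then $\int_0^\cdot \nabla_v J\cdot dW^Q$ is a square-integrable $Q$-martingale on $[0,t]$. Taking expectations in the \emph{unstopped} identity for $\log D_t$ then yields the first equality. Integrability of $\log D_t$, and hence of $J(t,Z_t)$, comes for free.

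You instead dominate $\log D_{t\wedge\tau_N}$ from both sides and apply dominated convergence. This needs two extra inputs:
\begin{itemize}
\item a Gaussian lower bound for $p(u,T;\cdot,w)$, uniform over starting times $u\in[0,t]$. This is the Delarue--Menozzi bound, not literally Lemma~\ref{lemma:kernel_bound}, which is stated only for the time-$0$ kernel;
\item a bound on $\E^{R_z}[\sup_{u\le t}|Z_u|^2]$ for the deterministic starting point $z$. This follows from Lemma~\ref{lemma:bdd_exp_moment} with $\mu_{\mathrm{init}}$ replaced by $\delta_z$, or directly from Gronwall.
\end{itemize}
Both are available, so your argument goes through. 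It gives an explicit integrable majorant for $J(t,Z_t)$ under $Q$. The paper's route is shorter and needs nothing beyond the upper bound on $D$.
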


\begin{proof}
The second equality follows from \eqref{eq:density_martingale}, and the inequality from \eqref{eq:D_bounded}. For the first equality, write, using $dW = dW^Q - \nabla_v J \, dr$,
\begin{equation*}
\log D_u = - \int_0^u \nabla_v J(r, Z_r) \cdot dW_r - \frac{1}{2} \int_0^u |\nabla_v J(r, Z_r)|^2 \, dr
= - \int_0^u \nabla_v J(r, Z_r) \cdot dW^Q_r + \frac{1}{2} \int_0^u |\nabla_v J(r, Z_r)|^2 \, dr .
\end{equation*}
Let $\tau_N := \inf\{ s > 0 : |Z_s| \geq N \}$. Since $\nabla_v J$ is bounded on compact sets, the stopped stochastic integral is a $Q$-martingale, and
\begin{equation*}
\E \left[ \frac{1}{2} \int_0^{t \wedge \tau_N} |\nabla_v J(s, Z_s)|^2 \, ds \right]
= \E \left[ \log D_{t \wedge \tau_N} \right]
\leq \log \frac{C_{T-t}}{p(0, T; z, w)} .
\end{equation*}
Since $Q \ll R_z$ and $Z$ does not explode under $R_z$ ($b$ has linear growth), $\tau_N \to \infty$ $Q$-a.s., and the monotone convergence theorem yields $\E [ \frac12 \int_0^t |\nabla_v J(s, Z_s)|^2 \, ds ] < \infty$. Hence $\int_0^{\cdot} \nabla_v J \cdot dW^Q$ is a square-integrable $Q$-martingale on $[0,t]$, and taking expectations in the displayed identity for $\log D_t$ gives the first equality in \eqref{eq:bridge-rep}.
\end{proof}

Our next goal is to obtain the following bounds.
\begin{lemma} \label{lemma:bounds_by_E(nabla_vJ)}
There exists a constant $C = C(d, \Lambda_0, \Lambda_1)>0$, independent of $s_0$, $T$, $\delta$ and $\bs{\mu}$,
such that for any $0 \leq s_0 < T$, any $0 < \delta < T - s_0$ and any $z \in \R^{2d}$, the following bounds hold, where $\E$ is the expectation under the law $Q$ of the bridge process from $(s_0, z)$ to $(T, w)$:
\begin{equation} \label{eq:bounds_by_E(nabla_vJ)}
\begin{split}
|\nabla_x J(s_0, z) |
& \leq C(\delta^{-\frac{3}{2}} + \delta^{-\frac{1}{2}} + \delta^{\frac{1}{2}})
\E\left[\int^{s_0 + \delta}_{s_0}
| \nabla_v J(t, Z_t)|^2  \, dt \right]^{\frac{1}{2}}, \\
|\nabla_v J(s_0, z)|
&\leq C(\delta^{-\frac{1}{2}} + \delta^{\frac{1}{2}} + \delta^{\frac{3}{2}})
\E \left[\int^{s_0 + \delta}_{s_0}
| \nabla_v J(t, Z_t)|^2  \, dt \right]^{\frac{1}{2}}.
\end{split}
\end{equation}
\end{lemma}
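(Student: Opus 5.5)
Write $M_\delta:=\E[\int_{s_0}^{s_0+\delta}|\nabla_v J(t,Z_t)|^2\,dt]$ and $\|\nabla b\|_\infty\le \Lambda_1$. The proof uses the evolution equations \eqref{eq:evolutions_of_nabla_J} of $\partial_{x_\alpha}J(t,Z_t)$ and $\partial_{v_\beta}J(t,Z_t)$ along the bridge started at $(s_0,z)$. It first bounds $\nabla_x J(s_0,z)$ and then $\nabla_v J(s_0,z)$, with Sheu's time-averaging trick at each stage.

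\emph{Step 1: identities.} Integrate each evolution equation from $s_0$ to a time $u\le s_0+\delta<T$, stopped at $\tau_N$, and take expectations; the stopped stochastic integrals vanish. Let $N\to\infty$ using the local boundedness of $\nabla J$ on $[s_0,s_0+\delta]$ and the integrability from Lemma~\ref{lemma:control_nabla_J_by_J}. This limit passage is justified by the uniform bounds on $\nabla p$ together with the Gaussian lower bound, which give $|\nabla J|\le C(1+|Z|^2)$ before $T$ and hence uniform integrability under $Q$. The outcome is
\[
\partial_{x_\beta}J(s_0,z)=\E\Bigl[\partial_{x_\beta}J(u,Z_u)+\int_{s_0}^u(\partial_{x_\beta}b\cdot\nabla_vJ)(r,Z_r)\,dr\Bigr],
\]
and, for $s<t$,
\[
\E\Bigl[\int_s^t\partial_{x_\beta}J(u,Z_u)\,du\Bigr]=\E\Bigl[\partial_{v_\beta}J(s,Z_s)-\partial_{v_\beta}J(t,Z_t)-\int_s^t(\partial_{v_\beta}b\cdot\nabla_vJ)(u,Z_u)\,du\Bigr].
\]

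\emph{Step 2: bound on $\nabla_x J$.} Integrate the first identity in $u$ over $[s,t]\subset[s_0,s_0+\delta]$ and substitute the second, so that no $\partial_xJ$ remains on the right:
\[
(t-s)\,\partial_{x_\beta}J(s_0,z)=\E\bigl[\partial_{v_\beta}J(s,Z_s)-\partial_{v_\beta}J(t,Z_t)\bigr]+\text{(drift terms)}.
\]
The pointwise terms $\partial_vJ(s,Z_s)$ are not controlled by $M_\delta$ individually. To fix this, average over $s\in[s_0,s_0+\delta/3]$ and $t\in[s_0+2\delta/3,s_0+\delta]$, which makes $t-s\ge\delta/3$. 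After averaging, the boundary terms become $\delta^{-1}\E\int|\nabla_vJ|\le\delta^{-1/2}M_\delta^{1/2}$ by Cauchy--Schwarz. The drift terms are at most $\Lambda_1\,\delta\,\E\int|\nabla_vJ|\le C\delta^{3/2}M_\delta^{1/2}$. Dividing by $t-s\sim\delta$ gives
\[
|\nabla_xJ(s_0,z)|\le C(\delta^{-3/2}+\delta^{1/2})M_\delta^{1/2},
\]
which is within the stated form.

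\emph{Step 3: bound on $\nabla_v J$.} Run the $\partial_{v_\beta}$ identity from $s_0$ to $u$ and average over $u\in[s_0,s_0+\delta]$. The boundary term again gives $\delta^{-1/2}M_\delta^{1/2}$. The new term is $\E\int_{s_0}^{s_0+\delta}|\partial_xJ(r,Z_r)|\,dr$, and this is the main obstacle, since $\partial_xJ$ is not directly controlled by the energy. The plan is to apply Step 2 at the random starting point $(r,Z_r)$ with window $\delta'=s_0+\delta-r$. The Markov property of the bridge (a bridge restarted at $(r,Z_r)$ is again the bridge to $(T,w)$) lets the inner conditional energy be bounded by $M_\delta$. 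The resulting factor is $\int_{s_0}^{s_0+\delta}(s_0+\delta-r)^{-3/2}\,dr$, which diverges, so this choice of window does not close as stated. The remedy I would try first is to apply Step 2 at $(r,Z_r)$ with a window of fixed size $\delta$, or to restrict $r$ to the first half and average. Either way, Cauchy--Schwarz and the Markov property should give $\E\int|\partial_xJ|\le C\delta\cdot\delta^{-3/2}M_{2\delta}^{1/2}$ (after enlarging windows and renaming $\delta$), which contributes $\delta^{-1/2}$ after the $1/\delta$ averaging. The drift terms contribute $\delta^{1/2}$ and $\delta^{3/2}$ factors. All constants depend only on $d$, $\Lambda_0$ and $\Lambda_1$.
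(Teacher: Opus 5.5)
Your architecture is the paper's. You derive two identities from \eqref{eq:evolutions_of_nabla_J}, substitute the $\partial_{v_\beta}$-identity to remove $\int\partial_{x_\beta}J$, and average over $s\in[s_0,s_0+\delta/3]$, $t\in[s_0+2\delta/3,s_0+\delta]$. You then use the bridge's Markov property plus Jensen to feed the $\nabla_xJ$ bound into the $\partial_{v_\beta}$-identity.

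\textbf{The gap: the limit $N\to\infty$ in Step~1.} Your bound $|\nabla J|\le C(1+|Z|^2)$ does not follow from the bound on $\nabla p$ and the Gaussian lower bound. Those only give $|\nabla J(u,y)|=|\nabla_y p(u,T;y,w)|/p(u,T;y,w)\le C\,e^{C(|y|^2+|w|^2)}$, which is exponential in $|y|^2$. Integrability of this under $Q$ is not available, since $C$ is the uncontrolled constant of the lower bound; still less is uniform integrability of its value at $\tau_N$. A pointwise polynomial bound on $\nabla\log p$ is exactly the kind of estimate this lemma is meant to produce, so it cannot be an input. Local boundedness of $\nabla J$ does not address the real difficulty either. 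That difficulty is the boundary term $\E[\partial_{v_\beta}J(\tau_N,Z_{\tau_N})\mathbf 1_{\{s\le\tau_N<t\}}]$, which is evaluated on $\{|Z|=N\}$.

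\textbf{The missing idea is Lemma~\ref{lemma:partial_J_bdd}.} By optional sampling, $\E[|\partial J(\tau,Z_\tau)|\mathbf 1_A]=\E^{R_z}[|\partial J(\tau,Z_\tau)|\,D_\tau\mathbf 1_A]$. The density $D_\tau=p(\tau,T;Z_\tau,w)/p(s_0,T;z,w)$ cancels the denominator of $\partial J=-\partial p/p$, so this is at most $K_{T-t}(w)\,R_z(A)/p(s_0,T;z,w)$.
\begin{itemize}
\item With $A=\{s\le\tau_N<t\}$ this kills the boundary term, because $R_z(\tau_N<t)\to0$.
\item With $A=\Omega$, plus Tonelli, it gives the integrability of $\partial J(s,Z_s)$, $\partial J(t,Z_t)$ and $\int_s^t|\partial_{x_\beta}J(u,Z_u)|\,du$ needed for dominated convergence.
\end{itemize}
No lower bound on $p$ is used.

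\textbf{Step~3: the divergence comes from dropping a weight.} Averaging the $\partial_{v_\beta}$-identity over $u\in[s_0,s_0+\delta]$ produces $\frac1\delta\,\E\int_{s_0}^{s_0+\delta}(s_0+\delta-r)\,|\partial_{x_\beta}J(r,Z_r)|\,dr$. With window $s_0+\delta-r$, the leading term of the integrand becomes $(s_0+\delta-r)^{-1/2}$, which is integrable. So your first choice already closes with the stated exponents. Your fallback is the paper's route. The paper integrates over $t\in[s_0,s_0+\delta/2]$ and applies the $\nabla_xJ$ bound at $(u,Z_u)$ with window $\delta_u=s_0+\delta-u\in[\delta/2,\delta]$. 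This keeps the energy inside $[s_0,s_0+\delta]$; a fixed window $\delta$ would need $s_0+2\delta<T$, hence your renaming.

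\textbf{Step~2: a harmless omission.} The drift term $\int_s^t\partial_{v_\beta}b\cdot\nabla_vJ\,du$ carries no extra factor $\delta$, so it contributes $\delta^{-1/2}$ after dividing by $t-s$. This does not change the result, since $\delta^{-1/2}\le\delta^{-3/2}+\delta^{1/2}$.
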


In fact, granting these estimates, the linear growth condition of $\nabla_z \log p$ follows immediately. The same property for $\nabla_w \log p$ is detailed in Appendix~\ref{appendix:estimate_for_end_point_variable}.
\begin{proof}[Proof of Theorem~\ref{thm:linear_growth}]
Fix $0 \leq s_0 < T$, set $\delta := (T - s_0)/2$, and let $Q$ be the law of the bridge process from $(s_0, z)$ to $(T, w)$. By Lemma~\ref{lemma:control_nabla_J_by_J} with $t = s_0 + \delta$,
\begin{align*}
    \E \left[ \int^{s_0 + \delta}_{s_0}  |\nabla_v J(t, Z_t)|^2 \, dt \right]^{1/2}
    &= \Big( 2 J(s_0, z) - 2 \, \E \left[ J\big(s_0 + \delta, Z_{s_0 + \delta} \big)  \right] \Big) ^{1/2} \\
    &\leq C \left( 1 + |z|^2 + |w|^2 \right)^{1/2},
\end{align*}
where we have used the upper bound $J(s_0, z) \leq C(1 + |z|^2 + |w|^2)$ of Lemma~\ref{lemma:kernel_bound} for the first term, and, for the second term, the Gaussian upper bound of \cite[Theorem~1.1]{delarue_density_estimates}: writing $w = (w_x, w_v)$, for every $y \in \R^{2d}$,
\begin{align*}
- J(s_0 + \delta, y)
&= \log p(s_0 + \delta, T; y, w) \\
&\leq \log C_{\delta} - C \left( \frac{|\theta^x_{s_0 + \delta, T}(y) - w_x|^2}{\delta^3} + \frac{|\theta^v_{s_0 + \delta, T}(y) - w_v|^2}{\delta} \right) \\
&\leq \log C_{\delta},
\end{align*}
with $\theta_{s,t}(y)$ the flow of the deterministic system $\dot{x} = v$, $\dot{v} = b(t, x, v)$ started at $(s, y)$, as in \cite[Theorem~1.1]{delarue_density_estimates}; here we used that $T - (s_0 + \delta) = \delta$. Hence $-\E[J(s_0 + \delta, Z_{s_0 + \delta})] \leq \log C_{\delta}$. Plugging this into \eqref{eq:bounds_by_E(nabla_vJ)}, we obtain
\begin{equation*}
|\nabla_z J(s_0, z)|
\leq C(\delta^{-\frac{3}{2}} + \delta^{-\frac{1}{2}} + \delta^{\frac{1}{2}} + \delta^{\frac{3}{2}}) \left( 1 + |z|^2 + |w|^2 \right)^{1/2}
\leq C_{T - s_0} \left( 1 + |z| + |w| \right),
\end{equation*}
where $C_{T - s_0}$ depends on $T$ and $T - s_0$ (and blows up as $T - s_0 \to 0$), but not on $z, w$ or $\bs{\mu}$. Since $J(s_0, z) = -\log p(s_0, T; z, w)$, set $s_0 = 0$ to obtain the claimed estimate.
\end{proof}

It now remains to obtain the bounds \eqref{eq:bounds_by_E(nabla_vJ)}.
To this end, we will use the following evolutions, the derivation of which is presented in Appendix~\ref{appendix:derivation_of_evolution_nabla_J}: for $\alpha, \beta \in \{1, \dots, d\}$ and $t \in [s_0, T)$,
\begin{equation} \label{eq:evolutions_of_nabla_J}
\left\{
\begin{split}
d \, \partial_{x_\alpha} J(t, Z_t) &= - \big( \partial_{x_\alpha} b \cdot \nabla_v  J \big)(t, Z_t) \, dt + \nabla_v \partial_{x_\alpha}  J(t, Z_t) \cdot dW^Q_t , \\
d \, \partial_{v_\beta} J(t, Z_t) &= -
\left[
\partial_{x_\beta} J
+ \partial_{v_\beta} b \cdot \nabla_v J
\right] (t, Z_t) \, dt
+  \nabla_v \partial_{v_\beta}  J(t, Z_t) \cdot dW^Q_t .
\end{split}
\right.
\end{equation}

We also need the following estimate.
\begin{lemma} \label{lemma:partial_J_bdd}
Fix $t \in [s_0, T)$ and set $\delta := T - t$. For any stopping time $\tau$ with $s_0 \leq \tau \leq t$, any $A \in \mcal{F}_\tau$, and $\partial \in \{ \partial_{x_\alpha}, \partial_{v_\beta} \}$,
\begin{equation} \label{eq:partial_J_bdd}
\E \left[ \left| \partial J(\tau, Z_\tau) \right| \1_{A} \right]  \leq \frac{K_{\delta}(w)}{p(s_0, T; z, w)} \, R_z(A),
\end{equation}
where
\begin{equation*}
K_{\delta}(w) := \sup_{u \in [0, T - \delta]} \ \sup_{y \in \R^{2d}} \left| \nabla_y p(u, T; y, w) \right| < \infty
\end{equation*}
by Corollary~\ref{cor:bounded_derivative}. Note that $R_z(A)$ is the probability of $A$ under the reference measure, not under $Q$.
\end{lemma}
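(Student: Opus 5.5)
The plan is to convert the $Q$-expectation into an $R_z$-expectation using the density $D$, and then bound the resulting ratio using the bounded gradient of $p$.

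\emph{Change of measure.} Since $\tau \le t < T$ and $A\in\mcal{F}_\tau$, and $dQ/dR_z|_{\mcal{F}_t}=D_t$ with $D$ a bounded $R_z$-martingale on $[s_0,t]$ by \eqref{eq:D_bounded}, optional stopping gives $dQ/dR_z|_{\mcal{F}_\tau}=D_\tau$. Therefore
\begin{equation*}
\E\left[|\partial J(\tau,Z_\tau)|\1_A\right]=\E^{R_z}\left[|\partial J(\tau,Z_\tau)|\,D_\tau\,\1_A\right].
\end{equation*}
Here $R_z$ and $D$ are understood with starting time $s_0$, as in the construction of the bridge from $(s_0,z)$. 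We also take $Z_{s_0}=z$.

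\emph{Cancellation.} By definition, $J(u,y)=-\log p(u,T;y,w)$. Hence $\partial J(\tau,Z_\tau)=-\partial_y p(\tau,T;Z_\tau,w)/p(\tau,T;Z_\tau,w)$. This is well defined because $p>0$ by Lemma~\ref{lemma:kernel_bound}, applied to the time-shifted dynamics. On the other hand, $D_\tau=p(\tau,T;Z_\tau,w)/p(s_0,T;z,w)$. The factor $p(\tau,T;Z_\tau,w)$ therefore cancels, and the integrand becomes
\begin{equation*}
|\partial J(\tau,Z_\tau)|\,D_\tau=\frac{|\partial_y p(\tau,T;Z_\tau,w)|}{p(s_0,T;z,w)}.
\end{equation*}

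\emph{Uniform bound and conclusion.} Since $\tau\le t=T-\delta$, we have $|\partial_y p(\tau,T;Z_\tau,w)|\le K_\delta(w)$. Taking the $R_z$-expectation over $A$ then gives exactly \eqref{eq:partial_J_bdd}. The finiteness of $K_\delta(w)$ is supplied by Corollary~\ref{cor:bounded_derivative}, which gives a bound on $\nabla_y p$ uniform over $u\le T-\delta$.

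The main point to check is the change of measure at the random time $\tau$. Because $\tau$ is bounded by $t<T$ and $D$ is bounded on $[s_0,t]$, the stopped process $D_{\cdot\wedge\tau}$ is uniformly integrable, so $Q(B)=\E^{R_z}[D_t\1_B]=\E^{R_z}[D_\tau\1_B]$ for $B\in\mcal{F}_\tau$. This extends from indicators to the nonnegative $\mcal{F}_\tau$-measurable integrand $|\partial J(\tau,Z_\tau)|\1_A$ by monotone convergence. Everything else is algebraic.
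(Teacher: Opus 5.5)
Your proposal is correct and follows essentially the same route as the paper's proof. The paper uses the tower property and optional sampling to replace $D_t$ by $D_\tau$, cancels $p(\tau,T;Z_\tau,w)$ between $|\partial J(\tau,Z_\tau)|$ and $D_\tau$, and bounds $|\partial_y p(\tau,T;Z_\tau,w)|$ by $K_\delta(w)$ using $\tau \le T-\delta$, exactly as you do.
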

\begin{proof}[Proof of Lemma~\ref{lemma:partial_J_bdd}]
Recall from \eqref{eq:D_bounded} that $D$ is a bounded $R_z$-martingale on $[s_0, t]$ and that $dQ / dR_z |_{\mcal{F}_t} = D_t$. Since $\left| \partial J(\tau, Z_\tau) \right| \1_A$ is $\mcal{F}_\tau$-measurable, the tower property and the optional sampling theorem give
\begin{align*}
\E \left[ \left| \partial J(\tau, Z_\tau) \right| \1_A \right]
&= \E^{R_z} \left[ \left| \partial J(\tau, Z_\tau) \right| \1_A \, D_t \right]
= \E^{R_z} \left[ \left| \partial J(\tau, Z_\tau) \right| \1_A \, \E^{R_z} \big[ D_t \mid \mcal{F}_\tau \big] \right] \\
&= \E^{R_z} \left[ \left| \partial J(\tau, Z_\tau) \right| \1_A \, D_\tau \right] .
\end{align*}
Now, by the definitions of $J$ and $D$, the ratio $\partial p / p$ cancels against $D_\tau$:
\begin{equation*}
\left| \partial J(\tau, Z_\tau) \right| D_\tau
= \frac{\left| \partial_y p(\tau, T; Z_\tau, w) \right|}{p(\tau, T; Z_\tau, w)} \cdot \frac{p(\tau, T; Z_\tau, w)}{p(s_0, T; z, w)}
= \frac{\left| \partial_y p(\tau, T; Z_\tau, w) \right|}{p(s_0, T; z, w)}
\leq \frac{K_{\delta}(w)}{p(s_0, T; z, w)},
\end{equation*}
since $\tau \leq t = T - \delta$. Taking expectations yields \eqref{eq:partial_J_bdd}.
\end{proof}

We are now ready to prove Lemma~\ref{lemma:bounds_by_E(nabla_vJ)}.

\begin{proof}[Proof of Lemma~\ref{lemma:bounds_by_E(nabla_vJ)}]
Fix $0 \leq s_0 < T$, $0 < \delta < T - s_0$, and let $Q$ be the law of the bridge process from $(s_0, z)$ to $(T, w)$. Set $t_1 := s_0 + \delta$ and $\tau_N := \inf \{ u \geq s_0 :  | Z_{u}| \geq N  \}$ as in the proof of Lemma~\ref{lemma:control_nabla_J_by_J}; recall that $\tau_N \to \infty$ $Q$-a.s.\ and $R_z(\tau_N < t_1) \to 0$ as $N \to \infty$.

\medskip
\noindent \emph{Step 1: two identities.}
Let $s_0 \leq s < t \leq t_1$. From the evolution of $\partial_{v_\beta} J$ in \eqref{eq:evolutions_of_nabla_J},
\begin{align*}
\partial_{v_\beta} J(t \wedge \tau_N, Z_{t \wedge \tau_N})  &=
\partial_{v_\beta} J(s \wedge \tau_N, Z_{s \wedge \tau_N})
-\int^{t\wedge \tau_N}_{s\wedge \tau_N} \partial_{x_\beta} J(u, Z_u) \, du \\
& \quad - \int^{t\wedge \tau_N}_{s\wedge \tau_N} \big( \partial_{v_\beta}b \cdot \nabla_v J \big) (u, Z_u) \, du
+  \int^{t\wedge \tau_N}_{s\wedge \tau_N} \nabla_v \partial_{v_\beta}  J(u, Z_u) \cdot dW^Q_u .
\end{align*}
Since $\nabla_v \partial_{v_\beta} J$ is continuous, hence bounded on $[s_0, t_1] \times \overline{B_N}$, the last term is a true $Q$-martingale. Taking expectations and splitting according to $\{ \tau_N \geq t \}$, $\{ \tau_N \geq s \}$, we obtain
\begin{align*}
&\E\left[\int^{t\wedge \tau_N}_{s\wedge \tau_N} \partial_{x_\beta} J(u, Z_u) \, du \right] \\
&= \E\Bigg[ - \partial_{v_\beta} J(t, Z_{t}) \1_{\{ t \leq \tau_N \}}
+ \partial_{v_\beta} J(s, Z_{s}) \1_{\{ s \leq \tau_N \}}
- \int^{t\wedge \tau_N}_{s\wedge \tau_N} \big( \partial_{v_\beta}b \cdot \nabla_v J \big)(u, Z_u) \, du \Bigg] \\
&\quad - \E\Big[  \partial_{v_\beta} J(\tau_N, Z_{\tau_N}) \1_{\{ s \leq \tau_N < t \}} \Big] .
\end{align*}
We let $N \to \infty$. The event $A := \{ s \leq \tau_N < t \}$ belongs to $\mcal{F}_{\tau}$ for $\tau := \tau_N \wedge t \leq t_1$, and $\tau = \tau_N$ on $A$; hence, by Lemma~\ref{lemma:partial_J_bdd},
\begin{equation*}
\left| \E\Big[  \partial_{v_\beta} J(\tau_N, Z_{\tau_N}) \1_{A} \Big] \right|
\leq \frac{K_{T - t_1}(w)}{p(s_0, T; z, w)} \, R_z(\tau_N < t) \xrightarrow[N \to \infty]{} 0 .
\end{equation*}
The first two terms converge by dominated convergence, with dominating functions $|\partial_{v_\beta} J(t, Z_t)|$ and $|\partial_{v_\beta} J(s, Z_s)|$, which are integrable by Lemma~\ref{lemma:partial_J_bdd} applied with $\tau = t$, resp.\ $\tau = s$, and $A = \Omega$. The third term is dominated by $C \int_s^t |\nabla_v J(u, Z_u)| \, du$, which is integrable by Lemma~\ref{lemma:control_nabla_J_by_J} and the Cauchy--Schwarz inequality; and the left-hand side is dominated by $\int_s^t |\partial_{x_\beta} J(u, Z_u)| \, du$, whose expectation is at most $(t-s) K_{T-t_1}(w) / p(s_0, T; z, w)$ by Tonelli's theorem and Lemma~\ref{lemma:partial_J_bdd}. We conclude that all the terms below are absolutely convergent and
\begin{equation} \label{eq:int_partial_x_J}
\E\left[\int^t_s \partial_{x_\beta} J(u, Z_u) \, du \right]
= \E\Bigg[ - \partial_{v_\beta} J(t, Z_{t})
+ \partial_{v_\beta} J(s, Z_{s}) - \int^t_s \big( \partial_{v_\beta}b \cdot \nabla_v J \big)(u, Z_u) \, du \Bigg] .
\end{equation}
In exactly the same way, starting from the evolution of $\partial_{x_\beta} J$ in \eqref{eq:evolutions_of_nabla_J}, we obtain for every $u \in [s_0, t_1]$
\begin{equation} \label{eq:partial_x_J_identity}
 \partial_{x_\beta} J(s_0, z)
 = \E\left[
 \partial_{x_\beta} J(u, Z_{u}) + \int^{u}_{s_0} \big( \partial_{x_\beta} b \cdot \nabla_v J \big)(r, Z_r) \, dr
\right] ,
\end{equation}
and, in the same way again, for every $t \in [s_0, t_1]$
\begin{equation} \label{eq:partial_v_J_identity}
\partial_{v_\beta} J(s_0, z)
= \E \left[
\partial_{v_\beta} J(t, Z_t)
+ \int^t_{s_0} \Big( \partial_{x_\beta} J + \partial_{v_\beta} b \cdot \nabla_v J \Big)(r, Z_r) \, dr
\right] .
\end{equation}

\medskip
\noindent \emph{Step 2: the estimate for $\nabla_x J$.}
Integrating \eqref{eq:partial_x_J_identity} over $u \in [s, t]$ for $s_0 \leq s < t \leq t_1$ gives
\begin{equation} \label{eq:partial_x_J_before_plug_in}
(t-s) \, \partial_{x_\beta} J(s_0, z)
 = \E\left[
 \int^t_s \partial_{x_\beta} J(u, Z_{u}) \, du + \int^t_s \int^u_{s_0} \big( \partial_{x_\beta} b \cdot \nabla_v J \big)(r, Z_r) \, dr \, du
\right] .
\end{equation}
Plugging \eqref{eq:int_partial_x_J} into \eqref{eq:partial_x_J_before_plug_in}, we compute
\begin{align}
(t-s) \, \partial_{x_\beta} J(s_0, z)
&= \E\Bigg[
- \partial_{v_\beta} J(t, Z_{t}) + \partial_{v_\beta} J(s, Z_{s})
- \int^t_s \big( \partial_{v_\beta}b \cdot \nabla_v J \big)(u, Z_u) \, du \notag\\
&\quad + (t-s) \int^s_{s_0} \big( \partial_{x_\beta} b \cdot \nabla_v J \big)(u, Z_u) \, du
 + \int^t_s (t-u) \big( \partial_{x_\beta} b \cdot \nabla_v J \big)(u, Z_u) \, du
\Bigg] , \label{eq:before_double_integral}
\end{align}
where we have used the following computation:
\begin{align*}
    \int^t_s \int^u_{s_0} f(r) \, dr \, du
    &= \int^t_{s_0} \1_{\{ s \leq u \}} \int^t_{s_0} f(r) \1_{\{ r \leq u \}}  \, dr \, du
    = \int^t_{s_0}  f(r)  \int^t_{s_0} \1_{\{ s \leq u \}} \1_{\{ r \leq u \}}  \, du \, dr\\
    &= \int^t_{s_0}  f(r) \big( t - \max (s, r ) \big) \, dr
    = (t-s) \int^s_{s_0} f(r) \, dr +  \int^t_s  f(r) (t - r) \, dr .
\end{align*}
Since $|\partial_{v_\beta} J| \leq |\nabla_v J|$, $\| \nabla b \|_\infty \leq C$, and $t - s \leq \delta$, $t - u \leq \delta$ on the relevant ranges, \eqref{eq:before_double_integral} yields
\begin{equation} \label{eq:before_averaging}
(t-s) \left| \partial_{x_\beta} J(s_0, z) \right|
\leq \E \Big[ \left| \nabla_v J(t, Z_t) \right| + \left| \nabla_v J(s, Z_s) \right| \Big]
+ C (1 + \delta) \, \E \left[ \int^{t_1}_{s_0} \left| \nabla_v J(u, Z_u) \right| du \right] .
\end{equation}
The identity \eqref{eq:before_double_integral}, hence \eqref{eq:before_averaging}, holds for every $s_0 \leq s < t \leq t_1$; in particular for $s \in [s_0, s_0 + \delta/3]$ and $t \in [s_0 + 2\delta/3, t_1]$, on which range $t - s \geq \delta/3$. Integrating \eqref{eq:before_averaging} over this range and using that each of the two intervals has length $\delta/3$, we obtain
\begin{align*}
\left(\frac{\delta}{3}\right)^3 \left| \partial_{x_\beta} J(s_0, z) \right|
&\leq \frac{\delta}{3} \, \E \left[ \int^{t_1}_{s_0} \left| \nabla_v J(u, Z_u) \right| du \right]
+ \left(\frac{\delta}{3}\right)^2 C (1+\delta) \, \E \left[ \int^{t_1}_{s_0} \left| \nabla_v J(u, Z_u) \right| du \right] \\
&\leq C \, \delta \, (1 + \delta + \delta^2) \, \E \left[ \int^{t_1}_{s_0} \left| \nabla_v J(u, Z_u) \right| du \right] \\
&\leq C \, (\delta^{\frac{3}{2}} + \delta^{\frac{5}{2}} + \delta^{\frac{7}{2}}) \, \E \left[ \int^{t_1}_{s_0} \left| \nabla_v J(u, Z_u) \right|^2 du \right]^{\frac{1}{2}},
\tag*{\text{by the Cauchy--Schwarz inequality.}}
\end{align*}
Dividing by $(\delta/3)^3$, we conclude
\begin{equation} \label{eq:nabla_x_J_final}
 \left|\partial_{x_\beta} J(s_0, z) \right|
 \leq C(\delta^{-\frac{3}{2}} + \delta^{-\frac{1}{2}} + \delta^{\frac{1}{2}}) \, \E \left[ \int^{s_0 + \delta}_{s_0} \left| \nabla_v J(u, Z_u) \right|^2 du \right]^{\frac{1}{2}} ,
\end{equation}
which is the first bound in \eqref{eq:bounds_by_E(nabla_vJ)}. We stress that \eqref{eq:nabla_x_J_final} has been established for every starting point $(s_0, z)$ and every window $0 < \delta < T - s_0$, with a constant $C$ independent of these; this is used in Step 3.

\medskip
\noindent \emph{Step 3: the estimate for $\nabla_v J$.}
Integrating \eqref{eq:partial_v_J_identity} over $t \in [s_0, s_0 + \delta/2]$ and using the computation of Step 2 for the double integral, we estimate
\begin{align}
\frac{\delta}{2} \left| \partial_{v_\beta} J(s_0, z) \right|
&\leq
\E \left[ \int^{s_0 + \delta/2}_{s_0} \left| \nabla_v J(t, Z_t) \right| dt \right]
+ \int^{s_0 + \delta/2}_{s_0} \left( s_0 + \frac{\delta}{2} - u \right) \E \left[ \left| \partial_{x_\beta} J(u, Z_u) \right| \right] du \notag \\
&\quad + C \, \E \left[ \int^{s_0+\delta/2}_{s_0} \left( s_0 + \frac{\delta}{2} - u \right) \left| \nabla_v J(u, Z_u) \right| du \right]
=: \mathrm{I} + \mathrm{II} + \mathrm{III} . \label{eq:nabla_v_three_terms}
\end{align}
By the Cauchy--Schwarz inequality,
\begin{equation*}
\mathrm{I} \leq \delta^{\frac{1}{2}} \, \E \left[ \int_{s_0}^{t_1} \left| \nabla_v J(t, Z_t) \right|^2 dt \right]^{\frac{1}{2}},
\qquad
\mathrm{III} \leq C \, \delta^{\frac{3}{2}} \, \E \left[ \int_{s_0}^{t_1} \left| \nabla_v J(t, Z_t) \right|^2 dt \right]^{\frac{1}{2}} .
\end{equation*}
For $\mathrm{II}$, fix $u \in [s_0, s_0 + \delta/2]$ and set $\delta_u := \delta - (u - s_0) \in [\delta/2, \delta]$, so that $u + \delta_u = t_1$ and $0 < \delta_u < T - u$. Applying \eqref{eq:nabla_x_J_final} with starting point $(u, y)$ and window $\delta_u$ gives, for every $y \in \R^{2d}$,
\begin{equation*}
\left| \partial_{x_\beta} J(u, y) \right|
\leq C \left( \delta_u^{-\frac{3}{2}} + \delta_u^{-\frac{1}{2}} + \delta_u^{\frac{1}{2}} \right)
\E^{Q_{u,y}} \left[ \int_u^{t_1} \left| \nabla_v J(r, Z_r) \right|^2 dr \right]^{\frac{1}{2}},
\end{equation*}
where $Q_{u,y}$ is the law of the bridge process from $(u, y)$ to $(T, w)$. By the evolution of $Z$ being Markovian, the conditional law under $Q$ of $(Z_r)_{r \in [u, T)}$ given $\mcal{F}_u$ is $Q_{u, Z_u}$; hence, substituting $y = Z_u$,
\begin{equation*}
\E^{Q_{u, Z_u}} \left[ \int_u^{t_1} \left| \nabla_v J(r, Z_r) \right|^2 dr \right]
= \E \left[ \int_u^{t_1} \left| \nabla_v J(r, Z_r) \right|^2 dr \,\Big|\, \mcal{F}_u \right] .
\end{equation*}
Since $\delta_u \geq \delta/2$, we have $\delta_u^{-3/2} + \delta_u^{-1/2} + \delta_u^{1/2} \leq C (\delta^{-3/2} + \delta^{-1/2} + \delta^{1/2})$. Taking expectations and using Jensen's inequality for the concave function $\sqrt{\cdot}$,
\begin{align*}
\E \left[ \left| \partial_{x_\beta} J(u, Z_u) \right| \right]
&\leq C (\delta^{-\frac{3}{2}} + \delta^{-\frac{1}{2}} + \delta^{\frac{1}{2}}) \,
\E \left[ \E \left[ \int_u^{t_1} \left| \nabla_v J(r, Z_r) \right|^2 dr \,\Big|\, \mcal{F}_u \right]^{\frac{1}{2}} \right] \\
&\leq C (\delta^{-\frac{3}{2}} + \delta^{-\frac{1}{2}} + \delta^{\frac{1}{2}}) \,
\E \left[ \int_{s_0}^{t_1} \left| \nabla_v J(r, Z_r) \right|^2 dr \right]^{\frac{1}{2}} .
\end{align*}
Therefore, since $s_0 + \delta/2 - u \leq \delta/2$,
\begin{equation*}
\mathrm{II}
\leq \frac{\delta}{2} \int_{s_0}^{s_0+\delta/2} \E \left[ \left| \partial_{x_\beta} J(u, Z_u) \right| \right] du
\leq C \, (\delta^{\frac{1}{2}} + \delta^{\frac{3}{2}} + \delta^{\frac{5}{2}}) \,
\E \left[ \int_{s_0}^{t_1} \left| \nabla_v J(r, Z_r) \right|^2 dr \right]^{\frac{1}{2}} .
\end{equation*}
Plugging the bounds for $\mathrm{I}$, $\mathrm{II}$, $\mathrm{III}$ into \eqref{eq:nabla_v_three_terms} and dividing by $\delta/2$, we obtain
\begin{equation*}
\left| \partial_{v_\beta} J(s_0, z) \right|
\leq C(\delta^{-\frac{1}{2}} + \delta^{\frac{1}{2}} + \delta^{\frac{3}{2}}) \,
\E \left[ \int^{s_0 + \delta}_{s_0} \left| \nabla_v J(t, Z_t) \right|^2 dt \right]^{\frac{1}{2}},
\end{equation*}
which is the second bound in \eqref{eq:bounds_by_E(nabla_vJ)} and concludes the proof.
\end{proof}

\begin{remark}[The endpoint variable]
The estimate for $\nabla_w \log p$ is obtained by running the program of this section for the endpoint variable: we fix the starting point $(0, z)$, set $I(t, w) := - \log p(0, t; z, w)$ as in \eqref{eq:def_I}, and replace the bridge process by its time reversal. Since $p$ now solves the Fokker--Planck equation in the forward variables $(t, w)$ rather than the Kolmogorov backward equation, $I$ satisfies the equation \eqref{eq:HJB_I}, which differs from the HJB equation \eqref{eq:HJB} in two respects: the transport and diffusion terms have the opposite sign, which is why the relevant bridge is the time-reversed one, and there is an additional source term $\nabla_{w_v} \cdot b$. The latter enters the evolution equations \eqref{eq:evolutions_of_nabla_I} for $\nabla_{w_x} I$ and $\nabla_{w_v} I$ through the terms $\nabla_{w_v} \cdot \partial_{w_x} b$ and $\nabla_{w_v} \cdot \partial_{w_v} b$, which are bounded by Assumption~\ref{assump:bounded_derivative} uniformly in $t$ and $\bs{\mu}$, and hence do not affect the structure of the argument. The details are provided in Appendix~\ref{appendix:estimate_for_end_point_variable}.
\end{remark}

\section{Future direction} \label{sec:discussion}
In this paper, we studied a mean-field planning problem for kinetic agents with degenerate noise.
We conclude with some open problems.

\paragraph{Uniqueness.}
A natural direction is to study the uniqueness, or non-uniqueness, of MFE. Whenever uniqueness
holds together with a stability property of the best-response map
$\Psi : \bs{\mu} \mapsto (P^{\bs{\mu}}_t)_{t \in [0,T]}$, the consistency error translates into a
rate of convergence of $n$-MFE to the unique MFE $\hat{\bs{\mu}}$. For instance, if $\Psi$ is a
contraction with constant $\kappa < 1$ for the metric
$d(\bs{\mu}, \bs{\nu}) := \sup_{t \in [0,T]} \mcal{W}_1(\mu_t, \nu_t)$, as one may expect for
short time horizons, then
\[
  d(\bs{\mu}^n, \hat{\bs{\mu}})
  \le d\bigl(\bs{\mu}^n, \Psi(\bs{\mu}^n)\bigr) + d\bigl(\Psi(\bs{\mu}^n), \Psi(\hat{\bs{\mu}})\bigr)
  \le d\bigl(\bs{\mu}^n, \Psi(\bs{\mu}^n)\bigr) + \kappa\, d(\bs{\mu}^n, \hat{\bs{\mu}}),
\]
so that $d(\bs{\mu}^n, \hat{\bs{\mu}}) \le (1-\kappa)^{-1} C n^{-1/2}$ by
Theorem~\ref{thm:consistency_error}.

\paragraph{Optimality of the rate.}
Whether the rate $n^{-1/2}$ is sharp remains open. If $c^{\bs{\mu}^n} + \tilde{\psi}$ were
semiconcave in the terminal variable, where $\tilde{\psi}$ denotes the EOT potential of the
hard-constrained problem, then \cite[Theorem~1.1]{chiarini2024semiconcavity} would yield
$H(\pi \,|\, \tilde{\pi}) \le H(\rho \,|\, \tilde{\rho}) + C \mcal{W}^2_2(\rho, \tilde{\rho})$.
Applied with $\rho = \mu^n_T$ and $\tilde{\rho} = \mu_{\mathrm{fin}}$, together with Talagrand's
inequality and the bound $H(\mu^n_T \,|\, \mu_{\mathrm{fin}}) \le C n^{-2}$ of
Lemma~\ref{lemma:bootstrap_h_n}, this gives $H(\pi^n_{\mathrm{soft}} \,|\, \pi_{\mathrm{hard}}) \le C n^{-2}$.
Since the two path measures share the same bridges, a $T_1$-type inequality as in
Lemma~\ref{lemma:T_1_inequality_for_JS} for the relative entropy in place of the
Jensen--Shannon divergence, would then improve the bound on the consistency error to
$\mcal{O}(n^{-1})$, which matches the lower bound in Remark~\ref{remark:lower_bound}.
However, this semiconcavity requires, for instance, a bound on the Hessian
$\nabla_w \otimes \nabla_w \log p^{\bs{\mu}}_T$ that is uniform in $\bs{\mu}$, which cannot be
expected from the approach of \cite{sheu1991_someestimates} that we adopt for
Theorem~\ref{thm:linear_growth}: even in the non-degenerate case, this approach yields second-order
estimates with quadratic growth only \cite[(1.16) with $|m| = 2$]{sheu1991_someestimates}, which
falls short of a bounded Hessian.

\appendix
\section{Deferred proofs} \label{appendix:deferred_proofs}
\subsection{Proof of Lemma~\ref{lemma:kernel_bound}} \label{sec:proof_of_kernel_lowerbound}
\begin{proof}
We write $z = (x,v), w = (y,u)$.
By \cite[Theorem~1.1]{delarue_density_estimates}, there exists $C= C(T,d, C_f)$, such that
\begin{equation*}
p^{\bs{\mu}}_T( (x,v) , (y,u) )
\geq
\frac{1}{C} T^{-2d}\exp \left(
-  \frac{C}{T^3}|\theta^1_T - y |^2 - \frac{C}{T}|\theta^2_T- u|^2
\right),
\end{equation*}
where $\theta = (\theta^1, \theta^2)$ is the solution of the following initial value problem
\begin{equation*}
\label{eq:reference_SDE_1}
     d \theta^1_t = \theta^2_t dt, \quad
     d \theta^2_t = f(t, \theta^1_t, \theta^2_t, \mu_t)dt, \quad
     \theta^1_0 = x, \theta^2_0 = v.
\end{equation*}
By Assumption \ref{assump:coefficient} \ref{assump:linear_growth} and the standard estimates with Gronwall's inequality, we get
\begin{equation*}
\sup_{t\in[0,T]}|\theta^1_t|^2 + \sup_{t\in[0,T]}|\theta^2_t|^2
\leq e^{CT} (|x|^2 + |v|^2) + e^{CT},
\end{equation*}
where $C$ is a constant independent of $\bs{\mu}$.
Therefore, we have
\begin{equation*}\label{eq:bound_entropy}
\begin{aligned}
p^{\bs{\mu}}_T( (x,v) , (y,u) ) &\geq
\frac{1}{C} T^{-2d}\exp \left(
-  \frac{C}{T^3} \left(|\theta^1_T|^2 + |y|^2 \right)
- \frac{C}{T} \left(|\theta^2_T|^2 +  |u|^2 \right)
\right)
\\
&\geq
\frac{1}{C} T^{-2d}\exp \left(
-  \frac{C}{T^3}\left(e^{C T} (|x|^2 + |v|^2)  + |y |^2 \right)
- \frac{C}{T} \left( e^{C T} (|x|^2 + |v|^2) +  |u|^2
 \right)
\right)\\
&\geq
\frac{1}{C}\exp \left(
- C \left(|x|^2  + |v|^2 +  |y |^2  +  |u|^2  \right)
\right),
\end{aligned}
\end{equation*}
where in the last line, all the $T$ terms were absorbed into the constant $C > 0.$
\end{proof}

\subsection{Non-uniqueness of MFE (Remark~\ref{remark:non-uniqueness})}
\label{appendix:non-uniqueness}

Theorem~\ref{thm:existence_mfe} gives existence of an MFE but not uniqueness. We show that uniqueness may fail under Assumptions~\ref{assump:initial_and_target}, \ref{assump:coefficient} and~\ref{assump:bounded_derivative}, even with Gaussian boundary laws and a bounded, smooth drift.
The mechanism is a resonance between the mean position of the population
and a self-induced harmonic force; it is not of Cucker--Smale type.

Throughout this subsection $d=1$. For $\mu\in\mcal{P}_1(\R^2)$ we
write $m_x(\mu):=\int_{\R^2}y\,\mu(dy,dw)$ for its mean position, and we let
$R^0$ be the law of the driftless kinetic process \eqref{eq:driftless} started from
$Z_0\sim\mcal{N}(0,I_2)$, so that $R^0_t=\mcal{N}(0,\Sigma^0_t)$ with
\begin{equation}
\label{eq:Sigma_kinetic}
  \Sigma^0_t:=
  \begin{pmatrix}
    1+t^2+t^3/3 & t+t^2/2\\
    t+t^2/2 & 1+t
  \end{pmatrix},
  \qquad t\in[0,T].
\end{equation}

\begin{proposition}[A continuum of Gaussian MFEs]
\label{prop:nonuniqueness}
Let $\omega:=2\pi/T$, let $S\in C_b^\infty(\R)$ be globally Lipschitz
with $S(r)=r$ for $|r|\leq1$, and set
\begin{equation}
\label{eq:nonuniqueness-drift}
  f(t,x,v,\mu):=-\omega^2 S\bigl(m_x(\mu)\bigr),
  \qquad
  \mu_{\mathrm{init}}:=\mcal{N}(0,I_2),
  \qquad
  \mu_{\mathrm{fin}}:=\mcal{N}(0,\Sigma^0_T).
\end{equation}
Then $f$ satisfies Assumptions~\ref{assump:coefficient} and~\ref{assump:bounded_derivative},
$(\mu_{\mathrm{init}},\mu_{\mathrm{fin}})$ satisfies Assumption~\ref{assump:initial_and_target}, and for
every $c\in[-\tfrac12,\tfrac12]$ the flow
\begin{equation}
\label{eq:nonuniqueness-family}
  \mu^c_t:=\mcal{N}\bigl((q_c(t),\dot q_c(t)),\Sigma^0_t\bigr),
  \qquad
  q_c(t):=c\bigl(\cos(\omega t)-1\bigr),
\end{equation}
is a mean-field equilibrium. These equilibria are pairwise distinct, since
$m_x(\mu^c_{T/2})=-2c$.
\end{proposition}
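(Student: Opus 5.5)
First, the assumptions are straightforward to check. The drift $f(t,x,v,\mu)=-\omega^2 S(m_x(\mu))$ does not depend on $z$. It is bounded by $\omega^2\|S\|_\infty$, so linear growth holds. It is Lipschitz in $\mu$ because $|m_x(\mu)-m_x(\nu)|\le\mcal{W}_1(\mu,\nu)$ and $S$ is Lipschitz. All its spatial derivatives vanish, and it is jointly measurable. So Assumptions~\ref{assump:coefficient} and~\ref{assump:bounded_derivative} hold. The boundary laws are nondegenerate Gaussians, which have densities, finite $\int\log p\,d\mu$, exponential moments for small $\theta$, and finite second moments. Hence Assumption~\ref{assump:initial_and_target} holds.

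The core step is to compute the best response to $\bs{\mu}^c$. Since $|q_c|\le 2|c|\le 1$, we have $S(m_x(\mu^c_t))=q_c(t)$. The reference drift is therefore the deterministic function $b(t)=-\omega^2q_c(t)=\ddot q_c(t)$, using $\ddot q_c=-c\,\omega^2\cos(\omega t)$. Indeed this equals $-\omega^2 q_c-c\omega^2$. Let me recheck: $q_c=c(\cos\omega t-1)$, so $\ddot q_c=-c\omega^2\cos\omega t=-\omega^2 q_c - c\omega^2$. This is not exactly $-\omega^2q_c$.

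I would therefore verify the algebra with care. Define $m(t):=\int_0^t\int_0^s b(r)\,dr\,ds$ as the deterministic displacement produced by $b$. With $b=-\omega^2 q_c$ we get $\dot m(t)=-\omega^2\int_0^t q_c = -c\omega\sin\omega t + c\omega^2 t$. The linear term $c\omega^2 t$ looks problematic. So the first task is to recompute whether $(q_c,\dot q_c)$ is really the reference mean. If it is not, the equilibrium must come from the optimal control rather than from the reference drift alone. That changes the plan, as follows.

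The general principle is this. Under $R^{\bs{\mu}^c}$ the process is the driftless kinetic process plus the deterministic shift $(m(t),\dot m(t))$. Hence $R^{\bs{\mu}^c}$ is the pushforward of $R^0$ under the path translation $\omega\mapsto\omega+(m,\dot m)$. The Schr\"odinger bridge of a translated reference is the translate of a bridge. Concretely, $P^{\bs{\mu}^c}$ equals the SB for $R^0$ between $\mu_{\mathrm{init}}$ and $\mu_{\mathrm{fin}}$ shifted by $-(m(T),\dot m(T))$, then translated back. For the Gaussian kinetic reference with Gaussian endpoints, I would use the explicit linear-Gaussian SB formulas (as in \cite{chen2018steering}). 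They give that this SB is the reference $R^0$ plus the minimal-energy deterministic control that steers the mean from $0$ to $-(m(T),\dot m(T))$. The covariance stays $\Sigma^0_t$, because $\mu_{\mathrm{fin}}$ has covariance $\Sigma^0_T$ and the reference is already "covariance-consistent". So the marginal means of $P^{\bs{\mu}^c}$ are $(m(t)+e(t),\dot m(t)+\dot e(t))$, where $e$ solves the minimal-energy problem $\min\int_0^T|\ddot e|^2$ with $e(0)=\dot e(0)=0$ and $(e,\dot e)(T)=-(m(T),\dot m(T))$. The optimal $\ddot e$ is affine in $t$, so $e$ is a cubic.

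The fixed-point condition is then $m+e=q_c$, i.e.\ $q_c-m$ must be a cubic with vanishing data at $0$. Because $\ddot q_c-\ddot m=-c\omega^2$ is constant, which is affine, this holds provided the endpoint conditions match. Those follow from $q_c(T)=\dot q_c(T)=0$ together with the correct cubic. This is the main obstacle I expect: showing rigorously that the entropic SB for a deterministically shifted Gaussian reference differs from the unshifted one only by the minimal-energy mean correction while the covariance stays $\Sigma^0_t$. I would prove it by exhibiting the candidate $Q$ as $R^0$ translated by the deterministic path $q_c$. I would then use Girsanov with a deterministic control $\alpha=\ddot q_c-b$, which gives $dQ/dR^{\bs{\mu}^c}=\exp(\int\alpha\,dW-\frac12\int\alpha^2)$. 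Optimality follows if this density has the product form $\varphi(Z_0)\psi(Z_T)$. I would verify that by rewriting $\int_0^T\alpha_t\,dW_t$ with $\alpha$ affine via integration by parts in terms of $V_T,V_0,X_T,X_0$, which is possible exactly for affine $\alpha$. Uniqueness in Proposition~\ref{prop:well-posedness_SB_mu} then identifies $Q=P^{\bs{\mu}^c}$. Its marginals are $\mcal{N}((q_c,\dot q_c),\Sigma^0_t)=\mu^c_t$, with the right endpoints since $q_c(0)=\dot q_c(0)=q_c(T)=\dot q_c(T)=0$. Finally, distinctness follows from $q_c(T/2)=-2c$.
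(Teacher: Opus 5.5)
Your argument is correct, but the optimality step differs from the paper's. Both proofs use the same candidate $Q^c$: the driftless kinetic law from $\mcal{N}(0,I_2)$ shifted by $(q_c,\dot q_c)$. Relative to $R^{\bs{\mu}^c}$ it carries the constant control $\alpha^c=\ddot q_c-b=-c\,\omega^2$.

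The paper proves optimality by a lower bound. For any admissible $Q$ with control $\alpha$ from Lemma~\ref{lemma:entropy_is_kinetic_energy}, take expectations in $V_T-V_0=\int_0^T(-\omega^2q_c+\alpha_t)\,dt+W^Q_T$. Since both boundary laws are centred, this forces $\int_0^T\E^Q[\alpha_t]\,dt=-c\,\omega^2T$. Jensen on $[0,T]\times\Omega$ then gives $H(Q\,|\,R^{\bs{\mu}^c})\ge\tfrac{T}{2}c^2\omega^4=H(Q^c\,|\,R^{\bs{\mu}^c})$.

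You instead certify optimality through the product form. Here $\log\frac{dQ^c}{dR^{\bs{\mu}^c}}=-c\,\omega^2(V_T-V_0)+\tfrac12c^2\omega^4T$, which has the form $\log\varphi(Z_0)+\log\psi(Z_T)$ with affine $\log\varphi,\log\psi$. The computation of \eqref{eq:gibbs_identity} then yields $H(Q'\,|\,R^{\bs{\mu}^c})=H(Q'\,|\,Q^c)+H(Q^c\,|\,R^{\bs{\mu}^c})$ for every admissible $Q'$. This uses $Q'\ll Q^c$, which holds because $Q^c\sim R^{\bs{\mu}^c}$, and the integrability of affine functions under Gaussians.

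The paper's route is entirely elementary and uses a single scalar moment constraint. However, it hinges on the optimal control being constant, since it relies on the equality case of Jensen. Yours works verbatim for any affine deterministic correction. As a by-product it exhibits the Schr\"odinger potentials, hence the explicit incentive $\hat g(w)=c\,\omega^2w_v+\mathrm{const}$ of Remark~\ref{remark:MFP_SB} that implements each equilibrium. So different members of the continuum are implemented by different linear terminal rewards on velocity.

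The detour through $m$, the minimal-energy correction $e$ and the linear--Gaussian formulas of \cite{chen2018steering} is not needed. The term $c\,\omega^2t$ that worried you is just the footprint of the constant control. Your last paragraph is already a complete proof, and for constant $\alpha$ not even an integration by parts is required.
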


\begin{proof}
\emph{Assumptions.}
Since $|m_x(\mu)-m_x(\nu)|\leq \mcal{W}_1(\mu,\nu)$, the drift $f$ is bounded and
Lipschitz in $\mu$; being independent of $(t,z)$, it satisfies
Assumptions~\ref{assump:coefficient} and~\ref{assump:bounded_derivative}. The boundary laws are
nondegenerate Gaussians, so Assumption~\ref{assump:initial_and_target} holds.

\emph{A candidate with constant control.}
Fix $c\in[-\tfrac12,\tfrac12]$ and let $Q^c$ be the law of
$Z_t+(q_c(t),\dot q_c(t))$ under $R^0$; its marginal flow is
$\bs{\mu}^c=(\mu^c_t)_{t\in[0,T]}$ in~\eqref{eq:nonuniqueness-family}.
Because $q_c(0)=q_c(T)=\dot q_c(0)=\dot q_c(T)=0$, we have
$Q^c_0=\mu_{\mathrm{init}}$ and $Q^c_T=\mu_{\mathrm{fin}}$.
Moreover $|q_c|\leq 2|c|\leq1$, so along the frozen flow $\bs{\mu}^c$
the reference drift is deterministic:
$f(t,x,v,\mu^c_t)=-\omega^2q_c(t)$. Since
$\ddot q_c=-\omega^2q_c-c\,\omega^2$, under $Q^c$ the canonical process solves
\begin{equation*}
  dX_t=V_t\,dt,
  \qquad
  dV_t=\bigl(f(t,X_t,V_t,\mu^c_t)+\alpha^c\bigr)\,dt+dW^{Q^c}_t,
  \qquad
  \alpha^c:=-c\,\omega^2,
\end{equation*}
i.e.\ $Q^c$ is admissible for~\eqref{eq:controlled_SDE} with the frozen flow
$\bs{\mu}^c$ and the constant control $\alpha^c$, and by Girsanov's theorem
$H(Q^c\,|\,R^{\bs{\mu}^c})=\tfrac12\int_0^T|\alpha^c|^2\,dt
=\tfrac{T}{2}c^2\omega^4$.

\emph{Optimality.}
Let $Q$ be admissible for \ref{eq:SB_mu} with $\bs{\mu}=\bs{\mu}^c$ and
$H(Q\,|\,R^{\bs{\mu}^c})<\infty$, and let $\alpha$ be the control
given by Lemma~\ref{lemma:entropy_is_kinetic_energy}, so that
$V_T-V_0=\int_0^T\bigl(-\omega^2q_c(t)+\alpha_t\bigr)\,dt+W^Q_T$ with $W^Q$
a $Q$-BM. Both $\mu_{\mathrm{init}}$ and $\mu_{\mathrm{fin}}$ are centered, hence
taking expectations and using $\int_0^Tq_c(t)\,dt=-cT$,
\begin{equation*}
  \int_0^T\E^Q[\alpha_t]\,dt=-c\,\omega^2T .
\end{equation*}
By Jensen's inequality on $[0,T]\times\Omega$,
\begin{equation*}
  H(Q\,|\,R^{\bs{\mu}^c})
  =\frac12\,\E^Q\!\int_0^T|\alpha_t|^2\,dt
  \geq\frac{1}{2T}\Bigl|\int_0^T\E^Q[\alpha_t]\,dt\Bigr|^2
  =\frac{T}{2}c^2\omega^4
  =H(Q^c\,|\,R^{\bs{\mu}^c}).
\end{equation*}
Hence $Q^c$ solves \ref{eq:SB_mu} for $\bs{\mu}^c$, and by the uniqueness
in Proposition~\ref{prop:well-posedness_SB_mu}, $P^{\bs{\mu}^c}=Q^c$. Its
marginal flow is $\bs{\mu}^c$, so $\bs{\mu}^c$ is a mean-field equilibrium.
\end{proof}

\subsection{Proof of Lemma~\ref{lemma:compact_sublevel}} \label{appendix:proof_compact_sublevel}
\begin{proof}
We may assume $H(Q|R^{\bs{\mu}}) < \infty$, since otherwise
\eqref{eq:dataprocessing_ineq} below is trivial. We first show that
\begin{equation}
Q(A) \leq \dfrac{H(Q|R^{\bs{\mu}}) + \log 2}{\log \frac{1}{R^{\bs{\mu}}(A)}},
\label{eq:dataprocessing_ineq}
\end{equation}
for every Borel set $A \subset \Omega$ with $0 < R^{\bs{\mu}}(A) < 1$.
Granted \eqref{eq:dataprocessing_ineq}, tightness of the sublevel set
$\{ Q : H(Q|R^{\bs{\mu}}) \leq c \}$ follows from that of the singleton
$\{R^{\bs{\mu}}\}$. Indeed, fix an arbitrary $\varepsilon > 0$ and set
$\delta := \exp\big(-\frac{c+\log 2}{\varepsilon}\big) \in (0,1)$. Since
$\Omega$ is Polish, there exists a compact set $K^{\delta} \subset \Omega$
such that $R^{\bs{\mu}}(K^{\delta}) > 1 - \delta$; note that $K^{\delta}$
depends only on $c$ and $\varepsilon$, and not on $Q$. Let $Q$ satisfy
$H(Q|R^{\bs{\mu}}) \leq c$. If $R^{\bs{\mu}}((K^{\delta})^{c}) = 0$, then
$Q((K^{\delta})^{c}) = 0$ because $Q \ll R^{\bs{\mu}}$. Otherwise
$0 < R^{\bs{\mu}}((K^{\delta})^{c}) < \delta < 1$, so that
$\log \frac{1}{R^{\bs{\mu}}((K^{\delta})^{c})} > \frac{c + \log 2}{\varepsilon}$,
and \eqref{eq:dataprocessing_ineq} applied with $A = (K^{\delta})^{c}$ gives
\begin{equation*}
    Q\big( (K^{\delta})^{c} \big)
    \leq \frac{c + \log 2}{\log \frac{1}{R^{\bs{\mu}}((K^{\delta})^{c})}}
    \leq \varepsilon .
\end{equation*}
In either case $Q((K^{\delta})^{c}) \leq \varepsilon$ for every $Q$ with
$H(Q|R^{\bs{\mu}}) \leq c$, and since $K^{\delta}$ does not depend on $Q$,
the sublevel set is tight.

It remains to prove \eqref{eq:dataprocessing_ineq}. With the convention
$0 \log 0 = 0$,
\begin{equation*}
\begin{split}
H(Q |R^{\bs{\mu}} )
&= \int_A \frac{dQ}{dR^{\bs{\mu}}} \log\left(\frac{dQ}{dR^{\bs{\mu}}}\right) dR^{\bs{\mu}}
 + \int_{A^c} \frac{dQ}{dR^{\bs{\mu}}} \log\left(\frac{dQ}{dR^{\bs{\mu}}}\right) dR^{\bs{\mu}} \\
&\geq Q(A) \log\left(\frac{Q(A)}{R^{\bs{\mu}}(A)}\right)
 + Q(A^c) \log\left(\frac{Q(A^c)}{R^{\bs{\mu}}(A^c)}\right) \\
&= Q(A) \log\left(Q(A)\right) + (1 - Q(A)) \log\left(1 - Q(A)\right)
 \\
&\quad + Q(A) \log\left(\frac{1}{R^{\bs{\mu}}(A)}\right)
 + \underset{\geq 0}{\underline{(1 - Q(A)) \log\left(\frac{1}{1 - R^{\bs{\mu}}(A)}\right)}} \\
&\geq - \log 2 + Q(A) \log\left(\frac{1}{R^{\bs{\mu}}(A)}\right),
\end{split}
\end{equation*}
where we have used Jensen's inequality for the convex function
$t \mapsto t \log t$ in the second line, and the fact that
$[0,1] \ni t \mapsto t\log t + (1-t)\log(1-t)$ attains its minimum $-\log 2$
at $t = 1/2$ in the last line. Thus \eqref{eq:dataprocessing_ineq} is
established.

Finally, the sublevel set is closed, since $H(\,\cdot\,|R^{\bs{\mu}})$ is
l.s.c.\ with respect to the weak topology.
Being tight and closed, it is compact by Prohorov's theorem.
\end{proof}

\subsection{Proof of Remark~\ref{remark:lower_bound}} \label{appendix:lower_bound}
Let $f\equiv 0$, $G=H(\cdot\,|\,\mu_{\mathrm{fin}})$,
$\mu_{\mathrm{init}}=\mathcal N(m_{\mathrm{init}},\Sigma_{\mathrm{init}})$ and
$\mu_{\mathrm{fin}}=\mathcal N(m_{\mathrm{fin}},\Sigma_{\mathrm{fin}})$ with $\Sigma_{\mathrm{init}},\Sigma_{\mathrm{fin}}$ positive definite.
Since $R:=R^{\mu}$ does not depend on $\mu$, neither do the solutions $P^n$ of $n$-SB$(\mu)$ and $P$ of SB$(\mu)$.
Hence $\mu^n:=(P^n_t)_{t\in[0,T]}$ is the unique $n$-MFE, $P^{\mu^n}=P$, and, testing $\mcal{W}_1$ against a linear function,
\begin{equation*}
  \sup_{t\in[0,T]} \mcal{W}_1\big(\mu^n_t,P^{\mu^n}_t\big)\;\ge\; \mcal{W}_1(\mu^n_T,\mu_{\mathrm{fin}})\;\ge\;|m^n-m_{\mathrm{fin}}|,
  \qquad \nu^n:=P^n_T,\quad m^n:=\textstyle\int y\,\nu^n(dy).
\end{equation*}
To compute $m^n$, recall that $\pi^n:=(Z_0,Z_T)_\#P^n$ minimizes $H(\pi\,|\,R_{0,T})+nH(\pi_T\,|\,\mu_{\mathrm{fin}})$
over $\pi_0=\mu_{\mathrm{init}}$, where by \eqref{eq:affine_structure}--\eqref{eq:Sigma}, $R_{0,T}(dz,dw)=\mu_{\mathrm{init}}(dz)\,\mathcal N(\Theta_T(z),\Sigma_T)(dw)$.
For $e\in\mathbb R^{2d}$, consider the perturbed law $\pi^e$ of $(Z_0,Z_T+e)$, then $\pi^e$ is admissible.
Using the identity $H(\pi^e | R_{0,T}) = H(\pi^n | (\cdot, \cdot - e)\# R_{0,T})$, a direct computation gives
\begin{align*}
  H(\pi^e\,|\,R_{0,T}) &= H(\pi^n\,|\,R_{0,T}) + e^\top\Sigma_T^{-1}\big(m^n-\Theta_T m_{\mathrm{init}}\big)+\tfrac12 e^\top\Sigma_T^{-1}e,\\
  H(\pi^e_T\,|\,\mu_{\mathrm{fin}}) &= H(\nu^n\,|\,\mu_{\mathrm{fin}}) + e^\top\Sigma_{\mathrm{fin}}^{-1}\big(m^n-m_{\mathrm{fin}}\big)+\tfrac12 e^\top\Sigma_{\mathrm{fin}}^{-1}e.
\end{align*}
By the optimality of $\pi^n$, the objective function $J(e) := H(\pi^e\,|\,R_{0,T}) + n H(\pi^e_T\,|\,\mu_{\mathrm{fin}})$ attains its minimum at $e=0$.
Together with the first order condition $\nabla J(0) = 0$, it has to hold $\Sigma_T^{-1}(m^n-\Theta_T m_{\mathrm{init}})+n\Sigma_{\mathrm{fin}}^{-1}(m^n-m_{\mathrm{fin}})=0$, i.e.
\begin{equation*}
  m^n-m_{\mathrm{fin}}=\big(I+n\Sigma_T\Sigma_{\mathrm{fin}}^{-1}\big)^{-1}\big(\Theta_T m_{\mathrm{init}}-m_{\mathrm{fin}}\big).
\end{equation*}
Consequently, for every $n\in\mathbb N$,
\begin{equation*}
  \sup_{t\in[0,T]} \mcal{W}_1\big(\mu^n_t,P^{\mu^n}_t\big)\;\ge\;\frac{|\Theta_T m_{\mathrm{init}}-m_{\mathrm{fin}}|}{1+n\|\Sigma_T\|\,\|\Sigma_{\mathrm{fin}}^{-1}\|}
  \;\ge\;\frac{c}{n},\qquad c:=\frac{|\Theta_T m_{\mathrm{init}}-m_{\mathrm{fin}}|}{1+\|\Sigma_T\|\,\|\Sigma_{\mathrm{fin}}^{-1}\|},
\end{equation*}
and $c>0$ whenever $\Theta_T m_{\mathrm{init}}\neq m_{\mathrm{fin}}$.

\subsection{Proof of Lemma~\ref{lemma:SB_EOT}} \label{appendix:proof_SB_EOT}
\begin{proof}
Let $p_\rho$ be the density of $\rho$ with respect to the Lebesgue measure.
Let $E := \{(z,w) \in \R^{2d}\times\R^{2d} : p_{\mathrm{init}}(z) > 0\}$. Then, $\pi(E^c) = \mu_{\mathrm{init}}(\{p_{\mathrm{init}} = 0\}) =0$.
Likewise, let $F := \{(z,w) \in \R^{2d}\times\R^{2d} : p_{\rho}(w) > 0\}$. Then, $\pi(F^c) = \rho(\{p_{\rho} = 0\}) =0$.
Thus, $\pi(E^c \cup F^c ) \leq \pi(E^c) + \pi(F^c) = 0$, and $p_{\mathrm{init}}, p_\rho > 0$, $\pi$-a.s.

Since $R^{\bs{\mu}}_{0,T}(dz,dw) = \mu_{\mathrm{init}}(dz) \, p^{\bs{\mu}}_T(z,w) \, dw$ and
$(\mu_{\mathrm{init}}\otimes\rho)(dz,dw) = \mu_{\mathrm{init}}(dz)\,p_\rho(w)\,dw$, the initial marginal cancels:
\begin{equation*}
  \frac{d(\mu_{\mathrm{init}}\otimes\rho)}{dR^{\bs{\mu}}_{0,T}}(z,w)
  = \frac{p_\rho(w)}{p^{\bs{\mu}}_T(z,w)} \in (0,\infty), \quad \text{on} \quad E\cap F.
\end{equation*}
Hence, it holds that $R^{\bs{\mu}}_{0,T}\big|_{E\cap F} \sim \mu_{\mathrm{init}}\otimes \rho \big|_{E\cap F}$ and
\begin{equation*}
  \pi  \ll R^{\bs{\mu}}_{0,T} \Leftrightarrow \pi  \ll   \mu_{\mathrm{init}}\otimes \rho.
\end{equation*}
If $\pi \not\ll R^{\bs{\mu}}_{0,T}$, both sides of \eqref{eq:SB_EOT} equal $+\infty$, because
$H(\pi\,|\,\mu_{\mathrm{init}}\otimes\rho) = +\infty$ while the remaining two terms on the right-hand
side are bounded from below by the estimates given next.

We may therefore assume that $\pi \ll R^{\bs{\mu}}_{0,T}$, so that, $\pi$-a.e.,
\begin{equation*}
  \log \frac{d\pi}{dR^{\bs{\mu}}_{0,T}}
  = \log \frac{d\pi}{d(\mu_{\mathrm{init}}\otimes\rho)} + c^{\bs{\mu}}(z,w) + \log p_\rho(w).
\end{equation*}
The integrals with respect to $\pi$ of each of the three terms on the right hand side have finite lower bounds: the first because
of the positivity of the relative entropy, the second because $c^{\bs{\mu}} \ge -\log \sup p^{\bs{\mu}}_T \geq - \log C$ by the
Gaussian upper bound of \cite[Theorem~1.1]{delarue_density_estimates}, and the third because of the assumption of this lemma.
Hence the identity may be integrated term by term, which is \eqref{eq:SB_EOT}.
The last assertion follows because $H(\rho\,|\,\leb)$ does not depend on
$\pi \in \Pi(\mu_{\mathrm{init}},\rho)$.
\end{proof}

\subsection{Derivation of Equation~\eqref{eq:evolutions_of_nabla_J}}
\label{appendix:derivation_of_evolution_nabla_J}
To begin with, $J$ satisfies the HJB equation
\begin{equation} \label{eq:HJB}
\partial_t J + \mcal{L}_t J - \frac{1}{2} |\nabla_v J |^2 = 0 \qquad \text{on } [0, T) \times \R^{2d},
\end{equation}
where $\mcal{L}_t$ is the generator of the reference process,
\begin{equation*}
\mcal{L}_t = \frac{1}{2} \Delta_v + v \cdot \nabla_x  + b(t, \cdot) \cdot \nabla_v ,
\qquad b = b^{\bs{\mu}} .
\end{equation*}
Indeed, writing $p = p(t, T; y, w)$, we have $\partial_t J = - \partial_t p / p$ and, since the first-order terms of $\mcal{L}_t$ are linear in $\log p$ while
\begin{equation*}
\frac{1}{2} \Delta_v J = - \frac{1}{2} \frac{\Delta_v p}{p} + \frac{1}{2} \frac{|\nabla_v p|^2}{p^2} = - \frac{1}{2} \frac{\Delta_v p}{p} + \frac{1}{2} |\nabla_v J|^2 ,
\end{equation*}
we obtain $\mcal{L}_t J = - \mcal{L}_t p / p + \frac{1}{2} |\nabla_v J|^2$. Adding the two identities and using the Kolmogorov backward equation
\begin{equation*}
    \partial_t p + \mcal{L}_t p = 0 \qquad \text{on } [0, T) \times \R^{2d},
\end{equation*}
yields \eqref{eq:HJB}. This is Fleming's logarithmic transformation: $J$ is the value function of a stochastic control problem whose optimal control is $u^* = - \nabla_v J$, and the corresponding optimally controlled process is $Z$ under $Q$.

Let us differentiate the HJB equation \eqref{eq:HJB} with respect to  the space variables $x_\alpha, v_\beta$.
For $\partial_{x_\alpha}$, we obtain
\begin{equation*}
    \partial_t \partial_{x_\alpha} J + v \cdot \nabla_x \partial_{x_\alpha} J
    + \partial_{x_\alpha} b \cdot \nabla_v J + b \cdot \nabla_v \partial_{x_\alpha} J
    + \frac{1}{2} \Delta_v \partial_{x_\alpha}J - \nabla_v J \cdot \nabla_v \partial_{x_\alpha} J = 0,
\end{equation*}
and for $\partial_{v_\beta}$, we have
\begin{equation*}
    \partial_t \partial_{v_\beta}J + \partial_{x_\beta} J + v \cdot \nabla_x \partial_{v_\beta} J
    + \partial_{v_\beta}b \cdot \nabla_v J + b \cdot \nabla_v \partial_{v_\beta}J
    + \frac{1}{2} \Delta_v \partial_{v_\beta}J
    - \nabla_v J \cdot \nabla_v \partial_{v_\beta} J = 0.
\end{equation*}
Organizing these equations, we get
\begin{equation*}
\begin{split}
   &\partial_t \partial_{x_\alpha} J +  \mcal{L}^{u^*}_t  \partial_{x_\alpha} J + \partial_{x_\alpha} b \cdot \nabla_v J = 0, \\
    &\partial_t \partial_{v_\beta} J +
    \mcal{L}^{u^*}_t \partial_{v_\beta} J + \partial_{x_\beta} J  + \partial_{v_\beta}b \cdot \nabla_v J = 0,
\end{split}
\end{equation*}
where
\begin{equation*}
\mcal{L}^{u^*}_t = v \cdot \nabla_{x} + ( b  - \nabla_v J ) \cdot \nabla_v + \frac{1}{2} \Delta_v
\end{equation*}
denotes the generator of the bridge process $Z$ under $Q$ in \eqref{eq:bridge_process}.
Thus, we apply It\^o's formula to $\partial_{x_\alpha}J(t, Z_t)$ and  $\partial_{v_\beta}J(t, Z_t)$ to derive \eqref{eq:evolutions_of_nabla_J}.

\subsection{Estimate of $\nabla_w \log p$} \label{appendix:estimate_for_end_point_variable}
Throughout this section, we fix the starting point $(0, z)$ and let the endpoint vary; we write $w = (w_x, w_v) \in \R^{2d}$ and
\begin{equation} \label{eq:def_I}
I(t, w) := - \log p(0, t; z, w), \qquad (t, w) \in (0, T] \times \R^{2d} .
\end{equation}
We shall prove that $|\nabla_w I(t_1, w)| \leq C_{t_1} (1 + |z| + |w|)$ for every $t_1 \in (0,T]$, with $C_{t_1}$ independent of $z$, $w$ and $\bs{\mu}$. The argument is the exact analogue of the one in Section~\ref{sec:proof_lineargrowth}, the bridge process being replaced by its time reversal; we therefore only indicate the difference and refer to Section~\ref{sec:proof_lineargrowth} for the arguments that are repeated.

\paragraph{The equation for $I$.}
In the forward variables $(t,w)$, $p$ solves the Fokker--Planck equation
\begin{equation} \label{eq:fokker_planck}
\partial_t p
+ \nabla_{w_x} \cdot \left( w_v \, p \right)
+ \nabla_{w_v} \cdot \left( b(t, w) \, p \right)
- \frac{1}{2} \Delta_{w_v} p = 0 .
\end{equation}
Dividing by $p$ and using $\Delta_{w_v} I = - \Delta_{w_v} p / p + |\nabla_{w_v} I|^2$ together with $\nabla_{w_x} \cdot (w_v p) = w_v \cdot \nabla_{w_x} p$, we obtain
\begin{equation} \label{eq:HJB_I}
\partial_t I
+ w_v \cdot \nabla_{w_x} I
+ b \cdot \nabla_{w_v} I
- \frac{1}{2} \Delta_{w_v} I
+ \frac{1}{2} |\nabla_{w_v} I |^2
= \nabla_{w_v} \cdot b .
\end{equation}
Compared with \eqref{eq:HJB}, the transport and diffusion terms have the opposite sign --- which is why the relevant bridge is the time-reversed one --- and there is an additional source term $\nabla_{w_v} \cdot b$, which is bounded  uniformly in $t$ and $\bs{\mu}$ under Assumption~\ref{assump:bounded_derivative}.

\paragraph{The time-reversed bridge.}
Fix $t_1 \in (0, T]$ and $0 < \delta < t_1$, and work on the reversed clock $u \in [0, \delta]$, corresponding to the original times $t = t_1 - u \in [t_1 - \delta, t_1]$. Let $\hat{R}_{t_1, w}$ be the law on path space of the time-reversed reference process started at $w$, under which the canonical process $Z$ satisfies
\begin{equation*}
dZ^x_u = - Z^v_u \, du, \qquad dZ^v_u = - b(t_1 - u, Z_u) \, du + dW_u ,\qquad Z_0 = w,
\end{equation*}
with $W$ a $\hat{R}_{t_1,w}$-Brownian motion, and set $h(u, y) := p(0, t_1 - u; z, y)$. A direct computation from \eqref{eq:fokker_planck} shows that $h$ solves
\begin{equation*}
\partial_u h + \hat{\mcal{L}}_u h = \big( \nabla_{w_v} \cdot b \big)(t_1 - u, \cdot) \, h ,
\qquad
\hat{\mcal{L}}_u := - w_v \cdot \nabla_{w_x} - b(t_1 - u, \cdot) \cdot \nabla_{w_v} + \frac{1}{2} \Delta_{w_v} ,
\end{equation*}
that is, $h$ is space-time harmonic for $\hat{\mcal{L}}$ up to the bounded potential $V(u, \cdot) := (\nabla_{w_v} \cdot b)(t_1 - u, \cdot)$. Accordingly we set
\begin{equation} \label{eq:density_martingale_I}
\hat{D}_u := \frac{p(0, t_1 - u; z, Z_u)}{p(0, t_1; z, w)} \, \exp \left( - \int_0^u V(r, Z_r) \, dr \right), \qquad u \in [0, \delta] .
\end{equation}
By It\^o's formula, $d \hat{D}_u = - \hat{D}_u \, \nabla_{w_v} I(t_1 - u, Z_u) \cdot dW_u$, so that $\hat{D}$ is a positive local martingale with $\hat{D}_0 = 1$; moreover, by the Gaussian upper bound and the boundedness of $V$,
\begin{equation} \label{eq:D_hat_bounded}
0 < \hat{D}_u \leq \frac{\hat{C}_{t_1 - \delta} \, e^{CT}}{p(0, t_1; z, w)}, \qquad u \in [0, \delta], \qquad
\hat{C}_{t_1-\delta} := \sup_{r \in [t_1 - \delta, \, t_1]} \ \sup_{y \in \R^{2d}} p(0, r; z, y) < \infty ,
\end{equation}
hence $\hat{D}$ is a bounded martingale. We define $\hat{Q}$ on $\mcal{F}_\delta$ by $d\hat{Q} / d\hat{R}_{t_1,w} := \hat{D}_\delta$ and call $Z$ under $\hat{Q}$ the reverse bridge process from $(t_1, w)$ to $(0, z)$. By Girsanov's theorem, under $\hat{Q}$ the process $W^{\hat{Q}}_u := W_u + \int_0^u \nabla_{w_v} I(t_1 - r, Z_r) \, dr$ is a Brownian motion and
\begin{equation} \label{eq:reverse_bridge_process}
dZ^x_u = - Z^v_u \, du, \qquad
dZ^v_u = - \big( b + \nabla_{w_v} I \big)(t_1 - u, Z_u) \, du + dW^{\hat{Q}}_u , \qquad Z_0 = w .
\end{equation}
In the sequel $\E$ denotes the expectation under $\hat{Q}$.

\paragraph{The energy identity.}
From \eqref{eq:density_martingale_I} we have $I(t_1 - u, Z_u) = I(t_1, w) - \log \hat{D}_u - \int_0^u V(r, Z_r) \, dr$, and, exactly as in the proof of Lemma~\ref{lemma:control_nabla_J_by_J}, $\E[\log \hat{D}_u] = \frac{1}{2} \E \int_0^u |\nabla_{w_v} I(t_1 - r, Z_r)|^2 dr$. Hence, for every $u \in [0, \delta]$,
\begin{align} \label{eq:bridge-rep_I}
\E \left[ \frac{1}{2} \int_0^{u} \left| \nabla_{w_v} I(t_1 - r, Z_r) \right|^2 dr \right]
&= I(t_1, w) - \E \left[ I(t_1 - u, Z_u) \right] - \E \left[ \int_0^u V(r, Z_r) \, dr \right] \\
&\leq I(t_1, w) + \log \hat{C}_{t_1 - \delta} + C T , \notag
\end{align}
where we have used the fact that $V$ is bounded.

\paragraph{Evolutions of $\nabla I$.}
Differentiating \eqref{eq:HJB_I} in $w_{x,\alpha}$ and $w_{v,\beta}$ and applying It\^o's formula under $\hat{Q}$ --- whose generator is $\hat{\mcal{L}}_u - \nabla_{w_v} I \cdot \nabla_{w_v}$ --- we obtain, for $u \in [0, \delta]$,
\begin{equation} \label{eq:evolutions_of_nabla_I}
\left\{
\begin{split}
d \, \partial_{w_{x,\alpha}} I(t_1 - u, Z_u) &= \left[
\partial_{w_{x,\alpha}} b \cdot \nabla_{w_v} I
- \nabla_{w_v} \cdot \partial_{w_{x,\alpha}} b
\right](t_1 - u, Z_u) \, du  \\
& \quad + \nabla_{w_v} \partial_{w_{x,\alpha}} I(t_1-u, Z_u) \cdot dW^{\hat{Q}}_u , \\
d \, \partial_{w_{v,\beta}} I(t_1 - u, Z_u) &= \left[
 \partial_{w_{x,\beta}} I
+ \partial_{w_{v,\beta}} b \cdot \nabla_{w_v} I
- \nabla_{w_v} \cdot \partial_{w_{v,\beta}} b
\right](t_1 - u, Z_u) \, du \\
& \quad +  \nabla_{w_v} \partial_{w_{v,\beta}} I(t_1-u, Z_u) \cdot dW^{\hat{Q}}_u .
\end{split}
\right.
\end{equation}
Apart from the overall sign of the drifts --- which is irrelevant, since only absolute values are used below --- the only difference with \eqref{eq:evolutions_of_nabla_J} is the presence of the extra terms $\nabla_{w_v} \cdot \partial_{w_{x,\alpha}} b$ and $\nabla_{w_v} \cdot \partial_{w_{v,\beta}} b$. These are bounded by $\| \nabla^2 b \|_\infty$, so that, after integration in time, they contribute an additive constant $C \delta$ to each estimate and do not affect the structure of the argument.

\paragraph{Conclusion.}
The analogue of Lemma~\ref{lemma:partial_J_bdd} reads: for every stopping time $\tau \leq \delta$, every $A \in \mcal{F}_\tau$ and $\partial \in \{ \partial_{w_{x,\alpha}}, \partial_{w_{v,\beta}} \}$,
\begin{equation*}
\E \left[ \left| \partial I(t_1 - \tau, Z_\tau) \right| \1_A \right]
\leq \frac{\hat{K}_{t_1 - \delta}(z) \, e^{CT}}{p(0, t_1; z, w)} \, \hat{R}_{t_1, w}(A),
\qquad
\hat{K}_{t_1-\delta}(z) := \sup_{r \in [t_1-\delta, \, t_1]} \ \sup_{y \in \R^{2d}} \left| \nabla_y p(0, r; z, y) \right| ,
\end{equation*}
which is finite by Corollary~\ref{cor:bounded_derivative}; the proof is identical to that of Lemma~\ref{lemma:partial_J_bdd}, the ratio $\partial p / p$ cancelling against $\hat{D}_\tau$ and $ \exp \left( - \int_0^\tau V(r, Z_r) \, dr \right)$ being bounded by $e^{CT}$. With this at hand, the localization and averaging procedure of the proof of Lemma~\ref{lemma:bounds_by_E(nabla_vJ)} applies word for word --- the roles of $\partial_{x_\beta} J$ and $\partial_{v_\beta} J$ being played by $\partial_{w_{x,\beta}} I$ and $\partial_{w_{v,\beta}} I$--- and yields
\begin{equation*}
\left| \nabla_{w} I(t_1, w) \right|
\leq C(\delta^{-\frac{3}{2}} + \delta^{-\frac{1}{2}} + \delta^{\frac{3}{2}})
\, \E \left[ \int_0^{\delta} \left| \nabla_{w_v} I(t_1 - r, Z_r) \right|^2 dr \right]^{\frac{1}{2}}
+ C \left( 1 + \delta + \delta^2 \right) .
\end{equation*}
Taking $\delta := t_1 / 2$ and bounding the expectation by \eqref{eq:bridge-rep_I}, together with the two-sided Gaussian estimates of \cite[Theorem~1.1]{delarue_density_estimates} as in the proof of Theorem~\ref{thm:linear_growth}, and using $\delta \leq T$ to absorb the additive term, we conclude
\begin{equation*}
\left| \nabla_w \log p(0, t_1; z, w) \right| \leq C_{t_1} \left( 1 + |z| + |w| \right),
\end{equation*}
with $C_{t_1}$ depending on $T$ and $t_1$ but not on $z$, $w$ or $\bs{\mu}$.

\section{Smoothness of $p$} \label{appendix:smoothness}
Throughout this section we work under Assumptions~\ref{assump:coefficient} and
\ref{assump:bounded_derivative}. Our goal is to prove that the transition kernel
$p$ is in $C^\infty$ in both space variables, and in $C^1$ in both time variables. We also provide the uniform bounds on its first derivatives (Corollary~\ref{cor:bounded_derivative}) as used in Section~\ref{sec:proof_lineargrowth}.

\subsection*{Step 1: Driftless Kinetic process}
Fix $\bs{\mu} \in C([0,T]; \mcal{P}_1(\R^{2d}))$. We write $p(s,t;z,w)$ for the
transition kernel of the reference process $Z$ under the path measure $R^{\bs{\mu}}_z$, suppressing
throughout the dependence on $\bs{\mu}$ of $p$, of the path measures, and of all the quantities
introduced below; all constants are independent of $\bs{\mu}$. We also write $\E := \E^{Q_z}$ when no
confusion arises, $Q_z$ being introduced below.

\paragraph{The driftless kinetic process.}
Set
\begin{equation*}
\sigma(s,r) := \begin{pmatrix} (s-r) I_d \\ I_d \end{pmatrix} \in \R^{2d \times d},
\qquad
Z^0_s := \int_0^s \sigma(s,r)\, dW_r ,
\qquad
\Theta_s(x,v) := (x + s v,\ v) .
\end{equation*}
For $s \le T$ the map $\Theta_s$ is linear with $\|\Theta_s\| \le 1+T$, and the driftless kinetic
process started at $z$ satisfies
\begin{equation} \label{eq:affine_structure}
Z^z_s = \Theta_s(z) + Z^0_s , \qquad s \in [0,T] .
\end{equation}
Indeed, $X_s = x + sv + \int_0^s (s-r)\, dW_r$ and $V_s = v + W_s$ solve $dX = V\,ds$, $dV = dW$. We
denote by $Q_z$ the path measure of this process. The dependence on $z$ in \eqref{eq:affine_structure}
is thus through the deterministic linear map $\Theta_s$ only; this is the structural fact on which the
whole section rests.

The random vector $Z^0_t$ is centred Gaussian with covariance matrix
\begin{equation} \label{eq:Sigma}
\Sigma_t := \int_0^t \sigma(t,r)\,\sigma(t,r)^* \, dr
= \begin{pmatrix} t^3/3 & t^2/2 \\ t^2/2 & t \end{pmatrix} \otimes I_d .
\end{equation}
Its $2\times 2$ block has determinant $t^4/3 - t^4/4 = t^4/12 > 0$, so that
$\det \Sigma_t = (t^4/12)^d > 0$ and
\begin{equation} \label{eq:Sigma_inverse}
\Sigma_t^{-1} = \begin{pmatrix} 12/t^3 & -6/t^2 \\ -6/t^2 & 4/t \end{pmatrix} \otimes I_d .
\end{equation}
In the language of Malliavin calculus, $\Sigma_t$ is the Malliavin covariance matrix of $Z^0_t$. The fact that
it is deterministic and explicitly invertible is what replaces H\"ormander's condition in the argument
below.

\paragraph{Putting back the drift.}
For $z \in \R^{2d}$ write $b_s(z) := b\big(s, \Theta_s(z) + Z^0_s\big)$ and
\begin{equation} \label{eq:girsanov_density}
\mcal{E}_t(z) := \exp\left( \int_0^t b_s(z) \cdot dW_s - \frac{1}{2}\int_0^t |b_s(z)|^2\, ds \right) .
\end{equation}
Since $\Lambda_0= \sup_{s,z}|b(s,z)| < \infty$ (recall Assumption~\ref{assump:bounded_derivative}), Novikov's condition holds, $\E[\mcal{E}_t(z)] = 1$, and moreover
\begin{equation} \label{eq:E_moments}
\sup_{z \in \R^{2d}} \big\| \mcal{E}_t(z) \big\|_{L^p}
\le \exp\left( \frac{(p-1)}{2}\, T \Lambda_0^2 \right), \qquad p \ge 1 ,
\end{equation}
as follows from writing
$\mcal{E}_t^p = \exp\big(\int_0^t p\,b_s \cdot dW_s - \frac12\int_0^t |p\,b_s|^2 ds\big)
\exp\big(\frac{p^2-p}{2}\int_0^t |b_s|^2 ds\big)$ and taking expectations.

By Girsanov's theorem, under $\mcal{E}_t(z) \cdot Q_z$ the process $W - \int_0^\cdot b_s(z)\, ds$ is a
Brownian motion, so that $Z^z$ solves the SDE with drift $b$; by weak uniqueness for that SDE,
\begin{equation} \label{eq:girsanov}
R_z = \mcal{E}_t(z) \cdot Q_z \qquad \text{on } \mcal{F}_t,
\end{equation}
recalling $R_z$ is the reference path measure for the reference process starting at $z$.
Consequently, for every bounded measurable $f$,
\begin{equation} \label{eq:kernel_representation}
\int_{\R^{2d}} p(0,t;z,w)\, f(w)\, dw
= \E^{Q_z}\big[ \mcal{E}_t(z)\, f\big(\Theta_t(z) + Z^0_t\big) \big] ,
\end{equation}
that is, $p(0,t;z,\cdot)$ is the density of the finite measure
$A \mapsto \E[\mcal{E}_t(z)\,\1_A(\Theta_t(z)+Z^0_t)]$.

\subsection*{Step 2: Representation of $p$}
\paragraph{Malliavin calculus: notation.}
We use the standard Malliavin calculus on the Wiener space of $W$ over $[0,t]$. For details, see
\cite[Chapter~1]{Nualart2006malliavin}. Let
\begin{equation*}
H := L^2\big([0,t]; \R^d\big),
\qquad
\langle u, w \rangle_H := \int_0^t u_r \cdot w_r \, dr,
\qquad
W(u) := \int_0^t u_r \cdot dW_r \ \ (u \in H) .
\end{equation*}
Let $\mcal{S}$ denote the set of random variables
$F = \varphi\big( W(u_1), \dots, W(u_n) \big)$ with $u_i \in H$ and $\varphi$ smooth with polynomially
bounded derivatives, and define the derivative operator
\begin{equation*}
D F := \sum_{i=1}^n \partial_i \varphi\big( W(u_1), \dots, W(u_n) \big)\, u_i \ \in H .
\end{equation*}
The operator $D$ is closable from $L^p(\Omega)$ into $L^p(\Omega; H)$ for every $p \ge 1$; we write
$\mbb{D}^{1,p}$ for the closure of $\mcal{S}$ under
$\|F\|_{1,p} := \big( \E|F|^p + \E\|DF\|_H^p \big)^{1/p}$, and $\mbb{D}^{k,p}$, $\mbb{D}^\infty :=
\bigcap_{k,p} \mbb{D}^{k,p}$ for the higher-order spaces. We shall use the chain rule
\begin{equation} \label{eq:chain_rule}
F \in \mbb{D}^{1,p},\ \varphi \in C^1_b(\R^n)
\quad \Longrightarrow \quad
\varphi(F) \in \mbb{D}^{1,p},
\qquad D \varphi(F) = \sum_j \partial_j\varphi(F)\, DF_j ,
\end{equation}
and the duality relation between $D$ and its adjoint $\delta$: for $u \in H$ and
$F \in \mbb{D}^{1,2}$, then
\begin{equation} \label{eq:duality}
\E\big[ \langle DF, u \rangle_H \big] = \E\big[ F\, \delta(u) \big] = \E \big[ F \int^t_0 u_r dW_r \big],
\end{equation}
where $\delta(u) = W(u) = \int^t_0 u_r dW_r$ is the adjoint operator of $D$ (Skorohod integral).

Since $Z^0_t = \int_0^t \sigma(t,r)\, dW_r$ is a Wiener integral with deterministic kernel, it lies in $\mbb{D}^\infty$ and
\begin{equation} \label{eq:D_Z0}
D_r Z^0_t = \sigma(t,r) \ \in \R^{2d \times d} .
\end{equation}

\paragraph{The steering control.}
For $h \in \R^{2d}$ define
\begin{equation} \label{eq:gamma}
\gamma^h_r := \sigma(t,r)^* \Sigma_t^{-1} h \in \R^d ,
\qquad r \in [0,t] .
\end{equation}
By \eqref{eq:Sigma} and \eqref{eq:D_Z0},
\begin{equation} \label{eq:steering}
\big\langle D Z^0_t,\ \gamma^h \big\rangle_H
= \int_0^t \sigma(t,r)\, \gamma^h_r \, dr = \Sigma_t \Sigma_t^{-1} h = h ,
\end{equation}
that is, $\gamma^h$ is the direction in the Cameron--Martin space along which $Z^0_t$ moves exactly by
$h$; it is the minimal-norm such direction, with
\begin{equation} \label{eq:gamma_norm}
\big\| \gamma^h \big\|_{H}^2 = \big\langle \Sigma_t^{-1} h,\ h \big\rangle
= \frac{12 |h_x|^2}{t^3} - \frac{12\, h_x \cdot h_v}{t^2} + \frac{4 |h_v|^2}{t} .
\end{equation}

\begin{remark}
Identity \eqref{eq:steering} is the reason the whole argument works: it is a deterministic linear
equation because, by \eqref{eq:D_Z0}, the Malliavin covariance matrix of $Z^0_t$ is the deterministic
matrix $\Sigma_t$, invertible by \eqref{eq:Sigma_inverse}.
\end{remark}

\begin{lemma} \label{lemma:IBP}
For $h \in \R^{2d}$ and $F \in \mbb{D}^{1,2}$ set
\begin{equation} \label{eq:H_operator}
\mcal{H}_h(F) := F\, W(\gamma^h) - \big\langle DF,\ \gamma^h \big\rangle_H .
\end{equation}
Then, for every $f \in C^1_b(\R^{2d})$ and every $z \in \R^{2d}$,
\begin{equation} \label{eq:IBP_master}
\E\Big[ \big\langle \nabla f\big( \Theta_t(z) + Z^0_t \big),\ h \big\rangle\, F \Big]
= \E\Big[ f\big( \Theta_t(z) + Z^0_t \big)\, \mcal{H}_h(F) \Big] .
\end{equation}
\end{lemma}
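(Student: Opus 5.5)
Set $G := \Theta_t(z) + Z^0_t$ and note that $G \in \mbb{D}^\infty$ with $D_r G = \sigma(t,r)$ by \eqref{eq:D_Z0}, because $\Theta_t(z)$ is deterministic. The plan is a one-line Malliavin integration by parts driven by the steering identity \eqref{eq:steering}.

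\emph{Step 1: rewrite the directional derivative as a Malliavin derivative.} Since $f \in C^1_b$, the chain rule \eqref{eq:chain_rule} gives $f(G) \in \mbb{D}^{1,2}$ with $D_r f(G) = \sigma(t,r)^* \nabla f(G)$. Pairing with $\gamma^h$ and using \eqref{eq:steering} (pulled inside the $r$-integral, since $\nabla f(G)$ does not depend on $r$),
\begin{equation*}
\big\langle D f(G), \gamma^h \big\rangle_H = \Big\langle \nabla f(G), \int_0^t \sigma(t,r)\gamma^h_r\,dr \Big\rangle = \langle \nabla f(G), h\rangle .
\end{equation*}
The left side of \eqref{eq:IBP_master} therefore equals $\E[F \langle D f(G), \gamma^h\rangle_H]$.

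\emph{Step 2: move the derivative off $f(G)$.} Apply the product rule $D(F f(G)) = f(G)\,DF + F\,Df(G)$, which gives
\begin{equation*}
F\langle D f(G), \gamma^h\rangle_H = \langle D(F f(G)), \gamma^h\rangle_H - f(G)\langle DF, \gamma^h\rangle_H .
\end{equation*}
Then apply the duality \eqref{eq:duality} with the deterministic $u = \gamma^h \in H$ to the first term: $\E\langle D(Ff(G)), \gamma^h\rangle_H = \E[F f(G) W(\gamma^h)]$. Regrouping yields $\E[f(G)(F W(\gamma^h) - \langle DF, \gamma^h\rangle_H)] = \E[f(G)\mcal{H}_h(F)]$, which is \eqref{eq:IBP_master}.

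\emph{Main obstacle: justifying Step 2 when $F$ is only in $\mbb{D}^{1,2}$.} We need $Ff(G) \in \mbb{D}^{1,2}$ with the product rule, and integrability of $F f(G) W(\gamma^h)$. Boundedness of $f$ and $\nabla f$ gives $Ff(G) \in L^2$ and $f(G)DF + F\sigma^*\nabla f(G) \in L^2(\Omega;H)$, since $\sigma$ is deterministic and bounded on $[0,t]$. To make this rigorous, I would first prove the identity for $F \in \mcal{S}$. There $Ff(G)$ is a smooth functional: because $G$ is a linear function of finitely many Wiener integrals, one approximates $f$ by smooth functions with polynomially bounded derivatives, or uses the $C^1_b$ chain rule directly. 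Then pass to general $F \in \mbb{D}^{1,2}$ by density. Both sides of \eqref{eq:IBP_master} are continuous in $F$ for $\|\cdot\|_{1,2}$: the left side trivially; the right side because $f(G)$ is bounded, $W(\gamma^h)$ is Gaussian so $|\E[f(G) F W(\gamma^h)]| \le C\|F\|_{L^2}\|W(\gamma^h)\|_{L^2}$, and $|\E[f(G)\langle DF, \gamma^h\rangle_H]| \le \|f\|_\infty \|\gamma^h\|_H \|DF\|_{L^2(H)}$. This closes the argument.
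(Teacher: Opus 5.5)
Your proposal is correct and follows the paper's own proof. Like the paper, you use the chain rule and the steering identity to write $\langle \nabla f(Z^z_t),h\rangle$ as $\langle D f(Z^z_t),\gamma^h\rangle_H$, then apply the product rule to $F f(Z^z_t)$ and the duality relation with the deterministic direction $\gamma^h$. The paper simply asserts $F f(Z^z_t)\in\mathbb{D}^{1,2}$; your approximation over $\mathcal{S}$, using the $L^2$/$\mathbb{D}^{1,2}$-continuity of both sides in $F$, is a valid and slightly more careful justification of that step.
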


\begin{proof}
Note that $\Theta_t(z)$ is deterministic, so $D_r Z^z_t = D_r Z^0_t =
\sigma(t,r)$ by \eqref{eq:D_Z0}. By the chain rule \eqref{eq:chain_rule} and \eqref{eq:steering},
\begin{equation} \label{eq:chain_steering}
\big\langle D f(Z^z_t),\ \gamma^h \big\rangle_H
= \int_0^t \gamma^h_r \cdot \sigma(t,r)^* \nabla f(Z^z_t)\, dr
= \Big( \int_0^t \sigma(t,r)\gamma^h_r\, dr \Big) \cdot \nabla f(Z^z_t)
= \big\langle \nabla f(Z^z_t),\ h \big\rangle .
\end{equation}
Since $f$ and $\nabla f$ are bounded and $F \in \mbb{D}^{1,2}$, the product $F f(Z^z_t)$ belongs to
$\mbb{D}^{1,2}$, with $D\big( F f(Z^z_t) \big) = f(Z^z_t)\, DF + F\, D f(Z^z_t)$. Applying the duality relation
\eqref{eq:duality} to $Ff(Z^z_t)$ with the deterministic $u = \gamma^h$, for which
$\delta(\gamma^h) = W(\gamma^h)$, and using \eqref{eq:chain_steering},
\begin{equation*}
\E\Big[ f(Z^z_t)\, \big\langle DF, \gamma^h \big\rangle_H \Big]
+ \E\Big[ F\, \big\langle \nabla f(Z^z_t), h \big\rangle \Big]
= \E\Big[ \big\langle D\big( F f(Z^z_t) \big), \gamma^h \big\rangle_H \Big]
= \E\Big[ F f(Z^z_t)\, W(\gamma^h) \Big] .
\end{equation*}
Rearranging gives \eqref{eq:IBP_master}. All the expectations are finite: $f$ is bounded,
$F, \|DF\|_H \in L^2$ and $W(\gamma^h) \in L^2$.
\end{proof}

\begin{remark}\label{remark:H_is_Skorohod}
For readers familiar with Malliavin calculus, $\mathcal{H}_h(F)$ is nothing but the
Skorohod integral of the $H$-valued random variable $F\gamma^h$,
\begin{equation}\label{eq:H_as_divergence}
  \mathcal{H}_h(F) \;=\; \delta\big(F\,\gamma^h\big)
  \;=\; F\,W(\gamma^h) - \big\langle DF, \gamma^h\big\rangle_H.
\end{equation}
\end{remark}

For
$\xi,\eta \in \R^{2d}$ we write $\xi < \eta$ for the componentwise
inequality $\xi_j < \eta_j$, $j = 1,\dots,2d$.

Since $\delta$ maps
$\mathbb{D}^{\infty}(H)$ into $\mathbb{D}^{\infty}$
by \cite[Prop.~1.5.7]{Nualart2006malliavin}, and $\mathcal{E}_t(z) \in \mathbb{D}^{\infty}$ by Lemma~\ref{lemma:E_bounds}, we can therefore define,
for $t \in (0,T]$ and $z \in \mathbb{R}^{2d}$,
\begin{equation}\label{eq:G_def}
  G_t(z) \;:=\; \mathcal{H}_{e_{2d}}\circ \mathcal{H}_{e_{2d-1}}\circ\cdots\circ
  \mathcal{H}_{e_1}\big(\mathcal{E}_t(z)\big) \;\in\; \mathbb{D}^{\infty}
  \;\subset\; \bigcap_{p<\infty} L^p(\Omega).
\end{equation}

Using $G_t(z)$, we can now obtain the representation of the kernel $p$.
\begin{proposition}\label{prop:density_rep}
Let $t \in (0,T]$ and define
\begin{equation}\label{eq:Pi_def}
  \Pi_t(z,\xi) \;:=\; \E\Big[ \1_{\{Z^0_t \,>\, \xi\}}\, G_t(z) \Big],
  \qquad (z,\xi) \in \R^{2d}\times\R^{2d} .
\end{equation}
Then the transition density $p$ admits the representation
\begin{equation}\label{eq:density_rep}
  p(0,t;z,w) \;=\; \Pi_t\big(z,\; w - \Theta_t(z)\big),
  \qquad z,w \in \R^{2d} .
\end{equation}
\end{proposition}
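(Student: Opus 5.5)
The plan is the classical Malliavin argument: write the density as a mixed derivative of a distribution function, then move all $2d$ derivatives onto the weight via the integration-by-parts formula \eqref{eq:IBP_master}.

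\emph{Step 1: an integration-by-parts identity with $2d$ derivatives.} Fix $z$ and a smooth compactly supported test function $g$ on $\R^{2d}$. Define the primitive
\[
\Phi(\xi) := \int_{\{\eta < \xi\}} g(\eta)\, d\eta ,
\]
so that $\partial_{\xi_1}\cdots\partial_{\xi_{2d}}\Phi = g$. This $\Phi$ is bounded and smooth, and all its partial derivatives are bounded. Set $f(y) := \Phi(y - \Theta_t(z))$. Start from \eqref{eq:kernel_representation}:
\[
\int p(0,t;z,w)\, g(w - \Theta_t(z))\, dw = \E\big[\mcal{E}_t(z)\, (\partial_1 \cdots \partial_{2d}\Phi)(Z^0_t)\big].
\]
Apply Lemma~\ref{lemma:IBP} $2d$ times with $h = e_1, \dots, e_{2d}$ in succession. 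At step $k$, take $F = \mcal{H}_{e_{k-1}}\circ\cdots\circ\mcal{H}_{e_1}(\mcal{E}_t(z))$ and the test function $\partial_{k+1}\cdots\partial_{2d}\Phi(\cdot - \Theta_t(z))$, which lies in $C^1_b$. Each application is legitimate because the iterates stay in $\mbb{D}^\infty \subset \mbb{D}^{1,2}$, using $\mcal{E}_t(z)\in\mbb{D}^\infty$ (Lemma~\ref{lemma:E_bounds}) and the fact that $\delta$ preserves $\mbb{D}^\infty$. This yields
\[
\E\big[\mcal{E}_t(z)\, g(Z^0_t)\big] = \E\big[\Phi(Z^0_t)\, G_t(z)\big].
\]

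\emph{Step 2: Fubini.} Write $\Phi(Z^0_t) = \int g(\xi)\, \1_{\{\xi < Z^0_t\}}\, d\xi$. Since $g$ is bounded with compact support and $G_t(z)\in L^1$, Fubini applies and gives
\[
\E\big[\Phi(Z^0_t)\, G_t(z)\big] = \int g(\xi)\, \Pi_t(z,\xi)\, d\xi .
\]
Substitute $w = \Theta_t(z) + \xi$ on the left-hand side of Step 1. Comparing the two sides gives
\[
\int g(\xi)\, p(0,t;z,\Theta_t(z)+\xi)\, d\xi = \int g(\xi)\, \Pi_t(z,\xi)\, d\xi
\]
for all such $g$. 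Hence $p(0,t;z,\cdot+\Theta_t(z)) = \Pi_t(z,\cdot)$ almost everywhere.

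\emph{Step 3: upgrade to every point.} Show that $\xi\mapsto\Pi_t(z,\xi)$ is continuous. For every $\xi$, $\PP(Z^0_t \in \partial\{\eta>\xi\}) = 0$, because $Z^0_t$ is a nondegenerate Gaussian ($\Sigma_t$ is invertible by \eqref{eq:Sigma_inverse}). Together with $G_t(z)\in L^1$, dominated convergence gives continuity. The density $p$ is defined only up to null sets, so we take the continuous version. This is the version used throughout, and \eqref{eq:density_rep} then holds pointwise.

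\emph{Main obstacle.} The delicate point is the bookkeeping in the iterated integration by parts: at each stage the new weight must lie in $\mbb{D}^{1,2}$, and the partially differentiated primitive must lie in $C^1_b$. The order of the operators must also match the definition \eqref{eq:G_def}, with $\mcal{H}_{e_1}$ applied first. The first derivative peeled off is $\partial_{\xi_1}$, which accounts for that ordering.
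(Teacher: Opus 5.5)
Your proposal is correct and follows essentially the same route as the paper's proof. Both arguments take the iterated primitive of a $C_c^\infty$ test function, apply Lemma~\ref{lemma:IBP} $2d$ times to reach $G_t(z)$, use Fubini to identify $\Pi_t$, and then use continuity of $\xi\mapsto\Pi_t(z,\xi)$ (from the nondegeneracy of $Z^0_t$) to pass from a.e.\ to pointwise equality. The differences are cosmetic: you test against $g(\cdot-\Theta_t(z))$ and so work in the shifted variable $\xi$ from the start, whereas the paper converts $\{Z^z_t>w\}$ into $\{Z^0_t>w-\Theta_t(z)\}$ at the end; and you use dominated convergence where the paper uses Cauchy--Schwarz with $\|G_t(z)\|_{L^2}$.
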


\begin{proof}
Fix $f \in C_c^\infty(\R^{2d})$ and define its iterated integral
\begin{equation}\label{eq:primitive}
  \phi(y) \;:=\; \int_{-\infty}^{y_1}\!\!\cdots\int_{-\infty}^{y_{2d}}
  f(u_1,\dots,u_{2d})\,du_{2d}\cdots du_1 = \int_{\R^{2d}} \1_{\{y > w \}} \, f(w)\,dw ,
\end{equation}
so that $\partial_1\partial_2\cdots\partial_{2d}\,\phi = f$.
By \eqref{eq:G_def} and $\langle \nabla f, e_i \rangle = \partial_i f$, applying  \eqref{eq:IBP_master} with $h = e_j$ and the
test function $\phi$, we have
\begin{equation}\label{eq:IBP_iterated}
  \E\big[ f(Z^z_t)\,\mcal{E}_t(z)\big]
  \;=\; \E\big[ \phi(Z^z_t)\, G_t(z)\big] .
\end{equation}
We now use Fubini's theorem on the right hand side of \eqref{eq:IBP_iterated} to get
\begin{equation}\label{eq:fubini_step}
  \E\big[\phi(Z^z_t)\,G_t(z)\big]
  \;=\; \int_{\R^{2d}} \E\Big[\1_{\{Z^z_t \,>\, w\}}\,G_t(z)\Big] f(w)\,dw .
\end{equation}
Because $Z^z_t = \Theta_t(z)+Z^0_t$, the event $\{Z^z_t > w\}$ coincides with
$\{Z^0_t > w - \Theta_t(z)\}$, so the inner expectation in
\eqref{eq:fubini_step} equals $\Pi_t\big(z, w-\Theta_t(z)\big)$.

Combining \eqref{eq:kernel_representation}, \eqref{eq:IBP_iterated} and
\eqref{eq:fubini_step}, we obtain
\[
  \int_{\R^{2d}} p(0,t;z,w)\, f(w)\,dw
  \;=\; \int_{\R^{2d}} \Pi_t\big(z, w-\Theta_t(z)\big)\, f(w)\,dw
  \qquad \text{for all } f \in C_c^\infty(\R^{2d}) ,
\]
whence $p(0,t;z,\cdot) = \Pi_t(z,\,\cdot-\Theta_t(z))$ Lebesgue-a.e.

Finally, both sides are continuous in $w$. Indeed, applying the Cauchy--Schwarz inequality, we find
\begin{align*}
\left| \Pi_t(z,\xi) - \Pi_t(z,\xi') \right|
&= \left| \E \left[
    \left(\1_{\{Z_t^0 > \xi \}} - \1_{\{Z_t^0 > \xi' \} } \right) G_t(z)
\right] \right| \\
&\leq \| G_t(z)\|_{L^2} \, Q_z\left(\{Z_t^0 > \xi \} \mathbin{\Delta} \{Z_t^0 > \xi' \} \right)^{1/2} \xrightarrow[\xi' \to \xi ]{} 0,
\end{align*}
since $Z^0_t$ is a
nondegenerate Gaussian vector, and $\|G_t(z)\|_{L^2} <\infty $ by Proposition~\ref{prop:G_smooth}. Hence \eqref{eq:density_rep} holds for every $w$, for the continuous version of $p(0,t;z,\cdot)$.

\end{proof}

\subsection*{Step 3: Smoothness of $p$}
\begin{proposition}\label{prop:derivatives}
For all multi-indices $\alpha,\beta \in \N_0^{2d}$ the derivative
$\partial^\alpha_\xi\partial^\beta_z \Pi_t$ exists, is continuous on
$\R^{2d}\times\R^{2d}$, and
\begin{equation}\label{eq:Pi_derivatives}
  \partial^\alpha_\xi \partial^\beta_z \Pi_t(z,\xi)
  = (-1)^{|\alpha|}\, \E\Big[ \1_{\{Z^0_t > \xi\}}\
  \partial_z^\beta \mathcal{H}_{e}^{\alpha}\big( G_t(z) \big) \Big] ,
  \qquad
  \mathcal{H}_e^{\alpha} := \mathcal{H}_{e_1}^{\alpha_1}\circ\cdots\circ\mathcal{H}_{e_{2d}}^{\alpha_{2d}} .
\end{equation}
Consequently $p(0,t;\cdot,\cdot) \in C^\infty(\R^{2d}\times\R^{2d})$ and, by the chain
rule applied to \eqref{eq:density_rep},
\begin{equation}\label{eq:unified}
  \nabla_w\, p(0,t;z,w) = \big( \nabla_\xi \Pi_t \big)(z,\xi_w),
  \qquad
  \partial_{z_i} p(0,t;z,w) = \big( \partial_{z_i} \Pi_t \big)(z,\xi_w)
  - k^{(i)}_t \cdot \big(\nabla_\xi \Pi_t\big)(z,\xi_w) ,
\end{equation}
where $\xi_w := w - \Theta_t(z)$ and
$k^{(i)}_t := \partial_{z_i}\Theta_t(z)$, explicitly
\begin{equation}\label{eq:k_columns}
  k^{(x_j)}_t = (e_j, 0), \qquad k^{(v_j)}_t = (t\,e_j, e_j),
  \qquad j = 1,\dots,d .
\end{equation}
\end{proposition}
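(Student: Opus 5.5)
Starting from the representation of Proposition~\ref{prop:density_rep}, $\Pi_t(z,\xi)=\E[\1_{\{Z^0_t>\xi\}}G_t(z)]$, I would treat the $\xi$- and $z$-derivatives separately, using that $z$ enters only through the Malliavin-smooth weight $G_t(z)$ and $\xi$ enters only through the indicator.

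\emph{Derivatives in $\xi$.} Take $\alpha=e_j$ first. For $f\in C_c^\infty(\R^{2d})$, the quantity $\int \Pi_t(z,\xi)\,\partial_j f(\xi)\,d\xi$ is, by Fubini, $\E[\Phi(Z^0_t)G_t(z)]$, where $\Phi(y)=\int \1_{\{y>\xi\}}\partial_jf(\xi)\,d\xi$. The function $\Phi$ is the iterated primitive in all directions except $j$. Writing $\Phi=\partial_j\phi$, with $\phi$ the full primitive of $f$ as in \eqref{eq:primitive}, I apply Lemma~\ref{lemma:IBP} with $h=e_j$ and $F=G_t(z)\in\mbb{D}^\infty$. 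This moves $\partial_j$ onto the weight and gives $\E[\phi(Z^0_t)\mcal{H}_{e_j}(G_t(z))]$. Undoing Fubini turns this into $\int \E[\1_{\{Z^0_t>\xi\}}\mcal{H}_{e_j}(G_t(z))]f(\xi)\,d\xi$. Hence the distributional derivative is $\partial_{\xi_j}\Pi_t=-\E[\1_{\{Z^0_t>\xi\}}\mcal{H}_{e_j}G_t(z)]$, the minus sign coming from the integration against $\partial_j f$.

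The right-hand side is continuous in $\xi$ by the same Cauchy--Schwarz and nondegenerate-Gaussian argument used at the end of the proof of Proposition~\ref{prop:density_rep}. So the weak derivative is classical. Since $\mcal{H}_{e_j}$ maps $\mbb{D}^\infty$ into itself (\cite[Prop.~1.5.7]{Nualart2006malliavin}), induction on $|\alpha|$ gives the $\xi$-part of \eqref{eq:Pi_derivatives}. The operators $\mcal{H}_{e_i}$ commute because the $\gamma^{e_i}$ are deterministic, so the order of composition does not matter.

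\emph{Derivatives in $z$.} The dependence on $z$ sits in $\mcal{E}_t(z)$ through $b_s(z)=b(s,\Theta_s(z)+Z^0_s)$. Differentiating \eqref{eq:girsanov_density} formally gives
\[
\partial_{z_i}\mcal{E}_t=\mcal{E}_t\Big(\int_0^t \nabla b\,\partial_{z_i}\Theta_s\cdot dW_s-\int_0^t b\cdot\nabla b\,\partial_{z_i}\Theta_s\,ds\Big).
\]
Iterating, $\partial^\beta_z\mcal{E}_t$ is $\mcal{E}_t$ times a polynomial in stochastic and Lebesgue integrals of $\nabla^k b$. These are bounded uniformly by Assumption~\ref{assump:bounded_derivative}, so together with \eqref{eq:E_moments} all moments are bounded uniformly in $z$.

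The main obstacle is to show that $z\mapsto \mcal{E}_t(z)$ is $C^\infty$ as a map into $\mbb{D}^{k,p}$ for every $k,p$, not merely into $L^p$. Only then can $\partial_z^\beta$ be commuted with the $\mcal{H}$'s. Note that $\mcal{H}_h$ involves $D$, which is continuous from $\mbb{D}^{k+1,p}$ to $\mbb{D}^{k,p}$, and multiplication by the Gaussian $W(\gamma^h)$. I would establish this regularity via the presumably available Lemma~\ref{lemma:E_bounds}: $D_r$ of $\mcal{E}_t(z)$ and of its $z$-derivatives is again of the same form, with additional factors $\nabla^k b\cdot\sigma(s,r)$. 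Difference quotients would then be controlled in $\mbb{D}^{k,p}$ by the mean value theorem, using the bounded higher derivatives of $b$ and dominated convergence.

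With this in hand, $\partial_z^\beta\mcal{H}_e^\alpha G_t(z)=\mcal{H}_e^\alpha\partial^\beta_z G_t(z)$ is continuous in $z$ into $L^2$. Differentiation under the expectation gives \eqref{eq:Pi_derivatives}, and joint continuity in $(z,\xi)$ follows from the $L^2$ bound on the weight plus the indicator continuity in $\xi$.

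Finally, \eqref{eq:unified} and the $C^\infty$ property of $p$ follow from the chain rule applied to $p(0,t;z,w)=\Pi_t(z,w-\Theta_t(z))$. Here $\Theta_t$ is linear, with the columns $k^{(i)}_t$ read off from $\Theta_t(x,v)=(x+tv,v)$.
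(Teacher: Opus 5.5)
Your $\xi$-part and the final chain-rule step are essentially the paper's argument; your $z$-part takes a genuinely different route. The $\xi$-part is Fubini, writing the pairing with $\partial_j f$ as $\partial_j\phi$, one application of Lemma~\ref{lemma:IBP}, continuity via Cauchy--Schwarz and nondegeneracy of $Z^0_t$, then induction. The paper does the same, except that it performs all $|\alpha|$ integrations by parts at once in the $w$-variable.

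\emph{The paper's route for $z$-derivatives.} The paper never differentiates in $\D^{k,p}$-norms and never commutes $\partial_z$ with $\mcal{H}$. By Lemma~\ref{lemma:G_structure}, $\mcal{H}^\alpha_e(G_t(z))=\mcal{E}_t(z)\Psi(z)$ with $\Psi$ in the algebra $\mcal{A}_t(z)$ of stochastic and Lebesgue integrals with bounded smooth integrands. Lemma~\ref{lemma:atoms_smooth} (Kunita's theorem) then gives a pathwise $C^\infty$ modification in $z$, with uniform-in-$z$ $L^p$-bounds on all $z$-derivatives (Proposition~\ref{prop:G_smooth}). Vitali's theorem moves $\partial^\beta_z$ inside $\E[\1_{\{Z^0_t>\xi\}}\,\cdot\,]$. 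This is why \eqref{eq:Pi_derivatives} has $\partial^\beta_z$ outside $\mcal{H}^\alpha_e$.

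\emph{Your route.} You show $z\mapsto\mcal{E}_t(z)$ is $C^\infty$ into every $\D^{k,p}$. You then use that $\mcal{H}_h$ is continuous linear $\D^{k+1,p}\to\D^{k,p'}$ for $p'<p$, so smoothness propagates through $G_t$ and $\mcal{H}^\alpha_e$. Differentiation under $\E$ then reduces to continuity of a linear functional on $L^2$. This is cleaner abstractly and gives several things for free:
\begin{itemize}
\item $\partial^\beta_z G_t(z)\in\D^\infty$;
\item $\partial^\beta_z\mcal{H}^\alpha_e=\mcal{H}^\alpha_e\partial^\beta_z$;
\item an easy justification for swapping the order of the mixed $\xi$/$z$ derivatives, which the paper also passes over.
\end{itemize}

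\emph{What your route still requires.} It needs a stronger input than what the paper states. Lemma~\ref{lemma:E_bounds} only bounds $\mcal{E}_t$ and $\nabla_z\mcal{E}_t$ in $\D^{k,q}$; it does not give differentiability in those norms. You must first extend the bounds to all $z$-derivatives. You must also identify $D^j\partial_z\mcal{E}_t$ with $\partial_z D^j\mcal{E}_t$, for instance via closability of $D$ applied to the difference quotients. Only then does your mean-value argument yield convergence in $\D^{k,p}$. The paper avoids this step, at the price of explicit algebraic bookkeeping that relies on the additive-noise/Girsanov structure.
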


\begin{proof}
\emph{Step 1 ($\xi$-derivatives).}
Fix $\beta = 0$ and $f \in C_c^\infty(\R^{2d})$.  Since
$\mathcal{E}_t(z) \in \D^\infty$ and the $\mathcal{H}_{e_j}$ map $\D^\infty$ into itself,
$|\alpha|$ applications of \eqref{eq:IBP_master} give
$\E\big[(\partial^\alpha f)(Z^z_t)\,\mathcal{E}_t(z)\big]
= \E\big[ f(Z^z_t)\, \mathcal{H}^\alpha_e(\mathcal{E}_t(z))\big]$.
As in Proposition~\ref{prop:density_rep}, we apply the integration by parts formula \eqref{eq:IBP_master} for $\partial_1\partial_2\cdots\partial_{2d}\,\phi = f$ with $F=\mcal{H}^\alpha_e(\mcal{E}_t(z)) $ to derive
\[
  \int_{\R^{2d}} (\partial^\alpha f)(w)\, p(0,t;z,w)\,dw
  = \int_{\R^{2d}} f(w)\, \E\Big[\mathbf 1_{\{Z^z_t > w\}}\,\mathcal{H}^\alpha_e\big(G_t(z)\big)\Big] dw .
\]
Hence $\partial^\alpha_w p(0,t;z,\cdot)$ exists in the sense of distributions.
Moreover, the expectation in the integrand is continuous in $w$: for $w,w' \in \R^{2d}$, with
$1/p+1/q=1$,
\[
  \Big| \E\Big[\big( \mathbf 1_{\{Z^z_t>w\}} - \mathbf 1_{\{Z^z_t>w'\}}\big)
  \mathcal{H}^\alpha_e(G_t(z))\Big]\Big|
  \le \big\| \mathcal{H}^\alpha_e(G_t(z))\big\|_{L^q}\;
  Q_z\big(\{Z^z_t >w\} \mathbin{\Delta} \{Z^z_t >w'\} \big)^{1/p} \xrightarrow[\ w'\to w\ ]{} 0.
\]
The probability tends to $0$ because
$Z^z_t = \Theta_t(z)+Z^0_t$ has the bounded Gaussian density
$\mathcal{N}(\Theta_t(z),\Sigma_t)$, $\Sigma_t$ being invertible by
\eqref{eq:Sigma_inverse}, while $\|\mathcal{H}^\alpha_e(G_t(z))\|_{L^q}<\infty$ since
$\mathcal{H}^\alpha_e(G_t(z)) \in \D^\infty$.
Therefore the distributional derivative $\partial^\alpha_w p$ is continuous, which proves
\eqref{eq:Pi_derivatives} for $\beta=0$.

\emph{Step 2 ($z$-derivatives).}
First, set $\alpha = 0$.
By Proposition~\ref{prop:G_smooth}, the map
$z \mapsto G_t(z)$ is $C^\infty$ with
$\sup_z \|\partial^\beta_z G_t(z)\|_{L^2} < \infty$, and in particular, $\{  \partial^\beta_z G(z) \}_{z\in\R^{2d}}$ is uniformly integrable. Together with the fact that the integrand $\mathbf 1_{\{Z^0_t>\xi\}}$ does not depend on
$z$, we can use Vitali's convergence theorem to transfer the derivative into the expectation.

For general $\alpha$, we repeat the same argument with $\mcal{H}_e^\alpha (\mcal{E}_t(z))$ in place of $\mcal{E}_t(z)$.
This gives
\eqref{eq:Pi_derivatives} in general, and joint continuity in $(z,\xi)$ follows by
combining the two estimates above.

\emph{Step 3.}  Formula \eqref{eq:unified} is the chain rule applied to
$p(0,t;z,w) = \Pi_t(z, w - \Theta_t(z))$, with $\Theta_t(x,v) = (x+tv, v)$ giving
\eqref{eq:k_columns}.
\end{proof}

Finally, we obtain the uniform bound for the first order derivatives, which is used in Section~\ref{sec:proof_lineargrowth}.
\begin{corollary}\label{cor:bounded_derivative}
For every $\delta \in (0,T]$ there is $C_\delta = C(\delta,T,d, \Lambda_0, \ldots \Lambda_{2d+1}) < \infty$,
such that
\begin{equation}\label{eq:K_delta}
  \sup_{0 \le s \le t-\delta,\ t \le T}\ \sup_{z,w\in\R^{2d}}
  \Big(\big|\nabla_z\, p(s,t;z,w)\big| + \big|\nabla_w\, p(s,t;z,w)\big| \Big)
  \;\le\; C_\delta ,
\end{equation}
uniformly in $\boldsymbol\mu$.
\end{corollary}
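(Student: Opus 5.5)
We start from the representation of Proposition~\ref{prop:derivatives}: by \eqref{eq:unified}, both $\nabla_w p(0,t;z,w)$ and $\nabla_z p(0,t;z,w)$ are finite linear combinations, with coefficients $k^{(i)}_t$ bounded by $1+T$, of $(\nabla_\xi\Pi_t)(z,\xi_w)$ and $(\partial_{z_i}\Pi_t)(z,\xi_w)$. By \eqref{eq:Pi_derivatives} with $|\alpha|+|\beta|=1$, each of these equals $\pm\E[\1_{\{Z^0_t>\xi\}}\,F]$ for a random variable $F$ of the form $\mcal{H}_{e_j}(G_t(z))$ or $\partial_{z_i}G_t(z)$. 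Since the indicator is bounded by one,
\[
|\partial \Pi_t(z,\xi)|\le \|F\|_{L^1}\le\|F\|_{L^2},
\]
uniformly in $\xi$. So we only need $L^2$ bounds on $\mcal{H}_{e_j}(G_t(z))$ and $\partial_{z_i}G_t(z)$ that are uniform in $z$ and in $\bs{\mu}$, and uniform for $t\ge\delta$.

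To get these, we unfold the definition \eqref{eq:H_operator}: $\mcal{H}_h(F)=F\,W(\gamma^h)-\langle DF,\gamma^h\rangle_H$. The Gaussian variable $W(\gamma^h)$ has all moments controlled by $\|\gamma^h\|_H$. By \eqref{eq:gamma_norm}, this norm is at most $C(t^{-3/2}+t^{-1/2})|h|$, hence bounded for $t\ge\delta$. Iterating $2d+1$ times with Hölder's inequality, $\|\mcal{H}^{\alpha}_e\circ\mcal{H}_{e_{2d}}\circ\cdots\circ\mcal{H}_{e_1}(\mcal{E}_t(z))\|_{L^2}$ is bounded by a polynomial in $\delta^{-1}$ times $\sum_{k\le 2d+1}\sup_z\|\,\|D^k\mcal{E}_t(z)\|_{H^{\otimes k}}\|_{L^p}$ for suitable $p$. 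The $z$-derivatives are handled by differentiating under the (linear in $F$) operators, which commute with $\partial_z$ since $\gamma^h$ does not depend on $z$. This reduces everything to bounds on $\|D^k\partial^\beta_z\mcal{E}_t(z)\|_{L^p(H^{\otimes k})}$ with $k+|\beta|\le 2d+1$. I take these from Lemma~\ref{lemma:E_bounds} and Proposition~\ref{prop:G_smooth}, which are cited earlier as supplying $\mcal{E}_t(z)\in\D^\infty$ and $\sup_z\|\partial^\beta_zG_t(z)\|_{L^2}<\infty$. The point to check is that their constants depend only on $(T,d,\Lambda_0,\dots,\Lambda_{2d+1})$. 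They do, because every Malliavin or $z$-derivative of $\log\mcal{E}_t(z)=\int_0^t b_s\cdot dW_s-\frac12\int_0^t|b_s|^2ds$ produces derivatives of $b$ of order at most $k+|\beta|$, composed with $\Theta_s(z)+Z^0_s$. Its chain-rule factors are $D_rZ^0_s=\sigma(s,r)\1_{r\le s}$ and $\partial_z\Theta_s$, both deterministic and bounded by $1+T$. The resulting polynomial in Gaussian stochastic integrals with $\Lambda_j$-bounded integrands, multiplied by $\mcal{E}_t(z)$, is controlled in every $L^p$ by the BDG inequality and \eqref{eq:E_moments}.

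For general starting time $s$, the same construction applies after shifting time. The process on $[s,t]$ has drift $b(s+\cdot,\cdot)$, which obeys the same bounds $\Lambda_k$, so the representation holds with $t$ replaced by $t-s\in[\delta,T]$ and all constants unchanged. The main obstacle is the bookkeeping in the iterated Malliavin derivatives of $\mcal{E}_t(z)$, in particular making sure the number of derivatives needed, $2d+1$ (namely $2d$ for $G_t$ plus one for the gradient), matches the $\Lambda_0,\dots,\Lambda_{2d+1}$ in the statement. I would organize this by induction on the order, using the Faà di Bruno formula for $D^k e^{Y}$ with $Y=\log\mcal{E}_t(z)$.
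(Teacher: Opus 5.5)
Your proposal is correct and follows essentially the same route as the paper. You discard the indicator in \eqref{eq:density_rep} and \eqref{eq:unified}, bound the iterated weights applied to $\mcal{E}_t(z)$ and $\partial_{z_i}\mcal{E}_t(z)$ in $L^p$ using that $\gamma^h$ is deterministic with $H$-norm bounded for $t\ge\delta$, invoke Lemma~\ref{lemma:E_bounds}, and reduce general $s$ to $s=0$ by a time shift. The only difference is that the paper gets the weight estimate by citing \cite[(2.32)]{Nualart2006malliavin}, whereas you unfold $\mcal{H}_h(F)=F\,W(\gamma^h)-\langle DF,\gamma^h\rangle_H$ by hand with H\"older's inequality, which is legitimate precisely because $\gamma^h$ is deterministic.
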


\begin{proof}
It suffices to consider $s=0$ and $t \in [\delta,T]$: on $[s,t]$ the construction is
identical with $t$ replaced by $t-s$.

In the context of \cite[Prop.~2.1.4]{Nualart2006malliavin},  setting $F = Z^z_t$, we have $DF = \sigma(t,\cdot)$ and $\gamma_F = \Sigma_t$, and the weights are $H_{(i)}(Z^z_t,G) = \mathcal{H}_{e_i}(G)$.  Since $\gamma_F^{-1}DF$ is deterministic, its
$\D^{k,r}$-norm equals its $H$-norm,
$\|\Sigma_t^{-1}\sigma(t,\cdot)\|_H = (\operatorname{tr}\Sigma_t^{-1})^{1/2}
\le C(\delta)$ for $t \in [\delta,T]$.  Hence \cite[(2.32)]{Nualart2006malliavin} gives, for
$|\alpha| \le 2d+1$ and $1<p<q$,
\[
  \big\| H_\alpha(F, G)\big\|_{L^p} \;\le\; c_{p,q}\,C(\delta)^{|\alpha|}\,\|G\|_{\D^{|\alpha|,q}} .
\]
Discarding the indicator in \eqref{eq:density_rep} and \eqref{eq:unified}, and applying this with $G = \mathcal{E}_t(z)$ or
$G = \partial_{z_i}\mathcal{E}_t(z)$, Lemma~\ref{lemma:E_bounds} provides bounds on
$\nabla_w p$ and $\partial_{z_i}\Pi_t$ uniformly in $z,w$ and $t\in[\delta,T]$.  Finally,
by \eqref{eq:unified}, $\partial_{z_i} p = \partial_{z_i}\Pi_t - k^{(i)}_t\cdot\nabla_w p$ with
$|k^{(i)}_t| \le 1+T$.

\end{proof}

Given the smoothness of $p$ with respect to the spatial variables, we can deduce the smoothness in time.
\begin{proposition}\label{prop:time_regularity}
$p(s,t;z,w)$ is $C^1$
in $t\in(s,T]$ and in $s\in[0,t)$, and it satisfies
\[
  \partial_t\, p(s,t;z,w) = \big(\mathcal{L}^*_t\, p(s,t;z,\cdot)\big)(w),
  \qquad
  \partial_s\, p(s,t;z,w) = -\big(\mathcal{L}_s\, p(s,t;\cdot,w)\big)(z).
\]
\end{proposition}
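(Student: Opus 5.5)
We obtain the time regularity from the spatial smoothness of Proposition~\ref{prop:derivatives}, the uniform derivative bounds of Corollary~\ref{cor:bounded_derivative}, and the Chapman--Kolmogorov identity. The forward equation is derived first, in distributional form, and then upgraded to a classical one. The backward equation is reduced to the forward one.

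\emph{Forward variable.} Fix $s<t$ and $z$, and take a test function $g\in C_c^\infty(\R^{2d})$. Under $R^{\bs{\mu}}$ started at $(s,z)$, It\^o's formula gives
\[
\int p(s,t;z,w)g(w)\,dw = g(z)+\int_s^t\!\int p(s,r;z,w)\,(\mcal{L}_r g)(w)\,dw\,dr .
\]
The map $r\mapsto \int p(s,r;z,w)(\mcal{L}_rg)(w)\,dw$ is continuous. This uses two facts: $r\mapsto \mcal{L}_r g$ is continuous and compactly supported, since $b$ is continuous in time by Assumption~\ref{assump:bounded_derivative}; and $r\mapsto R_r$ is weakly continuous.

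Next we integrate by parts in $w$. This is legitimate because $p(s,r;z,\cdot)$ is $C^\infty$, with derivatives given by \eqref{eq:Pi_derivatives}, and $b$ is smooth in space. The result is
\[
\frac{d}{dt}\int p(s,t;z,w)g\,dw=\int \bigl(\mcal{L}^*_t p(s,t;z,\cdot)\bigr)(w)\,g(w)\,dw .
\]
To pass from this distributional statement to a pointwise one, we show that $(r,w)\mapsto \mcal{L}^*_r p(s,r;z,\cdot)(w)$ is jointly continuous on $(s,T]\times\R^{2d}$. Then $p(s,t;z,w)-p(s,t_0;z,w)=\int_{t_0}^t \mcal{L}^*_r p(s,r;z,\cdot)(w)\,dr$ holds for every $w$, both sides being continuous in $w$, and differentiating in $t$ gives the $C^1$ claim together with the forward equation.

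\emph{Main obstacle: joint continuity in time.} Proposition~\ref{prop:derivatives} is stated for fixed $t$. We must show that the representation \eqref{eq:Pi_derivatives} depends continuously on $t$, and on $s$ after shifting time. For this we regard all objects as living on a common Wiener space. The kernel $\sigma(t,r)$ and $\Sigma_t^{-1}$ are continuous in $t>0$, hence so are $\gamma^h$ and $W(\gamma^h)$ in $L^p$. The Girsanov density $\mcal{E}_t(z)$ is continuous in $t$ in every $\mbb{D}^{k,p}$, by the bounds of Lemma~\ref{lemma:E_bounds} and the continuity of $b$ in time. It follows that $t\mapsto\mcal{H}^\alpha_e(G_t(z))$ is continuous in $L^2$. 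The indicator $\1_{\{Z^0_t>\xi\}}$ is continuous in probability because $Z^0_t$ has a nondegenerate Gaussian law. The Cauchy--Schwarz argument used in Proposition~\ref{prop:density_rep} then yields joint continuity in $(t,z,\xi)$. Shifting the time origin handles the starting time $s$ in the same way.

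\emph{Backward variable.} We use Chapman--Kolmogorov,
\[
p(s,t;z,w)=\int p(s,u;z,y)\,p(u,t;y,w)\,dy,\qquad s<u<t .
\]
Differentiating in $s$ at fixed $u$ turns $\partial_s p(s,u;z,y)$ into $-\mcal{L}_s$ acting on $z$. To see this, apply the forward argument to the adjoint pairing: It\^o's formula started at time $s$ gives $\partial_s \E_{s,z}[\phi(Z_u)]=-\mcal{L}_s\E_{s,z}[\phi(Z_u)]$ for smooth bounded $\phi$, which is valid by the $C^\infty$ dependence on $z$ with uniform bounds from Corollary~\ref{cor:bounded_derivative}. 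Now take $\phi=p(u,t;\cdot,w)$, which is smooth and bounded with bounded derivatives since $t-u>0$. This gives $\partial_s p(s,t;z,w)=-\bigl(\mcal{L}_s p(s,t;\cdot,w)\bigr)(z)$. Continuity in $s$ of this expression follows from the joint continuity established above.
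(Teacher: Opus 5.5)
Your proof is correct. Its skeleton matches the paper's: It\^o's formula against a test function, integration by parts in $w$, upgrading the distributional identity to a pointwise one, and Chapman--Kolmogorov plus the Markov property for the backward variable. The two arguments part ways only at the key step, the continuity in time of $\mathcal{L}^*_r p(s,r;z,\cdot)$.

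The paper never revisits the Malliavin representation as a function of $t$. It uses only the bounds on $\nabla^k_w p(s,r;z,\cdot)$, $k\le 3$, uniform in $r\in[s+\delta,T]$, which come from the same weight estimates as Corollary~\ref{cor:bounded_derivative}. From these it first derives the integral identity $q(t',w)-q(t,w)=\int_t^{t'}\mathcal{L}^*_r q(r,w)\,dr$ for every $w$, by Fubini and continuity in $w$. This makes $q$ Lipschitz in $r$, locally uniformly in $w$. An Arzel\`a--Ascoli/interpolation argument then transfers the time-continuity to $\nabla q$ and $\nabla^2 q$: differences converging uniformly to zero with bounded higher derivatives have derivatives converging to zero.

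You instead prove joint continuity directly. You place everything on a common Wiener space over $[0,T]$ and check that each of the following varies continuously in $t$: $\gamma^h$, $W(\gamma^h)$, $\mathcal{E}_t(z)$ in every $\mathbb{D}^{k,p}$, the iterated weights $\mathcal{H}^\alpha_e(G_t(z))$, and the indicator $\1_{\{Z^0_t>\xi\}}$ (continuous in probability).

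The paper's route is softer and shorter, since it needs only bounds that are already available. Yours is more laborious but gives more: joint continuity of all derivatives $\partial^\alpha_w\partial^\beta_z p$ in $(t,z,w)$, and, after the time shift, in $s$ as well.

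One detail in your time-shift step needs a remark. There the Girsanov density involves $b(s+\cdot,\cdot)$, and its Malliavin and $z$-derivatives involve $\nabla^j_y b(s+\cdot,\cdot)$. So you need continuity in time of $\nabla^j_y b$, not only of $b$. This follows from the continuity of $b$ together with the uniform bounds of Assumption~\ref{assump:bounded_derivative}, by exactly the interpolation trick the paper uses. The same point is implicit in both arguments for the term $\nabla_{w_v}\cdot b$ appearing in $\mathcal{L}^*_r$.
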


\begin{proof}
We prove the statement in $t$; the statement in $s$ follows in the same way from the
Chapman--Kolmogorov relation and It\^o's formula applied to $p(s',t;\cdot,w)$.

Fix $(s,z)$ and put $q(r,\cdot) := p(s,r;z,\cdot)$.  By the argument similar to the one in
Corollary~\ref{cor:bounded_derivative}, we have that $\nabla^k_y q(r,\cdot)$ is bounded, uniformly in
$r\in[s+\delta,T]$, for every $k\le 3$ and every $\delta>0$.  For $f\in C_c^\infty(\R^{2d})$ and
$s<t<t'$, It\^o's formula followed by an integration by parts gives
\[
  \int f(w)\,\big[q(t',w)-q(t,w)\big]\,dw
  = \int_t^{t'}\!\!\int f(w)\;\mathcal{L}^*_rq(r,w)\,dw\,dr ,
\]
where $\mcal{L}^*$ is the adjoint of the generator:
\begin{equation}\label{eq:adjoint_generator}
  \mathcal{L}^*_r q(r,w)
  = -\,w_v\cdot\nabla_{w_x} q(r,w) - b\cdot\nabla_{w_v} q(r,w) - (\nabla_{w_v}\cdot b)\,q(r,w) + \tfrac12\,\Delta_{w_v} q(r,w).
\end{equation}
Both sides are continuous in
$w$, so
\[
  q(t',w)-q(t,w) = \int_t^{t'} \mathcal{L}^*_r q(r,w) \,dr
  \qquad \text{for every } w .
\]
It remains to show that the integrand is continuous in $r$. Once we show this continuity, the fundamental theorem of calculus applies to guarantee that $q$ is continuously differentiable in time.

For $r \in [t, t']$ and a real sequence $a_n \to 0$, we set $g^n(w) := q(r+a_n, w) - q(r, w)$. By the boundedness of $\nabla^k_y q(r,\cdot)$, uniformly in $r \in [t, t']$, the integrand $\mathcal{L}^*_r q(r,w)$ is uniformly bounded on $[t, t'] \times V$ for a compact set $V \subset \R^{2d}$. Therefore, it holds that $g^n \to 0$ uniformly on $V$.
We know that $\{\nabla g^n \}_{n\in \N}$ and $\{\nabla^2 g^n \}_{n\in \N}$ are both bounded uniformly on $V$.
Thus, by the Arzel\`a--Ascoli theorem, there exists a subsequence of $\{\nabla g^n \}_{n\in \N}$, which is still denoted by the same symbol and some function $k$, such that $\nabla g^n \to k$ uniformly on $V$. Since $ g^n \to 0$, we deduce $k=0$. Since any subsequence of $\{\nabla g^n \}_{n\in \N}$ has another subsequence which converges to $0$, we deduce the original sequence $\nabla g^n \to 0$. This means that $\nabla q(\cdot, w)$ is continuous in time. Exactly same argument applies to $\Delta_{w_v} q$, and we conclude that for each $w$, $\mathcal{L}^*_r q(\cdot,w)$ is continuous.
\end{proof}

\subsection*{Proof of the smoothness of $z\ \mapsto G(z)$}

\paragraph{A stable class of functionals.}
Let $\mathcal{B}_k$ ($k \in \{1,d\}$) denote the set of Borel maps
$\varphi : [0,T]\times\R^{2d} \to \R^{k}$ that are $C^\infty$ in the space
variable and satisfy
$\|\varphi\|_{(j)} := \sup_{s,y}\big|\nabla^j_y \varphi(s,y)\big| < \infty$ for every
$j \ge 0$.  For $\varphi \in \mathcal{B}_d$, $\psi \in \mathcal{B}_1$ and $t \in (0,T]$ put
\begin{equation}\label{eq:atoms}
  \mathcal{I}_t[\varphi](z) := \int_0^t \varphi\big(s, \Theta_s(z)+Z^0_s\big)\cdot dW_s ,
  \qquad
  \mathcal{J}_t[\psi](z) := \int_0^t \psi\big(s, \Theta_s(z)+Z^0_s\big)\,ds ,
\end{equation}
and let $\mathcal{A}_t(z)$ be the algebra (over $\R$) generated by these two families, i.e.\ elements of $\mathcal{A}_t(z)$ are thus finite sums of finite products of the
atoms \eqref{eq:atoms}. Notice that for any $p >1$, it holds that $\|\mcal{I}_t[\varphi](z)\|_{L^p} \leq C_p\|\varphi\|_{(0)} T^{1/2}<\infty$, as a consequence of BDG inequality, and the same holds for $\mcal{J}_t[\psi](z)$. Therefore, any element in $\mcal{A}_t(z)$ has a finite $p$-moment for any $p>1$ by H\"older's inequality.

Our first goal is to figure out the structure of $G(z)$.
\begin{lemma}\label{lemma:G_structure}
For every $\Psi \in \mathcal{A}_t(z)$ and every $h$,
\begin{equation}\label{eq:H_on_product}
  \mathcal{H}_h\big(\mathcal{E}_t(z)\,\Psi\big)
  \;=\; \mathcal{E}_t(z)\Big[\, \beta^h_t(z)\,\Psi \;-\; \big\langle D\Psi, \gamma^h\big\rangle_H \Big] ,
\end{equation}
where
\begin{equation}\label{eq:beta_def}
  \beta^h_t(z) \;:=\; W(\gamma^h) - \big\langle D\log\mathcal{E}_t(z),\gamma^h\big\rangle_H
  \;\in\; \mathcal{A}_t(z) .
\end{equation}
The bracket again lies in $\mathcal{A}_t(z)$.  In particular there is
$\Psi_t \in \mathcal{A}_t(z)$, obtained from $\beta^{e_1},\dots,\beta^{e_{2d}}$ by finitely
many products and applications of $\langle D\,\cdot\,,\gamma^{e_j}\rangle_H$, with
\begin{equation}\label{eq:G_structure}
  G_t(z) \;=\; \mathcal{E}_t(z)\,\Psi_t(z) .
\end{equation}
\end{lemma}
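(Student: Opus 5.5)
The plan is to compute $\mathcal{H}_h(\mathcal{E}_t(z)\Psi)$ directly from the definition \eqref{eq:H_operator}, using the product and chain rules for $D$. Once this is done, I will check that $\mathcal{A}_t(z)$ is stable under the operations that appear, and then induct on the number of operators $\mathcal{H}_{e_j}$ in \eqref{eq:G_def}.

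\emph{Step 1: the identity \eqref{eq:H_on_product}.} Granting that $\mathcal{E}_t(z)$ and every $\Psi\in\mathcal{A}_t(z)$ lie in $\mathbb{D}^{1,2}$, the product rule gives $D(\mathcal{E}_t\Psi)=\Psi\,D\mathcal{E}_t+\mathcal{E}_t\,D\Psi$. Since $\mathcal{E}_t>0$, we also have $D\mathcal{E}_t=\mathcal{E}_t\,D\log\mathcal{E}_t$. Substituting both into $\mathcal{H}_h(F)=F\,W(\gamma^h)-\langle DF,\gamma^h\rangle_H$ and factoring out $\mathcal{E}_t$ yields \eqref{eq:H_on_product} with $\beta^h_t$ as in \eqref{eq:beta_def}. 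To make this rigorous, I will note that $\log\mathcal{E}_t=\mathcal{I}_t[b]-\tfrac12\mathcal{J}_t[|b|^2]$ has all moments and lies in $\mathbb{D}^\infty$. The chain rule for $\exp$ is then justified by \eqref{eq:E_moments}, either by approximating $\exp$ with $C^1_b$ functions or by citing the corresponding lemma (Lemma~\ref{lemma:E_bounds}).

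\emph{Step 2: derivatives of the atoms (the main computation).} Here I need the Malliavin derivatives of the atoms \eqref{eq:atoms}. Using \eqref{eq:D_Z0}, for $s\ge r$ we get $D_r\,\varphi(s,\Theta_s(z)+Z^0_s)=\sigma(s,r)^*\nabla_y\varphi(s,\cdot)$, and this vanishes for $r>s$. The standard formula for the derivative of an Itô integral then gives
\[
\langle D\mathcal{I}_t[\varphi],\gamma^h\rangle_H=\int_0^t \varphi(r,\cdot)\cdot\gamma^h_r\,dr+\int_0^t\Big(\int_0^s \nabla_y\varphi(s,\cdot)\,\sigma(s,r)\gamma^h_r\,dr\Big)\cdot dW_s .
\]
The first term equals $\mathcal{J}_t[\psi_0]$ with $\psi_0(r,y)=\varphi(r,y)\cdot\gamma^h_r$. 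The second is $\mathcal{I}_t[\varphi_1]$ with $\varphi_1(s,y)=\nabla_y\varphi(s,y)\,m^h(s)$, where $m^h(s):=\int_0^s\sigma(s,r)\gamma^h_r\,dr$ is a deterministic vector, bounded on $[0,t]$ for fixed $t>0$. An analogous and simpler formula holds for $\mathcal{J}_t[\psi]$. In both cases the new integrands are still $C^\infty$ in $y$ with bounded derivatives, so the results stay in the atom class. There is one subtlety: $\gamma^h_r$ depends on $r$, so $\psi_0$ has time-dependent coefficients. This is allowed, because $\mathcal{B}_k$ consists of Borel maps in $s$ with only spatial smoothness required, and $\gamma^h$ is bounded for fixed $t$. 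Finally, $W(\gamma^h)=\mathcal{I}_t[\gamma^h]$ is itself an atom with constant-in-$y$ integrand, and $D\log\mathcal{E}_t$ is handled by the formulas above. Hence $\beta^h_t\in\mathcal{A}_t(z)$. Because $D$ is a derivation, $\langle D\cdot,\gamma^h\rangle_H$ maps $\mathcal{A}_t(z)$ into itself, and so the bracket in \eqref{eq:H_on_product} belongs to $\mathcal{A}_t(z)$. The genuine obstacle is justifying the Itô-integral differentiation formula for these adapted integrands in $\mathbb{D}^{1,2}$ (e.g.\ via \cite[Prop.~1.3.8]{Nualart2006malliavin}), together with checking the needed integrability, which follows from the moment bounds stated after \eqref{eq:atoms}.

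\emph{Step 3: induction.} I start from $\mathcal{E}_t(z)=\mathcal{E}_t(z)\cdot 1$ with $1\in\mathcal{A}_t(z)$. Applying Steps 1--2 successively to $\mathcal{H}_{e_1},\dots,\mathcal{H}_{e_{2d}}$, each step preserves the form $\mathcal{E}_t\Psi$ with $\Psi\in\mathcal{A}_t(z)$. This gives \eqref{eq:G_structure}, with $\Psi_t$ built from the $\beta^{e_j}$ by products and applications of $\langle D\cdot,\gamma^{e_j}\rangle_H$.
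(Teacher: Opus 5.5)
Your proposal is correct and follows essentially the same route as the paper: the product/chain-rule computation of $\mathcal{H}_h(\mathcal{E}_t\Psi)$, the atom-derivative formulas (which the paper isolates as Lemma~\ref{lemma:algebra_stable}, with $\ell^h$ in place of your $m^h$ and a stochastic Fubini step), and the induction starting from $\Psi=1$. Two small points the paper leaves implicit are spelled out in your version: that $W(\gamma^h)=\mathcal{I}_t[\gamma^h]$ is itself an atom, and that the chain rule for $\exp$ needs a justification beyond \eqref{eq:chain_rule}.
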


To this goal, the following lemma guarantees that the operator $\langle D\cdot, \gamma^h\rangle$ is stable within $\mcal{A}_t(z)$.
\begin{lemma}\label{lemma:algebra_stable}
For every $h \in \R^{2d}$,
\begin{equation}\label{eq:D_atoms}
  \big\langle D\,\mathcal{I}_t[\varphi],\ \gamma^h\big\rangle_H
  = \mathcal{J}_t\big[\gamma^h\!\cdot\!\varphi\big] + \mathcal{I}_t\big[(\nabla_y\varphi)\,\ell^h\big],
  \qquad
  \big\langle D\,\mathcal{J}_t[\psi],\ \gamma^h\big\rangle_H
  = \mathcal{J}_t\big[(\nabla_y\psi)\!\cdot\!\ell^h\big] ,
\end{equation}
for $\ell_s^h := \int^s_0 \sigma(s,r) \gamma^h_r dr$,
and all the integrands on the right belong again to $\mathcal{B}_d$, resp.\
$\mathcal{B}_1$.
\end{lemma}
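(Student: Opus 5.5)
The plan is to compute the Malliavin derivative of each atom explicitly. Then pair it with $\gamma^h$ and use the definition $\ell^h_s = \int_0^s \sigma(s,r)\gamma^h_r\,dr$ to collapse the time integral. The basic input is \eqref{eq:D_Z0} applied at an intermediate time. For $r \le s$ we have $D_r Z^0_s = \sigma(s,r)$, and $D_r Z^0_s = 0$ for $r > s$, since $Z^0_s = \int_0^s \sigma(s,r)\,dW_r$ is a Wiener integral. Because $\Theta_s(z)$ is deterministic, the chain rule \eqref{eq:chain_rule} gives, for $\varphi \in \mathcal{B}_d$ (smooth with bounded derivatives in space),
\[
D_r \varphi\big(s,\Theta_s(z)+Z^0_s\big) = \nabla_y \varphi\big(s,\Theta_s(z)+Z^0_s\big)\,\sigma(s,r)\,\1_{\{r\le s\}} .
\]
This requires $\nabla_y\varphi$ to be bounded, which holds since $\|\varphi\|_{(1)}<\infty$; the chain rule for $C^1$ functions with bounded derivative suffices here.

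For $\mathcal{J}_t[\psi]$, I will differentiate under the Lebesgue integral. This is justified by the closability of $D$ together with the uniform bounds $\|\psi\|_{(1)}<\infty$, for instance via Riemann-sum approximation in $\mathbb{D}^{1,2}$. It gives
\[
\langle D\mathcal{J}_t[\psi],\gamma^h\rangle_H = \int_0^t \nabla_y\psi(s,\cdot)\cdot \Big(\int_0^s \sigma(s,r)\gamma^h_r\,dr\Big)\,ds = \mathcal{J}_t\big[(\nabla_y\psi)\cdot \ell^h\big],
\]
after exchanging the $dr$ and $ds$ integrals by Fubini.

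For $\mathcal{I}_t[\varphi]$, I will use the commutation rule for the derivative of an Itô integral with adapted integrand $u_s = \varphi(s,\Theta_s(z)+Z^0_s)$ (Nualart, Prop.~1.3.8):
\[
D_r \int_0^t u_s\cdot dW_s = u_r + \int_r^t D_r u_s\cdot dW_s .
\]
The first term paired with $\gamma^h$ gives $\int_0^t \gamma^h_r\cdot\varphi(r,\cdot)\,dr = \mathcal{J}_t[\gamma^h\cdot\varphi]$. The second term gives $\int_0^t\!\int_r^t \gamma^h_r\cdot(\ldots)\,dW_s\,dr$. A stochastic Fubini interchange turns this into $\int_0^t (\nabla_y\varphi)(s,\cdot)\,\ell^h_s\cdot dW_s = \mathcal{I}_t[(\nabla_y\varphi)\ell^h]$. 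The stochastic Fubini step is legitimate because $\gamma^h$ is deterministic and bounded on $[0,t]$ and the integrand is bounded. Throughout I need to track the index contraction carefully: $(\nabla_y\varphi)\ell^h$ is the $d$-vector with components $\sum_k \partial_{y_k}\varphi_i\,(\ell^h)_k$.

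It remains to check membership in $\mathcal{B}_d$ and $\mathcal{B}_1$. The functions $s\mapsto\gamma^h_s$ and $s\mapsto\ell^h_s$ are deterministic and bounded on $[0,t]$. Indeed, $\gamma^h_r = \sigma(t,r)^*\Sigma_t^{-1}h$ is a polynomial in $r$ for fixed $t>0$, and $\ell^h$ is its integral against $\sigma$. Multiplying $\varphi$ or its space gradient by such bounded, time-only factors preserves smoothness in $y$ and finiteness of every $\|\cdot\|_{(j)}$. This is also Borel in $s$; if membership is required for $s \in (t,T]$, the factors can be extended by zero. The main obstacle is rigor in the Itô-integral step: showing $\mathcal{I}_t[\varphi]\in\mathbb{D}^{1,2}$ and justifying the commutation and stochastic Fubini. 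I would settle it by citing Nualart's Prop.~1.3.8, after checking $u\in \mathbb{D}^{1,2}(H)$ with $\sup_{r,s}\|D_r u_s\|$ bounded, which follows from the explicit formula above.
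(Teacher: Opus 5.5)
Your proposal is correct and follows the paper's proof step by step: the chain rule with $D_rZ^0_s=\sigma(s,r)\1_{\{r\le s\}}$, the commutation formula $D_r\mathcal{I}_t[\varphi]=u_r+\int_r^t D_ru_s\cdot dW_s$, stochastic Fubini to produce $\ell^h$, ordinary Fubini for $\mathcal{J}_t[\psi]$, and boundedness of $\gamma^h,\ell^h$ on $[0,t]$ for membership in $\mathcal{B}_d$, $\mathcal{B}_1$ (your citation of Nualart's commutation result and the extension-by-zero remark just make explicit what the paper leaves implicit). One aside needs tightening: elements of $\mathcal{B}_1$ are only Borel in $s$, so Riemann sums need not converge; instead approximate $s\mapsto\psi(s,\Theta_s(z)+Z^0_s)$ by step processes in $L^2([0,t];\mathbb{D}^{1,2})$, or use that $D$ is closed and hence commutes with the Bochner integral, both of which your closability argument already supports.
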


\begin{proof}
By $D_r Z^0_s = \sigma(s,r)\mathbf 1_{\{r<s\}}$ \eqref{eq:D_Z0} and the chain rule,
$D_r\big[\varphi(s,\Theta_s(z)+Z^0_s)\big] = \nabla_y\varphi(s,\Theta_s(z)+Z^0_s)\,\sigma(s,r)$ for $r<s$.
Hence
\[
  D_r \mathcal{I}_t[\varphi] = \varphi\big(r,\Theta_r(z)+Z^0_r\big)
  + \int_r^t \big(\nabla_y\varphi(s,\Theta_s(z)+Z^0_s)\sigma(s,r)\big)^{\!*} dW_s ,
\]
and pairing with $\gamma^h$ and applying the stochastic Fubini theorem (cf.\ \cite[Theorem~65]{protter}; the integrand is bounded and jointly measurable,
and $\gamma^h \in L^2([0,t])$) gives
$\int_0^t \gamma^{h*}_r\!\int_r^t\nabla_y\varphi(s,\Theta_s(z)+Z^0_s)\sigma(s,r)dW_s\,dr
= \int_0^t \big(\nabla_y\varphi(s,\Theta_s(z)+Z^0_s)\,\ell^h_s\big)\cdot dW_s$.
The second identity in \eqref{eq:D_atoms} follows from the same computation with an ordinary
Fubini.
Boundedness of the new integrands and of their $y$-derivatives is clear from
$\varphi,\psi \in \mathcal{B}$ and from the boundedness of $\gamma^h,\ell^h$ on $[0,t]$.
\end{proof}

\begin{proof}[Proof of Lemma~\ref{lemma:G_structure}]
$D(\mathcal{E}_t\Psi) = \mathcal{E}_t\big(\Psi\, D\log\mathcal{E}_t + D\Psi\big)$, so
$\mathcal{H}_h(\mathcal{E}_t\Psi) = \mathcal{E}_t\Psi W(\gamma^h) - \mathcal{E}_t\Psi\langle D\log\mathcal{E}_t,\gamma^h\rangle
- \mathcal{E}_t\langle D\Psi,\gamma^h\rangle$, which is \eqref{eq:H_on_product}.

Applying \eqref{eq:D_atoms} to $\log\mathcal{E}_t(z) = \mathcal{I}_t[b] - \tfrac12\mathcal{J}_t[|b|^2]$,
\begin{equation}\label{eq:D_logE}
  \big\langle D \log \mathcal{E}_t(z), \gamma^h\big\rangle_H
  = \mathcal{J}_t\big[\gamma^h\!\cdot\! b\big]
  + \mathcal{I}_t\big[(\nabla_y b)\,\ell^h\big]
  - \mathcal{J}_t\big[ b \cdot (\nabla_y b)\,\ell^h \big] \;\in\; \mathcal{A}_t(z) .
\end{equation}
Lemma~\ref{lemma:algebra_stable} and \eqref{eq:D_logE} show the bracket stays in
$\mathcal{A}_t(z)$.  Iterating $2d$ times from $\Psi = 1$ gives \eqref{eq:G_structure}.
\end{proof}

\begin{proposition}\label{prop:G_smooth}
For each $t \in (0,T]$ the map $z \mapsto G_t(z)$ admits a modification of class
$C^\infty$ on $\R^{2d}$.  Moreover, for every multi-index $\beta$, there is
$\Psi_{t,\beta} \in \mathcal{A}_t(z)$ with
\begin{equation}\label{eq:dG}
  \partial^\beta_z G_t(z) \;=\; \mathcal{E}_t(z)\,\Psi_{t,\beta}(z).
\end{equation}
and for every $p \in [1,\infty)$, we have
\begin{equation}\label{eq:dG_Lp}
  \sup_{z\in\R^{2d}} \big\| \partial^\beta_z G_t(z) \big\|_{L^p(\Omega)} \;<\; \infty .
\end{equation}
\end{proposition}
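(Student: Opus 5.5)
The plan is to exploit the explicit structure of the objects involved. By Lemma~\ref{lemma:G_structure}, $G_t(z)=\mathcal{E}_t(z)\Psi_t(z)$ with $\Psi_t\in\mathcal{A}_t(z)$, and every random variable appearing here is built from the atoms $\mathcal{I}_t[\varphi](z)$ and $\mathcal{J}_t[\psi](z)$ of \eqref{eq:atoms}. In these atoms, $z$ enters only through the deterministic linear map $\Theta_s(z)$ inside a smooth, bounded integrand. The first step is therefore a differentiation rule for atoms:
\[
\partial_{z_i}\mathcal{I}_t[\varphi](z)=\mathcal{I}_t\bigl[(\nabla_y\varphi)\,\partial_{z_i}\Theta_s\bigr](z),
\qquad
\partial_{z_i}\mathcal{J}_t[\psi](z)=\mathcal{J}_t\bigl[(\nabla_y\psi)\cdot\partial_{z_i}\Theta_s\bigr](z).
\]
Here $\partial_{z_i}\Theta_s$ is a constant vector of norm at most $1+T$, so the new integrands again lie in $\mathcal{B}_d$, resp.\ $\mathcal{B}_1$. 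Consequently $\mathcal{A}_t(z)$ is stable under $\partial_{z_i}$.

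Applying this to $\log\mathcal{E}_t(z)=\mathcal{I}_t[b]-\tfrac12\mathcal{J}_t[|b|^2]$ gives $\partial_{z_i}\mathcal{E}_t=\mathcal{E}_t\,\partial_{z_i}\log\mathcal{E}_t$ with $\partial_{z_i}\log\mathcal{E}_t\in\mathcal{A}_t(z)$. By the Leibniz rule and induction on $|\beta|$, this yields \eqref{eq:dG}: $\partial^\beta_z G_t=\mathcal{E}_t\Psi_{t,\beta}$, where $\Psi_{t,\beta}$ is a finite sum of products of atoms. The integrands of these atoms have $\|\cdot\|_{(j)}$-norms bounded in terms of $\Lambda_0,\dots,\Lambda_{|\beta|+2d+1}$ and of the bounded deterministic quantities $\gamma^{e_j},\ell^{e_j}$ (bounded for fixed $t>0$).

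The uniform $L^p$ bound \eqref{eq:dG_Lp} then follows from H\"older's inequality. Combining \eqref{eq:E_moments}, which is uniform in $z$, with the BDG bound $\|\mathcal{I}_t[\varphi]\|_{L^p}\le C_p\|\varphi\|_{(0)}T^{1/2}$ and the trivial bound for $\mathcal{J}_t$, every term has $L^p$ norm bounded independently of $z$.

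The main obstacle is making the differentiation rigorous. Stochastic integrals are only defined up to null sets, so one must produce a modification that is jointly $C^\infty$ in $z$, and justify exchanging $\partial_z$ with the It\^o integral. I would handle this by Kolmogorov's continuity criterion in the parameter $z$.

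First, for each atom, the difference quotient in $z$ converges in $L^p$ to the candidate derivative. This holds because the integrand $\varphi(s,\Theta_s(z)+Z^0_s)$ is differentiable in $z$ with bounded derivative; a mean-value estimate combined with BDG gives
\[
\bigl\|\mathcal{I}_t[\varphi](z)-\mathcal{I}_t[\varphi](z')\bigr\|_{L^p}\le C\|\varphi\|_{(1)}|z-z'|,
\]
and the same for second-order Taylor remainders. Second, these $L^p$-Lipschitz bounds, valid at every order with $p$ arbitrarily large, let Kolmogorov's theorem (in its version for random fields with $C^k$ regularity, e.g.\ Kunita's) provide a modification in which every $\partial^\beta_z$ exists and is continuous. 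Third, $\partial^\beta_z G_t$ coincides with the $L^p$-derivative computed above, which identifies the formula. Products are handled by H\"older's inequality, since all factors have moments of every order.
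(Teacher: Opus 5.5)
Your proposal is correct and follows essentially the same route as the paper. Both arguments use the factorization $G_t=\mathcal{E}_t\Psi_t$ of Lemma~\ref{lemma:G_structure}, the atom differentiation rule (the paper's Lemma~\ref{lemma:atoms_smooth}, where the smooth modification also comes from Kunita's Kolmogorov-type criterion), induction on $|\beta|$ via $\partial_{z_i}(\mathcal{E}_t\Psi)=\mathcal{E}_t(\Psi\,\partial_{z_i}\log\mathcal{E}_t+\partial_{z_i}\Psi)$, and the uniform $L^p$ bound from \eqref{eq:E_moments}, BDG and H\"older; your sketch of the Kolmogorov step just spells out what the paper leaves to the Kunita citation.
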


The following lemma guarantees the smoothness of the integrals in \eqref{eq:atoms}, and that for each $\beta$, differentiation $\partial^\beta$ is stable as an operator from $\mcal{A}_t(z)$ to itself.
\begin{lemma}\label{lemma:atoms_smooth}
Let $\varphi \in \mathcal{B}_d$.  Then $z \mapsto \mathcal{I}_t[\varphi](z)$ admits a modification
which is of class $C^\infty$ on $\R^{2d}$, and for every multi-index
$\beta = (i_1,\dots,i_n)$,
\begin{equation}\label{eq:atom_derivative}
  \partial^\beta_z\, \mathcal{I}_t[\varphi](z) \;=\; \mathcal{I}_t\big[\varphi_\beta\big](z),
  \qquad
  \varphi_\beta(s,y) := \big(\nabla^{n}_y \varphi\big)(s,y)\big[\theta^{(i_1)}_s,\dots,\theta^{(i_n)}_s\big] ,
\end{equation}
where $\theta_s := \partial_z\Theta_s(z) = \begin{pmatrix} I_d & s I_d \\ 0 & I_d\end{pmatrix}$
and $\theta^{(i)}_s$ is its $i$-th column.  Moreover $\varphi_\beta \in \mathcal{B}_d$ with
$\|\varphi_\beta\|_{(j)} \le (1+T)^{n}\,\|\varphi\|_{(n+j)}$.  The same statements hold for
$\mathcal{J}_t[\psi]$.
\end{lemma}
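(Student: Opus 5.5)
I would prove Lemma~\ref{lemma:atoms_smooth} by induction on $n=|\beta|$. The whole argument rests on one structural fact: by \eqref{eq:affine_structure}, the parameter $z$ enters the integrand $\varphi(s,\Theta_s(z)+Z^0_s)$ only through the deterministic linear map $\Theta_s$. The random part $Z^0_s$ does not depend on $z$.

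Formally differentiating under the stochastic integral therefore gives
\[
\partial_{z_i}\varphi\big(s,\Theta_s(z)+Z^0_s\big) = \nabla_y\varphi\big(s,\Theta_s(z)+Z^0_s\big)\,\theta^{(i)}_s .
\]
Iterating this yields $\varphi_\beta$ as in \eqref{eq:atom_derivative}, because $\theta_s$ does not depend on $z$, so no lower-order terms appear. The bound $\|\varphi_\beta\|_{(j)}\le (1+T)^n\|\varphi\|_{(n+j)}$ follows from $|\theta^{(i)}_s|\le 1+s\le 1+T$ together with multilinearity of $\nabla^n_y$. This also shows $\varphi_\beta\in\mathcal{B}_d$. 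Consequently it suffices to prove the first-order statement for an arbitrary $\varphi\in\mathcal{B}_d$, and then apply it to $\varphi_\beta$ recursively.

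For the first-order statement together with the existence of a smooth modification, I would use Kolmogorov's continuity criterion in $z\in\R^{2d}$, applied jointly to the finite family $\{\mathcal{I}_t[\varphi_\beta](z)\}_{|\beta|\le N}$.

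\textbf{Moment estimate.} By the BDG inequality and the mean value theorem,
\[
\E\big|\mathcal{I}_t[\varphi_\beta](z)-\mathcal{I}_t[\varphi_\beta](z')\big|^p \le C_p\, T^{p/2}\,\|\varphi_\beta\|_{(1)}^p (1+T)^p\, |z-z'|^p .
\]
With $p>2d$, this gives a continuous (indeed locally Hölder) modification of each $z\mapsto\mathcal{I}_t[\varphi_\beta](z)$.

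\textbf{Identification of the derivative.} The difference quotient satisfies
\[
\frac{\mathcal{I}_t[\varphi](z+\epsilon e_i)-\mathcal{I}_t[\varphi](z)}{\epsilon} - \mathcal{I}_t[\varphi_{(i)}](z) = \int_0^1 \Big(\mathcal{I}_t[\varphi_{(i)}](z+u\epsilon e_i)-\mathcal{I}_t[\varphi_{(i)}](z)\Big)\,du ,
\]
where I would justify interchanging the $du$ integral with the stochastic integral via the stochastic Fubini theorem (\cite[Theorem~65]{protter}), the integrand being bounded. Since the continuous modification of $\mathcal{I}_t[\varphi_{(i)}]$ is continuous in $z$ almost surely on a common null-set complement, the right-hand side tends to $0$ almost surely, for all $z$ simultaneously. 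This shows that the continuous modification of $\mathcal{I}_t[\varphi]$ is $C^1$, with partial derivatives equal to the continuous modifications of $\mathcal{I}_t[\varphi_{(i)}]$.

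Inducting over $|\beta|\le N$ and then intersecting the countably many full-measure events over $N$ yields a single $C^\infty$ modification satisfying \eqref{eq:atom_derivative}.

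\textbf{The case of $\mathcal{J}_t$.} For $\mathcal{J}_t[\psi]$ the argument is simpler, because it is a pathwise Lebesgue integral of a function that is smooth in $z$ with bounded derivatives. Dominated convergence lets us differentiate under the integral directly, with no modification needed.

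\textbf{Main obstacle.} The delicate point is making the modifications consistent: the derivative of the chosen modification of $\mathcal{I}_t[\varphi]$ must coincide with the chosen modification of $\mathcal{I}_t[\varphi_{(i)}]$ on one common almost-sure event. The stochastic-Fubini identity above is how I would handle this. I would establish it first for $z,z'$ in a countable dense set, and then extend it to all $z$ by continuity of both sides.
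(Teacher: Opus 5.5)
Your argument is correct. It shares the paper's key structural observation: $z$ enters $\varphi(s,\Theta_s(z)+Z^0_s)$ only through the deterministic linear map $\Theta_s$. Hence each $z$-derivative of the integrand is bounded by the deterministic constant $(1+T)^{|\beta|}\|\varphi\|_{(|\beta|)}$, and the chain rule produces no lower-order terms.

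Where you differ is in how the smooth modification is produced. The paper checks that the integrand lies in Kunita's class $\mathbf{L}^{n+\mathrm{Lip},p}$ for $p>2d\vee 2$. It then cites \cite[Prop.~2.3.1]{kunita2019stochastic}, which gives the $C^n$ modification and the commutation formula at once.

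You instead reprove that statement in this special case, in three steps:
\begin{itemize}
\item BDG plus the mean value theorem gives the moment bound needed for Kolmogorov's criterion with $p>2d$.
\item Stochastic Fubini gives $\mathcal{I}_t[\varphi](z+\epsilon e_i)-\mathcal{I}_t[\varphi](z)=\epsilon\int_0^1\mathcal{I}_t[\varphi_{(i)}](z+u\epsilon e_i)\,du$.
\item The dense-set-plus-continuity step makes this identity hold for all $(z,\epsilon)$ on one full-measure event. This is exactly what ties the derivative of one modification to the modification of the next integrand.
\end{itemize}

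The paper's route is shorter because it delegates this to a general theorem on parameter-dependent stochastic integrals. Yours is elementary and self-contained, and it makes explicit the consistency of modifications that the citation hides.

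When you write it up, state two points that you currently use only implicitly. First, the stochastic Fubini step needs a version of $u\mapsto\mathcal{I}_t[\varphi_{(i)}](z+u\epsilon e_i)$ that is jointly measurable in $(u,\omega)$; the continuous modification supplies it. Second, $\varphi_\beta$ depends only on the unordered indices, by symmetry of $\nabla^n_y\varphi$. So you can fix one continuous modification per $\beta$ and intersect over the countably many pairs $(\beta,i)$. This gives a single full-measure event on which $\partial^\beta_z$ of the modification of $\mathcal{I}_t[\varphi]$ equals the modification of $\mathcal{I}_t[\varphi_\beta]$ for every $\beta$.
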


Granting this lemma, we can prove the smoothness result of $G$:
\begin{proof}[Proof of Proposition~\ref{prop:G_smooth}]
Since $b \in \mcal{B}_d$, we have the following representation:
\begin{equation*}
\log \mcal{E}_t(z) = \mcal{I}_t[b](z) - \frac{1}{2} \mcal{J}_t[|b|^2](z) \in \mcal{A}_t(z).
\end{equation*}
By Lemma~\ref{lemma:atoms_smooth}, this has a modification which is smooth in $z$ almost surely, and thus so does $\mcal{E}_t(z)$. By Lemma~\ref{lemma:G_structure}, $G_t(z) \;=\; \mathcal{E}_t(z)\,\Psi_t(z)$, and product rule, we have
\begin{equation*}
\partial^{(i)}_z \left(\mathcal{E}_t(z)\,\Psi_t(z) \right) = \mathcal{E}_t(z)  \left( \Psi_t(z)\partial^{(i)}_z \log \mcal{E}_t(z) + \partial^{(i)}_z\Psi_t(z) \right).
\end{equation*}
By Lemma~\ref{lemma:atoms_smooth}, we know $\partial^{(i)}_z \left( \log \mcal{E}_t(z) + \Psi_t(z)\right) \in \mcal{A}_t(z)$. Inductively, we derive the structure \eqref{eq:dG}.

The bounds for the $L^p$-moments follow from the product rule, BDG inequality, and H\"older's inequality, along with \eqref{eq:E_moments}.
\end{proof}

\begin{proof}[Proof of Lemma~\ref{lemma:atoms_smooth}]
Set $\phi_z(s) := \varphi\big(s,\Theta_s(z)+Z^0_s\big)$.  Because $\Theta_s$ is linear in $z$, its
Jacobian $\theta_s$ is independent of $z$ and $\theta_s$ has no higher derivatives;
the chain rule therefore produces no terms beyond those in
\eqref{eq:atom_derivative}, and
\[
  \big| \partial^\beta_z \phi_z(s)\big| \;\le\; (1+T)^{|\beta|}\, \|\varphi\|_{(|\beta|)}
  \qquad \text{for all } s \in [0,t],\ z \in \R^{2d},\ \omega \in \Omega .
\]
Thus all the $z$-derivatives of $\phi_z$ are bounded by deterministic constants,
uniformly in $(s,z,\omega)$; Lipschitz continuity of $\partial^\beta_z \phi_z(s)$ in $z$  follows from the mean value theorem applied with the bound of $\nabla_z \left(\partial^\beta_z \phi_z(s)\right)$ for any $\beta$.  Hence
$\phi \in \mathbf{L}^{n+\mathrm{Lip},p}$ for every $n$ and every
$p \in (2d \vee 2, \infty)$, i.e.\ the condition
\cite[Prop.~2.3.1]{kunita2019stochastic} holds, which guarantees the existence of a modification of
$z \mapsto \int_0^t \phi_z\cdot dW$ of class $C^{n}$ together with the commutation
formula \eqref{eq:atom_derivative}.  As $n$ is arbitrary, the modification is $C^\infty$.
The case of $\mathcal{J}_t[\psi]$ is elementary using dominated convergence pathwise.
\end{proof}

We use the following bounds for Corollary~\ref{cor:bounded_derivative}.
\begin{lemma}\label{lemma:E_bounds}
$\mathcal{E}_t(z)$ has a modification such that the map $z \to \mathcal{E}_t(z)$ is in class $C^\infty$, almost surely.
Moreover, for every $k \in \N_0$ and $q \in [1,\infty)$, it holds
\[
  \sup_{t \in (0,T]}\ \sup_{z \in \R^{2d}}\ \Big( \big\|\mathcal{E}_t(z)\big\|_{\D^{k,q}}
  + \big\|\nabla_z \mathcal{E}_t(z)\big\|_{\D^{k,q}} \Big) \;<\; \infty .
\]
\end{lemma}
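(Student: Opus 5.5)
We prove the smoothness in $z$ first, then the Malliavin--Sobolev bounds, working throughout on the Wiener space of $W$ over $[0,t]$.

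\emph{Smoothness.} We have $\log\mathcal{E}_t(z)=\mathcal{I}_t[b](z)-\tfrac12\mathcal{J}_t[|b|^2](z)$ with $b\in\mathcal{B}_d$ and $|b|^2\in\mathcal{B}_1$ by Assumption~\ref{assump:bounded_derivative}. Lemma~\ref{lemma:atoms_smooth} then gives a modification of $z\mapsto\log\mathcal{E}_t(z)$ that is $C^\infty$ almost surely. Exponentiating yields the smooth modification of $z\mapsto\mathcal{E}_t(z)$. Iterating the product rule, for every multi-index $\beta$ we get
\[
\partial^\beta_z\mathcal{E}_t(z)=\mathcal{E}_t(z)\,\Phi_\beta(z),
\]
where $\Phi_\beta$ is a polynomial in atoms $\mathcal{I}_t[\varphi_{\beta'}]$, $\mathcal{J}_t[\psi_{\beta'}]$. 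By Lemma~\ref{lemma:atoms_smooth}, these integrands satisfy $\|\varphi_{\beta'}\|_{(j)}\le(1+T)^{|\beta'|}\Lambda_{|\beta'|+j}$ (similarly for $\psi$), uniformly in $z$, $t$ and $\bs{\mu}$. In particular $\nabla_z\mathcal{E}_t(z)=\mathcal{E}_t(z)\,\nabla_z\log\mathcal{E}_t(z)$, with the second factor in $\mathcal{A}_t(z)$.

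\emph{Malliavin derivatives.} It suffices to bound $\|D^k(\mathcal{E}_t\Psi)\|_{L^q(\Omega;H^{\otimes k})}$ for $\Psi\in\mathcal{A}_t(z)$, using only the sup-norms of the integrands. I would prove by induction on $k$ that, for an atom $A$ (an $\mathcal{I}$ or a $\mathcal{J}$), $D^kA$ is a sum of terms of the following types:
\begin{itemize}
\item iterated integrals whose integrands are bounded by $C\|\varphi\|_{(k)}$ times products of the kernels $|\sigma(s,r_i)|\le 1+T$;
\item for the stochastic integral, boundary terms of the form $\nabla^{k-1}\varphi(r_i,\cdot)$ evaluated at one of the $r_i$.
\end{itemize}
This is the computation of Lemma~\ref{lemma:algebra_stable} without pairing against $\gamma^h$:
\[
D_r\mathcal{I}_t[\varphi]=\varphi(r,\cdot)+\int_r^t\nabla_y\varphi\,\sigma(s,r)\,dW_s .
\]
The BDG inequality, applied in the Hilbert space $H^{\otimes k}$, then gives $\sup_{t,z}\|D^kA\|_{L^q(H^{\otimes k})}\le C(k,q,T,d,\Lambda_0,\dots,\Lambda_{k+1})$. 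Since $D(\mathcal{E}_t)=\mathcal{E}_t\,D\log\mathcal{E}_t$, the Leibniz rule writes $D^k(\mathcal{E}_t\Psi)$ as $\mathcal{E}_t$ times a polynomial in $D^jA$ for atoms $A$ of $\log\mathcal{E}_t$ and of $\Psi$. Finally, Hölder's inequality together with \eqref{eq:E_moments} (all moments of $\mathcal{E}_t$ bounded uniformly) gives the claimed uniform $\mathbb{D}^{k,q}$ bound, for both $\mathcal{E}_t(z)$ and $\nabla_z\mathcal{E}_t(z)=\mathcal{E}_t\nabla_z\log\mathcal{E}_t$.

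\emph{Main obstacle.} The hard step is justifying that $\mathcal{E}_t(z)$ belongs to $\mathbb{D}^{k,q}$ at all, since $\exp$ is not $C^1_b$ and the chain rule \eqref{eq:chain_rule} does not apply directly. I would approximate $\exp$ by $\exp\wedge N$ smoothed (functions $\chi_N\in C^\infty_b$ agreeing with $\exp$ on $(-\infty,N]$). The approximants $\chi_N(\log\mathcal{E}_t)$ have $\mathbb{D}^{k,q}$ norms bounded uniformly in $N$ by the argument above, because the derivatives of $\chi_N$ are dominated by $C e^{x}$. They converge in $L^q$ to $\mathcal{E}_t$, and closability of $D^k$ then yields $\mathcal{E}_t\in\mathbb{D}^{k,q}$ with the same bound. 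A secondary point is measurability and uniformity in $t\in(0,T]$, which is fine because no constant above depends on $t$ except through $T$.
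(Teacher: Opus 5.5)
Your proposal is correct and follows the paper's proof: you get smoothness from Lemma~\ref{lemma:atoms_smooth} applied to $\log\mathcal{E}_t=\mathcal{I}_t[b]-\tfrac12\mathcal{J}_t[|b|^2]$, bound the iterated Malliavin derivatives of the atoms by BDG, and combine them through the Leibniz/chain rule for $\exp$, H\"older's inequality and the moment bound \eqref{eq:E_moments}. Your truncation argument (functions $\chi_N\in C^\infty_b$ whose derivatives are dominated by $Ce^x$ uniformly in $N$) adds rigor to the step the paper dismisses as ``the chain rule for $\exp$'', since \eqref{eq:chain_rule} as stated only covers $C^1_b$ functions.
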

\begin{proof}
By \eqref{eq:girsanov_density}, $\log\mathcal E_t(z)=\mathcal I_tb-\tfrac12\mathcal J_t|b|^2\in\mathcal A_t(z)$ (cf.\ the proof of Proposition~\ref{prop:G_smooth}). By Lemma~\ref{lemma:atoms_smooth}, it has a $C^\infty$-modification, and hence so does $\mathcal E_t(z)$.

The Malliavin derivatives of $\log\mathcal{E}_t(z)$, and their $z$-derivatives, are
iterated stochastic and Lebesgue integrals over $[0,t]$ whose integrands involve only
$\nabla^j_y b$ and the bounded deterministic kernels $\sigma(s,r)$ and
$\partial_z\Theta_s$ (Lemma~\ref{lemma:atoms_smooth}); by the BDG
inequality they are bounded in every $L^q$, uniformly in $z$ and $t \le T$.  Since
$|b| \le \Lambda_0$, $\sup_{z,t}\E[\mathcal{E}_t(z)^q] \le \exp\big(\tfrac{q(q-1)}{2}T\Lambda_0^2\big)$,
and the claim follows from the chain rule for $\exp$ and H\"older's inequality.
\end{proof}

\begingroup
\emergencystretch=2em
\printbibliography
\endgroup
\end{document}